
\documentclass[12pt,reqno]{amsart}

\numberwithin{equation}{section}


\usepackage{amssymb}
\usepackage{amsmath}
\usepackage{amsbsy}
\usepackage{amscd}
\usepackage{amsthm}
\usepackage{amsfonts}
\usepackage{enumitem}
\usepackage{ifpdf}
\ifpdf \usepackage[pdftex,pdfstartview=FitH,pdfpagemode=none,colorlinks,bookmarks,linkcolor=blue]{hyperref} \else  \usepackage[hypertex]{hyperref} \fi
\usepackage[nobysame,abbrev,alphabetic]{amsrefs}
\usepackage{color}

\newtheorem{theorem}{Theorem}[section]
\newtheorem{condition}[theorem]{Condition}

\newtheorem{lemma}[theorem]{Lemma}
\newtheorem{corollary}[theorem]{Corollary}

\newtheorem{proposition}[theorem]{Proposition}

\theoremstyle{definition}
\newtheorem{definition}[theorem]{Definition}

\newtheorem{remark}[theorem]{Remark}



\newcommand{\cL}{\mathcal{L}}

\newcommand{\cO}{\mathcal{O}}

\newcommand{\bC}{\mathbb{C}}

\newcommand{\bR}{\mathbb{R}}
\newcommand{\bZ}{\mathbb{Z}}
\newcommand{\bQ}{\mathbb{Q}}
\newcommand{\bF}{\mathbb{F}}

\newcommand{\bN}{\mathbb{N}}

\newcommand{\bT}{\mathbb{T}}

\newcommand{\bfm}{\mathbf{m}}
\newcommand{\bfn}{\mathbf{n}}

\newcommand{\bfv}{\mathbf{v}}
\newcommand{\bfw}{\mathbf{w}}
\newcommand{\bfx}{\mathbf{x}}
\newcommand{\bfy}{\mathbf{y}}
\newcommand{\bfz}{\mathbf{z}}
\newcommand{\bfzero}{\mathbf{0}}
\newcommand{\bfxi}{\boldsymbol{\xi}}
\newcommand{\bfgamma}{\boldsymbol{\gamma}}

\newcommand{\GL}{\operatorname{GL}}

\newcommand{\re}{\operatorname{Re}}

\newcommand{\rank}{\operatorname{rank}}
\newcommand{\dist}{\operatorname{dist}}
\newcommand{\Arg}{\operatorname{Arg}}

\newcommand{\Tr}{\operatorname{Tr}}
\newcommand{\Log}{\operatorname{Log}}

\newcommand{\Stab}{\operatorname{Stab}}
\newcommand{\sign}{\operatorname{sign}}

\newcommand{\imag}{\mathrm{i}}

\newcommand\diag[1]{\operatorname{diag}\left(#1\right)}

\newcommand{\onto}{\xymatrix{\ar@{>>}[r]&}}
\newcommand{\da}[4]{\xymatrix{#1 \ar@<.5ex>[r]^{#2} \ar@<-.5ex>[r]_{#3} & #4}}

\newcounter{subconst}[subsection]

\newcounter{const}

\newcounter{CONST}

\begin{document}

\title[Topological self-joinings of Cartan actions]{Topological self-joinings of Cartan actions by toral automorphisms}
\author[E. Lindenstrauss]{Elon Lindenstrauss}
\address{ Hebrew University, 91904 Jerusalem, Israel}
\author[Z. Wang]{Zhiren Wang}
\address{ Princeton University, Princeton, NJ 08544, USA}
\setcounter{page}{1}
\begin{abstract}We show that if $r\geq 3$ and $\alpha$ is a faithful $\bZ^r$-Cartan action on a torus $\bT^d$ by automorphisms, then any closed subset of $(\bT^d)^2$ which is invariant and topologically transitive under the diagonal $\bZ^r$-action by $\alpha$ is homogeneous, in the sense that it is either the full torus $(\bT^d)^2$, or a finite set of rational points, or a finite disjoint union of parallel translates of some $d$-dimensional invariant subtorus. A counterexample is constructed for the rank~$2$ case.\end{abstract}
\thanks{E.~L.~acknowledges the support of the ERC (AdG Grant 267259), the ISF (983/09) and the NSF (DMS-0800345)}
\maketitle
\small\tableofcontents

\section{Introduction}

\subsection{Background}

One of the principle results in Furstenberg's seminal paper \cite{F67} is that on the one torus $\bT = \bR / \bZ$ there are very few closed sets invariant under the natural action of higher rank multiplicative semigroups of the natural numbers,  such as the multiplicative semigroup generated by 2 and 3: indeed, any such closed and invariant set is either $\bT$ or is a finite set of rational points.
Berend extended this important result to actions of semigroups of toral endomorphisms on higher dimensional tori. We identify the automorphisms of the $d$-dimensional torus $\bT ^ d$  with the corresponding element of $\GL (d, \bZ)$. A toral automorphism $\phi \in \GL(d,\bZ)$ of $\bT^d$ is {\it irreducible} if it leaves invariant no non-trivial proper subtorus of $\bT^d$, or equivalently its characteristic polynomial is irreducible over $\bQ$. $\phi$ is {\it totally irreducible} if $\phi^n$ is irreducible for all non-zero integer $n$.
Under this identification, a {faithful $\bZ^r$-action} on $\bT^d$ by automorphisms $\alpha:\bZ^r\curvearrowright \bT^d$ can be identified with a group embedding $\alpha:\bfn \mapsto \alpha^\bfn$ of $\bZ^r$ into $\GL(d,\bZ)$.

In \cite{B83} Berend gave a necessary and sufficient condition for an abelian semigroup of automorphisms of $\bT ^ d$ to share the property of higher rank multiplicative semigroups of $\bN$ that any closed invariant set is either finite or everything. In the important special case of group actions, his results reduces to the following:

\begin{theorem}\cite{B83}\label{Berend} Let $\alpha:\bZ^r\hookrightarrow\GL(d,\bZ)$ be a faithful $\bZ^r$-action on $\bT^d$ by toral automorphisms that satisfies:

\begin{enumerate}[label=\textup{(}\roman*\textup{)},leftmargin=*]
\item $r\geq 2$;

\item $\exists\bfn\in\bZ^r$ such that $\alpha^\bfn$ is a totally irreducible toral automorphism;

\item If $v\in \bC^d$ is a non-zero common eigenvector of the commutative group of matrices $\alpha(\bZ^r)$, then $\exists\bfn\in \bZ^r$ such that $|\alpha^\bfn.v|>|v|$.
\end{enumerate}

Then the only infinite $\alpha$-invariant closed subset of $\bT^d$ is $\bT^d$ itself. In particular, $\{\alpha^\bfn.y: \bfn\in\bZ^r\}$ is dense in $\bT^d$ for all irrational point $y\in\bT^d$.
\end{theorem}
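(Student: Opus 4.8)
\emph{Preliminaries.} Note first that $d\ge 2$, since $\GL(1,\bZ)$ is finite. Fix $\bfn_0$ with $\phi:=\al^{\bfn_0}$ totally irreducible and record three consequences. (a) The characteristic polynomial of $\phi$ is irreducible and not cyclotomic --- otherwise $\phi^{n}$ would have characteristic polynomial $(x-1)^d$ for some $n$, against total irreducibility (using $d\ge 2$) --- so $\phi$ has no root-of-unity eigenvalue; hence $\phi$ is ergodic for Haar measure, in particular topologically transitive, so \emph{any closed $\al$-invariant set with non-empty interior is all of $\bT^d$}. It therefore suffices to prove that an infinite closed $\al$-invariant set $X$ has non-empty interior. (b) The only $\al$-invariant subtori of $\bT^d$ are $\{0\}$ and $\bT^d$; consequently $\overline{\pi(W)}=\bT^d$ for every non-zero $\al(\bZ^r)$-invariant real subspace $W\subseteq\bR^d$, where $\pi\colon\bR^d\to\bT^d$ is the projection. (c) As the irreducible degree-$d$ polynomial is separable, $\phi$ has $d$ distinct eigenvalues, so the commuting family $\al(\bZ^r)$ is simultaneously diagonalizable over $\bC$ with common eigenlines $L_1,\dots,L_d$; writing $\al^\bfn|_{L_i}=\lambda_i(\bfn)$, each $\chi_i:=\log|\lambda_i|$ is a linear functional on $\bR^r$, $\sum_i\chi_i\equiv 0$ (because $\det\al^\bfn=\pm 1$), and condition (iii) says precisely that $\chi_i\not\equiv 0$ for every $i$. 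Grouping the $L_i$ with a common $\chi_i$ gives the real, $\al(\bZ^r)$-invariant Lyapunov subspaces $V_\chi$, each with $\dim V_\chi<d$.

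\emph{Renormalization.} Let $X$ be infinite, closed, $\al$-invariant; suppose for contradiction that $X\ne\bT^d$, i.e. $X$ has empty interior. Since $X$ is infinite in a compact space there are $x_k\ne x_k'$ in $X$ with $\dist(x_k,x_k')\to 0$; let $\tilde\del_k\in\bR^d\setminus\{0\}$ be the small lift of $x_k-x_k'$, so $\tilde\del_k\to 0$. I would renormalize: because each $\chi_i\not\equiv 0$ (here condition (iii) enters), along a suitable lattice path issuing from $0$ the norm $\|\al^\bfm\tilde\del_k\|$ increases from $\|\tilde\del_k\|<1$ to $\infty$ while changing by at most a fixed factor per step, so there is $\bfm_k\in\bZ^r$ with $\|\al^{\bfm_k}\tilde\del_k\|$ in a fixed annulus $[1,K]$. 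Passing to a subsequence along which the ``projective shape'' of $\tilde\del_k$ stabilizes and pushing $\bfm_k$ far into a Weyl chamber on which the exponent of a persistent component of $\tilde\del_k$ is dominant, one may moreover arrange that $w_k:=\al^{\bfm_k}\tilde\del_k$ converges to a non-zero vector $w$ in a single Lyapunov subspace $V_\chi$. Then $b_k:=\al^{\bfm_k}x_k$ and $b_k':=\al^{\bfm_k}x_k'$ lie in $X$ and, along a further subsequence, converge to $b,b'\in X$ with $b-b'=\pi(w)$. This alone produces only two points of $X$ at a prescribed distance; the heart of the matter is to iterate the renormalization, using the \emph{second} independent $\bZ^r$-direction available because $r\ge 2$ to ``slide'' the already-produced configuration, thereby upgrading the conclusion to: \emph{$X$ contains, near one of its points, a relatively open piece of a coset of the subtorus $\overline{\pi(V_\chi)}$} --- concretely, a set $x_\ast+\pi(N)$ with $N$ a neighbourhood of $0$ in $V_\chi$.

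\emph{Filling out the torus.} By (b), $\overline{\pi(V_\chi)}=\bT^d$, so the cosets of the dense subgroup $\pi(V_\chi)$ foliate $\bT^d$ by dense leaves, and this foliation is minimal and uniquely ergodic (a minimal translation action on a compact abelian group). Pick $\bfn$ with $\chi(\bfn)>0$; then every eigenvalue of $\al^\bfn|_{V_\chi}$ has modulus $e^{\chi(\bfn)}>1$, so $\al^{-j\bfn}|_{V_\chi}\to 0$ and hence $\bigcup_{j\ge 0}\al^{j\bfn}(N)=V_\chi$. By $\al$-invariance of $X$, $\al^{j\bfn}\bigl(x_\ast+\pi(N)\bigr)=\al^{j\bfn}x_\ast+\pi\bigl(\al^{j\bfn}N\bigr)\subseteq X$ for all $j$, and these are arbitrarily large pieces of leaves of the foliation; by unique ergodicity, long leaf-pieces become uniformly $\veps$-dense in $\bT^d$. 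Hence $\overline X=\bT^d$, contradicting $X\ne\bT^d$. So every infinite closed $\al$-invariant set equals $\bT^d$, and the final ``in particular'' follows by taking $X=\overline{\{\al^\bfn y:\bfn\in\bZ^r\}}$, which is infinite whenever $y$ is irrational.

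\emph{The main obstacle.} Everything outside the renormalization step is soft. The crux --- and the only place all three hypotheses are genuinely needed --- is producing a truly \emph{connected} positive-dimensional subset of $X$ itself, rather than merely a Cantor-type invariant set inside a Lyapunov leaf: for a single toral automorphism ($r=1$) such Cantor invariant sets occur in abundance, so $r\ge 2$ is essential --- it supplies the extra renormalization directions that turn one rescaled limit configuration into a relatively open leaf-piece --- and condition (iii) is essential --- it guarantees that every eigendirection can be expanded, so no small difference vector is trapped and the renormalization can always be started; total irreducibility then keeps the subtorus $\overline{\pi(V_\chi)}$ produced along the way from being proper. Carrying out this ``slide and iterate'' step rigorously --- controlling the non-conformality of $\al^\bfn$ on $V_\chi$ and the drift of the base points $\al^{\bfm_k}x_k$ as $k\to\infty$ --- is the genuinely delicate part of the argument.
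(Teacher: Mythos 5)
The paper does not prove Theorem~\ref{Berend}; it quotes it from Berend \cite{B83}, so there is no in-paper proof to compare against. Taking your sketch on its own terms: you yourself flag that the step you call ``the heart of the matter'' is left open, and that gap is precisely the content of the theorem. Producing two points of $X$ whose difference is a small vector in a Lyapunov direction, and rescaling it into a fixed annulus, is elementary; upgrading this to a connected positive-dimensional subset of $X$ is the whole work, and ``slide the configuration using the second independent $\bZ^r$-direction'' is a heuristic, not yet an argument.

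The mechanism that actually makes $r\geq 2$ bite is more specific than having an extra renormalization direction: by a pigeonhole argument one finds a nontrivial $\bfn$ for which the full complex logarithm $\Log\lambda_i(\bfn)$ (modulus and phase simultaneously) is arbitrarily small --- this is exactly Lemma~\ref{smallLog} in the present paper, in the joining setting --- and applying $\al^{l\bfn}$ to a difference vector that has first been blown up to very large length sweeps out arbitrarily long, arbitrarily fine approximations to a line in a single eigendirection through a point of $X$; a limit produces a genuine line (compare Lemmas~\ref{finiteline} and~\ref{line}). Note this is a one-real-dimensional line, not the relatively open piece of a (possibly two-real-dimensional) Lyapunov leaf $V_\chi$ that your ``filling out the torus'' step demands; so the unique-ergodicity-of-the-leaf-flow route you sketch would not receive the input it needs. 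The standard way to finish (and the one this paper mimics in Lemma~\ref{rotatedense}) is instead to apply a totally irreducible element of the action to that single line and show the union of its translates is dense via a Pontryagin-character argument (\cite{B83}, Lemma~4.7). In short: your preliminaries are fine, your diagnosis of where the difficulty sits is accurate, but the key renormalization-iteration step is left unproved, and the intermediate target you aim it at (an open leaf-piece) is stronger than what the method actually yields.
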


\begin{remark}\label{Berendfisubgp}Notice that if $\alpha:\bZ^r\curvearrowright\bT^d$ is as in Theorem \ref{Berend} and $H\leq\bZ^r$ is a finite-index subgroup, then $H\cong\bZ^r$ and $\alpha|_H$ satisfies the same conditions in Theorem \ref{Berend} as well. 
\end{remark}

In particular, Berend's theorem covers the special case of Cartan actions which contain totally irreducible elements\footnote {Note that according to our definition, a Cartan action always has irreducible elements, but not necessarily totally irreducible elements.} (see Lemma \ref{Cartanspecial}), where a Cartan action is defined by the following maximality condition:

\begin{definition}\label{Cartan}A faithful $\bZ^r$-action $\alpha$ on $\bT^d$ by automorphisms is {\bf Cartan} if the abelian subgroup $\alpha(\bZ^r)\leq\GL(d,\bZ)$ contains an irreducible element and is maximal in rank, i.e. there exists no intermediate abelian subgroup $G$ such that $\alpha(\bZ^r)\leq G\leq \GL(d,\bZ)$ and $\rank(G)>r$.\end{definition}

Here and throughout the rest of paper, $\rank(G)$ always refers to the torsion-free rank of a finitely generated abelian group $G$.

We remark that our definition of Cartan action here is more general than what was adopted by some previous authors, as in our case a Cartan action is not necessarily $\bR$-split, i.e. some of the common eigenspaces of $\alpha(\bZ^r)$ may be defined only over $\bC$ but not over $\bR$.

Cartan actions are of particular number-theoretical interest since a Cartan action is, up to finite index, conjugate to the multiplicative action of $U_K$, the group of units of some number field $K$ of degree $d$, on an arithmetic compact quotient of $K\otimes_\mathbb Q\mathbb R$ (see Proposition \ref{Gfield}).

\subsection{Main results}

In this paper, we try to understand what happens if the action is no longer irreducible. More precisely, for a $\bZ^r$-Cartan action $\alpha$ on $\bT^d$ as in Theorem \ref{Berend}, we consider the diagonal $\bZ^r$-action $\alpha_\triangle$ on $(\bT^d)^2$ given by \begin{equation}\label{diagaction}\alpha_\triangle^\bfn.y=(\alpha^\bfn.y^{(1)},\alpha^\bfn.y^{(2)}), \forall\bfn\in\bZ^r,\forall y=(y^{(1)},y^{(2)})\in(\bT^d)^2.\end{equation}

Recall the notion of topological transitivity:

\begin{definition}\label{topotrans} For a continuous action $\rho$ by a group $G$ on a compact metric space $\Omega$, a $\rho$-invariant closed subset $A\subset\Omega$ is {\bf topologically transitive} if for any two non-empty subsets $U,V\subset A$ that are both relatively open inside $A$, there is $g\in G$ such that $(\rho(g).U)\cap V\neq\emptyset$. 

It is known that $A$ is topologically transitive if and only if it is the orbit closure $\overline{\{\rho(g).\omega: g\in G\}}$ of some $\omega\in\Omega$, see \cite{GH55}*{Theorem 9.22}.\end{definition}

One hopes to classify all closed subsets of $(\bT^d)^2$ that are invariant and topologically transitive under the diagonal action $\alpha_\triangle$. A few candidate types are listed below.

{\noindent\bf($0$-dimensional)} Clearly if $x\in(\bT^d)^2$ is rational then the orbit $\{\alpha_\triangle^\bfn.x:\bfn\in\bZ^r\}$ is a finite set of rational points.

{\noindent\bf($2d$-dimensional)} It is not hard to see that $\alpha_\triangle$ is ergodic with respect to the Lebesgue measure on $(\bT^d)^2$, hence the orbit closure of almost every point is $(\bT^d)^2$ itself.

{\noindent\bf($d$-dimensional)} There is an intermediate category of $d$-dimensional sets. For example, let $y=(y^{(1)},0)$ where $y^{(1)}\in\bT^d$ is irrational. Then by Theorem \ref{Berend}, $\{\alpha^\bfn.y^{(1)}:\bfn\in\bZ^r\}$ is dense in $\bT^d$. So the orbit closure of $y$ under $\alpha_\triangle$ is $\bT^d\times\{0\}$, and thus $\bT^d\times\{0\}$ is a topologically transitive $\alpha_\triangle$-invariant subtorus. More generally, there are many $d$-dimensional subtori of $(\bT^d)^2$ that are invariant under the action $\alpha_\triangle$. In general, if $A_0$ is such an invariant subtorus and $A_1\subset (\bT^d)^2$ is the translate of $A_0$ by a rational point, then the stabilizer $H=\{\bfn:\alpha_\triangle^\bfn.A_1=A_1\}$ is a finite-index subgroup of $\bZ^r$. Choose representatives $\bfn_1,\cdots,\bfn_s$ such that $\bZ^r/H=\{\bfn_1+H,\cdots,\bfn_s+H\}$, and denote $A_t=\alpha_\triangle^{\bfn_t}.A_1$, for $t=1,\cdots,s$. Then the disjoint union $\bigsqcup_{t=1}^sA_t$ is $\alpha_\triangle$-invariant and topologically transitive.

All three types of invariant sets above are said to be {\it homogeneous}. One may speculate that these exhaust all topologically transitive $\alpha_\triangle$-invariant closed subsets of $(\bT^d)^2$. It turns out this is true when $r\geq 3$ but fails in rank $2$.

\begin{theorem}\label{Cartanjoining}Let $r \geq 2$ and $\alpha$ be a faithful $\bZ^r$-Cartan action on $\bT^d$ by automorphisms such that $\alpha^\bfn$ is a totally irreducible toral automorphism for at least one $\bfn \in\bZ^r$. Let $\alpha_\triangle:\bZ^r\curvearrowright\bT^d$ be the diagonal action in~(\ref{diagaction}).
\begin{enumerate}[label=\textup{(\arabic*)},leftmargin=*]
\item When $r \geq 3$, if an infinite proper closed subset $A$ of $(\bT^d)^2$ is invariant and topologically transitive under $\alpha_\triangle$, then there is an $\alpha_\triangle$-invariant $d$-dimensional subtorus $A_0 \subset \bT^{2d}$ such that $A$ decomposes into a finite disjoint union $\bigsqcup_{t=1}^s A_t$, where each $A_t$ is a translate of~$A_0$. Any $\alpha _ \triangle$-invariant closed subset of $(\bT^d)^2$ is a union of finitely many topologically transitive closed and invariant sets.

\item If $r=2$, then there is a point $\bfy \in (\bT^d)^2$ whose orbit closure is an infinite proper subset of $(\bT^d)^2$ but not homogeneous. Actually there are three $d$-dimensional subtori $T_1,T_2,T_3 \subset(\bT^d)^2$ transverse to each other, such that the orbit closure is a disjoint union:
\begin{equation}\overline{\{ \alpha_\triangle^\bfn.\bfy:\bfn \in\bZ^2 \}}=\{ \alpha_\triangle^\bfn.\bfy:\bfn \in\bZ^2 \} \sqcup\big(\bigcup_{i=1}^3T_i\big).\end{equation}
\end{enumerate}
\end{theorem}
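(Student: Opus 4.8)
The plan is to study the problem through the arithmetic structure of Cartan actions (Proposition~\ref{Gfield}): up to finite index, $\alpha$ is conjugate to the action of the unit group $U_K$ of a degree-$d$ number field $K$ on a compact quotient of $K_\infty = K \otimes_\bQ \bR$, and $\alpha_\triangle$ is correspondingly the action of $U_K$ on a quotient of $K_\infty \oplus K_\infty$. An $\alpha_\triangle$-invariant closed set $A$ lifts to a closed subset $\widetilde A \subset K_\infty \oplus K_\infty$ invariant under multiplication by $U_K$ (acting diagonally) and containing a lattice. The key structural input is a \emph{rigidity} statement for such invariant closed sets when $r \geq 3$: the abundance of commuting hyperbolic automorphisms forces $\widetilde A$, away from lower-dimensional pieces, to be a union of linear subspaces. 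The natural line of attack is a measure-rigidity/entropy argument in the style of Einsiedler--Katok--Lindenstrauss or an adaptation of Berend's inductive argument to the diagonal setting: one analyzes the \emph{connected component of the identity} of $\widetilde A$ (or of the set of accumulation directions of $\widetilde A$ at $0$), shows it is a Lie subgroup $V \subset K_\infty \oplus K_\infty$ that is $U_K$-invariant, and then classifies such $V$.

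First I would reduce to understanding $\alpha_\triangle$-invariant \emph{subgroups}. Since $\alpha^{\bfn_0}$ is totally irreducible, $K$ is a field (not merely an étale algebra), so $K_\infty \oplus K_\infty$ is a free rank-$2$ module over $K_\infty$, and every $U_K$-invariant, hence $K$-invariant (by taking $\bZ$-linear combinations and closure, or passing to the $\bQ$-span), linear subspace $V$ is a $K_\infty$-submodule: it is either $0$, a ``graph'' $\{(v, \beta v) : v \in K_\infty\}$ for some $\beta \in K$ (the off-diagonal subtori, including the diagonal $\beta = 1$ and the axes as the degenerate $\beta \in \{0,\infty\}$ cases), a single factor, or all of $K_\infty \oplus K_\infty$. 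These are exactly the candidate tangent spaces $A_0$ in the statement. The crux of part~(1) is therefore: \emph{for $r \geq 3$, the non-proper, infinite, topologically transitive $A$ has, through each of its points, a coset of one such $V$ entirely contained in $A$}, so that $A$ is a finite union of translates of $A_0 = V/(\text{lattice})$; finiteness of the union follows because $A$ is closed and $A_0$ has positive, maximal-among-proper dimension, while topological transitivity pins down that the translates form a single $H$-orbit for a finite-index $H \leq \bZ^r$.

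For the existence of such a coset through a point I expect the following mechanism, which is where the hypothesis $r \geq 3$ is essential. Fix $x \in A$ and consider the set $A - x$ near $0$; take a sequence $\alpha_\triangle^{\bfn_k}$ contracting toward $x$ and rescaling, extract a limiting closed set $W \subset K_\infty \oplus K_\infty$, which is nonempty (it contains $0$ and, by topological transitivity, is nontrivial), $\alpha(\bZ^r)$-invariant, and a cone. Using that $\alpha_\triangle(\bZ^r)$ has rank $r \geq 3$ while $K_\infty \oplus K_\infty$ decomposes into $\leq 2d$ one-dimensional (over $\bR$ or $\bC$) common eigenspaces with the two copies carrying \emph{the same} character on $\bZ^r$, a pigeonhole/higher-rank argument (no ``Lyapunov hyperplane'' can separate the two copies of a given weight) shows $W$ must contain a full $K_\infty$-line of one of the graph types; then a maximality/connectedness argument propagates this from the limit back to $A$ itself, giving a translate of $V$ inside $A$. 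The rank-$2$ counterexample in part~(2) is built by running this obstruction in reverse: with $r = 2$ one has room to choose a point $\bfy$ so that its forward orbit under the two generators spirals in a way whose closure picks up exactly \emph{three} distinct graph-type subtori $T_1, T_2, T_3$ (corresponding to three values of $\beta \in K$, chosen compatibly with a $2$-dimensional sublattice relation) together with the discrete orbit itself; verifying that this union is closed, invariant, and equals the orbit closure is a direct computation with the explicit $\bfy$.

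The main obstacle, I expect, is the propagation step: upgrading the statement ``a limiting cone $W$ contains a linear subspace'' to ``$A$ itself contains a translate of that subspace.'' Limits under contraction lose information, and one must argue that the subspace direction present in $W$ is already ``visible'' in $A$ at full scale — this typically requires either a minimal-set/idempotent argument in the enveloping semigroup, or an additivity argument exploiting that $A$ contains the dense image of a lattice and is closed under the (dense) $\bZ^r$-action, so that small translates accumulate and their differences fill out $V$. Controlling this while keeping $A$ from collapsing to $0$-dimensional or expanding to everything — i.e. genuinely using that $A$ is \emph{infinite} and \emph{proper} and \emph{topologically transitive} — is the delicate heart of the proof.
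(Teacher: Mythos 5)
Your high-level setup is right: passing to the number-theoretic model of Proposition~\ref{Gfield}, recognizing that the candidate $d$-dimensional invariant pieces are the ``graph'' subtori $T^\kappa$, $\kappa\in K\cup\{\infty\}$ (with the axes and anti-diagonal as special cases), and that the rank-$2$ counterexample should arrange an orbit accumulating on exactly three of them. That matches Sections~2--3 and 6 of the paper, and your Step~0 (any infinite invariant set contains a torsion point, and a translate thereof along some Lyapunov direction) is essentially what Section~4 does via compactness arguments (Corollary~\ref{torsioneigen}, Lemma~\ref{line}, Lemma~\ref{denseorcontaindhomo}), proving that $A$ contains at least one $d$-dimensional homogeneous invariant subset even when $r=2$.

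The gap is in the heart of part~(1), the step you yourself flag as ``the delicate heart.'' You propose to show that \emph{through each point} of $A$ there is a coset of some $V^\kappa$, by extracting a limiting cone $W$ under rescaling, using a pigeonhole argument to find a line in $W$, and then ``propagating'' back to $A$. This is not what the paper does, and I do not see how to make it work: the observation you isolate as the use of $r\ge 3$ --- that the two copies of $V_i$ in $V_i^\square$ carry the same Lyapunov functional $\lambda_i$ and hence cannot be separated by a Lyapunov hyperplane --- is true for all $r\ge 1$ and so cannot be where $r\ge 3$ enters; and upgrading a tangent-cone statement to ``a full coset of $V$ lies in $A$ through $x$'' for every $x$ is exactly the kind of rigidity that one cannot get locally. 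The paper instead argues by contradiction with a genuinely different mechanism: it shows that if $A$ properly contains one homogeneous invariant set $L_0$ (Condition~\ref{nonisocond}, equivalent to topological transitivity once $L_0\subset A\subsetneq X^2$), then one can inductively produce a \emph{new} homogeneous invariant subset $L_{q+1}\subset A$ distinct from the previously found ones (Lemma~\ref{nonisoinduction}); since only finitely many $T^\kappa$-cosets fail to be $\epsilon$-dense (Lemma~\ref{finitedhomo}), repeating this forces $A=X^2$, contradiction. The two technical ingredients producing the new $L_{q+1}$ --- a recurrence lemma (Lemma~\ref{selfreturn}) that moves a point inside $L_{0,1}$ away from the finitely many intersection points with the other $L_h$ while expanding a chosen direction, and a transversal-deviation lemma (Lemma~\ref{transdev}) pushing a nearby point of $A\setminus L_0$ to a definite distance from $L_{0,1}$ along some $V_i^\kappa$ with $\kappa\ne\kappa_0$ --- are where the real work is, and the essential use of $r\ge 3$ is hiding in Proposition~\ref{nonhyp} (rigidity of the sub-action $\{\bfn: |\lambda_i(\bfn)|<\epsilon\}$, which has rank $r-1\ge 2$ and so still satisfies a Berend-type density theorem), used via Lemma~\ref{dhomononhyp} to land the deviated point on a torsion base point. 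None of that structure --- the induction, the recurrence, the non-hyperbolic sub-action rigidity --- is present in your proposal, so I would regard the proof of part~(1) as missing, even though your description of the target structure ($A$ is a finite $\bZ^r$-orbit of cosets of one $T^\kappa$, pinned down by topological transitivity) is accurate.
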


Theorem \ref{Cartanjoining} is the main result of this paper. It actually classifies all topologically transitive self-joinings of the $\bZ^r$-action $\alpha$ when $r\geq 3$. Recall that a {\it topological joining} between two group actions $\rho_k:G\curvearrowright\Omega_k, k=1,2$ is a subset $A\subset \Omega_1\times\Omega_2$ which is invariant under the product action $\rho_1\times\rho_2$ such that $\pi_k(A)=\Omega_k$ where $\pi_k$ is the projection to $\Omega_k$. Here $\rho_1\times\rho_2$ is defined by $(\rho_1\times\rho_2)(g).(\omega_1,\omega_2)=\big(\rho_1(g).\omega_1,\rho_2(g).\omega_2\big)$ (see \cite{G03}).

We note that the analogous question in the measure theoretic category was studied by Kalinin and Katok \cite{KK02}. Examples such as Maucourant's \cite{M10} show that the measure theoretic and topological questions behave quite differently in this context.

One motivation behind this paper is the similarity between the actions investigated here and actions by higher-dimensional diagonal groups on homogeneous spaces. A general conjecture was raised by Margulis \cite{M00} on the rigidity of orbit closures under those diagonal actions. Recently several counterexamples to Margulis' Conjecture were constructed in different settings by the works of Maucourant \cite{M10}, Shapira \cite{S11}, Lindenstrauss and Shapira \cite{LS11} and Tomanov \cite{T10}. Our example of irregular orbit closures in the rank 2 may be considered as another manifestation of the same phenomenon.

Under certain circumstances it is possible to extend our results to semigroup actions. However, there is some delicateness in such an extension which is already evident in the case of $d = 1$.
In a follow-up paper we prove that if $a,b,c$ are pairwise relatively prime,  all topological self-joinings of the action of the semigroup generated by $a, b, c$ on $\bT$ are homogeneous. However, it is easy to construct counterexamples if the greatest common divisor of $(a,b,c)\neq 1$, e.g. if all are even.

Besides $\alpha_\triangle$-orbits of generic points, our techniques  also allow us to analyze orbits of rational points. Recall a subset of a metric space is said to be {\it $\epsilon$-dense} if it intersects every open ball of radius $\epsilon$.

\begin{theorem}\label{nondenserational}Suppose $r\geq 3$ and $\alpha$ is a faithful $\bZ^r$-Cartan action on $\bT^d$ by automorphisms such that $\alpha^\bfn$ is a totally irreducible toral automorphism for at least one $\bfn\in\bZ^r$. Then for all $\epsilon>0$, there exist a finite number of subsets $A_1,\cdots,A_s\subset(\bT^d)^2$, each of which is a translate of a (possibly different) $d$-dimensional $\alpha_\triangle$-invariant subtorus in $(\bT^d)^2$, such that $\bigcup_{t=1}^sA_t$ covers all rational points in $(\bT^d)^2$ whose $\alpha_\triangle$-orbits are not $\epsilon$-dense in $(\bT^d)^2$.\end{theorem}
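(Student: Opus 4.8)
The plan is to derive Theorem~\ref{nondenserational} from part~(1) of Theorem~\ref{Cartanjoining} together with a short compactness argument. Fix $\epsilon>0$ and let $R_\epsilon\subset(\bT^d)^2$ be the set of rational points whose $\alpha_\triangle$-orbit is not $\epsilon$-dense in $(\bT^d)^2$. Since points lying in a common orbit have the same orbit closure, $R_\epsilon$ is $\alpha_\triangle$-invariant, and hence so is its closure $Y:=\overline{R_\epsilon}$, which is a closed $\alpha_\triangle$-invariant subset of $(\bT^d)^2$.

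The only delicate point is to show $Y\neq(\bT^d)^2$. I would in fact prove the stronger assertion that \emph{every} point of $Y$ has a non-dense $\alpha_\triangle$-orbit. Given $x\in Y$, choose rational points $x_k\in R_\epsilon$ with $x_k\to x$; for each $k$ pick $z_k\in(\bT^d)^2$ with $\{\alpha_\triangle^\bfn.x_k:\bfn\in\bZ^r\}\cap B(z_k,\epsilon)=\emptyset$, and, after passing to a subsequence, assume $z_k\to z$. For all sufficiently large $k$ we have $B(z,\epsilon/2)\subset B(z_k,\epsilon)$, so $\alpha_\triangle^\bfn.x_k\notin B(z,\epsilon/2)$ for every $\bfn\in\bZ^r$ and every large $k$; since each $\alpha_\triangle^\bfn$ is continuous and the complement of $B(z,\epsilon/2)$ is closed, letting $k\to\infty$ yields $\alpha_\triangle^\bfn.x\notin B(z,\epsilon/2)$ for all $\bfn$. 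Hence the orbit closure of $x$ avoids $B(z,\epsilon/2)$ and is in particular a proper subset of $(\bT^d)^2$. On the other hand $\alpha_\triangle$ acts ergodically on $(\bT^d)^2$ with respect to Lebesgue measure, so almost every point has a dense $\alpha_\triangle$-orbit; such a point cannot lie in $Y$, and therefore $Y\subsetneq(\bT^d)^2$.

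Next I would apply Theorem~\ref{Cartanjoining}(1) to the proper closed $\alpha_\triangle$-invariant set $Y$: it decomposes as a finite union $Y=\bigcup_j Z_j$ of topologically transitive closed $\alpha_\triangle$-invariant sets. Each $Z_j$ is a proper subset of $(\bT^d)^2$, so it is not of the $2d$-dimensional type; thus either $Z_j$ is infinite, and then by Theorem~\ref{Cartanjoining}(1) it is a finite disjoint union of translates of some $d$-dimensional $\alpha_\triangle$-invariant subtorus, or $Z_j$ is a finite set $\{y_1,\dots,y_m\}$, in which case each $y_i$ lies in the translate $(\bT^d\times\{0\})+y_i$ of the $d$-dimensional $\alpha_\triangle$-invariant subtorus $\bT^d\times\{0\}$. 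Collecting the finitely many translates produced by all the $Z_j$ gives a finite family $A_1,\dots,A_s$ of translates of $d$-dimensional $\alpha_\triangle$-invariant subtori with $R_\epsilon\subset Y\subset\bigcup_{t=1}^s A_t$, which is the desired statement.

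The deduction is short once Theorem~\ref{Cartanjoining} is in hand; the one step requiring care is the second paragraph, namely arranging that the balls avoided by the approximating orbits converge and checking that this avoidance persists for the limiting orbit closure. Note that the hypothesis $r\geq3$ enters only through Theorem~\ref{Cartanjoining}(1), and by Theorem~\ref{Cartanjoining}(2) the conclusion indeed fails when $r=2$.
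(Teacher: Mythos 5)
Your proof is correct, and it takes a genuinely different route from the paper's. The paper first establishes a more quantitative statement, Theorem~\ref{nondenserationalX2}: fixing $\epsilon>\epsilon'>0$, it shows that all but finitely many of the offending torsion points lie inside the (finitely many, by Lemma~\ref{finitedhomo}) $d$-dimensional homogeneous invariant sets that fail to be $\epsilon'$-dense; the argument runs through Proposition~\ref{firstdhomo}, Condition~\ref{nonisocond} and Proposition~\ref{nonisofull} applied to the closures of the ``exceptional'' torsion points missing a fixed small ball, and then a compactness covering of $X^2$ by $(\epsilon-\epsilon')$-balls. Your approach bypasses all of this and derives the theorem directly from the packaged Theorem~\ref{Cartanjoining}(1): you take the closure $Y$ of all rational points with non-$\epsilon$-dense orbit, show by a diagonal/limit argument that every point of $Y$ has an orbit closure missing a ball of radius $\epsilon/2$ (so $Y$ is a Lebesgue-null, hence proper, closed invariant set), and then invoke the last sentence of Theorem~\ref{Cartanjoining}(1) to decompose $Y$ into finitely many topologically transitive pieces, each of which is either a finite set (trivially covered by translates of $\bT^d\times\{0\}$) or a finite union of translates of a $d$-dimensional invariant subtorus. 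This is shorter and more conceptual, at the cost of relying on the decomposition statement as a black box and yielding only the qualitative statement rather than the sharper Theorem~\ref{nondenserationalX2}, which the paper records and uses implicitly (e.g. to say the covering sets can be taken to be homogeneous subsets that are themselves not too dense). One small notational point: the paper distinguishes closed balls $B_\epsilon(\cdot)$ from open balls $\mathring B_\epsilon(\cdot)$, and the ``complement is closed'' step in your second paragraph needs the open ball $\mathring B_{\epsilon/2}(z)$; your argument is fine once this is made explicit.
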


\subsection{Organization of the paper}

Section \ref{numbermodel} gives some basic facts about Cartan actions by toral automorphisms. In particular, it will be explained that the action $\alpha:\bZ^r\curvearrowright\bT^d$ is equivalent to an algebraic $\bZ^r$-action $\zeta$ on a twisted torus $X$ which arises from some number field $K$, and in the rest of the paper, we will study the diagonal action $\zeta_\triangle:\bZ^r\curvearrowright X^2$ rather than $\alpha_\triangle$. 
In Section \ref{homogeneous}, by analysing characters of $X$ we describe the homogeneous $\zeta_\triangle$-invariant subsets in $X^2$. As mentioned above, there are $0$-dimensional, $d$-dimensional and $2d$-dimensional ones, the most interesting of which are the $d$-dimensional ones; we show how these can be parametrized in terms of the number field $K$.

In Section \ref{containhomo}, it is shown that any infinite $\zeta_\triangle$-invariant closed subset contains a $d$-dimensional homogeneous invariant set; this holds even when $r=2$.

Section \ref{rank3} establishes the rigidity of the diagonal action $\zeta_\triangle$ when the rank is at least $3$. The strategy is that, once a topologically transitive infinite $\zeta_\triangle$-invariant closed subset $A$ is known to contain a $d$-dimensional homogeneous invariant set as a proper subset, we can repeatedly add new $d$-dimensional homogeneous invariant subsets to it. But the union of infinitely many $d$-dimensional homogeneous invariant sets would become dense in $X^2$ so $A$ must be $X^2$. Two main ingredients of this proof are respectively a controlled recurrence argument (Lemma \ref{selfreturn}) that moves a point around while keeping it away from known homogeneous subsets; and an extension of Berend's Theorem to rigidity properties of non-hyperbolic abelian actions by toral automorphisms (Proposition \ref{nonhyp}, which is a special case of a more general fact proved in \cite{W10subaction}). Using the same techniques, we also provide a proof of Theorem \ref{nondenserational} in Section \ref{rank3}.

In Section \ref{rank2}, we construct a non-homogeneous orbit closure in the $r=2$ case. It is well known that Theorem \ref{Berend} fails for $r=1$ in which case, for example, homoclinic points have non-homogeneous orbit closures. In \cite{M10}, Maucourant gave examples of higher rank algebraic diagonal actions which have orbits wandering back and forth between several homogeneous submanifolds. Our counterexample bears resemblance to these examples. We construct a point $\bfx=(x^{(1)},x^{(2)})\in X^2$ such that $x^{(1)}$, $x^{(2)}$ and $x^{(1)}+x^{(2)}$ lie respectively in one of three foliations through the origin of $X$ along different common eigenspaces of the group action. So as elements of the group action all have determinant $1$, when a large element from the action is applied, at least one of these three expressions is attracted towards $0$.

\section{A number-theoretical model of the group action}\label{numbermodel}

For future convenience, we translate the problem into a number-theoretical setting.

\subsection{Cartan actions and groups of units in number fields}
Let $K$ be a degree $d$ number field with $r_1$ real embeddings and $r_2$ conjugate pairs of complex embeddings, where $r_1+2r_2=d$. Denote the real embeddings by $\sigma_1,\cdots,\sigma_{r_1}$ and the conjugate pairs of complex ones by $(\sigma_{r_1+1},\sigma_{r_1+r_2+1})$, $(\sigma_{r_1+2},\sigma_{r_1+r_2+2})$, $\cdots$, $(\sigma_{r_1+r_2},\sigma_{r_1+2r_2})$. Let $U_K$ be the group of units, which is of rank $r_1+r_2-1$. Define $\sigma:K\mapsto\bR^{r_1}\oplus\bC^{r_2}$ by
\begin{equation}\label{fieldemb}\sigma:\theta\mapsto\big(\sigma_1(\theta),\cdots,\sigma_{r_1}(\theta),\sigma_{r_1+1}(\theta),\cdots,\sigma_{r_1+r_2}(\theta)\big).\end{equation}
then $\sigma$ is an embedding between additive groups and induces an isomorphism $K\otimes_\bQ\bR\cong\bR^{r_1}\oplus\bC^{r_2}$.

Write $\theta.(\mu\otimes x)=\theta\mu\otimes x, \forall \theta,\mu\in K, \forall x\in\bR$; which gives a multiplicative action by $K$ on $K\otimes_\bQ\bR$. If we identify $K\otimes_\bQ\bR$ with $\bR^{r_1}\oplus\bC^{r_2}$ via $\sigma$ then this multiplicative action can be equivalently defined by:
\begin{equation}\label{fieldmulti}\theta.(\tilde x_1,\cdots,\tilde x_{r_1+r_2})=\big(\sigma_1(\theta)\tilde x_1,\cdots,\sigma_{r_1+r_2}(\theta)\tilde x_{r_1+r_2}\big),\end{equation}
which is compatible with $\sigma$ in the sense that \begin{equation}\label{embmulti}\theta.\sigma(\mu)=\sigma(\theta\mu), \forall \theta,\mu\in K.\end{equation}

\begin{definition}\label{CMfield}A number field $K$ is {\bf CM} if it is a totally complex quadratic extension of a totally real number field.\end{definition}

The following proposition is a special case of a well-known correspondence. For general statements and proofs, we refer the reader to \cite{S95}*{\S 7 \& \S 29} and \cite{EL04}, as well as \cite{EL03}*{Prop. 2.1}. See also for example \cite{W10effective}*{Thm. 2.12} for a self-contained proof in this special case.

\begin{proposition}\label{Gfield}Suppose $\alpha:\bZ^r\curvearrowright\bT^d$ is a faithful $\bZ^r$-Cartan action by automorphisms such that $\alpha(\bZ^r)$ contains a totally irreducible toral automorphism, then there exist \begin{itemize}[leftmargin=*]
\item a common eigenbasis $\{w_1,\cdots,w_d\}$ in $\bC^d$ with respect to which each $\alpha^\bfn$ can be diagonalized as $\diag{\zeta_1^\bfn,\zeta_2^\bfn,\cdots,\zeta_d^\bfn}$ where $\zeta_1^\bfn,\cdots,\zeta_{r_1}^\bfn\in\bR$; $\zeta_{r_1+1}^\bfn,\cdots,\zeta_d^\bfn\in\bC$ and $\zeta_{r_1+j}^\bfn=\overline{\zeta_{r_1+r_2+j}^\bfn}$ for $j=1,\cdots,r_2$;
\item a non-CM number field $K$ of degree $d$ with $r_1$ real embeddings $\sigma_1\cdots,\sigma_{r_1}$ and $r_2$ conjugate pairs of complex ones $(\sigma_{r_1+1}, \sigma_{r_1+r_2+1})$, $(\sigma_{r_1+2}, \sigma_{r_1+r_2+2})$, $\cdots$, $(\sigma_{r_1+r_2}, \sigma_d)$ where $r_1+r_2=r+1$, $r_1+2r_2=d$;
\item a group embedding $\zeta:\bfn\mapsto\zeta^\bfn$ of $\bZ^r$ into the group of units $U_K$;
\item a subgroup $\Gamma\leq\sigma(K)$ which is a cocompact lattice in $\bR^{r_1}\oplus\bC^{r_2}$, where $\sigma$ is given in (\ref{fieldemb}); moreover, $\Gamma$ is commensurable with the image under $\sigma$ of the ring of integers $\mathcal{O}_K$ of $K$,
\end{itemize}
such that:
\begin{itemize}[leftmargin=*]
\item $\zeta_i^\bfn=\sigma_i(\zeta^\bfn)$ for all $i\in\{1,\cdots,d\}$ and $\bfn\in \bZ^r$;
\item the image $\zeta(\bZ^r)=\{\zeta^\bfn:\bfn\in\bZ^r\}$ has finite index in $U_K$;
\item for all $\bfn\in \bZ^r$, the multiplication (\ref{fieldmulti}) by $\zeta^\bfn$ on $\bR^{r_1}\oplus\bC^{r_2}$ preserves $\Gamma$, and therefore induces a multiplicative $\bZ^r$-action $\zeta$ on $(\bR^{r_1}\oplus\bC^{r_2})/\Gamma$: $$\zeta^\bfn.(\tilde x\mathrm{\ mod\ }\Gamma)=(\zeta^\bfn.\tilde x\mathrm{\ mod\ }\Gamma), \forall\tilde x\in \bR^{r_1}\oplus\bC^{r_2}, \forall \bfn\in\bZ^r;$$
\item $\alpha$ is algebraically conjugate to the multiplicative action $\zeta:\bZ^r\curvearrowright(\bR^{r_1}\oplus\bC^{r_2})/\Gamma$, i.e.,  there is a continuous group isomorphism $\psi:\bT^d\overset\sim\to(\bR^{r_1}\oplus\bC^{r_2})/\Gamma$ such that $\alpha^\bfn.y=\psi^{-1}(\zeta^\bfn.\psi(y))$ for all $\bfn\in\bZ^r$ and $y\in\bT^d$.
\end{itemize}\end{proposition}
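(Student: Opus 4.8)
The plan is to recover the field $K$ as the $\bQ$-algebra generated by a totally irreducible element of the action, and then to realize $\bT^d$ as a quotient of $K\otimes_\bQ\bR$ by a fractional ideal. Fix $\bfn_0$ with $\phi:=\alpha^{\bfn_0}$ totally irreducible, let $\lambda\in\bC$ be an eigenvalue of $\phi$ (necessarily simple, since the characteristic polynomial of $\phi$ is irreducible, hence separable), and set $K=\bQ(\lambda)$, a number field of degree $d$. Because the characteristic and minimal polynomials of $\phi$ coincide, the centralizer of $\phi$ in $\Mat(d,\bQ)$ is exactly $\bQ[\phi]$, which I identify with $K$ via $\phi\mapsto\lambda$. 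Since $\alpha(\bZ^r)$ is abelian and contains $\phi$, each $\alpha^\bfn$ lies in $\bQ[\phi]\cong K$; as $\alpha^\bfn$ and its inverse are integer matrices, the corresponding element $\zeta^\bfn\in K$ is an algebraic unit, so we obtain an injective homomorphism $\zeta:\bZ^r\hra U_K$. Diagonalizing the commuting family $\alpha(\bZ^r)$ over $\bC$ in an eigenbasis $w_1,\dots,w_d$ of $\phi$, labelled by the $d$ embeddings $\sigma_i$ of $K$ (choosing complex-conjugate eigenvectors for conjugate complex embeddings, which is possible since $\phi$ is real), yields $\zeta_i^\bfn=\sigma_i(\zeta^\bfn)$ with the asserted real/complex structure, and $r_1+2r_2=d$.

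Next I would pin down the rank. Let $\cO=\{\theta\in K:\theta\,\bZ^d\subseteq\bZ^d\}$ be the multiplier order of the lattice $\bZ^d$ regarded as a $\bZ[\phi]$-module via the torus action; it is an order in $K$ containing the image of $\alpha(\bZ^r)$, and $\cO^\times$ acts on $\bZ^d$ by integer matrices with integer inverses, so $\cO^\times$ is an abelian subgroup of $\GL(d,\bZ)$ containing $\alpha(\bZ^r)$. By Dirichlet's unit theorem $\rank\cO^\times=r_1+r_2-1$; since $\alpha(\bZ^r)\cong\bZ^r$ sits inside $\cO^\times$ this gives $r\le r_1+r_2-1$, while the Cartan maximality hypothesis (no intermediate abelian subgroup of $\GL(d,\bZ)$ of strictly larger rank) applied to $G=\cO^\times$ forbids $\rank\cO^\times>r$. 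Hence $r=r_1+r_2-1$ and $\zeta(\bZ^r)$ has finite index in $U_K$. To see $K$ is not CM, suppose it were, with maximal totally real subfield $K^+$; then, up to bounded index, every unit of $K$ is a root of unity times a unit of $K^+$, so some fixed power $(\zeta^{\bfn_0})^k$ lies in $K^+$. Then $\phi^k=(\alpha^{\bfn_0})^k$ satisfies a polynomial of degree $\le d/2$ over $\bQ$, so its characteristic polynomial (of degree $d$) is reducible, contradicting the total irreducibility of $\phi$.

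Finally I would construct $\Gamma$ and the conjugacy $\psi$. As a module over $\bQ[\phi]\cong K$, the space $\bQ^d$ is one-dimensional, hence $K$-linearly isomorphic to $K$; fixing such an isomorphism identifies $\bZ^d$ with a full-rank $\bZ[\phi]$-submodule of $K$, that is, a fractional ideal $I$, which is automatically commensurable with $\bZ[\phi]$ and hence with $\cO_K$. Set $\Gamma=\sigma(I)$: since $\sigma$ identifies $K\otimes_\bQ\bR$ with $\bR^{r_1}\oplus\bC^{r_2}$ and carries the $\bQ$-lattice $I$ to a cocompact $\bZ$-lattice, $\Gamma$ is a cocompact lattice commensurable with $\sigma(\cO_K)$, and by (\ref{embmulti}) it is preserved by the multiplication (\ref{fieldmulti}) by each unit $\zeta^\bfn$, producing the $\bZ^r$-action $\zeta$ on $(\bR^{r_1}\oplus\bC^{r_2})/\Gamma$. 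The chosen $K$-linear isomorphism $\bQ^d\cong K$, extended $\bR$-linearly and postcomposed with $\sigma$, carries $\bZ^d$ onto $\Gamma$ and intertwines multiplication by $\phi$ — and hence by every $\alpha^\bfn$, which is a polynomial in $\phi$ — with multiplication by $\lambda$ (respectively $\zeta^\bfn$); passing to quotients gives the continuous group isomorphism $\psi:\bT^d\to(\bR^{r_1}\oplus\bC^{r_2})/\Gamma$ conjugating $\alpha$ to $\zeta$.

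The step that needs the most care is the rank bookkeeping in the second paragraph: the abstract Cartan maximality condition must be played off against Dirichlet's unit theorem to squeeze $\rank\alpha(\bZ^r)$ between $r_1+r_2-1$ from both sides, which is precisely what forces the unit embedding $\zeta$ to have finite-index image and, together with total irreducibility, rules out the CM case. By contrast, the module-theoretic identification in the last paragraph is routine once one recalls that a finitely generated torsion-free rank-one module over an order in a number field is isomorphic to a fractional ideal, and that every fractional ideal of an order is commensurable with $\cO_K$.
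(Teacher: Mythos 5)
The paper does not give its own proof of Proposition~\ref{Gfield}; it simply cites \cite{S95}, \cite{EL04}, \cite{EL03}, and \cite{W10effective} for the standard correspondence between Cartan actions on tori and arithmetic quotients of $K\otimes_\bQ\bR$. Your self-contained argument follows the same classical route as those references and is, as far as I can tell, correct. You get the field from $K=\bQ(\lambda)\cong\bQ[\phi]$ (the centralizer of the totally irreducible $\phi=\alpha^{\bfn_0}$, using that characteristic and minimal polynomial coincide), you land $\alpha(\bZ^r)$ inside the multiplier order's unit group $\cO^\times=\GL(d,\bZ)\cap\bQ[\phi]$, and you correctly squeeze $r=r_1+r_2-1$ between Dirichlet ($\rank\cO^\times=r_1+r_2-1$) and the Cartan maximality applied to $G=\cO^\times$, which simultaneously forces $\zeta(\bZ^r)$ to have finite index in $U_K$. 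The CM exclusion is the one place worth spelling out a hair more: you should make explicit that $[U_K:\mu_K U_{K^+}]\le 2$ so that $k=2|\mu_K|$ gives $\zeta^{k\bfn_0}\in K^+$, whence $\phi^{k}$ has minimal polynomial of degree at most $d/2$ and hence reducible characteristic polynomial, contradicting total irreducibility --- but the idea as written is right. The module-theoretic identification of $\bZ^d$ with a full-rank $\bZ[\lambda]$-stable lattice $I\subset K$ and $\Gamma=\sigma(I)$ is clean (note you only need $I$ to be a full-rank $\bZ$-lattice stable under $\bZ[\lambda]$, not a fractional ideal of any specific order; commensurability with $\sigma(\cO_K)$ then comes for free from finite rank). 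Since all of $\alpha(\bZ^r)$ lies in $\bQ[\phi]$, the one intertwiner $\psi$ built for $\phi$ conjugates the whole action. I see no gap.

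One small bookkeeping remark: when you label the eigenvectors so that $\zeta_i^\bfn=\sigma_i(\zeta^\bfn)$, you should observe explicitly that the eigenvalues of $\phi$ are the $d$ distinct algebraic conjugates $\sigma_i(\lambda)$, which is what makes the common eigenbasis one-dimensional in each slot and forces every element of $\bQ[\phi]$ to be simultaneously diagonalized with eigenvalue $\sigma_i(\,\cdot\,)$ on the $i$-th line; you implicitly use this but only state the weaker fact that the eigenvalue of $\phi$ is simple.
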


\begin{remark}\label{totalirrrmk}As $\zeta^\bfn$ is the abstract eigenvalue of the integer matrix $\alpha^\bfn$, $\alpha^\bfn$ is irreducible if and only if $\zeta^\bfn$ is an algebraic number of degree $d$, or in other words, $\bQ(\zeta^\bfn)=K$. Hence $\alpha^\bfn$ is totally irreducible if and only if for all non-zero integer $l$, $\zeta^{l\bfn}$ is not contained in any proper subfield of $K$.\end{remark}

In particular, Proposition \ref{Gfield} implies Cartan actions with totally irreducible elements are indeed covered by Theorem \ref{Berend} as a special case.

\begin{lemma}\label{Cartanspecial}Suppose $r\geq 2$, $\alpha$ is a faithful $\bZ^r$-Cartan action on $\bT^d$ by automorphisms and $\alpha(\bZ^r)$ contains a totally irreducible element, then $\alpha$ satisfies the conditions in Theorem \ref{Berend}.\end{lemma}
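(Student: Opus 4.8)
The plan is to check the three hypotheses of Theorem \ref{Berend} in turn, with essentially all the content in hypothesis (iii). Condition (i) is assumed, and condition (ii) is precisely the hypothesis that $\alpha(\bZ^r)$ contains a totally irreducible element. For condition (iii), I would first pass to the number-theoretic model supplied by Proposition \ref{Gfield}: there is a degree-$d$ number field $K$ with real embeddings $\sigma_1,\dots,\sigma_{r_1}$ and pairs of complex ones $(\sigma_{r_1+j},\sigma_{r_1+r_2+j})$, with $r_1+r_2=r+1$, together with a common eigenbasis $w_1,\dots,w_d$ of $\alpha(\bZ^r)$ on which $\alpha^\bfn$ acts by the scalar $\sigma_i(\zeta^\bfn)$, and a group embedding $\zeta\colon\bZ^r\hookrightarrow U_K$ whose image has finite index in the unit group.

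Next I would reduce (iii) to a statement about absolute values of units. Since $\alpha(\bZ^r)$ contains an irreducible automorphism, whose characteristic polynomial is irreducible over the perfect field $\bQ$ and hence has $d$ distinct roots, every nonzero common eigenvector of $\alpha(\bZ^r)$ is a scalar multiple of some $w_i$; for such $v$ one has $|\alpha^\bfn.v|=|\sigma_i(\zeta^\bfn)|\,|v|$. Thus (iii) says exactly that for each index $i$ there exists $\bfn\in\bZ^r$ with $|\sigma_i(\zeta^\bfn)|>1$. Suppose this failed for some $i$. Then $|\sigma_i(\zeta^\bfn)|\le 1$ for all $\bfn$; applying this to $-\bfn$ and using that $\zeta$ is a homomorphism of the \emph{group} $\bZ^r$ (this is where the group, rather than semigroup, setting makes the criterion so clean) gives $|\sigma_i(\zeta^\bfn)|=1$ for every $\bfn$. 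As $\zeta(\bZ^r)$ has finite index, say $m$, in $U_K$, every unit $u$ has $u^m\in\zeta(\bZ^r)$, whence $|\sigma_i(u)|=1$; equivalently, the coordinate of the logarithmic embedding of $U_K$ attached to the place of $\sigma_i$ vanishes identically.

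Finally I would invoke Dirichlet's unit theorem for the contradiction. After first replacing $\sigma_i$ by its complex conjugate if $r_1+r_2<i\le d$ (which does not change $|\sigma_i(\cdot)|$), the logarithmic embedding carries $U_K$ onto a lattice of full rank $r_1+r_2-1=r$ in the hyperplane of $\bR^{r_1+r_2}$ where the (weighted) sum of coordinates vanishes. Forcing one further coordinate to vanish identically is a genuine extra linear constraint, since the hyperplane $\{x_i=0\}$ neither contains nor is contained in that sum-zero hyperplane once $r_1+r_2\ge 2$, which holds because $r_1+r_2=r+1\ge 3$; so the image would be confined to a subspace of dimension at most $r-1$, contradicting that it is a lattice of rank $r$. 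This rules out the bad index and yields (iii).

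The only step needing care is this last one: one must keep track of whether the offending place is real or complex and of the weights in the logarithmic embedding, and verify that imposing the vanishing of a single coordinate really collapses the rank, which is where $r\ge 2$ enters. Everything else is routine bookkeeping around Proposition \ref{Gfield}.
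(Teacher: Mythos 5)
Your proposal is correct and follows essentially the same route as the paper: both reduce condition~(iii) via Proposition~\ref{Gfield} to a statement about absolute values of units, use that $\zeta(\bZ^r)$ has finite index in $U_K$, and invoke Dirichlet's unit theorem. The only difference is cosmetic—the paper directly produces a unit $u$ with $|\sigma_i(u)|>1$ from the full-rank statement of Dirichlet, whereas you argue by contradiction that $|\sigma_i|\equiv 1$ on $U_K$ would cut the logarithmic image down to rank $\leq r-1$—but the underlying mechanism and ingredients are identical.
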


\begin{proof}Conditions (1) and (2) in Theorem \ref{Berend} are already assumed. It suffices to check (3). By Proposition \ref{Gfield}, for the $i$-th common eigenvector $v_i$, $\frac{|\alpha^\bfn.v_i|}{|v_i|}=|\zeta_i^\bfn|$. By Dirichlet's Unit Theorem, for any index $i$, there is $u\in U_K$ whose $i$-th embedding $\sigma_i(u)$ has norm greater than $1$. As $\zeta(\bZ^r)$ is of finite index in $U_K$, there is a positive integer $q$ such that $u^q\in \zeta(\bZ^r)$. In other words, $\exists\bfn\in\bZ^r$, $\zeta^\bfn=u^q$; hence  $|\zeta_i^\bfn|=|\sigma_i(\zeta^\bfn)|=|\sigma_i(u^q)|=|\sigma_i(u)|^q>1$. This verifies condition (3) in Theorem \ref{Berend}. \end{proof}

\subsection{Notations}\label{Xnota} Throughout this paper let $\alpha$ be a $\bZ^r$-Cartan action on $\bT^d$ by automorphisms such that $\alpha(\bZ^r)$ contains a totally irreducible element.

Let $K$, $\Gamma$, $\psi$, $\zeta$ and $\sigma_i$'s be as in Proposition \ref{Gfield}. 
Write $I=\{1,\cdots,r_1+r_2\}$ and to each $i\in I$ associate a subspace $V_i$ of $\bR^{r_1}\oplus\bC^{r_2}$ as follows: $V_i$ is the $i$-th copy of $\bR$ in $\bR^{r_1}\oplus\bC^{r_2}$ if $1\leq i\leq r_1$; and for $r_1< i\leq r_1+r_2$, $V_i$ is the $(i-r_1)$-th copy of $\bC$, so that $\bR^{r_1}\oplus\bC^{r_2}=\bigoplus_{i\in I}V_i$. Set $\bF_i=\bR$ or $\bC$ depending on whether $i \leq r_1$ or~$>r_1$, and let $d_i=\dim_\bR \bF_i=\dim_\bR V_i$.

Denote \begin{equation}X=(\bR^{r_1}\oplus\bC^{r_2})/\Gamma=(\bigoplus_{i\in I}V_i)/\Gamma,\end{equation} and let $\pi$ be the canonical projection from  $\bR^{r_1}\oplus\bC^{r_2}$ to $X$.
For $x\in X$, let \[\|x\|=\min_{\substack{\tilde x\in \bR^{r_1}\oplus\bC^{r_2}\\\pi(\tilde x)=x}}|\tilde x|.\] Then $(x,x')\mapsto\|x-x'\|$ is a distance on $x$ and this makes $X$ a locally Euclidean metric space.

In order to characterize how the action $\zeta$ expands or contracts different $V_i$'s, for all $i\in I$ we introduce a group morphism $\lambda_i:\bZ^r\mapsto\bR$ by  \begin{equation}\lambda_i(\bfn)=\log|\zeta_i^\bfn|,\end{equation} and construct a map $\cL:\bZ^r\mapsto\bR^I$ by \begin{equation}\label{Logmap}\cL(\bfn)=\big(\lambda_i(\bfn)\big)_{i\in I}.\end{equation} Then $\cL$ is a group morphism and its image $\cL(\bZ^r)$ is a finite-index subgroup in the set $\big\{\big(\log|\sigma_i(u)|\big)_{i\in I}:u\in U_K\big\}$.

\begin{remark}\label{dirichlet}By Dirichlet's Unit Theorem, the set $\big\{\big(\log|\sigma_i(u)|\big)_{i\in I}:u\in U_K\big\}$ is a full-rank lattice in the subspace \begin{equation}\label{Wdef}W=\{w\in\bR^I:\sum_{i\in I}d_iw_i=0\}.\end{equation} Since $\{\zeta^\bfn:\bfn\in\bZ^r\}$ has finite index in $U_K$ by Proposition \ref{Gfield}, for any finite index subgroup $H$ in $\bZ^r$, $\cL(H)$ is a finite-index subgroup in the above lattice, and thus is itself a full-rank lattice in $W$.\end{remark}

Write \begin{equation}X^2=X\oplus X=(\bR^{r_1}\oplus\bC^{r_2})^2/\Gamma^2.\end{equation} Denote by $\pi_\triangle$ the quotient map from $(\bR ^ {r _ 1} \oplus \bC ^ {r _ 2}) ^2$ to $X ^2$ and equip $X^2$ with a distance given by $\|\bfx \|=\min_{\substack{\tilde \bfx \in (\bR^{r_1}\oplus \bC^{r_2})^2\\\pi_\triangle(\tilde \bfx)=\bfx}}|\tilde \bfx|$.

$(\bR^{r_1}\oplus\bC^{r_2})^2\cong\bR^{2d}$ decomposes as $\bigoplus_{i\in I, k=1,2}V_i^{(k)}$ where $V_i^{(k)}$ is the $i$-th component $V_i$ in the $k$-th copy of $\bR^{r_1}\oplus\bC^{r_2}$. Write \begin{equation}\label{Visquare}V_i^\square=V_i^{(1)}\oplus V_i^{(2)},\end{equation} then \begin{equation}\label{Visquaresum}(\bR^{r_1}\oplus\bC^{r_2})^2=\bigoplus_{i\in I} V_i^\square.\end{equation}
Set \begin{equation}\label{X2multi}\zeta_\triangle^\bfn.(x^{(1)},x^{(2)})=(\zeta^\bfn.x^{(1)},\zeta^\bfn.x^{(2)}),\forall x^{(1)},x^{(2)}\in X,\forall\bfn\in\bZ^r,\end{equation} and  \begin{equation}\label{field2multi}\zeta_\triangle^\bfn.(\tilde x^{(1)},\tilde x^{(2)})=(\zeta^\bfn.\tilde x^{(1)},\zeta^\bfn.\tilde x^{(2)}),\forall \tilde x^{(1)},\tilde x^{(2)}\in \bR^{r_1}\oplus\bC^{r_2},\end{equation} These make $\zeta_\triangle$ is a $\bZ^r$-action both on $X^2$ and on $(\bR^{r_1}\oplus\bC^{r_2})^2$; moreover, (\ref{field2multi}) projects to (\ref{X2multi}) via the quotient map $\pi_\triangle$. It follows from Proposition \ref{Gfield} that the actions $\alpha_\triangle:\bZ^r\curvearrowright(\bT^d)^2$ and $\zeta_\triangle:\bZ^r\curvearrowright X^2$ are conjuagte via the isomorphism $\psi_\triangle:(\bT^d)^2\overset\sim\to X^2$ given by \begin{equation}\psi_\triangle(y^{(1)},y^{(2)})=\big(\psi(y^{(1)}),\psi(y^{(2)})\big).\end{equation}

Notice $V_i^\square$ is isomorphic to either $\bR^2$ or $\bC^2$ and under this identification we have \begin{equation}\label{Visquaremulti}\zeta_\triangle^\bfn.\bfv=\zeta_i^\bfn \bfv,\qquad \forall\bfv\in V_i^\square,\forall\bfn\in\bZ^r,\end{equation} hence \begin{equation}\label{X2Lip}\|\zeta_\triangle^\bfn.\bfx\|\leq e^{\max_{i\in I}\lambda_i(\bfn)}\|\bfx\|,\qquad\forall \bfx\in X^2,\forall\bfn\in\bZ^r.\end{equation}

For a subset $A\subset X^2$, define its stabilizer by \begin{equation}\label{stab}\Stab_{\zeta_\triangle}(A)=\{\bfn\in\bZ^r:\zeta_\triangle^\bfn.A\subset A\}.\end{equation}

In this paper, $B_\epsilon(\omega)$ and $\mathring B_\epsilon(\omega)$ will respectively denote the closed and open balls of radius $\epsilon$ around a point $\omega$ in a metric space.

\subsection{Rigidity results in $X$} We now translate the special case of Theorem \ref{Berend} regarding Cartan actions to our more number theoretic setting.

\begin{remark}\label{rationaltorsion}As $\psi$ is a group isomorphism, $y\in\bT^d$ is a rational point if and only if $\psi(y)$ is a torsion point in the compact abelian group $X$. In consequence, $\bfy\in(\bT^d)^2$ is rational if and only if $\psi_\triangle(\bfy)$ is a torsion point in $X^2$.\end{remark}

\begin{theorem}\label{Berend'}Let $X$ and $\zeta$ be as above, then the only infinite $\zeta$-invariant closed subset of $X$ is $X$ itself. In fact, for all $x\in X$ and finite index subgroup $H$ of $\bZ^r$, $\{\zeta^\bfn.x: \bfn\in H\}$ is dense in $X$ unless $x$ is a torsion point.\end{theorem}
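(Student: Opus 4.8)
The plan is to obtain Theorem~\ref{Berend'} as a direct translation of Berend's Theorem (Theorem~\ref{Berend}) through the algebraic conjugacy $\psi\colon\bT^d\overset\sim\to X$ furnished by Proposition~\ref{Gfield}. By Lemma~\ref{Cartanspecial} the Cartan action $\alpha$ satisfies hypotheses (i)--(iii) of Theorem~\ref{Berend}, and by Remark~\ref{Berendfisubgp} the same is true of the restriction $\alpha|_H$ to any finite-index subgroup $H\leq\bZ^r$, once $H$ is identified with $\bZ^r$ (the hypotheses of Theorem~\ref{Berend} do not depend on the chosen identification of the acting group with $\bZ^r$). Since $\psi$ intertwines $\alpha$ and $\zeta$, i.e.\ $\zeta^\bfn\circ\psi=\psi\circ\alpha^\bfn$ for all $\bfn\in\bZ^r$, and $\psi$ is a homeomorphism, the assignment $A\mapsto\psi(A)$ is a bijection between closed $\alpha$-invariant subsets of $\bT^d$ and closed $\zeta$-invariant subsets of $X$; it carries infinite sets to infinite sets and, by Remark~\ref{rationaltorsion}, the set of rational points onto the set of torsion points.

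With this dictionary the proof is immediate. For the first assertion, if $A\subset X$ is infinite, closed and $\zeta$-invariant, then $\psi^{-1}(A)\subset\bT^d$ is infinite, closed and $\alpha$-invariant, so Theorem~\ref{Berend} (applicable to $\alpha$ by Lemma~\ref{Cartanspecial}) gives $\psi^{-1}(A)=\bT^d$, hence $A=X$. For the density statement, fix a finite-index subgroup $H\leq\bZ^r$ and $x\in X$ with $x$ not a torsion point; then $y:=\psi^{-1}(x)$ is irrational by Remark~\ref{rationaltorsion}, and applying the density conclusion of Theorem~\ref{Berend} to $\alpha|_H$ (legitimate by Remark~\ref{Berendfisubgp}) shows $\{\alpha^\bfn.y:\bfn\in H\}$ is dense in $\bT^d$, so $\{\zeta^\bfn.x:\bfn\in H\}=\psi\big(\{\alpha^\bfn.y:\bfn\in H\}\big)$ is dense in $X$. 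Finally, if $x$ is a torsion point its entire $\bZ^r$-orbit is finite and hence not dense in the infinite space $X$, so the torsion points are exactly the exceptions, as claimed.

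Since everything is a translation, there is no genuine difficulty; the only step meriting attention is the reduction to a finite-index subgroup $H$, where one must verify that $\alpha|_H$ still meets the hypotheses of Theorem~\ref{Berend} --- it contains a totally irreducible element because $(\alpha^\bfm)^k=\alpha^{k\bfm}$ is totally irreducible whenever $\alpha^\bfm$ is and $k\bfm\in H$, and it still satisfies the eigenvector-expansion condition because $\zeta(H)$ has finite index in $U_K$, so the Dirichlet unit theorem argument from the proof of Lemma~\ref{Cartanspecial} goes through verbatim. Both points are already subsumed in Remark~\ref{Berendfisubgp}, so in the write-up one simply invokes it.
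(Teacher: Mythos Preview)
Your proof is correct and follows essentially the same route as the paper: transport Theorem~\ref{Berend} through the conjugacy $\psi$ of Proposition~\ref{Gfield}, invoking Lemma~\ref{Cartanspecial}, Remark~\ref{rationaltorsion}, and Remark~\ref{Berendfisubgp} for the finite-index case. The paper's own proof is just a terser version of exactly this argument.
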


\begin{proof}Proposition \ref{Gfield} established a conjugation between the $\bZ^r$-actions $\zeta$ and $\alpha$. Thus by Lemma \ref{Cartanspecial}, we may apply Theorem \ref{Berend} to $\zeta$, this proves the theorem for $H=\bZ^r$ where we used Remark \ref{rationaltorsion}. The statement extends to any finite index subgroup $H$ by Remark \ref{Berendfisubgp}.\end{proof}

When $r \geq 3$, for a generic $x$ it turns out that it is not necessary to apply the full action by $H$ to get a dense orbit in $X$, and this fact is quite important in our analysis. The next proposition is a special case of \cite{W10subaction}*{Theorem 1.7}.

\begin{proposition}\label{nonhyp} In the setting of Theorem \ref{Berend'}, if $r\geq 3$ then for all $i\in I$, $\epsilon>0$ and $x\in X$, the set $\{\zeta^\bfn.x:\bfn\in H,|\lambda_i(\bfn)|<\epsilon\}$ is dense in $X$ unless $x$ can be written as $x_0+v$ where $x_0$ is a torsion point in $X$ and $v\in V_i$.\end{proposition}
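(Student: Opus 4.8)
I would split the statement into the easy verification that the listed points are genuinely exceptional and the substantive converse, the latter being an extension of Berend's theorem to a non-hyperbolic situation --- exactly the content of \cite{W10subaction}*{Theorem 1.7}. Set $S=\{\bfn\in H:|\lambda_i(\bfn)|<\epsilon\}$ and $Y=\overline{\{\zeta^\bfn.x:\bfn\in S\}}$. For the exceptional direction, if $x=x_0+v$ with $x_0$ a torsion point and $v\in V_i$, then the $\zeta$-orbit of $x_0$ is finite while, since $\zeta^\bfn$ acts on $V_i$ by multiplication by $\zeta_i^\bfn$ with $|\zeta_i^\bfn|\in[e^{-\epsilon},e^\epsilon]$ for $\bfn\in S$, the set $\{\zeta^\bfn.v:\bfn\in S\}$ stays in a fixed compact subset of $V_i$; hence $Y$ is contained in a finite union of sets of the form $y+\pi(C)$ with $C\subset V_i$ compact, and as $\dim_\bR V_i\le2<d=\dim_\bR X$ this forces $Y\ne X$.

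For the converse I would argue as in the proof of Berend's theorem. The key algebraic input is that the only $\zeta$-invariant subtori of $X$ are $\{0\}$ and $X$: a $\zeta$-invariant subtorus corresponds to a $\Gamma$-rational $\zeta$-invariant subspace of $\bR^{r_1}\oplus\bC^{r_2}\cong K\otimes_\bQ\bR$, which is a $K$-subspace because $\bQ(\zeta^{\bfn_0})=K$ for an irreducible $\alpha^{\bfn_0}$, hence either $0$ or everything. In particular $\overline{\pi(\bR v)}=X$ for every nonzero eigenvector $v\in V_j$ and every $j\in I$, so it suffices to show that for non-exceptional $x$ the set $Y$ contains a coset $y_0+\pi(\bR v)$ of a full eigenline, for then $X=\overline{y_0+\pi(\bR v)}\subset Y$.

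The first step is that $Y$ is infinite for non-exceptional $x$: otherwise, $S$ being infinite, some $\bfm=\bfn-\bfn'\ne0$ with $\bfn,\bfn'\in S$ fixes $x$, and the group of $\zeta^\bfm$-fixed points in $X$ is a closed $\zeta$-invariant subgroup whose identity component is a $\zeta$-invariant subtorus, hence $\{0\}$ (as $\zeta^\bfm$ is not the identity), so that group is finite and $x$ is a torsion point, i.e.\ exceptional. The second step runs Berend's renormalization argument inside the wall $S$: from an accumulation point of $Y$ one extracts a small nonzero difference $\zeta^\bfn.x-\zeta^{\bfn'}.x$ with $\bfn,\bfn'\in S$, rescales it to unit length by a further $\zeta^\bfm$ with $\bfm\in S$ chosen so that, up to a bounded error, the rescaled vector concentrates in a single eigendirection $V_j$, and then iterates and passes to a limit to deposit a full coset $y_0+\pi(\bR v)$, $v\in V_j$, inside $Y$. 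The index $j$ is automatically distinct from $i$, because $|\lambda_i|<\epsilon$ on $S$ forbids any net expansion along $V_i$; this is precisely why the excluded set consists of the $x_0+V_i$.

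The crux, and the reason the scheme works for $r\ge3$ but breaks down for $r=2$ (cf.\ the counterexample of Section \ref{rank2}), is the rescaling step. The rates available in the wall form $\cL(S)=\cL(H)\cap\{w\in W:|w_i|<\epsilon\}$, which spans a subspace of $W$ of dimension at least $r-1$; steering a prescribed small vector into a prescribed non-$V_i$ eigendirection while keeping every other component bounded needs a rank $\ge2$ family of rates, and $r-1\ge2$ is exactly what supplies it. For $r=2$ only a rank one family of rates survives inside the wall, the renormalization cannot be carried out, and indeed non-exceptional points with non-dense restricted orbits exist. The torsion component of $x$ is harmless, since the argument is run on differences $\zeta^\bfn.x-\zeta^{\bfn'}.x$. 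The details of this scheme, in much greater generality, are carried out in \cite{W10subaction}.
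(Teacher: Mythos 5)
Your outline correctly identifies the statement as an instance of \cite{W10subaction}*{Theorem 1.7}, and your heuristic for why $r\ge3$ matters (the wall $\{\,\bfn\in H:|\lambda_i(\bfn)|<\epsilon\,\}$ supplies a rank $r-1\ge2$ family of rates in $W$) matches what the paper records as hypothesis (C2) of that theorem. But the paper does not re-run the renormalization; it proves the proposition by carefully verifying the three hypotheses (C1)--(C3) of the cited theorem, and almost all of the work goes into (C3), a number-theoretic fact that your sketch never engages with. That is where the gap is.

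Concretely, (C3) says that for every $\epsilon>0$ there is $\bfn\in H$ with $|\lambda_i(\bfn)|<\epsilon$ such that $\zeta^\bfn$ is \emph{totally irreducible}, i.e.\ $\zeta^{l\bfn}$ lies in no proper subfield of $K$ for any $l\neq0$. Your only irreducibility input is the global fact that some $\bfn_0$ has $\bQ(\zeta^{\bfn_0})=K$, which says nothing about what is available inside the wall $P_i=\ker\lambda_i$. Establishing (C3) requires showing that the ``bad'' directions $P_F$ (the span of $\{\bfn:\zeta^\bfn\in U_F\}$, $F<K$ a proper subfield) do not contain $P_i$: the paper does this by observing $\dim P_F=\rank U_F<\rank U_K=r$ (using the non-CM hypothesis), and that $P_i=P_F$ would force $|\tau(u)|=1$ for all $u\in U_F$ and some archimedean $\tau$, contradicting Dirichlet's unit theorem for $F$; one then needs a pigeonhole/return argument to manufacture a nonzero lattice vector $\bfn\in H$ arbitrarily close to a generic ray in $P_i\setminus\bigcup_F P_F$. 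None of this is present in your sketch, and the renormalization does not conclude without it.

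Relatedly, the reduction you propose, ``if $Y$ contains a coset $y_0+\pi(\bR v)$ of a full eigenline then $Y=X$ because $\overline{\pi(\bR v)}=X$,'' is not correct at a complex place. If $V_j\cong\bC$, the real line $\bR v$ is not $\zeta$-invariant, and $\overline{\pi(\bR v)}$ is annihilated by any character $\sigma(\theta)\in\hat X$ with $\re(\sigma_j(\theta)\,\overline{v_j})=0$, which can have nontrivial solutions $\theta\in K$ for specific $v$. Your appeal to ``the only $\zeta$-invariant subtori are $\{0\}$ and $X$'' is about $\zeta$-invariant closed subgroups and does not apply to $\overline{\pi(\bR v)}$. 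The way this is repaired in the paper's framework (compare Lemma \ref{rotatedense}) is precisely by sweeping with powers of a totally irreducible $\zeta^\bfn$ in the wall --- i.e.\ by (C3) again. So the piece you hand-wave is not a routine detail; it is the substantive content of the verification.
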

\begin{proof}
In order to verify the proposition is indeed a special case of Theorem 1.7 from that paper it suffice to check conditions (C1)-(C3) listed below. To begin with, remark for all $j\in I$, $\lambda_j:\bZ^r\mapsto\bR$ can be uniquely extended to a linear map from $\bR^r$ to $\bR$, still denoted by $\lambda_j$.\newline

\begin{itemize}
\item[(C1)]{\it Suppose $L_i\subset(\bR^r)^*$ is the subspace of linear maps spanned by $\lambda_i$, then $\lambda_j\notin L_i$ for $j\neq i$;}
\item[(C2)]{\it The ``rank'' of  $\{\zeta^\bfn.x:\bfn\in H,|\lambda_i(\bfn)|<\epsilon\}$ (which by definition is $r-1$) is $\geq 2$;}
\item[(C3)]{\it For all $\epsilon>0$, there exists $\bfn\in H$ such that $|\lambda_i(\bfn)|<\epsilon$ and $\zeta^\bfn\in K$ is a totally irreducible element, i.e. $\zeta^{l\bfn}$ doesn't belong to any proper subfield of $K$ for all non-zero integer $l$.}\newline
\end{itemize}

Of these three conditions, (C2)  is obvious, and (C1) is also easy: indeed, suppose (C1) fails then there is $j \in I \backslash \{ i \}$ such that $\lambda_j(\bfn)=c \lambda_i(\bfn),\forall\bfn$ for some constant $c$. Hence the image $\cL(\bZ^r)$ lies in $\{(w_i)_{i \in I}\in W:w_j=cw_i \}$, which is a proper linear subspace of $W$. This contradicts the fact that $\cL(\bZ^r)$ is a full-rank lattice in $W$. Establishing (C3), however, is slightly more delicate.
\medskip

\noindent{\bf Proof of (C3).} Let $P_i=\{\eta\in\bR^r:\lambda_i(\eta)=0\}$, then $\dim P_i=r-1$ because $\lambda_i\in(\bR^r)^*$ is non-zero by Remark \ref{dirichlet}.

For any proper subfield $F$ of $K$, let $H_F=\{\bfn\in\bZ^r:\zeta^\bfn\in U_F\}$, which is a subgroup in $\bZ^r\subset\bR^r$, and call by $P_F$ its linear span in $\bR^r$. As $K$ is not a CM-field and $F$ is a proper subfield, it follows that $\rank(U_F)<\rank(U_K)=r$ (see e.g. \cite{P75}). Because $\zeta$ is a group embedding and its image has finite index in $U_K$, $\zeta(H_F)=\zeta(\bZ^r)\cap H_F$ has finite index in $U_F$, and thus $\dim P_F=\rank(H_F)=\rank(U_F)<r$.

We hope to show $P_i\not\subset P_F$. As $\dim P_i=r-1\geq\dim P_F$, it suffices to show $P_i\neq P_F$. Suppose for the moment that $P_i=P_F$, then for all $\bfn\in H_F$, $\lambda_i(\bfn)=0$ or equivalently $\sigma_i(\zeta^\bfn)=\zeta_i^\bfn$ has absolute value 1. Because $\zeta(H_F)$ has finite index in $U_F$, it follows that $|\sigma_i(u)|=1$, $\forall u\in U_F$. Let $\tau=\sigma_i|_F$, then $\tau$ is an archimedean embedding of $F$ and the above property rewrites $|\tau(u)|=1$, $\forall u\in U_F$, which contradicts Dirichlet's Unit Theorem for $F$. Therefore $P_i\not\subset P_F$ and thus $P_i\cap P_F$ is a proper subspace in $P_i$.

In the rest of proof let $F$ run over all proper subfields $F$ of $K$, as there are only finitely many such, $\bigcup_F(P_i\cap P_F)\subsetneq P_i$, or equivalently, \begin{equation}P_i\not\subset\bigcup_FP_F.\end{equation}

Take an arbitrary non-zero vector $\eta_0\in P_i\backslash\big(\bigcup_FP_F\big)$. As $\bigcup_FP_F$ is closed, there is $\delta>0$ such that \begin{equation}\label{nonhyp1}B_\delta(\eta_0)\cap\big(\bigcup_FP_F\big)=\emptyset.\end{equation} In particular, $\delta<|\eta_0|$.

For any $\eta\in\bR^r$, let $\bar\eta$ denote its projection in the quotient $\bT^r=\bR^r/\bZ^r$.  By compactness of $\bT^r$, there is an increasing sequence of positive integers $m_1,m_2,\cdots$ such that $\{\overline{m_k\eta_0}\}_{k=1}^\infty$ converges. In particular, there exist $m,m'\in\bN$ such that \begin{equation}\label{nonhyp2}m-m'\geq\frac\epsilon{|\bZ^r/H|\cdot \|\lambda_i\|\delta}\end{equation} and the distance between $\overline{m\eta_0}$ and $\overline{m'\eta_0}$ is less than $\frac\epsilon{|\bZ^r/H|\cdot \|\lambda_i\|}$, where $\|\lambda_i\|$ denotes the norm of the linear functional $\lambda_i$. This means for some $\bfn_0\in\bZ^r$, \begin{equation}\label{nonhyp3}|(m-m')\eta_0-\bfn_0|<\frac\epsilon{|\bZ^r/H|\cdot \|\lambda_i\|}.\end{equation}

As $\delta<|\eta_0|$, by comparing (\ref{nonhyp2}) with (\ref{nonhyp3}) we see $\bfn_0\neq 0$.
Using pigeonhole principle one can easily check there is $p\in\{1,\cdots,|\bZ^r/H|\}$ such that $\bfn=p\bfn_0$ is a non-trivial element in $H$.

Because $\eta_0\in P_i$, $\lambda_i\big((m-m')\eta_0\big)=0$ and (\ref{nonhyp3}) implies \begin{equation}|\lambda_i(\bfn_0)|\leq\|\lambda_i\|\cdot \frac\epsilon{|\bZ^r/H|\cdot \|\lambda_i\|}=\frac\epsilon{|\bZ^r/H|}.\end{equation}
Therefore \begin{equation}|\lambda_i(\bfn)|=p|\lambda_i(\bfn_0)|\leq |\bZ^r/H|\cdot|\lambda_i(\bfn_0)|<\epsilon.\end{equation}

On the other hand, remark \begin{equation}\begin{split}\Big|\frac{\bfn_0}{m-m'}-\eta_0\Big|=&\frac{|\bfn_0-(m-m')\eta_0|}{m-m'}\\
<&{\frac\epsilon{|\bZ^r/H|\cdot \|\lambda_i\|}}\bigg/{\frac\epsilon{|\bZ^r/H|\cdot \|\lambda_i\|\delta}}
=\delta.
\end{split}\end{equation}
Thus by (\ref{nonhyp1}), $\frac{\bfn_0}{m-m'}\notin\bigcup_FP_F$. So for any $l\in\bZ\backslash\{0\}$, since $l\bfn=lp\bfn_0$ is proportional to $\frac{\bfn_0}{m-m'}$, it doesn't belong to $\bigcup_FP_F$; and  thus by construction of $P_F$, $\zeta^{l\bfn}$ is not in any proper subfield $F$ of $K$. This completes the verification of condition (C3) and finally establishes the proposition.\end{proof}

\subsection{Characters in the new setting}
We identify $\bR^{r_1}\oplus\bC^{r_2}$ with its dual, via the pairing \begin{equation}\label{fielddualformula}\langle\xi,\tilde x\rangle=\sum_{i=1}^{r_1}\xi_i\tilde x_i+\sum_{i=r_1+1}^{r_1+r_2}2\re(\xi_i\tilde x_i)\text{ mod }\bZ,\quad\forall\xi,\tilde x\in \bR^{r_1}\oplus\bC^{r_2}.\end{equation} 
The dual group to $X=(\bR^{r_1}\oplus\bC^{r_2})/\Gamma$ is the group \begin{equation}\label{Xdual}\widehat{X}=\{\xi\subset \bR^{r_1}\oplus\bC^{r_2}:\langle\xi,\gamma\rangle=0(\mathrm{mod\ }\bZ),\forall\gamma\in\Gamma\}\end{equation} where $\langle\xi,\pi(\tilde x)\rangle=\langle\xi,\tilde x\rangle$ if $x=\pi(\tilde x)$ for $\xi\in\hat X$ and $\tilde x\in\bR^{r_1}\oplus\bC^{r_2}$. 
Since $X\cong\bT^d$, $\hat X\cong\bZ^d$.

The dual group $\widehat{X^2}$ is just $(\hat X)^2$. For $\bfxi=(\xi^{(1)},\xi^{(2)})\in\widehat{X^2}$ and $\bfx=(x^{(1)},x^{(2)})\in X^2$, the pairing is given by \begin{equation}\label{X2dualformula}\langle\bfxi,\bfx\rangle=\langle\xi^{(1)},x^{(1)}\rangle+\langle\xi^{(2)},x^{(2)}\rangle.\end{equation}

The $\bZ^r$-actions $\zeta$ and $\zeta_\triangle$ respectively induce dual actions on $\hat X$ and on $\widehat{X^2}$ in natural ways. To understand the dual actions, notice $\zeta$ preserves the lattice $\Gamma$, and thus by (\ref{Xdual}) $\hat X\subset\bR^{r_1}\oplus\bC^{r_2}$ is invariant under $\zeta^\bfn$ for all $\bfn\in\bZ^r$. Furthermore, 
 \begin{equation}\langle\xi,\theta.\tilde x\rangle=\langle \theta.\xi,\tilde x\rangle,\forall \theta\in K,\qquad\forall \xi,\tilde x\in \bR^{r_1}\oplus\bC^{r_2},\end{equation} 
 where both multiplications by $\theta$ are given by (\ref{fieldmulti}), so the restriction of the $\bZ^r$-action $\zeta$ to $\hat X$ is dual to $\zeta:\bZ^r\curvearrowright X$ in the sense that \begin{equation}\langle\zeta^\bfn.\xi,x\rangle=\langle\xi,\zeta^\bfn.x\rangle,\qquad\forall\xi\in\hat X,\forall x\in X,\forall\bfn\in\bZ^r.\end{equation} In addition, $\widehat {X^2}\subset(\bR^{r_1}\oplus\bC^{r_2})^2$ is stable under $\zeta_\triangle$ and 
 \[\langle\zeta_\triangle^\bfn.\bfxi,\bfx\rangle=\langle\bfxi,\zeta_\triangle^\bfn.\bfx\rangle,\qquad \forall\bfxi\in\widehat{X^2},\forall \bfx\in X^2,\forall n\in\bZ^r.\]

Before finishing this section we show that every character of $X$ arises from an algebraic number in $K$.

\begin{lemma}\label{XdualK}With the identification (\ref{Xdual}), $\hat X$ is a subset of $\sigma(K)$ commensurable with $\sigma(\mathcal{O}_K)$; in particular, every $\xi\in\hat X$ can be written as $\xi=\sigma(\theta)$ for some $\theta\in K$.\end{lemma}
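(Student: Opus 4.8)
The plan is to identify $\widehat{X}$ directly from its definition \eqref{Xdual} and exploit the fact that $\Gamma$ is a lattice in $\bR^{r_1}\oplus\bC^{r_2}$ commensurable with $\sigma(\cO_K)$. First I would recall the standard duality for the inclusion $\sigma(\cO_K)\hookrightarrow \bR^{r_1}\oplus\bC^{r_2}$: under the pairing \eqref{fielddualformula}, the dual lattice to $\sigma(\cO_K)$ is $\sigma(\mathfrak{d}^{-1})$, where $\mathfrak{d}$ is the different of $K/\bQ$; this is because \eqref{fielddualformula} is, up to the usual normalization, the trace pairing $\Tr_{K/\bQ}(\theta\mu)$ transported through $\sigma$, and the codifferent $\mathfrak d^{-1}=\{\theta\in K:\Tr_{K/\bQ}(\theta\cO_K)\subset\bZ\}$ is by definition the dual of $\cO_K$ with respect to the trace form. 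In particular $\sigma(\mathfrak d^{-1})$ is a full-rank lattice in $\bR^{r_1}\oplus\bC^{r_2}$ commensurable with $\sigma(\cO_K)$, and it is contained in $\sigma(K)$.

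Next I would transfer this from $\sigma(\cO_K)$ to $\Gamma$. By Proposition \ref{Gfield}, $\Gamma$ and $\sigma(\cO_K)$ are commensurable, so there is a nonzero integer $N$ with $N\sigma(\cO_K)\subseteq \Gamma$ and $N\Gamma\subseteq\sigma(\cO_K)$ (after shrinking, one may even take $\Gamma$ to be a sub-lattice of $\sigma(\cO_K)$ of finite index, or vice versa, and handle the two inclusions symmetrically). Taking duals reverses inclusions and scales: from $N\sigma(\cO_K)\subseteq\Gamma$ we get $\widehat{X}=\Gamma^\ast \subseteq (N\sigma(\cO_K))^\ast = \tfrac1N\,\sigma(\mathfrak d^{-1})=\sigma(\tfrac1N\mathfrak d^{-1})$, and from $N\Gamma\subseteq\sigma(\cO_K)$ we get $\sigma(\mathfrak d^{-1})=\sigma(\cO_K)^\ast\subseteq (N\Gamma)^\ast=\tfrac1N\Gamma^\ast$, i.e. $N\,\sigma(\mathfrak d^{-1})\subseteq\widehat X$. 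Thus $\widehat X$ is sandwiched between $N\sigma(\mathfrak d^{-1})$ and $\sigma(\tfrac1N\mathfrak d^{-1})$, both of which are full-rank lattices in $\bR^{r_1}\oplus\bC^{r_2}$ lying inside $\sigma(K)$ and commensurable with $\sigma(\cO_K)$. Since $\sigma$ is injective and $K$-linear for the multiplicative structure, it follows that $\widehat X\subseteq\sigma(K)$, that $\widehat X$ is a full-rank lattice, and that it is commensurable with $\sigma(\cO_K)$; the final assertion that each $\xi\in\widehat X$ is $\sigma(\theta)$ for some $\theta\in K$ is then immediate.

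The main obstacle I anticipate is purely bookkeeping: pinning down the precise normalization in \eqref{fielddualformula} — in particular the factors of $2$ on the complex coordinates and the contribution of the complex conjugate embeddings — so that the identification of the dual of $\sigma(\cO_K)$ with $\sigma(\mathfrak d^{-1})$ is literally correct rather than correct up to a bounded index. Since the lemma only claims commensurability with $\sigma(\cO_K)$ and membership in $\sigma(K)$, none of these constants actually matter: any fixed nonzero rational scaling of the pairing changes $\widehat X$ only by commensurability, so I can afford to be cavalier about them. An alternative, slightly cleaner route that avoids the different entirely: observe that $\widehat X$ is by \eqref{Xdual} a discrete cocompact subgroup of $\bR^{r_1}\oplus\bC^{r_2}$ (because $\Gamma$ is a cocompact lattice), hence a full-rank lattice; then note that $\widehat X$ is invariant under multiplication by $\zeta^{\bfn}$ for all $\bfn$ — equivalently under a finite-index subgroup of $U_K$ acting multiplicatively — and a standard fact (any full lattice in $K\otimes_\bQ\bR$ stable under a finite-index subgroup of $U_K$, in a field with no proper subfield containing that subgroup of units, is a fractional-ideal-like object, i.e. commensurable with $\sigma(\cO_K)$) gives the result; here one uses that $K$ has a totally irreducible element (Remark \ref{totalirrrmk}) so that $\bQ[\zeta^{\bfn}]=K$, forcing the order stabilizing the lattice to be an order in $K$ itself. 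I would present the first route as the main argument and mention the second only if space permits.
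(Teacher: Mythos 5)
Your proposal is correct and rests on the same key observation as the paper, namely that the pairing \eqref{fielddualformula} restricted to $\sigma(K)\times\sigma(K)$ is exactly the trace form $(\theta,\mu)\mapsto\Tr_{K/\bQ}(\theta\mu)$. The difference is one of packaging: the paper simply checks the one inclusion $\sigma(\cO_K)\subset\widehat{X_K}$ by the trace computation, notes that both are full-rank lattices in $\bR^{r_1}\oplus\bC^{r_2}$, and deduces commensurability by rank counting, then transfers commensurability from $\sigma(\cO_K)$ vs.\ $\Gamma$ to $\widehat{X_K}$ vs.\ $\widehat X$. You instead identify $\widehat{X_K}$ exactly with the codifferent $\sigma(\mathfrak d^{-1})$ and sandwich $\widehat X$ between rational scalings of it. Both are fine; the paper's version is slightly more elementary because it never needs the precise identification, only the inclusion and the rank count. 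One small remark: your worry about the normalization in \eqref{fielddualformula} (the factor $2$ on the complex places) is actually moot, since $\sigma_i\tilde x_i+\overline{\sigma_i\tilde x_i}=2\re(\sigma_i\tilde x_i)$ already accounts for both conjugate embeddings, so the pairing gives the trace form on the nose rather than only up to bounded index.
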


\begin{proof} Denote by $X_K$ the $d$-dimensional torus $(\bR^{r_1}\oplus\bC^{r_2})/\sigma(\cO_K)$ where $\cO_K$ is the ring of integers in $K$, then \begin{equation}\widehat{X_K}=\{\xi\in\bR^{r_1}\oplus\bC^{r_2}:\langle\xi,\sigma(\mu)\rangle=0 (\text{mod }\bZ),\forall\mu\in\cO_K\}.\end{equation} Note $\sigma(\cO_K)\subset\widehat{X_K}$ because for all $\theta,\mu\in\cO_K$, \begin{equation}\begin{split}&\sum_{i=1}^{r_1}\sigma_i(\theta)\sigma_i(\mu)+\sum_{j=1}^{r_2}2\re\big(\sigma_{r_1+j}(\theta)\sigma_{r_1+j}(\mu)\big)\\ 
=&\sum_{i=1}^{r_1}\sigma_i(\theta\mu)+\sum_{j=1}^{r_2}\big(\sigma_{r_1+j}(\theta\mu)+\sigma_{r_1+r_2+j}(\theta\mu)\big)\\
=&\sum_{i=1}^d\sigma_i(\theta\mu)=\Tr_{K/\bQ}(\theta\mu)\in\bZ.\end{split}\end{equation}

Because both $\widehat{X_K}$ and $\sigma(\cO_K)$ are isomorphic to $\bZ^d$, $\sigma(\cO_K)$ is a finite-index subgroup in $\widehat{X_K}$. Moreover, as $\Gamma$ is contained in $\sigma(K)$, it is commensurable with $\sigma(\cO_K)$. It follows that $\hat X$ and $\widehat{X_K}$ are commensurable lattices in $\bR^{r_1}\oplus\bC^{r_2}$, and thus $\hat X$ is commensurable to $\sigma(\cO_K)$ as well. This implies $\hat X$ lies in the $\bQ$-span of $\sigma(\cO_K)$, which is exactly $\sigma(K)$.\end{proof}

\section{Description of homogeneous invariant subsets}\label{homogeneous}

Three types of homogeneous $\alpha_\triangle$-invariant closed sets in $(\bT^d)^2$ were described in the discussion preceding Theorem \ref{Cartanjoining}. In this part, we are going to discuss their counterparts in $X^2$.

Under the map $\psi_\triangle$, finite sets of rational points correspond to finite sets of torsion points by Remark \ref{rationaltorsion}, and $(\bT^d)^2$ just becomes $X^2$.


It remains to describe $d$-dimensional homogeneous invariant subsets in $X^2$.

\subsection{Homogeneous invariant subtori in $X^2$}

$0$ and $X^2$ are respectively the only $0$-dimensional and $2d$-dimensional subtori in $X^2$ and they are clearly $\zeta_\triangle$-invariant. We now construct $d$-dimensional $\zeta_\triangle$-invariant subtori.

\begin{definition}\label{dhomotor}For any $\kappa\in K$, we define \begin{equation}(\widehat{X^2})^\kappa=\{(\xi^{(1)},\xi^{(2)})\in\widehat {X^2},\ \xi^{(1)}+\kappa.\xi^{(2)}=0\}<\widehat{X^2},\end{equation}
where the expression $\xi^{(1)}+\kappa.\xi^{(2)}$ is calculated as an element in $\bR^{r_1}\oplus\bC^{r_2}$.
For $\kappa=\infty$, define \begin{equation}(\widehat{X^2})^\infty=\{(\xi^{(1)},0):\xi^{(1)}\in\hat X\}.\end{equation}

A {\bf $d$-dimensional homogeneous $\zeta_\triangle$-invariant subtorus} is a subgroup in $X^2$ of the form
\begin{equation}T^\kappa=\big((\widehat{X^2})^\kappa\big)^\bot=\{\bfx\in X^2:\langle\bfxi,\bfx\rangle=0\mathrm{\ (mod\ }\bZ),\forall\bfxi\in(\widehat{X^2})^\kappa\}\end{equation} where $\kappa\in K\cup\{\infty\}$.
\end{definition}

\begin{remark}\label{switch12}By exchanging coordinates $x^{(1)}$ and $x^{(2)}$, $(\widehat{X^2})^\infty$ becomes $(\widehat{X^2})^0$ and $T^\infty$ becomes $T^0$.\end{remark}

Now we justify that the subgroup $T^\kappa$ is indeed a subtorus.

\begin{lemma}\label{dhomotorprop}$\forall\kappa\in K\cup\{\infty\}$, $T^\kappa$ is a $d$-dimensional subtorus in $X^2$. \end{lemma}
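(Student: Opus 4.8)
The plan is to show that $(\widehat{X^2})^\kappa$ is a subgroup of $\widehat{X^2} \cong \bZ^{2d}$ of rank exactly $d$ which is \emph{primitive} (i.e.\ the quotient $\widehat{X^2}/(\widehat{X^2})^\kappa$ is torsion-free), since the annihilator of such a subgroup in the dual torus $X^2$ is automatically a subtorus whose dimension is $2d$ minus the rank of the subgroup. First I would dispose of the case $\kappa = \infty$: here $(\widehat{X^2})^\infty = \hat X \times \{0\}$ is visibly a primitive rank-$d$ subgroup of $(\hat X)^2$, so $T^\infty = \{0\} \times X$ is a $d$-dimensional subtorus; by Remark \ref{switch12} the case $\kappa = 0$ follows by symmetry. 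So assume $\kappa \in K$, and moreover $\kappa \neq 0$ (the case $\kappa=0$ being covered).

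For $\kappa \in K^\times$, the key point is that the map $\xi^{(2)} \mapsto (-\kappa.\xi^{(2)}, \xi^{(2)})$ gives a group isomorphism from the lattice $\Lambda_\kappa := \{\xi^{(2)} \in \hat X : \kappa.\xi^{(2)} \in \hat X\}$ onto $(\widehat{X^2})^\kappa$. Indeed $(\xi^{(1)},\xi^{(2)}) \in (\widehat{X^2})^\kappa$ forces $\xi^{(1)} = -\kappa.\xi^{(2)}$, and the constraint that both coordinates lie in $\hat X$ is exactly the condition defining $\Lambda_\kappa$. Now by Lemma \ref{XdualK}, $\hat X = \sigma(J)$ for some lattice $J$ in $K$ commensurable with $\cO_K$, and multiplication by $\kappa$ on $\bR^{r_1}\oplus\bC^{r_2}$ corresponds under $\sigma$ to multiplication by $\kappa$ in $K$ (by (\ref{embmulti})); hence $\Lambda_\kappa \cong \{\theta \in J : \kappa\theta \in J\} = J \cap \kappa^{-1}J$, which is a subgroup of the rank-$d$ lattice $J$ of finite index (since $\kappa^{-1}J$ is again commensurable with $\cO_K$, being a nonzero fractional-ideal-like lattice). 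Therefore $\Lambda_\kappa$, and hence $(\widehat{X^2})^\kappa$, has rank exactly $d$.

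It remains to check primitivity of $(\widehat{X^2})^\kappa$ inside $\widehat{X^2}$. Suppose $(\xi^{(1)},\xi^{(2)}) \in \widehat{X^2}$ and $m(\xi^{(1)},\xi^{(2)}) \in (\widehat{X^2})^\kappa$ for some positive integer $m$; then $m\xi^{(1)} + \kappa.(m\xi^{(2)}) = 0$ in $\bR^{r_1}\oplus\bC^{r_2}$, so $\xi^{(1)} + \kappa.\xi^{(2)} = 0$ already (we are in a torsion-free vector space), and both $\xi^{(1)}, \xi^{(2)}$ lie in $\hat X$ by hypothesis, so $(\xi^{(1)},\xi^{(2)}) \in (\widehat{X^2})^\kappa$. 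Thus the quotient is torsion-free, and $T^\kappa = \big((\widehat{X^2})^\kappa\big)^\bot$ is a subtorus of $X^2$ of dimension $2d - d = d$. The main obstacle in writing this up cleanly is the bookkeeping around commensurability — making sure that $J \cap \kappa^{-1} J$ genuinely has full rank $d$ in $K$ for every nonzero $\kappa \in K$ — but this is immediate once one notes $\kappa^{-1}J$ is a finitely generated $\bZ$-submodule of $K$ spanning $K$ over $\bQ$, so its intersection with $J$ still spans $K$ over $\bQ$ and hence has rank $d$.
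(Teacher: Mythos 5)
Your proof is correct and follows essentially the same route as the paper: identify $(\widehat{X^2})^\kappa$ as a rank-$d$ sublattice of $\widehat{X^2}\cong\bZ^{2d}$ with torsion-free quotient, so the annihilator $T^\kappa$ is a $d$-dimensional subtorus. The only cosmetic difference is in the rank-$d$ step: the paper works with the lattice $\Gamma$ (verifying via the dual pairing that $q\kappa.\xi\in\hat X$ for suitable $q$), whereas you write $\hat X=\sigma(J)$ using Lemma \ref{XdualK} and note directly that $J\cap\kappa^{-1}J$ has full rank in $K$; both reduce to the same commensurability fact, so this is bookkeeping rather than a genuinely different argument.
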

\begin{proof} $T^\kappa$ is a closed subgroup in $X^2$. As $T^\kappa=\big((\widehat{X^2})^\kappa\big)^\bot$, its  Pontryagin dual is the quotient $\widehat{X^2}/(\widehat{X^2})^\kappa$. In order to show $T^\kappa\cong\bT^d$ it suffices to show  \begin{equation}\label{dhomotorprop1}\widehat{X^2}/(\widehat{X^2})^\kappa\cong\bZ^d.\end{equation}

Recall $\hat X$ is a full-rank lattice in the vector space $\bR^{r_1}\oplus\bC^{r_2}$ and so is $\widehat{X^2}=(\hat X)^2$ in $(\bR^{r_1}\oplus\bC^{r_2})^2\cong\widehat{(\bR^{r_1}\oplus\bC^{r_2})^2}\cong\bR^{2d}$. When $\kappa=\infty$, (\ref{dhomotorprop1}) follows directly from construction of $(\widehat{X^2})^\infty$. Assume from now on $\kappa\in K$, then it is easy to verify that \begin{equation}\label{dhomotorprop2}F^\kappa=\{(\xi^{(1)},\xi^{(2)})\in(\bR^{r_1}\oplus\bC^{r_2})^2:\xi^{(1)}+\kappa.\xi^{(2)}=0\}\end{equation} is a $d$-dimensional linear subspace of $(\bR^{r_1}\oplus\bC^{r_2})^2$. Moreover $(\widehat{X^2})^\kappa=\widehat{X^2}\cap F^\kappa$, hence is a sublattice of rank at most $d$. Notice the subset $\{(-\kappa.\xi,\xi):\xi\in\hat X\}\subset F^\kappa$ is a lattice of rank $d$.

In order to show $(\widehat{X^2})^\kappa$ has rank $d$, it suffices to show that $\forall\xi\in\hat X$, $\exists q\in\bN$ such that $q(-\kappa.\xi,\xi)\in\widehat{X^2}$ or equivalently $q\kappa.\xi\in\hat X$, which is furthermore equivalent to
\begin{equation}\label{dhomotorprop3}\langle\xi,q\kappa.\gamma\rangle=\langle q\kappa.\xi,\gamma\rangle=0\mathrm{\ (mod\ }\bZ),\forall\gamma\in\Gamma\end{equation} by (\ref{Xdual}). Since $\xi$ is in $\hat X$ itself, (\ref{dhomotorprop3}) would follow if we could find $q\in\bN$ such that $q\kappa.\gamma\in\Gamma$ for all $\gamma\in\Gamma$. Take a $\bZ$-basis $\{\gamma^1,\cdots,\gamma^d\}$ of $\Gamma$, it suffices to find $q$ such that $q\kappa.\gamma^j\in\Gamma,\forall j=1,\cdots,d$. By Proposition \ref{Gfield} $\Gamma\subset\sigma(K)$, and thus $\kappa.\gamma^j\in\sigma(\kappa.K)=\sigma(K)$ by (\ref{embmulti}). Recall $\sigma(K)$ is a $\bQ$-vector space which contains $\Gamma$ as a lattice of full rank. Thus for each $j$, there is $q_j\in\bN$ such that $q_j\kappa.\gamma^j\in\Gamma$. Then $q=\mathrm{gcd}(q_1,\cdots,q_d)$ satisfies (\ref{dhomotorprop3}). This shows $(\widehat{X^2})^\kappa$ is of rank $d$.

Therefore $\rank\big(\widehat{X^2}/(\widehat{X^2})^\kappa\big)=2d-d=d$. In order to establish (\ref{dhomotorprop1}) it remains to prove $\widehat{X^2}/(\widehat{X^2})^\kappa$ is torsion-free. In fact, assume $\widehat{X^2}/(\widehat{X^2})^\kappa$ contains a torsion element, then there exist $\bfxi\in\widehat{X^2}$ and a non-zero integer $n$ such that $\bfxi\notin(\widehat{X^2})^\kappa$ but $n\bfxi\in (\widehat{X^2})^\kappa$. Because $(\widehat{X^2})^\kappa=\widehat{X^2}\cap F^\kappa$ , this is equivalent to $\bfxi\notin F^\kappa$, $n\bfxi\in F^\kappa$, which cannot be true since $ F^\kappa$ is a vector space. Hence we proved (\ref{dhomotorprop1}) and in consequence $T^\kappa$ is isomorphic to $\bT^d$ as a topological subgroup in $X^2$ for all $\kappa\in K\cup\{\infty\}$.\end{proof}

\begin{corollary}\label{finiteTkappainter}Suppose $\kappa,\kappa'\in K\cup\{\infty\}$, $\kappa\neq\kappa'$ and $\bfx,\bfx'\in X^2$, then $(\bfx+T^\kappa)\cap(\bfx'+T^{\kappa'})$ is a finite set.\end{corollary}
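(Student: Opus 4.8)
The plan is to reduce the claim to a statement about the lattices $(\widehat{X^2})^\kappa$ and $(\widehat{X^2})^{\kappa'}$ inside $\widehat{X^2}$, and in turn to a statement about the subspaces $F^\kappa, F^{\kappa'}$ of $(\bR^{r_1}\oplus\bC^{r_2})^2$ introduced in the proof of Lemma~\ref{dhomotorprop}. First I would observe that the intersection $(\bfx+T^\kappa)\cap(\bfx'+T^{\kappa'})$, if non-empty, is a translate of the subgroup $T^\kappa\cap T^{\kappa'}$; hence it suffices to show $T^\kappa\cap T^{\kappa'}$ is finite. Since $T^\kappa\cap T^{\kappa'}=\big((\widehat{X^2})^\kappa\big)^\bot\cap\big((\widehat{X^2})^{\kappa'}\big)^\bot=\big((\widehat{X^2})^\kappa+(\widehat{X^2})^{\kappa'}\big)^\bot$, Pontryagin duality tells us that $T^\kappa\cap T^{\kappa'}$ is finite precisely when $(\widehat{X^2})^\kappa+(\widehat{X^2})^{\kappa'}$ has finite index in $\widehat{X^2}$, i.e. has full rank $2d$.

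Next I would pass to the ambient vector space. In the proof of Lemma~\ref{dhomotorprop} it was shown that for $\kappa\in K$ the lattice $(\widehat{X^2})^\kappa=\widehat{X^2}\cap F^\kappa$ has rank $d$ and therefore spans $F^\kappa$ over $\bR$; the same holds for $(\widehat{X^2})^\infty$ spanning $F^\infty=\hat X\times\{0\}\otimes\bR=(\bR^{r_1}\oplus\bC^{r_2})\times\{0\}$. So the $\bR$-span of $(\widehat{X^2})^\kappa+(\widehat{X^2})^{\kappa'}$ is $F^\kappa+F^{\kappa'}$, and it has full rank iff $F^\kappa+F^{\kappa'}=(\bR^{r_1}\oplus\bC^{r_2})^2$, equivalently $F^\kappa\cap F^{\kappa'}=0$ since both are $d$-dimensional in a $2d$-dimensional space. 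The remaining task is thus purely linear-algebraic: for distinct $\kappa,\kappa'\in K\cup\{\infty\}$, show $F^\kappa\cap F^{\kappa'}=0$. If both are finite, a vector $(\xi^{(1)},\xi^{(2)})$ in the intersection satisfies $\xi^{(1)}=-\kappa.\xi^{(2)}=-\kappa'.\xi^{(2)}$, so $(\kappa-\kappa').\xi^{(2)}=0$ in $\bR^{r_1}\oplus\bC^{r_2}$; since $\kappa\neq\kappa'$ and $\kappa-\kappa'\in K^\times$ acts on $K\otimes_\bQ\bR\cong\bR^{r_1}\oplus\bC^{r_2}$ by an invertible map (as each embedding $\sigma_i(\kappa-\kappa')\neq 0$), we get $\xi^{(2)}=0$ and then $\xi^{(1)}=0$. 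The case $\kappa'=\infty$ (so $\kappa\in K$) is even easier: $(\xi^{(1)},\xi^{(2)})\in F^\infty$ forces $\xi^{(2)}=0$, and then $F^\kappa$ forces $\xi^{(1)}=-\kappa.0=0$.

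I do not expect a serious obstacle here; the only point requiring a little care is the transition from lattices to their $\bR$-spans, which relies on the full-rank assertions already established inside the proof of Lemma~\ref{dhomotorprop}, and the observation that $\kappa-\kappa'$ acts invertibly on $\bR^{r_1}\oplus\bC^{r_2}$ because a nonzero element of the number field $K$ has all archimedean embeddings nonzero. One could alternatively argue entirely at the level of the tori: $T^\kappa$ and $T^{\kappa'}$ are $d$-dimensional subtori whose tangent spaces at the origin are $F^\kappa$ and $F^{\kappa'}$, so $F^\kappa\cap F^{\kappa'}=0$ already implies $\dim(T^\kappa\cap T^{\kappa'})=0$, hence finiteness since $T^\kappa\cap T^{\kappa'}$ is a closed subgroup of a compact group. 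Either route is short; I would present the dual-lattice version since the machinery of $F^\kappa$ and the full-rank computation is already in place from Lemma~\ref{dhomotorprop}.
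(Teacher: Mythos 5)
Your main argument is correct and follows the same route as the paper's proof: reduce to $T^\kappa\cap T^{\kappa'}$, pass to the dual and show the sum of $(\widehat{X^2})^\kappa$ and $(\widehat{X^2})^{\kappa'}$ has finite index via the observation that $(\kappa-\kappa')$ annihilates $\xi^{(2)}$ only if $\xi^{(2)}=0$. The paper shows the lattice intersection $(\widehat{X^2})^\kappa\cap(\widehat{X^2})^{\kappa'}$ is trivial directly and then appeals to $\bZ^d\oplus\bZ^d$ having finite index in $\bZ^{2d}$, whereas you first pass to the $\bR$-spans $F^\kappa,F^{\kappa'}$; the two presentations are interchangeable since the lattices are full-rank in those subspaces. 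One small slip in your closing alternative sketch: the tangent space of $T^\kappa$ at the origin is $V^\kappa$, not $F^\kappa$ (cf.\ Lemma~\ref{Tkappatangent}); $F^\kappa$ lives on the character side and is in fact the annihilator of $V^\kappa$ under the pairing~(\ref{fielddualformula}). The conclusion $\dim(T^\kappa\cap T^{\kappa'})=0$ still follows because $V^\kappa\cap V^{\kappa'}=0$ is equivalent to $F^\kappa\cap F^{\kappa'}=0$ by duality, but the identification as stated is wrong, and this version would also need care to avoid circularity since Corollary~\ref{Vkappatransversal} is derived from the present corollary in the paper.
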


\begin{proof}Suppose the intersection is non-empty and let $\bfy$ be a point from it. Then for all $\bfy'\in(\bfx+T^\kappa)\cap(\bfx'+T^{\kappa'})$, $\bfy'-\bfy\in T^{\kappa}\cap T^{\kappa'}$. Hence it suffices to show $T^{\kappa}\cap T^{\kappa'}$ is finite.

Note because $T^{\kappa}\cap T^{\kappa'}$ is a closed subgroup of $X^2$, it is enough to prove the annihilator $(T^{\kappa}\cap T^{\kappa'})^\bot$ has finite index in $\widehat{X^2}$. Notice that this annihilator contains both $(\widehat{X^2})^\kappa$ and $(\widehat{X^2})^{\kappa'}$. It was proved in the proof of the previous lemma that $(\widehat{X^2})^\kappa,(\widehat{X^2})^{\kappa'}\cong\bZ^d$. Therefore it suffices to show $(\widehat{X^2})^\kappa\cap(\widehat{X^2})^{\kappa'}=\{\bfzero\}$, since then $(\widehat{X^2})^\kappa\oplus(\widehat{X^2})^{\kappa'}$ would be isomorphic to $\bZ^{2d}$ and so has to be a finite index subgroup in $\widehat{X^2}$; it follows that the group $(T^{\kappa}\cap T^{\kappa'})^\bot$ which contains  $(\widehat{X^2})^\kappa\oplus(\widehat{X^2})^{\kappa'}$ is also of finite index and this would establish the corollary.

Suppose $\bfxi=(\xi^{(1)},\xi^{(2)})\in (\widehat{X^2})^\kappa\cap(\widehat{X^2})^{\kappa'}$. Assume first $\kappa,\kappa'\in K$, then $\xi^{(1)}+\kappa.\xi^{(2)}=\xi^{(1)}+\kappa'.\xi^{(2)}=0$ and thus $(\kappa-\kappa').\xi^{(2)}=0$. But since $\kappa-\kappa'\neq 0$ this can be true only if $\xi^{(2)}=0$. And furthermore $\xi^{(1)}=-\kappa'.\xi^{(2)}=0$. Therefore $\bfxi=\bfzero$. Now assume one of $\kappa$ and $\kappa'$ is $\infty$, without loss of generality let $\kappa'=\infty$ and $\kappa\in K$. Then by definition of $(\widehat{X^2})^\infty$, $\xi^{(2)}=0$. Moreover $\xi^{(1)}=\xi^{(1)}+\kappa.\xi^{(2)}=0$, and again $\bfxi$ vanishes. \end{proof}

\begin{lemma}For all $\kappa\in K\cup\{\infty\}$, $T^\kappa$ is a $\zeta_\triangle$-invariant subset. \end{lemma}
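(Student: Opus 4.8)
The plan is to dualize. Since $T^\kappa=\big((\widehat{X^2})^\kappa\big)^\bot$ and the $\bZ^r$-action $\zeta_\triangle$ on $\widehat{X^2}$ is dual to the one on $X^2$, it suffices to show that $(\widehat{X^2})^\kappa$ is invariant under the dual action. Concretely, I would first recall from the previous subsection that $\widehat{X^2}\subset(\bR^{r_1}\oplus\bC^{r_2})^2$ is stable under every $\zeta_\triangle^\bfn$ and that $\langle\zeta_\triangle^\bfn.\bfxi,\bfx\rangle=\langle\bfxi,\zeta_\triangle^\bfn.\bfx\rangle$ for all $\bfxi\in\widehat{X^2}$, $\bfx\in X^2$ and $\bfn\in\bZ^r$. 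Granting that $\zeta_\triangle^\bfn\big((\widehat{X^2})^\kappa\big)\subset(\widehat{X^2})^\kappa$, any $\bfx\in T^\kappa$ and $\bfxi\in(\widehat{X^2})^\kappa$ satisfy $\langle\bfxi,\zeta_\triangle^\bfn.\bfx\rangle=\langle\zeta_\triangle^\bfn.\bfxi,\bfx\rangle=0$, so $\zeta_\triangle^\bfn.\bfx\in T^\kappa$; and since $\zeta$ is a group embedding each $\zeta_\triangle^\bfn$ is invertible, so this suffices for $\zeta_\triangle$-invariance in the sense of~(\ref{stab}).

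It then remains to check the stability of $(\widehat{X^2})^\kappa$. For $\kappa\in K$, take $\bfxi=(\xi^{(1)},\xi^{(2)})$ with $\xi^{(1)}+\kappa.\xi^{(2)}=0$; then $\zeta_\triangle^\bfn.\bfxi=(\zeta^\bfn.\xi^{(1)},\zeta^\bfn.\xi^{(2)})$ again lies in $\widehat{X^2}$, and
\[\zeta^\bfn.\xi^{(1)}+\kappa.(\zeta^\bfn.\xi^{(2)})=\zeta^\bfn.\big(\xi^{(1)}+\kappa.\xi^{(2)}\big)=\zeta^\bfn.\bfzero=\bfzero,\]
so $\zeta_\triangle^\bfn.\bfxi\in(\widehat{X^2})^\kappa$. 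For $\kappa=\infty$, applying $\zeta_\triangle^\bfn$ to $(\xi^{(1)},0)$ gives $(\zeta^\bfn.\xi^{(1)},0)$, which is in $(\widehat{X^2})^\infty$ because $\hat X$ is $\zeta$-invariant.

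The computation above is the whole content of the proof, and there is no real obstacle; the only point I would make sure to state explicitly is the middle equality in the displayed line. It holds because multiplication by $\kappa$ and multiplication by $\zeta^\bfn$ on $\bR^{r_1}\oplus\bC^{r_2}$ are both given coordinatewise by the embeddings $\sigma_i$ as in~(\ref{fieldmulti}), and hence commute (equivalently, this is the commutativity of $K$ transported through $\sigma$ via~(\ref{embmulti})). Everything else is bookkeeping with the identifications already set up, so the write-up will be short.
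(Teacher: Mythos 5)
Your proof is correct and takes essentially the same approach as the paper: both dualize and verify that $(\widehat{X^2})^\kappa$ is stable under $\zeta_\triangle^\bfn$ via the identity $\zeta^\bfn.\xi^{(1)}+\kappa.\zeta^\bfn.\xi^{(2)}=\zeta^\bfn.(\xi^{(1)}+\kappa.\xi^{(2)})=0$. The only cosmetic difference is that the paper dispatches $\kappa=\infty$ by Remark~\ref{switch12} rather than by direct inspection, and you spell out the duality and commutativity steps that the paper leaves implicit.
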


\begin{proof}To obtain invariance it suffices to show $(\widehat{X^2})^\kappa$ is invariant under the dual action $\zeta_\triangle:\bZ^r\curvearrowright\widehat{X^2}$. By Remark \ref{switch12}, we may assume $\kappa\in K$. Suppose $\bfxi=(\xi^{(1)},\xi^{(2)})\in(\widehat{X^2})^\kappa$, then $\xi^{(1)}+\kappa.\xi^{(2)}=0$ and for any $\bfn\in\bZ^r$, \begin{equation}\zeta^\bfn.\xi^{(1)}+\kappa.\zeta^\bfn.\xi^{(2)}=\zeta^\bfn.(\xi^{(1)}+\kappa.\xi^{(2)})=0.\end{equation} Hence $\zeta_\triangle.\bfxi=(\zeta^\bfn.\xi^{(1)},\zeta^\bfn.\xi^{(2)})$ belongs to $\widehat{X^2}$ as well. \end{proof}

\subsection{Local structures of homogeneous invariant subtori} Now we discuss the structure of $T^\kappa$ in further detail.

\begin{definition}\label{Vkappa}For all $\kappa\in K$, set \begin{equation}V^\kappa=\{(\tilde x^{(1)},\tilde x^{(2)})\in(\bR^{r_1}\oplus\bC^{r_2})^2:\kappa.\tilde x^{(1)}-\tilde x^{(2)}=0\}.\end{equation} Furthermore, when $\kappa=\infty$, let \begin{equation}V^\infty=\{(\tilde x^{(1)},\tilde x^{(2)})\in(\bR^{r_1}\oplus\bC^{r_2})^2:\tilde x^{(1)}=0\}.\end{equation}
Denote $V_i^\kappa=V^\kappa\cap V_i^\square$, $\forall i\in I, \forall\kappa\in K\cup\{\infty\}$, where $V_i^\square$ is defined in~(\ref{Visquare}).\end{definition}

Clearly $V^\kappa$ is a $d$-dimensional linear subspace for all $\kappa\in K\cup\{\infty\}$. Moreover, $V^\kappa=\bigoplus_{i\in I}V_i^\kappa$ for all $\kappa\in K\cup\{\infty\}$. Observe \begin{equation}\label{Vkappacoord}V_i^\kappa=\{(v,\sigma_i(\kappa)v):v\in V_i\}, \forall\kappa\in K,\text{ and }V_i^\infty=\{(0,v):v\in V_i\},\end{equation} so $V_i^\kappa$ is isomorphic to $V_i$ and has real dimension $1$ or $2$ depending on whether $i\leq r_1$ or not.

Furthermore, both $V^\kappa$ and $V_i^\kappa$ are invariant under the multiplicative action $\zeta_\triangle$.

It turns out that $V^\kappa$ is the tangent space of the subtorus $T^\kappa$:

\begin{lemma}\label{Tkappatangent}$T^\kappa=\pi_\triangle(V^\kappa)\cong V^\kappa/(\Gamma^2\cap V^\kappa)$, $\forall\kappa\in K\cup\{\infty\}$.\end{lemma}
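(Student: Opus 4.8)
The plan is to prove the identity $T^\kappa = \pi_\triangle(V^\kappa) \cong V^\kappa/(\Gamma^2 \cap V^\kappa)$ by identifying $V^\kappa$ as the annihilator (as a \emph{real} subspace under the pairing extended to $(\bR^{r_1}\oplus\bC^{r_2})^2$) of the subspace $F^\kappa$ already introduced in the proof of Lemma~\ref{dhomotorprop}, and then using the general fact that for a closed subgroup of the form $T^\kappa = \big((\widehat{X^2})^\kappa\big)^\bot$ with $(\widehat{X^2})^\kappa$ a full-rank sublattice of $\widehat{X^2}\cap F^\kappa$, the identity component of $T^\kappa$ is exactly the image of $(F^\kappa)^\bot$.

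First I would reduce to $\kappa \in K$; the case $\kappa = \infty$ is handled by the coordinate switch of Remark~\ref{switch12} together with the analogous (and essentially trivial) statement $T^0 = \pi_\triangle(V^0)$, where $V^0 = \{(v^{(1)},0)\}$. Next, with $\kappa\in K$, I would record the duality relation between $F^\kappa = \{(\xi^{(1)},\xi^{(2)}) : \xi^{(1)} + \kappa.\xi^{(2)} = 0\}$ (from~(\ref{dhomotorprop2})) and $V^\kappa = \{(\tilde x^{(1)},\tilde x^{(2)}) : \kappa.\tilde x^{(1)} - \tilde x^{(2)} = 0\}$: using the pairing~(\ref{fielddualformula}) coordinatewise and the identity $\langle \kappa.\xi, \tilde x\rangle = \langle \xi, \kappa.\tilde x\rangle$, one checks that for $\bfxi \in F^\kappa$ and $\bfx \in V^\kappa$, $\langle \bfxi, \bfx\rangle = \langle \xi^{(1)}, \tilde x^{(1)}\rangle + \langle \xi^{(2)}, \tilde x^{(2)}\rangle = \langle \xi^{(1)}, \tilde x^{(1)}\rangle + \langle \xi^{(2)}, \kappa.\tilde x^{(1)}\rangle = \langle \xi^{(1)} + \kappa.\xi^{(2)}, \tilde x^{(1)}\rangle = 0$. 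Since both $F^\kappa$ and $V^\kappa$ have real dimension $d$ inside the $2d$-dimensional space, and the pairing is nondegenerate, $V^\kappa$ is precisely the real annihilator of $F^\kappa$.

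Now for the main inclusion. Since $(\widehat{X^2})^\kappa = \widehat{X^2} \cap F^\kappa$ is, by the proof of Lemma~\ref{dhomotorprop}, a rank-$d$ lattice inside the $d$-dimensional space $F^\kappa$, it is a full-rank lattice there; hence any $\bfx \in (\bR^{r_1}\oplus\bC^{r_2})^2$ pairing to zero mod $\bZ$ with all of $(\widehat{X^2})^\kappa$ must in fact pair to zero (not merely mod $\bZ$) with all of $F^\kappa$, i.e.\ lie in $V^\kappa$. Thus the preimage $\pi_\triangle^{-1}(T^\kappa) = \{\bfx : \langle \bfxi, \bfx\rangle \equiv 0 \text{ for all } \bfxi \in (\widehat{X^2})^\kappa\}$ is contained in $V^\kappa + \Gamma^2$, giving $T^\kappa \subset \pi_\triangle(V^\kappa)$. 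The reverse inclusion $\pi_\triangle(V^\kappa) \subset T^\kappa$ is immediate: if $\bfx \in V^\kappa$ and $\bfxi \in (\widehat{X^2})^\kappa \subset F^\kappa$, then $\langle \bfxi, \bfx\rangle = 0$ by the duality just established, so $\pi_\triangle(\bfx) \in T^\kappa$. Finally, $\pi_\triangle|_{V^\kappa}$ is a group homomorphism onto $T^\kappa$ with kernel $V^\kappa \cap \Gamma^2$, which yields the claimed isomorphism $T^\kappa \cong V^\kappa/(\Gamma^2 \cap V^\kappa)$; that this quotient is compact (hence $\pi_\triangle(V^\kappa)$ is already closed, matching $T^\kappa$) follows since $T^\kappa$ is a $d$-dimensional subtorus by Lemma~\ref{dhomotorprop} and a $d$-dimensional connected subgroup containing the $d$-dimensional image $\pi_\triangle(V^\kappa)$ must equal it.

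The step I expect to require the most care is the passage from ``pairs to zero mod $\bZ$ with the lattice $(\widehat{X^2})^\kappa$'' to ``pairs to zero exactly with the $\bR$-span $F^\kappa$'': this uses crucially that $(\widehat{X^2})^\kappa$ is full-rank in $F^\kappa$, a fact that was established inside the proof of Lemma~\ref{dhomotorprop} via the clearing-denominators argument with $q\kappa.\gamma^j \in \Gamma$. Everything else is bookkeeping with the pairing and with dimension counts; the only subtlety worth flagging explicitly is that the annihilator is taken in the sense of the real pairing on $(\bR^{r_1}\oplus\bC^{r_2})^2$, not merely of the integral pairing, so one should state the nondegeneracy being invoked.
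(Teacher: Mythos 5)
Your easy inclusion $\pi_\triangle(V^\kappa)\subset T^\kappa$ is exactly the paper's computation and is fine, but the converse direction contains a genuine error. The inference ``since $(\widehat{X^2})^\kappa$ is a full-rank lattice in $F^\kappa$, any $\bfx$ pairing to zero mod $\bZ$ with all of $(\widehat{X^2})^\kappa$ must in fact pair to zero (not merely mod $\bZ$) with all of $F^\kappa$, i.e.\ lie in $V^\kappa$'' is simply false: a full-rank lattice pins down the pairing only up to integer ambiguities. Indeed $\pi_\triangle^{-1}(T^\kappa)$ contains all of $\Gamma^2$, and $\Gamma^2\not\subset V^\kappa$, so your intermediate conclusion contradicts itself. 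What you actually want is the weaker statement $\pi_\triangle^{-1}(T^\kappa)\subset V^\kappa+\Gamma^2$, and that does \emph{not} follow from full rank alone. If you take a $\bZ$-basis $e_1,\dots,e_d$ of $(\widehat{X^2})^\kappa$, the set $\{\bfx:\langle e_i,\bfx\rangle\in\bZ,\ \forall i\}$ reduces to $V^\kappa+\Gamma^2$ only if you can extend $e_1,\dots,e_d$ to a $\bZ$-basis of $\widehat{X^2}$, i.e.\ only if $\widehat{X^2}/(\widehat{X^2})^\kappa$ is torsion-free. This saturation property is indeed established in the proof of Lemma~\ref{dhomotorprop} (being immediate from $(\widehat{X^2})^\kappa=\widehat{X^2}\cap F^\kappa$ with $F^\kappa$ a linear subspace), but you never invoke it, and without it the step you flag as delicate does not close.

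The paper's own proof sidesteps the preimage computation entirely: $T^\kappa$ is already known from Lemma~\ref{dhomotorprop} to be a torus, hence connected, so $T^\kappa$ equals the $\pi_\triangle$-image of its Lie algebra; your easy inclusion $\pi_\triangle(V^\kappa)\subset T^\kappa$ forces $V^\kappa\subset\mathrm{Lie}\,T^\kappa$, and the dimension count $\dim V^\kappa=d=\dim T^\kappa$ gives $\mathrm{Lie}\,T^\kappa=V^\kappa$, whence $T^\kappa=\pi_\triangle(V^\kappa)$. Your final paragraph gestures at exactly this connectedness argument, but you present it as a footnote on top of the flawed preimage step rather than as the crux, and the surjectivity of $\pi_\triangle|_{V^\kappa}$ asserted there still leans on the unproven preimage containment. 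Either drop the preimage argument and run the connectedness/dimension count cleanly, or patch the preimage argument by explicitly invoking the torsion-freeness of $\widehat{X^2}/(\widehat{X^2})^\kappa$.
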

Recall $\pi_\triangle$ denotes the projection from $(\bR^{r_1}\oplus\bC^{r_2})^2$ to $X^2$.
\begin{proof}Consider $T^\kappa$ as a Lie subgroup in the abelian Lie group $X^2$. To get the lemma, it suffices to show that the Lie algebra of $T^\kappa$ is $V^\kappa$. Since $\dim T^\kappa=d=\dim V^\kappa$, we only need to prove $\pi_\triangle(\bfv)\in T^\kappa$ for all $v\in V^\kappa$. This is clear for $\kappa=\infty$ as $V^\infty=\{0\}\times(\bR^{r_1}\oplus\bC^{r_2})$ and $T^\infty=\{0\}\times X$. Now let $\kappa\in K$ and assume $\bfv=(v,\kappa.v)$ where $v\in \bR^{r_1}\oplus\bC^{r_2}$. Take an arbitrary $\bfxi\in(\widehat{X^2})^\kappa\subset(\bR^{r_1}\oplus\bC^{r_2})^2$, then $\langle\bfxi,\pi_\triangle(\bfv)\rangle=(\langle\bfxi,\bfv\rangle\text{ mod }\bZ)$ but $\langle\bfxi,\bfv\rangle=\langle\xi^{(1)},v\rangle+\langle\xi^{(2)},\kappa.v\rangle=\langle\xi^{(1)},v\rangle+\langle\kappa.\xi^{(2)},v\rangle=\langle\xi^{(1)}+\kappa.\xi^{(2)},v\rangle=\langle0,v\rangle=0$. Thus $\pi_\triangle(\bfv)\in\big((\widehat{X^2})^\kappa\big)^\bot=T^\kappa$. \end{proof}

The next two corollaries follow easily from the lemma.

\begin{corollary}\label{Vkappatransversal}If $\kappa,\kappa'\in K\cup\{\infty\}$ and $\kappa\neq\kappa'$ then $(\bR^{r_1}\oplus\bC^{r_2})^2=V^\kappa\oplus V^{\kappa'}$.\end{corollary}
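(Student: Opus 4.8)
The plan is to reduce to a triviality of intersection plus a dimension count, working one index $i\in I$ at a time. Since $(\bR^{r_1}\oplus\bC^{r_2})^2=\bigoplus_{i\in I}V_i^\square$ and since $V^\kappa=\bigoplus_{i\in I}V_i^\kappa$ and $V^{\kappa'}=\bigoplus_{i\in I}V_i^{\kappa'}$ (recorded just above the statement), it suffices to prove $V_i^\square=V_i^\kappa\oplus V_i^{\kappa'}$ for every $i$. Recall from~(\ref{Vkappacoord}) that $\dim_\bR V_i^\kappa=\dim_\bR V_i^{\kappa'}=d_i=\tfrac12\dim_\bR V_i^\square$, so it is enough to check $V_i^\kappa\cap V_i^{\kappa'}=\{0\}$; the direct sum decomposition then follows automatically from dimensions.

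\textbf{Key step.} For the triviality of $V_i^\kappa\cap V_i^{\kappa'}$, I would again invoke the explicit coordinates~(\ref{Vkappacoord}). If $\kappa,\kappa'\in K$, an element of the intersection has the form $(v,\sigma_i(\kappa)v)=(v,\sigma_i(\kappa')v)$ with $v\in V_i$, forcing $\bigl(\sigma_i(\kappa)-\sigma_i(\kappa')\bigr)v=0$; since $\sigma_i$ is a field embedding hence injective and $\kappa\neq\kappa'$, we get $\sigma_i(\kappa)\neq\sigma_i(\kappa')$, so $v=0$. If instead one of the parameters is $\infty$, say $\kappa'=\infty$ and $\kappa\in K$, then an element of the intersection is simultaneously of the form $(v,\sigma_i(\kappa)v)$ and $(0,w)$, which immediately gives $v=0$ and hence $w=\sigma_i(\kappa)\cdot 0=0$. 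Either way $V_i^\kappa\cap V_i^{\kappa'}=\{0\}$, and summing over $i\in I$ yields $(\bR^{r_1}\oplus\bC^{r_2})^2=V^\kappa\oplus V^{\kappa'}$.

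\textbf{Alternative and main obstacle.} As an alternative more in the spirit of the phrase ``follow easily from the lemma,'' one can argue globally: by Lemma~\ref{Tkappatangent}, $\Gamma^2\cap V^\kappa$ is a full-rank lattice in $V^\kappa$, so the image under $\pi_\triangle$ of any nonzero subspace of $V^\kappa$ is infinite; if $V^\kappa\cap V^{\kappa'}$ were nontrivial, its image would be an infinite subset of $\pi_\triangle(V^\kappa)\cap\pi_\triangle(V^{\kappa'})=T^\kappa\cap T^{\kappa'}$, contradicting the finiteness established in Corollary~\ref{finiteTkappainter}; hence $V^\kappa\cap V^{\kappa'}=\{0\}$ and the dimension count finishes the proof. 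There is no real obstacle here: the only point requiring care is the dimension bookkeeping (making sure $\dim_\bR V^\kappa=d$ for the $\infty$ case as well, which is already noted below Definition~\ref{Vkappa}) and the use of injectivity of the archimedean embeddings $\sigma_i$; both are immediate.
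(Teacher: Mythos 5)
Your proof is correct, and in fact you give two valid arguments, one of which (the ``alternative'') is essentially the paper's proof. The paper argues globally: it notes $\dim V^\kappa+\dim V^{\kappa'}=2d$, then shows $V^\kappa\cap V^{\kappa'}=\{\bfzero\}$ by observing that otherwise $\pi_\triangle(V^\kappa\cap V^{\kappa'})\subset T^\kappa\cap T^{\kappa'}$ would be infinite, contradicting Corollary~\ref{finiteTkappainter} with $\bfx=\bfx'=\bfzero$. This is exactly your second route. Your main route is genuinely different and more elementary: you decompose componentwise over $i\in I$, use the explicit parametrization~(\ref{Vkappacoord}) of $V_i^\kappa$, and reduce the triviality of $V_i^\kappa\cap V_i^{\kappa'}$ to the injectivity of the archimedean embedding $\sigma_i$. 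This avoids any appeal to the Pontryagin-dual/finite-intersection machinery behind Corollary~\ref{finiteTkappainter}, so it is logically lighter and self-contained at the linear-algebra level; the trade-off is that it repeats in essence the computation that the paper performs once on the character side (in the proof of Corollary~\ref{finiteTkappainter}) and then recycles by duality. Both approaches are sound; yours buys a shorter dependency chain, while the paper's organization factors the computation into the dual picture and reuses it.
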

\begin{proof}Since $\dim(\bR^{r_1}\oplus\bC^{r_2})^2=2d=\dim V^\kappa+\dim V^{\kappa'}$ it suffices to show $V^\kappa\cap V^{\kappa'}=\{\bfzero\}$. Suppose not, then $V^\kappa\cap V^{\kappa'}$ has positive dimension. In consequence $T^\kappa\cap T^{\kappa'}\supset\pi_\triangle(V^\kappa\cap V^{\kappa'})$ has positive dimension and hence is infinite, which contradicts Corollary \ref{finiteTkappainter} by taking $\bfx=\bfx'=\bfzero$.\end{proof}

\begin{corollary}\label{kappacapinfty}If $\kappa\in K$, then $\forall\bfx\in X^2$, $(\bfx+T^\kappa)\cap T^\infty\neq\emptyset$.\end{corollary}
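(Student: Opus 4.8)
The plan is to read this off directly from the transversality statement in Corollary~\ref{Vkappatransversal}, combined with the identification $T^\kappa=\pi_\triangle(V^\kappa)$ and $T^\infty=\pi_\triangle(V^\infty)$ furnished by Lemma~\ref{Tkappatangent}. Since $\kappa\in K$ we have $\kappa\neq\infty$, so Corollary~\ref{Vkappatransversal} gives the direct sum decomposition $(\bR^{r_1}\oplus\bC^{r_2})^2=V^\kappa\oplus V^\infty$.

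First I would fix $\bfx\in X^2$, choose a lift $\tilde\bfx\in(\bR^{r_1}\oplus\bC^{r_2})^2$ with $\pi_\triangle(\tilde\bfx)=\bfx$, and use the decomposition above to write $\tilde\bfx=\bfv+\bfw$ with $\bfv\in V^\kappa$ and $\bfw\in V^\infty$. Then I would push this down to $X^2$: because $\pi_\triangle$ is a group homomorphism, $\bfx=\pi_\triangle(\bfv)+\pi_\triangle(\bfw)$, while Lemma~\ref{Tkappatangent} yields $\pi_\triangle(\bfv)\in T^\kappa$ and $\pi_\triangle(\bfw)\in T^\infty$. Hence $\pi_\triangle(\bfw)=\bfx-\pi_\triangle(\bfv)$, and since $T^\kappa$ is a subgroup of $X^2$ this shows $\pi_\triangle(\bfw)\in\bfx+T^\kappa$. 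Combined with $\pi_\triangle(\bfw)\in T^\infty$, this exhibits $\pi_\triangle(\bfw)$ as a point of $(\bfx+T^\kappa)\cap T^\infty$, so the intersection is nonempty.

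There is essentially no obstacle here: the whole content is the transversality of $V^\kappa$ and $V^\infty$, which has already been established, and everything else is the routine bookkeeping of passing between $(\bR^{r_1}\oplus\bC^{r_2})^2$ and $X^2$. The only minor point to keep in mind is that $\bfx+T^\kappa$ is a coset of the \emph{subgroup} $T^\kappa$, so it is closed under adding or subtracting elements of $T^\kappa$; this is precisely what lets us replace $+\pi_\triangle(\bfv)$ by $-\pi_\triangle(\bfv)$ in the argument.
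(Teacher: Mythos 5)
Your proof is correct and follows essentially the same idea as the paper's: decompose a lift of $\bfx$ into its $V^\kappa$- and $V^\infty$-components and project down via $\pi_\triangle$, using Lemma~\ref{Tkappatangent}. The paper writes out this decomposition explicitly in coordinates rather than invoking Corollary~\ref{Vkappatransversal} by name, but the argument is the same; your version is, if anything, a touch cleaner for making the appeal to transversality explicit.
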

\begin{proof}It is equivalent to show $T^\kappa\cap(-\bfx+T^\infty)\neq\emptyset$. Suppose $\bfx=(x^{(1)},x^{(2)})\in X^2$ and choose $\tilde x^{(1)}\in \bR^{r_1}\oplus\bC^{r_2}$ which projects to $x^{(1)}$. Let $\bfx'=\big(x^{(1)},\pi(\kappa.\tilde x^{(1)})\big)\in X^2$. Then $-\bfx'=\pi_\triangle\big((-\tilde x^{(1)},-\kappa.\tilde x^{(1)})\big)\in\pi(V^\kappa)=T^\kappa$ and $(-\bfx')-(-\bfx)=\big(0,x^{(2)}-\pi(\kappa.\tilde x^{(1)})\big)\in T^\infty$. Therefore $-\bfx\in T^\kappa\cap(-\bfx+T^\infty)$.\end{proof}

Now we claim that Berend's theorem applies to the torus $T^\kappa$.

\begin{lemma}\label{TkappaBerend}Let $H\leq\bZ^r$ be a finite-index subgroup and $\kappa\in K\cup\{\infty\}$. Then the action of $H$ on $T^\kappa$ by $\zeta_\triangle$ satisfies the conditions in Theorem \ref{Berend}. Any $\zeta_\triangle$-invariant closed subset of $T^\kappa$ is either $T^\kappa$ itself or a finite set of torsion points. In particular, for $\bfx\in T^\kappa$, the $H$-orbit $\{\zeta_\triangle^\bfn.\bfx:\bfn\in H\}$ of $x$ under $\zeta^\triangle$ is dense in $T^\kappa$ unless $\bfx$ is a torsion point; and $T^\kappa$ is topologically transitive under the action of $H$ by $\zeta_\triangle$.\end{lemma}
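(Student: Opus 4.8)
The plan is to exhibit $\zeta_\triangle$ restricted to $T^\kappa$ as algebraically conjugate to the action $\zeta$ on a torus of exactly the same shape as $X$ --- namely $(\bR^{r_1}\oplus\bC^{r_2})/\Gamma'$ for a lattice $\Gamma'$ commensurable with $\sigma(\cO_K)$ --- and then read off all four assertions from Theorem~\ref{Berend'} (equivalently, from Lemma~\ref{Cartanspecial} together with Berend's Theorem~\ref{Berend}). By Remark~\ref{switch12} one may assume $\kappa\in K$. First I would consider the map $\Phi\colon\bR^{r_1}\oplus\bC^{r_2}\to V^\kappa$, $\Phi(v)=(v,\kappa.v)$; by~(\ref{Vkappacoord}) this is a linear isomorphism (its inverse is projection to the first coordinate), and since by~(\ref{fieldmulti}) multiplication by $\kappa$ and by $\zeta^\bfn$ are both coordinatewise multiplications by elements of $K$, they commute, so $\Phi$ is equivariant: $\Phi(\zeta^\bfn.v)=\zeta_\triangle^\bfn.\Phi(v)$.

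Next, put $\Gamma'=\Phi^{-1}(\Gamma^2\cap V^\kappa)=\{v\in\Gamma:\kappa.v\in\Gamma\}$. The finite-index computation already performed in the proof of Lemma~\ref{dhomotorprop} yields $q\in\bN$ with $q\kappa.\Gamma\subset\Gamma$, whence $q\Gamma\subset\Gamma'\subset\Gamma$, so $\Gamma'$ has finite index in $\Gamma$; in particular $\Gamma'\subset\sigma(K)$ is a full-rank lattice commensurable with $\sigma(\cO_K)$, and each $\zeta^\bfn$ preserves $\Gamma'$. Consequently $\Phi$ descends to a topological group isomorphism $\bar\Phi\colon X':=(\bR^{r_1}\oplus\bC^{r_2})/\Gamma'\overset{\sim}{\to}V^\kappa/(\Gamma^2\cap V^\kappa)=T^\kappa$ --- the last identification being Lemma~\ref{Tkappatangent} --- which conjugates $\zeta\colon\bZ^r\curvearrowright X'$ to $\zeta_\triangle\colon\bZ^r\curvearrowright T^\kappa$. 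Since the data $(K,\Gamma',\zeta,\{\sigma_i\})$ differs from the standing data only in replacing $\Gamma$ by the commensurable lattice $\Gamma'$, it still satisfies the conclusions of Proposition~\ref{Gfield}, so Lemma~\ref{Cartanspecial}, Theorem~\ref{Berend} and Theorem~\ref{Berend'} all hold verbatim with $X'$ in place of $X$.

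It then remains to transport the four assertions along $\bar\Phi$. The conditions of Theorem~\ref{Berend} hold for $\zeta|_H$ on $X'$ by Remark~\ref{Berendfisubgp}, hence for $\zeta_\triangle|_H$ on $T^\kappa$. By Theorem~\ref{Berend'} for $X'$, any infinite $\zeta$-invariant closed subset of $X'$ equals $X'$, while any finite one consists of points with finite $\bZ^r$-orbit and therefore, again by the second assertion of Theorem~\ref{Berend'}, of torsion points; as $\bar\Phi$ is a group isomorphism (so sends torsion points to torsion points), this gives the asserted classification of $\zeta_\triangle$-invariant closed subsets of $T^\kappa$, and likewise the density of the $H$-orbit of any non-torsion $\bfx\in T^\kappa$. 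Finally, $T^\kappa$ is a positive-dimensional torus, hence uncountable, so it contains a non-torsion point; its $H$-orbit is dense in $T^\kappa$, so $T^\kappa$ is an $H$-orbit closure and thus topologically transitive under $\zeta_\triangle|_H$ by Definition~\ref{topotrans}. The only step that is not purely formal transport of structure is the verification that $\Gamma'$ is commensurable with $\sigma(\cO_K)$ --- that is, that $T^\kappa$ is modelled on the same number field $K$ --- but that is precisely the finite-index argument of Lemma~\ref{dhomotorprop}, so I anticipate no genuine obstacle.
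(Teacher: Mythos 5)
Your proposal is correct and follows essentially the same route as the paper's proof: reduce to $\kappa\in K$ via Remark~\ref{switch12}, identify $T^\kappa$ with $(\bR^{r_1}\oplus\bC^{r_2})/\Gamma'$ where $\Gamma'=\{\gamma\in\Gamma:\kappa.\gamma\in\Gamma\}$ via the parametrization $v\mapsto(v,\kappa.v)$ of $V^\kappa$ and Lemma~\ref{Tkappatangent}, observe that $(K,\zeta,\Gamma')$ again satisfies the hypotheses of Proposition~\ref{Gfield}, and then invoke Lemma~\ref{Cartanspecial}, Remark~\ref{Berendfisubgp} and Theorem~\ref{Berend}. You spell out the torsion-point and topological-transitivity deductions in more detail than the paper does (which simply says they "follow"), but the substance is the same.
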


\begin{proof}We only need to prove the lemma for $\kappa\in K$ because of Remark~\ref{switch12}.

By (\ref{Vkappacoord}), $V^\kappa$ can be identified with $\bR^{r_1}\oplus\bC^{r_2}$ where $(v,\kappa.v)$ corresponds to $v\in \bR^{r_1}\oplus\bC^{r_2}$. Lemma  \ref{Tkappatangent} implies $T^\kappa\cong(\bR^{r_1}\oplus\bC^{r_2})/\Gamma'$ where $\Gamma'=\{v\in\bR^{r_1}\oplus\bC^{r_2}:(\kappa.v,v)\in\Gamma^2\}=\{\gamma\in\Gamma:\kappa.\gamma\in\Gamma\}\subset\Gamma$. As we know $T^\kappa\cong\bT^d$, by Lemma \ref{Tkappatangent} $\Gamma'$ is a full-rank sublattice of $\Gamma$; in particular $\Gamma'\subset\sigma(K)$. Because $\zeta_\triangle^\bfn$ sends $(\kappa.v,v)$ to $\big(\kappa.(\zeta^\bfn.v),\zeta^\bfn.v\big)$, when we make the identification above, the restriction of $\zeta_\triangle$ to the invariant hyperplane $V^\kappa$ is conjugate to the action $\zeta:\bZ^r\curvearrowright\bR^{r_1}\oplus\bC^{r_2}$ in (\ref{fieldmulti}). In consequence, $\Gamma'$ is a $\zeta$-invariant lattice in $\sigma(K)$.

$K$, $\zeta$ and $\Gamma'$ satisify the conditions listed in Proposition \ref{Gfield}. Therefore the action $\zeta:\bZ^r\curvearrowright(\bR^{r_1}\oplus\bC^{r_2})/\Gamma'$ is algebraically conjugate to a faithful $\bZ^r$-Cartan action by $d$-dimensional toral automorphisms that contains a totally irreducible element. Therefore by Lemma \ref{Cartanspecial} $\zeta_\triangle:\bZ^r\curvearrowright T^\kappa$ satisfies the conditions in Theorem \ref{Berend}, and by Remark \ref{Berendfisubgp} so does the restriction $\zeta_\triangle|_H$. By Theorem \ref{Berend} the only $\zeta_\triangle$-invariant infinite closed subset of $T^\kappa$ is $T^\kappa$ itself. The density and topological transitivity statements follow from this.\end{proof}

\begin{corollary}\label{Tkappanonhyp}For all finite-index subgroup $H$ in $\bZ^r$, $i\in I$, $\epsilon>0$, $\kappa\in K\cup\{\infty\}$, and $\bfx\in T^\kappa$, \begin{equation}\overline{\{\zeta_\triangle^\bfn.\bfx:\bfn\in H,\lambda_i(\bfn)\in(-\epsilon,\epsilon)\}}=T^\kappa\end{equation} unless $\bfx$ can be written as $\bfx_0+\bfv$ where $\bfx_0\in T^\kappa$ is of torsion and $\bfv\in V_i^\kappa$.\end{corollary}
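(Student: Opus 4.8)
The plan is to deduce this from Proposition \ref{nonhyp} by transporting that result through the conjugacy constructed in the proof of Lemma \ref{TkappaBerend}; as there, and as Proposition \ref{nonhyp} demands, I assume $r\geq 3$.

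First I would reduce to the case $\kappa\in K$. The coordinate exchange $x^{(1)}\leftrightarrow x^{(2)}$ is a topological group automorphism of $X^2$ that commutes with $\zeta_\triangle$, hence preserves every $\lambda_i$; by Remark \ref{switch12} it carries $T^\infty$ onto $T^0$, and it plainly carries $V_i^\infty$ onto $V_i^0$. So the case $\kappa=\infty$ will follow from the case $\kappa=0$.

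Next, for fixed $\kappa\in K$, I would invoke the identification from the proof of Lemma \ref{TkappaBerend}: $V^\kappa\cong\bR^{r_1}\oplus\bC^{r_2}$ via $(v,\kappa.v)\mapsto v$, under which $T^\kappa\cong(\bR^{r_1}\oplus\bC^{r_2})/\Gamma'$ with $\Gamma'=\{\gamma\in\Gamma:\kappa.\gamma\in\Gamma\}$ a full-rank $\zeta$-invariant lattice in $\sigma(K)$, and under which $\zeta_\triangle|_{T^\kappa}$ is conjugate to the action $\zeta:\bZ^r\curvearrowright(\bR^{r_1}\oplus\bC^{r_2})/\Gamma'$. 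I would then make two compatibility checks: by (\ref{Vkappacoord}) this identification carries $V_i^\kappa=\{(v,\sigma_i(\kappa)v):v\in V_i\}$ isomorphically onto $V_i$; and by (\ref{Visquaremulti}) the action of $\zeta_\triangle^\bfn$ on $V_i^\kappa\subset V_i^\square$ is multiplication by $\zeta_i^\bfn$, exactly as $\zeta^\bfn$ acts on $V_i$, so the functionals $\lambda_i$ attached to the transported action coincide with the original ones. Since $K,\zeta,\Gamma'$ satisfy the hypotheses of Proposition \ref{Gfield} (this was checked in the proof of Lemma \ref{TkappaBerend}), and the proof of Proposition \ref{nonhyp} uses only conditions (C1)--(C3), which do not depend on the particular choice of $\zeta$-invariant lattice, Proposition \ref{nonhyp} applies to $\zeta:\bZ^r\curvearrowright(\bR^{r_1}\oplus\bC^{r_2})/\Gamma'$ with the same $\lambda_i$. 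That gives density of $\{\zeta^\bfn.x:\bfn\in H,\ \lambda_i(\bfn)\in(-\epsilon,\epsilon)\}$ in $(\bR^{r_1}\oplus\bC^{r_2})/\Gamma'$ unless $x=x_0+v$ with $x_0$ torsion and $v\in V_i$; pulling this back through the conjugacy, which is a group isomorphism and so matches torsion points to torsion points and $V_i$ to $V_i^\kappa$, yields exactly the corollary.

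The deduction is essentially formal, and the one place I would be careful is precisely this last bookkeeping: Lemma \ref{TkappaBerend} is phrased only as a topological conjugacy of $\bZ^r$-actions, whereas here I need it to be compatible with the finer data --- the functionals $\lambda_i$ and the subspaces $V_i^\kappa$ --- that enter Proposition \ref{nonhyp}. The two compatibility checks above are what supply this, and I do not anticipate any other difficulty.
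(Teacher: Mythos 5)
Your proof is correct and takes essentially the same route as the paper: the paper's (one-line) proof likewise applies Proposition \ref{nonhyp} to the conjugated action $\zeta:\bZ^r\curvearrowright(\bR^{r_1}\oplus\bC^{r_2})/\Gamma'$ obtained from the identification $V^\kappa\cong\bR^{r_1}\oplus\bC^{r_2}$ in the proof of Lemma \ref{TkappaBerend}. You have simply made explicit the bookkeeping the paper leaves implicit --- that the conjugacy carries $V_i^\kappa$ to $V_i$, preserves the functionals $\lambda_i$ (which depend only on $K$ and $\zeta$, not on the lattice), and that the $r\geq 3$ hypothesis of Proposition \ref{nonhyp} is needed and in force.
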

\begin{proof}Following the proof of the previous lemma, we obtain the corollary by applying Proposition \ref{nonhyp} to the multiplicative action $\zeta:\bZ^r\curvearrowright(\bR^{r_1}\oplus\bC^{r_2})/\Gamma'$, which is conjugate to the restriction of $\zeta_\triangle$ to $T^\kappa$.\end{proof}

\subsection{Homogeneous invariant subsets in $X^2$}

\begin{definition}\label{dhomoset}A {\bf $d$-dimensional homogeneous $\zeta_\triangle$-invariant subset} in $X^2$ is a subset of the form \begin{equation}\label{dhomoform}L=\{\zeta_\triangle^\bfn.\bfz':\ \bfn\in\bZ^r,\bfz'\in \bfz+T^\kappa\},\end{equation} where $\bfz$ is a given torsion point in $X^2$ and $\kappa$ is a given element in $K\cup\{\infty\}$.
\end{definition}

Such a subset is said to be ``homogeneous'' because it is a disjoint union of several  parallel subtori.

\begin{proposition}\label{dhomoprop}Let $L\subset X^2$ be a $d$-dimensional homogeneous $\zeta_\triangle$-invariant subset of the form (\ref{dhomoform}), then:
\begin{enumerate}[label=\textup{(\arabic*)},leftmargin=*]
\item $L$ is a finite disjoint union $\bigsqcup_{t=1}^sL_t$ where each component $L_t$ is a translate of the invariant subtorus $T^\kappa$ by some torsion element of $X^2$, $\kappa$ being the same as in (\ref{dhomoform});

\item $\Stab_{\zeta_\triangle}(L_1)=\Stab_{\zeta_\triangle}(L_2)=\cdots=\Stab_{\zeta_\triangle}(L_d)$, which is a subgroup of index $s$ in $\bZ^r$;

\item $L$ is invariant and topologically transitive under the $\bZ^r$-action $\zeta_\triangle$. For any point $\bfx\in L$ that is not a torsion element of $X^2$, \[\overline{\{\zeta_\triangle^\bfn.\bfx:\bfn\in\bZ^r\}}=L.\]
\end{enumerate}
\end{proposition}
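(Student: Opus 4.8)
The plan is to analyze the set $L$ through its relationship with the stabilizer subgroup $H=\Stab_{\zeta_\triangle}(\bfz+T^\kappa)$, where $\bfz$ is the torsion point and $\kappa\in K\cup\{\infty\}$ are as in~(\ref{dhomoform}). First I would establish that $H$ has finite index in $\bZ^r$: since $\bfz$ is a torsion point, there is $m\in\bN$ with $m\bfz=\bfzero$, so $\zeta_\triangle^\bfn.(\bfz+T^\kappa)=\zeta_\triangle^\bfn.\bfz+T^\kappa$ (using that $T^\kappa$ is $\zeta_\triangle$-invariant) is again a translate of $T^\kappa$ by a torsion point of order dividing $m$; there are only finitely many such translates, because $\widehat{X^2}/(\widehat{X^2})^\kappa\cong\bZ^d$ has only finitely many elements of order dividing $m$, so the orbit of $\bfz+T^\kappa$ under $\zeta_\triangle$ is finite and $H$ has finite index. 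Choosing coset representatives $\bfn_1,\dots,\bfn_s$ for $\bZ^r/H$ and setting $L_t=\zeta_\triangle^{\bfn_t}.(\bfz+T^\kappa)$, we get $L=\bigsqcup_{t=1}^s L_t$ as a disjoint union (disjointness because distinct translates of the subtorus $T^\kappa$ are automatically disjoint), and each $L_t$ is a translate of $T^\kappa$ by the torsion point $\zeta_\triangle^{\bfn_t}.\bfz$; this gives part~(1).

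For part~(2), the key observation is that $\zeta_\triangle^{\bfn_t}$ conjugates $\Stab_{\zeta_\triangle}(L_1)$ to $\Stab_{\zeta_\triangle}(L_t)$ — but since $\bZ^r$ is abelian, conjugation is trivial, so $\Stab_{\zeta_\triangle}(L_t)=\Stab_{\zeta_\triangle}(L_1)=H$ for every $t$. One should also check that $\Stab_{\zeta_\triangle}(L_t)$ defined via inclusion $\zeta_\triangle^\bfn.L_t\subset L_t$ actually coincides with the setwise stabilizer: because $\zeta_\triangle^\bfn$ is an automorphism of $X^2$ mapping the subtorus $T^\kappa$ onto itself and permuting the finite set of its torsion translates, $\zeta_\triangle^\bfn.L_t\subset L_t$ forces equality. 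The index of $H$ is $|\bZ^r/H|=s$ by construction.

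For part~(3), topological transitivity of $L$ under $\zeta_\triangle$ will follow once I show that $L=\overline{\{\zeta_\triangle^\bfn.\bfx:\bfn\in\bZ^r\}}$ for any non-torsion $\bfx\in L$, by the orbit-closure characterization recalled in Definition~\ref{topotrans}. Given such an $\bfx$, it lies in some component $L_t=\bfw_t+T^\kappa$ with $\bfw_t$ torsion; then $\bfx-\bfw_t\in T^\kappa$ is non-torsion (since $\bfw_t$ is torsion and $\bfx$ is not), so by the density statement in Lemma~\ref{TkappaBerend} applied to the finite-index subgroup $H$, the $H$-orbit of $\bfx-\bfw_t$ is dense in $T^\kappa$. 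Since $H$ stabilizes $L_t$ and acts on it by translation of the $T^\kappa$-coordinate composed with the $\zeta_\triangle$-action (more precisely $\zeta_\triangle^\bfn.\bfx=\zeta_\triangle^\bfn.\bfw_t+\zeta_\triangle^\bfn.(\bfx-\bfw_t)$ with $\zeta_\triangle^\bfn.\bfw_t$ ranging over the finitely many torsion points of $L_t$), the $H$-orbit of $\bfx$ is dense in a nonempty union of translates of $T^\kappa$ covering all of $L_t$; hence $\overline{\{\zeta_\triangle^\bfn.\bfx:\bfn\in H\}}=L_t$. Applying the full coset representatives $\bfn_1,\dots,\bfn_s$ then sweeps out all of $L=\bigsqcup_t L_t$, giving $\overline{\{\zeta_\triangle^\bfn.\bfx:\bfn\in\bZ^r\}}=L$.

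The main obstacle I anticipate is the bookkeeping in part~(3): one must be careful that the $H$-action on a single component $L_t$, which mixes the genuine $\zeta_\triangle$-rotation on $T^\kappa$ with the permutation of torsion points within $L_t$, really does have dense orbits on all of $L_t$ and not merely on one sub-translate. This is handled by noting that $H$ itself may further decompose $L_t$ into $H$-orbits of sub-translates, but the density of the $H$-orbit of $\bfx-\bfw_t$ in $T^\kappa$ (Lemma~\ref{TkappaBerend}) forces each of these to be dense in its sub-translate, and the finitely many torsion translates $\zeta_\triangle^\bfn.\bfw_t$, $\bfn\in H$, exhaust exactly those sub-translates whose union is $L_t$ — so no piece is missed. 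Everything else is essentially a finite-index bookkeeping argument built on the already-established Lemma~\ref{TkappaBerend}.
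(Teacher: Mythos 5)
Your overall plan — decompose $L$ into cosets indexed by $\bZ^r$ modulo the finite-index stabilizer $H=\Stab_{\zeta_\triangle}(\bfz+T^\kappa)$, then apply the Berend-type density of Lemma~\ref{TkappaBerend} on each component — matches the paper's approach, and parts~(1) and~(2) are otherwise fine. Two details need repair, though. In part~(1), the finiteness argument cites $\widehat{X^2}/(\widehat{X^2})^\kappa\cong\bZ^d$: but that group is the dual of $T^\kappa$ itself, not of the quotient $X^2/T^\kappa$ that parametrizes the translates, and being torsion-free it has no nontrivial elements of order dividing $m$, so the argument as written proves nothing. The correct group to invoke is $\widehat{X^2/T^\kappa}=(T^\kappa)^\bot=(\widehat{X^2})^\kappa\cong\bZ^d$, making $X^2/T^\kappa$ a $d$-torus with exactly $m^d$ points of order dividing $m$; or, more simply, note as the paper does that $\Stab_{\zeta_\triangle}(\bfz)\subset H$ and is already of finite index. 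In part~(3), you correctly identify the coupling between the torsion piece $\zeta_\triangle^\bfn.\bfw_t$ and the linear piece $\zeta_\triangle^\bfn.(\bfx-\bfw_t)$, but your resolution — that density of the full $H$-orbit of $\bfx-\bfw_t$ in $T^\kappa$ ``forces each of these to be dense in its sub-translate'' — is not a valid implication: density of an orbit over a group does not automatically pass down to density of orbits over a finite-index subgroup or its cosets. You must instead invoke Lemma~\ref{TkappaBerend} again with the smaller subgroup. The paper does exactly this by passing to $H_0=\Stab_{\zeta_\triangle}(\zeta^{\bfn_t}.\bfz)\leq H$, which fixes the torsion base point; on $H_0$ the translation cocycle $\bfn\mapsto\zeta_\triangle^\bfn.\bfw_t-\bfw_t$ vanishes, so $\zeta_\triangle^\bfn.\bfx=\zeta^{\bfn_t}.\bfz+\zeta_\triangle^\bfn.(\bfx-\zeta^{\bfn_t}.\bfz)$ for $\bfn\in H_0$, and Lemma~\ref{TkappaBerend} applied to the finite-index subgroup $H_0$ immediately gives $\overline{\{\zeta_\triangle^\bfn.\bfx:\bfn\in H_0\}}=L_t$. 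That move is the clean way to close the gap you anticipated.
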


\begin{proof} (1) Since $T^\kappa$ is $\zeta_\triangle$-invariant, the stabilizer $H=\Stab_{\zeta_\triangle}(\bfz+T^\kappa)$ contains $H_0=\Stab_{\zeta_\triangle}(\bfz)$, which is a finite-index subgroup of $\bZ^r$. It follows that $H$ itself, which is obviously a semigroup, must be a subgroup of finite index.

Suppose the finite quotient group $\bZ^r/H$ consists of $s$ different cosets  $\bfn_1+H,\cdots,\bfn_s+H$. Then for any $t$ and all $\bfn\in\bfn_t+H$,  $\zeta_\triangle^\bfn.(\bfz+T^\kappa)=\zeta_\triangle^{\bfn_t}.\bfz+\zeta_\triangle^{\bfn_t}.T^\kappa=\zeta_\triangle^{\bfn_t}.\bfz+T^\kappa$ is the translate of $T_k$ by another torsion element $\zeta_\triangle^{\bfn_t}.\bfz$, which we denote by $L_t$. Moreover the $L_t$'s are all different, since if $\zeta_\triangle^{\bfn_t}.(\bfz+T^\kappa)$ and $\zeta_\triangle^{\bfn_{t'}}.(\bfz+T^\kappa)$ are equal, then $\zeta_\triangle^{\bfn_t-\bfn_{t'}}.(\bfz+T^\kappa)=(\bfz+T^\kappa)$ and thus $\bfn_t-\bfn_{t'}\in H$, which contradicts the assumption that $\bfn_t+H\neq\bfn_{t'}+H$.
So \begin{equation}L=\bigcup_{t=1}^s\bigcup_{\bfn\in\bfn_t+H}\big(\zeta_\triangle^\bfn.(\bfz+T^\kappa)\big)=\bigcup_{t=1}^s\bigcup_{\bfn\in\bfn_t+H}L_t=\bigsqcup_{t=1}^sL_t.\end{equation} This proves part (1).\\

(2) The coset $\zeta_\triangle^\bfn.L_{t'}=\zeta_\triangle^\bfn.\zeta_\triangle^{\bfn_{t'}}.(\bfz+T^\kappa)=\zeta_\triangle^{\bfn+\bfn_{t'}}.(\bfz+T^\kappa)$ is equal to $L_t$ if and only if $\bfn\in\bfn_t-\bfn_{t'}+H$. Hence the action $\zeta_\triangle^\bfn$ stabilizes $L_t$ if and only if $\bfn\in H$, i.e. for all $t=1,\cdots,s$ the stabilizer $\Stab_{\zeta_\triangle}(L_t)$ is exactly $H$, which is of index $s$ in $\bZ^r$.\\

(3) The invariance is clear, and since there certainly are non-torsion points in $L$ topological transitivity follows from the second part of~(3). As $L_1,\cdots,L_s$ are permuted transitively by the group action $\zeta_\triangle$, it suffices to show for any $L_t$ and any $\bfx \in L_t$ which is not a torsion element, $\overline{\{ \zeta_\triangle^\bfn.\bfx:\bfn \in H \}}=L_t$.

Recall that $L_t=\zeta^{\bfn_t}.\bfz+T^\kappa$, and that
\begin{equation*} H_0=\Stab_{\zeta_\triangle}(\bfz)=\Stab_{\zeta_\triangle}(\zeta^{\bfn_t}.\bfz)<H \end{equation*}
and has finite index in $\bZ^r$. Observe $\bfx-\zeta^{\bfn_t}.\bfz$ is not a torsion element since otherwise so is $\bfx$ (as $\zeta^{\bfn_t}.\bfz$ is a torsion point). By Lemma \ref{TkappaBerend}, $\overline{\{ \zeta_\triangle^\bfn.(\bfx-\zeta^{\bfn_t}.\bfz):\bfn \in H_0 \}}=T^\kappa$.Since for $\bfn \in H_0$, $\zeta_\triangle^\bfn.(\bfx-\zeta^{\bfn_t}.\bfz)=\zeta_\triangle^\bfn.\bfx-\zeta^{\bfn_t}.\bfz$, we conclude that \[\overline{\{ \zeta_\triangle^\bfn.\bfx:\bfn \in H_0 \}}=\zeta^{\bfn_t}.\bfz+T^\kappa=L_t.\] This completes the proof.
\end{proof}

Recall that a $d$-dimensional homogeneous $\zeta_\triangle$-invariant subset is determined (non-uniquely) by a torsion point $\bfz$ and a slope $\kappa\in K\cup\{\infty\}$.

\begin{lemma}\label{finitedhomo}For all $\epsilon>0$, there are only finitely many $d$-dimensional homogeneous $\zeta_\triangle$-invariant subsets that are not $\epsilon$-dense in $X^2$.\end{lemma}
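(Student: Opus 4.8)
The plan is to show that the two parameters defining a $d$-dimensional homogeneous invariant set --- the torsion point $\bfz$ and the slope $\kappa \in K \cup \{\infty\}$ --- both range over finite sets once we insist that the resulting set $L = L(\bfz,\kappa)$ fail to be $\epsilon$-dense. First I would dispose of the torsion coordinate: by Proposition \ref{dhomoprop}(1), $L$ contains the subtorus $\bfz + T^\kappa$, and translating by a torsion point of $X^2$ shifts $L$ by at most the diameter of the (finite, since $\bfz$ ranges over an a priori unbounded torsion set — here one restricts to $\bfz$ of bounded order) ... more carefully: any homogeneous $L$ equals a union of translates of $T^\kappa$ by torsion points, and two choices of $\bfz$ differing by an element of $T^\kappa$ give the same $L$; so it suffices to bound, for fixed $\kappa$, the number of $L$'s that are not $\epsilon$-dense, and separately to bound the number of relevant $\kappa$. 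The key point is that \emph{a single} non-$\epsilon$-dense translate $\bfz+T^\kappa$ already forces $\kappa$ into a finite set, and then for each such $\kappa$ the torsion points $\bfz$ modulo $T^\kappa$ that yield a non-$\epsilon$-dense union form a finite set because $X^2/T^\kappa$ is a $d$-torus and only finitely many of its torsion cosets can be ``clustered'' enough.

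The heart of the argument is therefore: \emph{there is $\delta = \delta(\epsilon) > 0$ such that if $T^\kappa$ fails to be $\delta$-dense in $X^2$ then $\kappa$ lies in a finite subset of $K \cup \{\infty\}$.} I would prove the contrapositive via compactness. Suppose $\kappa_n \in K \cup \{\infty\}$ are distinct with $T^{\kappa_n}$ not $\delta$-dense; by Lemma \ref{Tkappatangent} each $T^{\kappa_n} = \pi_\triangle(V^{\kappa_n})$, and $V^{\kappa_n}$ is a $d$-dimensional subspace of $(\bR^{r_1}\oplus\bC^{r_2})^2$, i.e. a point in a compact Grassmannian; pass to a subsequence so $V^{\kappa_n} \to V_\infty$. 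The failure of $\delta$-density is a closed condition, so $\pi_\triangle(V_\infty)$ is not $\delta$-dense either, hence $\pi_\triangle(V_\infty)$ is not all of $X^2$; but $\pi_\triangle(V_\infty)$ is a closed connected subgroup, so it has dimension $\le d$, and by a lower-semicontinuity/rank argument its image cannot be a proper subtorus of dimension $d$ unless $V_\infty$ is rational in a suitable sense --- and crucially $\pi_\triangle(V^{\kappa_n})$ is $\zeta_\triangle$-invariant for every $n$, so $V_\infty$ is $\zeta_\triangle$-invariant, which by the eigenspace decomposition (\ref{Visquaresum}) and (\ref{Visquaremulti}) forces $V_\infty = \bigoplus_{i\in I} W_i$ with each $W_i \subseteq V_i^\square$ a $\zeta_\triangle$-invariant (hence, since $V_i^\square \cong \bF_i^2$ and $\zeta_\triangle$ acts by the scalar $\zeta_i^\bfn$, \emph{any} $\bF_i$-subspace) subspace. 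Comparing dimensions with $\sum_i d_i = d$ and using that $V_\infty$ is a limit of the graphs $V^{\kappa_n}$, each of which meets $V_i^\square$ in the line $V_i^{\kappa_n} = \{(v,\sigma_i(\kappa_n)v)\}$, one sees $W_i$ must again be a graph-line, i.e. $V_\infty = V^{\kappa_\infty}$ for some $\kappa_\infty \in K \cup \{\infty\}$; but $T^{\kappa_\infty}$ is a genuine $d$-subtorus by Lemma \ref{dhomotorprop}, hence $\epsilon$-dense being open... no --- rather, $T^{\kappa_\infty}$ is a $d$-dimensional subtorus, proper in $X^2$, so it is \emph{not} $\delta$-dense, and this is not yet a contradiction. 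The actual contradiction is extracted differently: distinct $\kappa_n$ give, by Corollary \ref{Vkappatransversal}, $V^{\kappa_n} \oplus V^{\kappa_m} = (\bR^{r_1}\oplus\bC^{r_2})^2$ for $n \ne m$, and in particular the $V^{\kappa_n}$ are pairwise distinct points of the Grassmannian; but there are only finitely many $\zeta_\triangle$-invariant $d$-subspaces that arise as such graphs with $T^\kappa$ proper --- this is false, there are infinitely many $\kappa$. So the correct statement must use $\delta$-density quantitatively: $T^\kappa$ is $\delta$-dense as soon as the covolume-type quantity $\mathrm{covol}(\Gamma^2 \cap V^\kappa)$ in $V^\kappa$ is bounded, and this covolume tends to infinity as $\kappa$ ranges to infinity in $K$ (the lattice $\Gamma' = \{\gamma \in \Gamma : \kappa.\gamma \in \Gamma\}$ of Lemma \ref{TkappaBerend} shrinks as the denominator of $\kappa$ grows), so only finitely many $\kappa$ have $T^\kappa$ not $\delta$-dense.

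So the structure of the proof is: (i) choose $\delta = \epsilon/2$; (ii) show via the covolume estimate on $\Gamma' = \{\gamma\in\Gamma : \kappa.\gamma\in\Gamma\}$ --- equivalently, the index $[\Gamma : \Gamma']$ grows without bound as $\kappa$ runs over $K$ outside a finite set, using that $\Gamma$ is commensurable with $\sigma(\mathcal{O}_K)$ and $\kappa.\mathcal{O}_K \cap \mathcal{O}_K$ has index comparable to the norm of the denominator ideal of $\kappa$ --- that $T^\kappa$ is $\delta$-dense for all but finitely many $\kappa$, so only a finite set $S \subseteq K \cup\{\infty\}$ of slopes is relevant; (iii) for each $\kappa \in S$, the quotient $X^2/T^\kappa$ is a fixed $d$-torus, and if $L = \bigcup_t (\bfz + \bfn_t.T^\kappa)$ is not $\epsilon$-dense then its image in $X^2/T^\kappa$, a finite set of torsion points, must miss every $\epsilon$-ball, hence (since $\epsilon$-balls in the quotient cover it) this finite torsion set has bounded cardinality \emph{and} the torsion points have bounded order, leaving finitely many possibilities for $L$; (iv) combine over the finitely many $\kappa \in S$. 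The main obstacle I anticipate is step (ii): making precise and proving that $[\Gamma : \Gamma']\to\infty$ as $\kappa \to \infty$, and translating that index into a lower bound on the mesh of $T^\kappa$ forcing $\delta$-density --- this is where the number-theoretic input (Lemma \ref{XdualK}, commensurability of $\Gamma$ with $\sigma(\mathcal{O}_K)$) and a lattice-point/Minkowski-type covering argument must be combined, and it is the only genuinely non-formal part; steps (i), (iii), (iv) are routine compactness and counting.
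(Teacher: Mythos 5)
Your Step~(ii) is aimed at the right quantity but uses the wrong tool, and your Step~(iii) has a genuine gap that you do not seem to have noticed.

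For Step~(ii): the covolume of $\Gamma^2 \cap V^\kappa$ does not by itself control the density of $T^\kappa$ in $X^2$. The relevant invariant is the \emph{covering radius} of the quotient torus $X^2/T^\kappa$, which for a $d$-dimensional lattice is governed not by its covolume but by its successive minima --- a lattice can have arbitrarily small covolume while still having very large covering radius (long thin fundamental domain). The clean route is via the dual lattice: $T^\kappa$ fails to be $\delta$-dense if and only if the annihilator $(\widehat{X^2})^\kappa = \widehat{X^2}\cap F^\kappa$ contains a nonzero character of norm $O(1/\delta)$, and since $\widehat{X^2}$ is a \emph{fixed} discrete group there are only finitely many candidates for such a short character, each of which (by Lemma~\ref{XdualK} and the linear relation $\xi^{(1)}+\kappa.\xi^{(2)}=0$) determines $\kappa$ uniquely. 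This is exactly what the paper does, packaged as an appeal to \cite{B83}*{Lemma 4.7}, which asserts that if a sequence of closed subgroups does not converge to the whole group in the Hausdorff metric then some fixed nonzero character annihilates infinitely many of them. Your index $[\Gamma:\Gamma']$ does grow as the denominator of $\kappa$ grows, and this captures the correct number-theoretic input, but you still need to pass to the dual to turn it into a density statement.

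Step~(iii) is where the proposal actually breaks. You claim that for a fixed $\kappa$, the image $F\subset X^2/T^\kappa$ of a non-$\epsilon$-dense homogeneous set $L$ is a finite set of torsion points of ``bounded cardinality and bounded order, leaving finitely many possibilities.'' This is not true: $X^2/T^\kappa$ is a $d$-torus carrying a Cartan-type $\bZ^r$-action, and it has \emph{infinitely many} finite invariant sets (the orbits of all its torsion points); each of them, being finite, fails to be $\epsilon$-dense for any $\epsilon$ less than half the diameter. Nothing in your argument bounds the order of $\bar\bfz$. The paper's Step~2 handles this with a different idea: supposing infinitely many non-$\epsilon$-dense $L_h$ with slope $\kappa$, all avoiding a fixed ball $B_{\epsilon/2}(\bfy)$, it intersects every component $L_{h,t}$ with the transversal torus $T^\infty$ (Corollary~\ref{kappacapinfty} guarantees this intersection is nonempty), obtains in this way an \emph{infinite} $\zeta_\triangle$-invariant subset of $T^\infty$, invokes Lemma~\ref{TkappaBerend} (Berend's theorem on $T^\infty$) to conclude that $\bigcup_h(L_h\cap T^\infty)$ is dense in $T^\infty$, and then transports a point of this set by an element of $T^\kappa$ into the forbidden ball to reach a contradiction. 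Some argument of this kind --- aggregating the infinitely many candidates into a single infinite invariant set to which a rigidity theorem applies --- is needed, and is not present in your Step~(iii).
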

\begin{proof}[Proof:]
{\bf Step 1.} We claim first that there are only finitely many $\kappa\in K$ such that $T^\kappa$ is not $\epsilon$-dense in $X^2$. Suppose the opposite, then there is a sequence of $\kappa_h\in K\cup\{\infty\}$, all different from each other, and points $\bfy_h\in X^2$ such that $T^{\kappa_h}\cap B_\epsilon(\bfy_h)=\emptyset,\forall h$. By passing to a subsequence we may assume that $\bfy_h$ is within distance $\frac\epsilon2$ from some point $\bfy\in X^2$ for all $h$. Then $B_{\frac\epsilon2}(\bfy)\subset B_\epsilon(\bfy_h)$ and therefore by assumption $T^{\kappa_h}\cap B_{\frac\epsilon2}(\bfy)=\emptyset,\forall h$. So the subsets $T^{\kappa_h}$ do not converge to $X^2$ in the Hausdorff metric as $h$ tends to $\infty$. Because $T^{\kappa_h}$ is a closed subgroup of $X^2$, by \cite{B83}*{Lemma 4.7} there exists a non-zero character $\bfxi\in\widehat{X^2}$ which lies in the annihilator of $T^{\kappa_h}$ for infinitely many $h$'s. In other words, there are infinitely many different slopes $\kappa_h$ such that $\bfxi\in(\widehat{X^2})^{\kappa_h}$. Without loss of generality assume all these $\kappa_h$'s are in $K$.

By Lemma \ref{XdualK}, $\bfxi=(\sigma(\theta^{(1)}),\sigma(\theta^{(2)}))$ for some $\theta^{(1)},\theta^{(2)}\in K$.  By definition of $(\widehat{X^2})^{\kappa_h}$, $\sigma(\theta^{(1)})+\kappa_h.\sigma(\theta^{(2)})=0$ for infinitely many $\kappa_h$'s, hence $\theta^{(1)}+\kappa_h\theta^{(2)}=0$. In order to make this happen for more than one $\kappa_h\in K$, $\theta^{(1)}$ and $\theta^{(2)}$ must be both $0$; hence $\bfxi=\bfzero$ and a contradiction is obtained. This shows the claim.\\

\noindent{\bf Step 2.} Notice that if $T^\kappa$ is $\epsilon$-dense, then so is any translate of it. Therefore given the claim in Step 1, to prove the lemma it suffices to show that for any given $\epsilon>0$ and $T^\kappa$, there are only finitely many  $d$-dimensional homogeneous $\zeta_\triangle$-invariant subsets that are not $\epsilon$ dense and decompose into a disjoint union of translates of $T^\kappa$. Again, we may assume $\kappa\in K$ following Remark \ref{switch12}.

Suppose the opposite is true, namely, there are infinitely many $d$-dimensional homogeneous $\zeta_\triangle$-invariant subsets consisting of cosets of $T^\kappa$ that are not $\epsilon$-dense. By the same argument as in Step 1, we may find a sequence of distinct $d$-dimensional homogeneous $\zeta_\triangle$-invariant subsets $L_h=\bigsqcup_{t=1}^{s_h}L_{h,t}$ that avoid a fixed ball $B_{\frac\epsilon2}(\bfy)$ of radius $\frac\epsilon2$, where each $L_{h,t}$ is a translate of $T^\kappa$. We remark that the $L_{h,t}$'s are all different, since if $L_{h,t}=L_{h',t'}$ for $h\neq h'$ then by topological transitivity of $L_h$ and $L_{h'}$ this would imply $L_h=L_{h'}$ which contradicts the way the $L_h$'s are chosen.

By Corollary \ref{kappacapinfty}, for each $L_{h,t}$ one can find a point $\bfx_{h,t}\in L_{h,t}\cap T^\infty$. Since $L_{h,t}=\bfx_{h,t}+T^\kappa$, the $\bfx_{h,t}$'s are all different and thus $\bigcup_{h=1}^\infty(L_h\cap T^\infty)$ is an infinite set. However by invariance of $L_h$ and $T^\infty$, this set is a $\zeta_\triangle$-invariant subset of $T^\infty$. Hence it follows from Lemma \ref{TkappaBerend} that $\bigcup_{h=1}^\infty(L_h\cap T^\infty)$ is dense in $T^\infty$.

Consider the center $\bfy$ of the ball $B_{\frac\epsilon2}(\bfy)$, by construction $B_{\frac\epsilon2}(\bfy)$ is disjoint from $L_h$ for all $h$. But by Corollary \ref{kappacapinfty} we can fix a point $\bfx \in(\bfy+T^\kappa)\cap T^\infty$. By density of $\bigcup_{h=1}^\infty(L_h\cap T^\infty)$, there are $h\in\bN$ and $\bfx'\in L_h\cap T^\infty$ such that $\|\bfx'-\bfx\|\leq\frac\epsilon2$. Then $\bfx'+(\bfy-\bfx)\in B_{\frac\epsilon2}(\bfy)$ and thus $\bfx'+(\bfy-\bfx)\notin L_h$. But on the other hand $\bfy-\bfx\in T^\kappa$ and $\bfx'+(\bfy-
\bfx)\in\bfx'+T^\kappa\subset L_h$, a contradiction. The proof is completed.
\end{proof}

\subsection{Restatement of the main results}
Finally we can restate the main result of this paper in the $X^2$ setting:

\begin{theorem}\label{CartanjoiningX2} Suppose $K$, $\Gamma$, $X$ and $\zeta$ satisfy the conditions in Proposition \ref{Gfield} and $\zeta_\triangle$ is constructed by (\ref{X2multi}), then:
\begin{enumerate}[leftmargin=*,label=\textup{(\arabic*)}]
\item If $r \geq 3$ and an infinite proper closed subset $A$ of $X^2$ is invariant and topologically transitive under the $\bZ^r$-action $\zeta_\triangle$, then $A$ is a $d$-dimensional homogeneous $\zeta_\triangle$-invariant subset. Any closed $\zeta _ \triangle$-invariant subset of $X^2$ is a finite union of topologically transitive closed  invariant subsets of $X^2$.

\item If $r=2$, then there exist a point $\bfx \in X^2$ and three different $d$-dimensional homogeneous $\zeta_\triangle$-invariant subtori $T^{\kappa_1},T^{\kappa_2},T^{\kappa_3}\subset(\bT^d)^2$  such that the $\zeta_\triangle$-orbit closure of $\bfx$ is a disjoint union:
\begin{equation}\label{CartanjoiningX2eq}\overline{\{ \zeta_\triangle^\bfn.\bfx:\bfn \in\bZ^r \}}=\{ \zeta_\triangle^\bfn.\bfx:\bfn \in\bZ^r \} \sqcup \big(\bigcup_{i=1}^3T^{\kappa_i}\big).\end{equation}
\end{enumerate}
\end{theorem}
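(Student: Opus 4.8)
We sketch the plan of proof; the details occupy Sections~\ref{containhomo}--\ref{rank2}.

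\emph{Part~(1), stage one} (valid for all $r\geq 2$): every infinite closed $\zeta_\triangle$-invariant $A\subseteq X^2$ contains a $d$-dimensional homogeneous $\zeta_\triangle$-invariant subset. The projections $\pi_k(A)\subseteq X$ are closed and $\zeta$-invariant, so by Theorem~\ref{Berend'} each is a finite set of torsion points or all of $X$; if $\pi_2(A)$ (say) is finite then applying Theorem~\ref{Berend'} again on the finitely many fibres shows $A$ is already a finite union of torsion translates of $T^\infty$, and otherwise $\pi_1(A)=\pi_2(A)=X$. In the latter case pick $\bfx\in A$ with dense orbit, use a pigeonhole/recurrence argument to produce orbit points at distance $o(1)$ from one another, lift their difference to $(\bR^{r_1}\oplus\bC^{r_2})^2=\bigoplus_{i\in I}V_i^\square$, and rescale it by elements of the action; passing to a limit yields a nonzero vector $\bfw$ and a point $\bfu\in A$ with $\bfu+[-\delta,\delta]\cdot\bfw\subseteq A$. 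Taking the $\bZ^r$-orbit closure of this segment inside $A$ and using that the closed connected $\zeta_\triangle$-invariant subgroups of $X^2$ are exactly $\{0\}$, the subtori $T^\kappa$ ($\kappa\in K\cup\{\infty\}$) of Definition~\ref{Vkappa}, and $X^2$, one gets $A\supseteq\bfu'+T^\kappa$ for some $\bfu'$ and $\kappa$ (unless $A=X^2$), hence a $d$-dimensional homogeneous invariant subset by Proposition~\ref{dhomoprop}.

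\emph{Part~(1), stage two} ($r\geq 3$): let $A$ be infinite, topologically transitive and proper. By stage one it contains a $d$-dimensional homogeneous invariant subset. The key claim is that whenever $A$ contains a finite union $L$ of such subsets with $L\subsetneq A$, it contains one not inside $L$; iterating gives infinitely many distinct $d$-dimensional homogeneous invariant subsets inside $A$, contradicting Lemma~\ref{finitedhomo} (since $A$, being proper, is not $\epsilon$-dense for some $\epsilon>0$, and neither is any homogeneous subset of it), so $A$ is a single such subset. To prove the key claim, choose $\bfx$ in the open set $A\setminus L$ with dense orbit; by the controlled recurrence Lemma~\ref{selfreturn}, move $\bfx$ by the action while keeping it at a definite distance from $L$, rerun the pigeonhole/rescaling step of stage one to get a segment $\bfu+[-\delta,\delta]\cdot\bfw\subseteq A$ with $\bfu\notin L$, and then --- this is where $r\geq 3$ is essential --- apply Proposition~\ref{nonhyp} (equivalently Corollary~\ref{Tkappanonhyp}) via elements $\bfn$ with $\lambda_i(\bfn)$ small for the appropriate $i$: these barely move $\bfw$ while sweeping $\bfu$ densely over a $d$-dimensional homogeneous invariant subtorus coset, which lies in $A$ but, having been reached away from $L$, is not contained in $L$. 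The last sentence of~(1) follows by a Noetherian argument: every non-torsion point of a proper closed invariant set lies in one of its finitely many (Lemma~\ref{finitedhomo}) $d$-dimensional homogeneous invariant subsets --- its orbit closure being infinite and, if proper, homogeneous --- and the residual torsion points form finitely many orbits, since an infinite closed invariant set of torsion points would by stage one contain a $d$-dimensional homogeneous set and hence non-torsion points; each piece is topologically transitive by Proposition~\ref{dhomoprop}.

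\emph{Part~(2)}: here $r_1+r_2=3$, so $I=\{1,2,3\}$ and $V_1\oplus V_2\oplus V_3=\bR^{r_1}\oplus\bC^{r_2}$. For a generic $\gamma\in\Gamma$ write $\gamma=\gamma_1+\gamma_2+\gamma_3$ with $\gamma_i\in V_i$ and set $\bfx=\big(\pi(\gamma_1),\pi(\gamma_2)\big)$; then $x^{(1)}$ lies in the leaf $\pi(V_1)$, $x^{(2)}$ in $\pi(V_2)$, and $x^{(1)}+x^{(2)}=\pi(-\gamma_3)$ in $\pi(V_3)$, and for generic $\gamma$ all three are nonzero and non-torsion. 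Take $\kappa_1=\infty$, $\kappa_2=0$, $\kappa_3=-1$, so that $T^{\kappa_1},T^{\kappa_2},T^{\kappa_3}$ are the loci $\{x^{(1)}=0\}$, $\{x^{(2)}=0\}$, $\{x^{(1)}+x^{(2)}=0\}$; these are distinct, pairwise transverse by Corollary~\ref{Vkappatransversal}, pairwise meet in finite sets by Corollary~\ref{finiteTkappainter}, and the orbit of $\bfx$ is disjoint from all three. The upper bound $\overline{\{\zeta_\triangle^\bfn.\bfx:\bfn\in\bZ^r\}}\subseteq\{\zeta_\triangle^\bfn.\bfx:\bfn\in\bZ^r\}\cup T^{\kappa_1}\cup T^{\kappa_2}\cup T^{\kappa_3}$ is easy: any accumulation point is a limit along some $\bfn_k\to\infty$, and after passing to a subsequence with $\cL(\bfn_k)/|\cL(\bfn_k)|\to w^*\in W$ the relation $\sum_i d_iw^*_i=0$ forces $w^*_j<0$ for some $j\in\{1,2,3\}$, so $\lambda_j(\bfn_k)\to-\infty$, the $j$-th expression is driven to $0$, and the limit lies in $T^{\kappa_j}$. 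The matching lower bound is the crux: $\overline{\{\zeta_\triangle^\bfn.\bfx:\bfn\in\bZ^r\}}\cap T^{\kappa_j}$ is a closed $\zeta_\triangle$-invariant subset of $T^{\kappa_j}$, hence by Lemma~\ref{TkappaBerend} either $T^{\kappa_j}$ or a finite torsion set, and one must rule out the latter --- i.e.\ show the orbit accumulates on an infinite subset of each $T^{\kappa_j}$ --- via a careful analysis of the orbit along sequences contracting one expression, using that $\lambda_1,\lambda_2,\lambda_3$ are pairwise non-proportional with a single linear relation among them, together with a Berend-type equidistribution statement for the surviving coordinate.

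\emph{Where the work is.} In~(1): the recurrence-and-rescaling extraction of an invariant leaf in stage one, and in stage two the controlled recurrence of Lemma~\ref{selfreturn} feeding into Proposition~\ref{nonhyp} --- producing a fresh homogeneous piece while dodging those already built --- which is precisely what collapses at rank $2$. In~(2): proving the three boundary tori are swept out in full; the $\det=1$ constraint gives the upper bound cheaply, but the matching lower bound demands genuine equidistribution input in a borderline rank-$2$ regime.
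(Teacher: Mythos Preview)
Your plan tracks the paper's architecture, and Part~(2) matches it exactly: the point $\bfx=(\pi(\gamma_1),\pi(\gamma_2))$, the slopes $\kappa_1=\infty,\kappa_2=0,\kappa_3=-1$, the upper bound via the sign constraint on $W$, and the reduction of the lower bound to ``$\overline{\{\zeta_\triangle^\bfn.\bfx\}}\cap T^{\kappa_j}$ is infinite'' (the paper's Lemma~\ref{counterlarge}) are all correct. Stage one of Part~(1) is also essentially right, though producing a full segment --- let alone the infinite line the paper actually builds in Lemmas~\ref{smallLog}--\ref{line} --- from a single rescaled difference is more work than you indicate (one needs elements with $\Log\zeta_i^\bfn$ arbitrarily small), and Proposition~\ref{firstdhomo} does not assume topological transitivity, so ``pick $\bfx$ with dense orbit'' is unavailable there; the paper instead produces a torsion accumulation point (Lemma~\ref{containtorsion}).

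The description of stage two, however, misreads Lemma~\ref{selfreturn}. That lemma does \emph{not} move a point of $A\setminus L$ while keeping it away from $L$; a point with dense $A$-orbit necessarily accumulates on $L\subset A$, so this cannot work as stated. Rather, Lemma~\ref{selfreturn} is a recurrence \emph{within} the coset $L_{0,1}$ of $T^{\kappa_0}$, avoiding only the finite set $E=L_{0,1}\cap(L_1\cup\cdots\cup L_q)$: given $\bfx\in L_{0,1}\setminus E_\theta$ it returns $\bfn\in H$ expanding a prescribed $V_i$-direction with $\zeta_\triangle^\bfn.\bfx$ still in $L_{0,1}\setminus E_\theta$. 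The way this is used (Lemma~\ref{transdev}) is to take $\bfx'_m\in A\setminus L_0$ converging to some $\bfx\in L_{0,1}\setminus E_{2\theta}$, split $\bfx'_m-\bfx$ along $V^{\kappa_0}\oplus V^\kappa$, and iterate the recurrence on the $L_{0,1}$-component while the same group elements inflate the transversal $V^\kappa$-part; in the limit one obtains $\bfy\in L_{0,1}\setminus E_\theta$ with $\bfy+\bfw\in A$ for some small nonzero $\bfw\in V_i^\kappa$. Only then is Proposition~\ref{nonhyp} applied --- to $\bfy$, a point \emph{on} $L_{0,1}$ --- to slide $\bfy$ to a torsion point while $\bfw$ barely moves, after which Lemma~\ref{denseorcontaindhomo} supplies the new homogeneous piece. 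The correct picture is ``ride along one homogeneous piece, dodging the finite trace of the others on it, while a transversal displacement grows'', not ``stay uniformly away from all homogeneous pieces''.
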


\begin{theorem}\label{nondenserationalX2}In the same setting as above, suppose $r\geq 3$ and $\epsilon>\epsilon'>0$. Let $L_1,\cdots,L_q$ be all the $d$-dimensional homogeneous $\zeta_\triangle$-invariant subsets failing to be $\epsilon'$-dense in $X^2$. Then among all the torsion points whose $\zeta_\triangle$-orbits are not $\epsilon$-dense, there are only finitely many which are not contained in $\bigcup_{h=1}^qL_h$.

In particular, for all $\epsilon>0$, there is a finite union of $d$-dimensional homogeneous $\zeta_\triangle$-invariant subsets that contains all the torsion points whose $\zeta_\triangle$ -orbits are not $\epsilon$-dense in $X^2$.\end{theorem}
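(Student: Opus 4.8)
I would argue by contradiction after a few routine reductions. By Lemma~\ref{finitedhomo} only finitely many $d$-dimensional homogeneous $\zeta_\triangle$-invariant subsets fail to be $\epsilon'$-dense, so $L_1,\dots,L_q$ is a finite list and $\bigcup_h L_h$ is closed. It then suffices to prove that the set $\mathcal E$ of torsion points $\bfx$ whose $\zeta_\triangle$-orbit is not $\epsilon$-dense and which do not lie in $\bigcup_h L_h$ is finite; the ``in particular'' assertion follows by adjoining to $\{L_1,\dots,L_q\}$ one homogeneous invariant subset through each of the (then finitely many) points of $\mathcal E$, e.g.\ $\{\zeta_\triangle^\bfn.\bfz':\bfz'\in\bfx+T^\infty\}$ for $\bfx\in\mathcal E$. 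Note that for $\bfx\in\mathcal E$ and every $\kappa\in K\cup\{\infty\}$ the homogeneous invariant subset $\{\zeta_\triangle^\bfn.\bfz':\bfz'\in\bfx+T^\kappa\}$ must be $\epsilon'$-dense, for otherwise it would be one of the $L_h$ and would contain $\bfx$; this constraint we keep in reserve.

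Suppose $\mathcal E$ is infinite and choose distinct points $\bfx_m\in\mathcal E$; let $O_m$ be the finite $\zeta_\triangle$-orbit of $\bfx_m$ and pick $\bfy_m$ with $O_m\cap\mathring B_\epsilon(\bfy_m)=\emptyset$. One first checks $|O_m|\to\infty$ (after passing to a subsequence): if $|O_m|$ stayed bounded, the stabilizers of the $\bfx_m$ would lie among the finitely many subgroups of $\bZ^r$ of bounded index, so along a subsequence all $\bfx_m$ would be fixed by one finite-index subgroup $H$; but $\zeta_\triangle^\bfn-\mathrm{Id}$ has finite kernel on $X^2$ for every $\bfn\neq 0$ (as $\sigma_i(\zeta^\bfn)\neq 1$ for $\bfn\neq 0$, $\sigma_i$ being injective), so $\mathrm{Fix}(H)$ is finite, contradicting distinctness. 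Passing to a subsequence, $\bfy_m\to\bfy$ and $O_m$ converges in the Hausdorff metric to a closed $\zeta_\triangle$-invariant set $A$ with $A\cap\mathring B_\epsilon(\bfy)=\emptyset$, so $A$ is proper and not $\epsilon$-dense. Moreover $A$ is infinite: otherwise, since $|O_m|\to\infty$, for large $m$ many points of $O_m$ cluster in an arbitrarily small ball around one point of $A$, producing two orbit points $\mathbf{p}\neq\mathbf{q}\in O_m$ whose difference $\mathbf{d}_m$ is a nonzero torsion point of arbitrarily small norm whose whole orbit under the (fixed, finite-index) stabilizer of that point of $A$ stays near $0$ --- impossible, because the orbit of any nonzero torsion point of $X^2$ (even under a fixed finite-index subgroup) contains a point of norm at least a universal $c_0>0$, a fact one proves directly from Dirichlet's unit theorem by choosing $\bfn$ that expands a single nonvanishing eigencomponent of $\mathbf{d}_m$ up to unit size while keeping the other eigencomponents bounded, using that $\cL(H)$ is a full-rank lattice in $W$.

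Now apply Theorem~\ref{CartanjoiningX2}(1) to the infinite proper closed invariant set $A$: it is a finite union of topologically transitive closed invariant subsets, none equal to $X^2$; its infinite pieces are $d$-dimensional homogeneous invariant subsets contained in $A$, hence not $\epsilon$-dense, hence --- a set that is not $\epsilon$-dense is a fortiori not $\epsilon'$-dense --- each equal to some $L_h$, and at least one such piece occurs since $A$ is infinite. Thus $A\subseteq\bigl(\bigcup_h L_h\bigr)\cup F$ for a finite set $F$ of torsion points, and since $O_m\to A$ and the right side is closed, for large $m$ the whole orbit $O_m$, in particular $\bfx_m$, lies in an arbitrarily small neighborhood of $\bigl(\bigcup_h L_h\bigr)\cup F$. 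The crux is to upgrade ``$\bfx_m$ near $\bigcup_h L_h$'' to ``$\bfx_m\in\bigcup_h L_h$'' for large $m$, which contradicts $\bfx_m\in\mathcal E$; this is where the apparatus of Section~\ref{rank3} is indispensable. Writing $\bfx_m$ in the universal cover as a point of one of the translated subtori $\bfz+T^\kappa$ comprising $\bigcup_h L_h$ plus a small transverse displacement $\sum_i\mathbf{w}_m^{(i)}$ with $\mathbf{w}_m^{(i)}\in V_i^{\kappa'}$ for some $\kappa'\neq\kappa$, I would run the controlled recurrence of Lemma~\ref{selfreturn} to move $\zeta_\triangle^\bfn.\bfx_m$ around its confined orbit while staying away from $\bigcup_h L_h$ and from $\mathring B_\epsilon(\bfy_m)$, and use the expansion of the components $\mathbf{w}_m^{(i)}$ under the action together with the non-hyperbolic density statement Corollary~\ref{Tkappanonhyp} --- precisely where $r\geq 3$ enters --- to force every $\mathbf{w}_m^{(i)}=0$. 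Once $\bfx_m$ lies exactly on a coset $\bfz_0+T^{\kappa_0}$ that is a constituent of some $L_h$, Proposition~\ref{dhomoprop} shows that the homogeneous invariant subset generated by $(\bfx_m,\kappa_0)$ equals that same $L_h$ --- two such subsets sharing a constituent coset coincide, each being the orbit closure of any non-torsion point on that coset --- whence $\bfx_m\in L_h$. This final transfer step, carrying the hypothesis ``orbit not $\epsilon$-dense'' through the recurrence machinery to pin $\bfx_m$ onto an exact subtorus coset, is the principal obstacle.
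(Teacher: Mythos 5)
Your overall strategy --- extract, by compactness, an infinite proper closed $\zeta_\triangle$-invariant set avoiding a fixed ball and then apply the structure theorem --- is the right idea, and it is morally the same engine as the paper's (the paper applies Proposition~\ref{firstdhomo} and Proposition~\ref{nonisofull} to $A_\bfx=\overline{C_\bfx\setminus\bigcup_h L_h}$, where $C_\bfx$ is the set of torsion points whose orbit avoids $\mathring B_{\epsilon'}(\bfx)$, followed by the same $\epsilon\to\epsilon'$ compactness covering argument you invoke at the start). But the specific implementation you chose, taking a \emph{Hausdorff limit} $A=\lim_m O_m$, throws away the information you need: Hausdorff convergence only tells you that $\bfx_m$ is \emph{near} $A$, not that $\bfx_m\in A$, and your whole final paragraph is an attempt to recover that lost information. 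There are two genuine gaps along the way.

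First, your argument that the Hausdorff limit $A$ is infinite does not work as written. You claim that, taking $\bfp\neq\bfq\in O_m$ near the same point $a_j\in A$, the orbit of $\bfd_m=\bfp-\bfq$ under the stabilizer of $a_j$ ``stays near $0$''. This is false: for $\bfn$ in that stabilizer, $\zeta_\triangle^\bfn.\bfp$ and $\zeta_\triangle^\bfn.\bfq$ again lie in $O_m$, which is near the \emph{finite set} $A$, but they need not be near the \emph{same} point of $A$; so $\zeta_\triangle^\bfn.\bfd_m$ is only constrained to be near $A-A$, not near $\bfzero$. The universal $c_0$ lower bound is correct, but it does not contradict the orbit being near $A-A$ unless $c_0$ exceeds the diameter of $A$, which there is no reason to assume. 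Second, the ``crux'' step --- upgrading ``$\bfx_m$ is close to $\bigcup_h L_h\cup F$'' to ``$\bfx_m\in\bigcup_h L_h$'' --- is only a sketch, and I do not believe the tools you cite can carry it out: Lemma~\ref{selfreturn}, Lemma~\ref{transdev} and Corollary~\ref{Tkappanonhyp} are all designed to extract \emph{new limit points} of an infinite closed invariant set; applied to the single \emph{finite} orbit $O_m$ they move you around inside $O_m$ but produce no limit to take, and ``not $\epsilon$-dense'' by itself does not force a torsion point near a coset of $T^\kappa$ to lie exactly on it.

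Both gaps disappear if you replace the Hausdorff limit by the closure of the \emph{union}: after passing to a subsequence with $\bfy_m\to\bfy$, for large $m$ you have $\mathring B_{\epsilon'}(\bfy)\subset\mathring B_\epsilon(\bfy_m)$, so all orbits $O_m$ (for $m\geq M$) avoid the \emph{same} ball $\mathring B_{\epsilon'}(\bfy)$. Set $B=\overline{\bigcup_{m\geq M}O_m}$. Then $B$ is closed, $\zeta_\triangle$-invariant, avoids $\mathring B_{\epsilon'}(\bfy)$ hence is a proper subset of $X^2$, and is infinite since it contains the distinct $\bfx_m$. Theorem~\ref{CartanjoiningX2}(1) gives $B$ as a finite union of topologically transitive closed invariant subsets; the infinite pieces are $d$-dimensional homogeneous invariant subsets, each avoiding $\mathring B_{\epsilon'}(\bfy)$ hence not $\epsilon'$-dense, hence among $L_1,\dots,L_q$, while the remaining pieces are a finite set $F$ of torsion points. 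Since each $\bfx_m\in B$ and $\bfx_m\notin\bigcup_h L_h$, you get $\bfx_m\in F$ for all $m\geq M$, contradicting that the $\bfx_m$ are distinct and $F$ is finite. This patch makes your route essentially a repackaging of the paper's: you build the invariant set directly out of the bad orbits rather than out of the full set $C_\bfx$, but the mechanism (an infinite proper invariant set avoiding a small ball, forced by the structure theorem to be a bounded collection of $L_h$'s plus finitely many torsion points) is the same.
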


Here Lemma \ref{finitedhomo} is implicitly used to assert that there are only finitely many $L_h$'s which are not $\epsilon'$-dense.

\begin{lemma}\label{CartanjoiningT2X2}Theorem \ref{CartanjoiningX2} implies Theorem \ref{Cartanjoining}; and Theorem \ref{nondenserationalX2} implies Theorem \ref{nondenserational}.\end{lemma}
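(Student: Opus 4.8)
The plan is to transport everything through the conjugating map $\psi_\triangle:(\bT^d)^2\overset\sim\to X^2$ supplied by Proposition \ref{Gfield}, which is simultaneously a homeomorphism, a topological group isomorphism, an intertwiner of $\alpha_\triangle$ with $\zeta_\triangle$, and --- by Remark \ref{rationaltorsion} --- a bijection sending rational points to torsion points. The first thing I would record is that every attribute in play is preserved: a subset $A\subset(\bT^d)^2$ is closed, infinite, proper, $\alpha_\triangle$-invariant, or topologically transitive exactly when $\psi_\triangle(A)$ has the matching property for $\zeta_\triangle$; orbits go to orbits and orbit closures to orbit closures. Since $\psi_\triangle$ is a continuous group isomorphism it is induced by a linear isomorphism of the universal covers $\bR^{2d}\to(\bR^{r_1}\oplus\bC^{r_2})^2$ carrying the deck lattice of $(\bT^d)^2$ onto $\Gamma^2$; hence it maps $d$-dimensional subtori to $d$-dimensional subtori and translates to translates, so $\psi_\triangle^{-1}(T^\kappa)$ is a $d$-dimensional $\alpha_\triangle$-invariant subtorus of $(\bT^d)^2$ for each $\kappa\in K\cup\{\infty\}$, and $\psi_\triangle^{-1}(\bfz+T^\kappa)=\psi_\triangle^{-1}(\bfz)+\psi_\triangle^{-1}(T^\kappa)$.

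For Theorem \ref{Cartanjoining}(1): given an infinite proper closed topologically transitive $\alpha_\triangle$-invariant set $A$, I would apply Theorem \ref{CartanjoiningX2}(1) to $\psi_\triangle(A)$ to see it is a $d$-dimensional homogeneous $\zeta_\triangle$-invariant subset; Proposition \ref{dhomoprop}(1) then writes $\psi_\triangle(A)=\bigsqcup_{t=1}^sL_t$ with each $L_t$ a translate of one fixed $T^\kappa$, and pulling back exhibits $A$ as a finite disjoint union of translates of $A_0:=\psi_\triangle^{-1}(T^\kappa)$. The statement that an arbitrary closed invariant set is a finite union of transitive pieces transports verbatim. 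For Theorem \ref{Cartanjoining}(2): take $\bfy:=\psi_\triangle^{-1}(\bfx)$ and $T_i:=\psi_\triangle^{-1}(T^{\kappa_i})$ from Theorem \ref{CartanjoiningX2}(2); the orbit-closure identity pulls back directly, and the $T_i$ are pairwise transverse because the tangent spaces $V^{\kappa_i}$ of the $T^{\kappa_i}$ are pairwise complementary (Corollary \ref{Vkappatransversal}) and so their linear preimages in $\bR^{2d}$ remain complementary. The orbit closure of $\bfy$ is not homogeneous: being infinite it is not a finite set of rational points, being proper it is not $(\bT^d)^2$, and it is not a finite union of parallel translates of a single $d$-dimensional subtorus since it contains the two transverse --- hence non-parallel --- subtori $T_1$ and $T_2$.

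It remains to deduce Theorem \ref{nondenserational} from Theorem \ref{nondenserationalX2}, where the only genuinely new point is comparing the two notions of $\epsilon$-density. As $\psi_\triangle$ and $\psi_\triangle^{-1}$ are continuous maps of compact metric spaces they are uniformly continuous, so there is $\epsilon''=\epsilon''(\epsilon)>0$ such that the $\psi_\triangle^{-1}$-image of any $\epsilon''$-dense subset of $X^2$ is $\epsilon$-dense in $(\bT^d)^2$; equivalently, if the $\alpha_\triangle$-orbit of $y$ is not $\epsilon$-dense then the $\zeta_\triangle$-orbit of $\psi_\triangle(y)$ is not $\epsilon''$-dense. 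Fixing any $\epsilon'\in(0,\epsilon'')$ and applying Theorem \ref{nondenserationalX2} to the pair $(\epsilon'',\epsilon')$ produces finitely many $d$-dimensional homogeneous $\zeta_\triangle$-invariant subsets $L_1,\dots,L_q$ together with finitely many torsion points $\bfz_1,\dots,\bfz_m\notin\bigcup_hL_h$ whose union contains every torsion point with non-$\epsilon''$-dense $\zeta_\triangle$-orbit. Now if $y\in(\bT^d)^2$ is rational with non-$\epsilon$-dense $\alpha_\triangle$-orbit, then $\psi_\triangle(y)$ is a torsion point with non-$\epsilon''$-dense $\zeta_\triangle$-orbit, so either $\psi_\triangle(y)$ lies in a component $L_{h,t}$ of some $L_h$ --- whence $y\in\psi_\triangle^{-1}(L_{h,t})$, a translate of a $d$-dimensional $\alpha_\triangle$-invariant subtorus --- or $\psi_\triangle(y)=\bfz_j$ for some $j$, in which case I would adjoin to the list the set $\psi_\triangle^{-1}(\bfz_j)+\psi_\triangle^{-1}(T^0)$, a translate of the $d$-dimensional $\alpha_\triangle$-invariant subtorus $\psi_\triangle^{-1}(T^0)$ containing $y$. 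The resulting finite collection of translates of $d$-dimensional $\alpha_\triangle$-invariant subtori covers all the rational points required by Theorem \ref{nondenserational}.

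I do not expect a real obstacle, since this is essentially a bookkeeping argument through the conjugacy $\psi_\triangle$. The two places that need care are the uniform-continuity comparison of the two $\epsilon$-density notions and the cosmetic step of fattening the finitely many leftover torsion points into translates of $d$-dimensional invariant subtori, so that the conclusion of Theorem \ref{nondenserational} takes exactly the asserted form; neither presents any difficulty.
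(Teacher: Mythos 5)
Your argument is correct and follows essentially the same route as the paper: transport everything through the conjugacy $\psi_\triangle$ using Lemma \ref{dhomotorprop} and Proposition \ref{dhomoprop} to identify images of homogeneous sets, together with a metric comparison to transfer $\epsilon$-density (the paper uses the Lipschitz constant of the linear map $\psi_\triangle$, you use uniform continuity of $\psi_\triangle^{-1}$, which is equivalent here). The extra details you supply --- transversality via Corollary \ref{Vkappatransversal}, the explicit non-homogeneity check in part (2), and the fattening of the finitely many leftover torsion points into translates of $\psi_\triangle^{-1}(T^0)$ --- are all correct, though the last step could be obtained more directly from the ``In particular'' clause of Theorem \ref{nondenserationalX2}.
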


\begin{proof}Because the $\bZ^r$-actions $\zeta_\triangle$ and $\alpha_\triangle$ are conjugate via the continuous group isomorphism $\psi_\triangle$, to deduce Theorem \ref{Cartanjoining} from Theorem \ref{CartanjoiningX2} it suffices to observe that the preimage under $\psi$ of a $d$-dimensional homogeneous $\zeta_\triangle$-invariant subtorus is a $d$-dimensional $\alpha_\triangle$-invariant subtorus in $\bT^d$ by Lemma \ref{dhomotorprop}, and thus the preimage of a $d$-dimensional homogeneous $\zeta_\triangle$-invariant subset is a disjoint union of finitely many parallel translates of a $d$-dimensional $\alpha_\triangle$-invariant subtorus by Proposition \ref{dhomoprop}.

In addition to these ingredients, to get the implication from Theorem \ref{nondenserationalX2} to Theorem \ref{nondenserational} it is enough to note that under $\psi_\triangle$, any $\epsilon$-dense subset in $(\bT^d)^2$ becomes $C\epsilon$-dense in $X^2$ , where $C$ is a positive constant depending only on $\psi_\triangle$.\end{proof}

The proofs of Theorems \ref{CartanjoiningX2} and \ref{nondenserationalX2} are going to occupy the rest of paper.

\section{Any infinite invariant set has a homogeneous subset}\label{containhomo}

In order to get the first half of Theorem \ref{CartanjoiningX2}, our strategy is to prove first that the set $A$ contains a homogeneous invariant subset and then show that there are no other points in it. The aim of this section is to show the following:

\begin{proposition}\label{firstdhomo}Suppose $r\geq 2$ and $A\subset X^2$ is an infinite $\zeta_\triangle$-invariant closed subset, then there is a $d$-dimensional homogeneous $\zeta_\triangle$-invariant subset $L$ of $X^2$ such that $L\subset A$.\end{proposition}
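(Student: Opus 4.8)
The plan is to locate inside $A$ a nonzero "return difference" lying in one of the eigendirections $V_i^\square$, and then use the rigidity of the induced action (Berend's theorem, via Lemma \ref{TkappaBerend}) to grow this into a full translate of some $T^\kappa$. First I would use the fact that $A$ is infinite and $X^2$ is compact: pick a sequence of distinct points $\bfx_m\in A$ converging to some $\bfx_\infty\in A$, so the differences $\bfx_m-\bfx_\infty$ are nonzero and tend to $0$. Using the product decomposition $(\bR^{r_1}\oplus\bC^{r_2})^2=\bigoplus_{i\in I}V_i^\square$ from (\ref{Visquaresum}) and the fact that $\zeta_\triangle^\bfn$ acts on $V_i^\square$ by the scalar $\zeta_i^\bfn$ (see (\ref{Visquaremulti})), the idea is to apply suitable group elements $\bfn$ to amplify one coordinate block while keeping the others bounded, then pass to a limit to extract a nonzero vector $\bfv\in V_i^\square$ (for some $i\in I$) such that the orbit closure $\overline{\{\zeta_\triangle^\bfn.(\bfx_\infty+\pi_\triangle(\bfv)) - \zeta_\triangle^\bfn.\bfx_\infty : \bfn\}}$ — or rather the orbit closure of a suitable difference — is contained in $\overline{A - \bfx_\infty}$ in an appropriate sense; more precisely, one wants that $A$ contains $\bfx_\infty + \overline{\{\text{orbit of }\pi_\triangle(\bfv)\}}$. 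This requires Remark \ref{dirichlet}: since $\cL(\bZ^r)$ is a full-rank lattice in $W=\{\sum d_iw_i=0\}$, for any prescribed pattern of expansion/contraction on the coordinate blocks (summing to zero) there are group elements realizing it approximately, which is exactly what lets us isolate a single direction.

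The heart of the argument is the following dichotomy. Having produced a nonzero $\bfv\in V_i^\square$ with $\pi_\triangle(\bfv)$ appearing as a limit of translates of difference points of $A$ (so that $A$ is invariant under adding $\pi_\triangle(\bfv)$ at least at accumulation points, hence $A\supset \bfx_\infty+\overline{\{\zeta_\triangle^\bfn.\pi_\triangle(\bfv)\}}$ after replacing $\bfx_\infty$ by a suitable torsion point in its class), one observes that $\bfv$, lying in $V_i^\square\cong \bF_i^2$, is either proportional to a vector of the form $(v,\sigma_i(\kappa)v)$ for some $\kappa\in K$, or of the form $(0,v)$, i.e. $\bfv\in V_i^\kappa$ for some $\kappa\in K\cup\{\infty\}$ — this is automatic since $\dim_{\bF_i}V_i^\square=2$ and the lines through $\bfv$ are parametrized by $\sigma_i(K)\cup\{\infty\}$, and one checks (using Lemma \ref{XdualK} and that $\sigma_i$ is injective on $K$) that the relevant ratio is forced to be the $\sigma_i$-image of an element of $K$. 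Then $\pi_\triangle(\bfv)\in T^\kappa$, and the orbit closure $\overline{\{\zeta_\triangle^\bfn.\pi_\triangle(\bfv):\bfn\in\bZ^r\}}$ is a closed $\zeta_\triangle$-invariant subset of $T^\kappa$, infinite because $\bfv\neq 0$ is non-torsion (or can be arranged to be). By Lemma \ref{TkappaBerend} this orbit closure is all of $T^\kappa$. Hence $A$ contains $\bfz+T^\kappa$ for a torsion point $\bfz$, which is exactly a $d$-dimensional homogeneous $\zeta_\triangle$-invariant subset as in Definition \ref{dhomoset} (its full form $\bigsqcup_t L_t$ is then forced by invariance).

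The main obstacle I anticipate is the first step: extracting a \emph{single}-direction nonzero limit vector from the difference sequence in a way that genuinely stays inside $A$. Two difficulties must be handled carefully. First, when we apply $\zeta_\triangle^\bfn$ to amplify block $i$, the other blocks do not merely stay bounded — we must choose $\bfn$ so that the blocks $j\neq i$ contract (or at least stay bounded) \emph{and} the $i$-th block lands in a fixed compact annulus; controlling all $|I|-1$ other blocks simultaneously is where the full-rank-lattice property of $\cL(\bZ^r)$ in $W$ (Remark \ref{dirichlet}) is essential, and one should be slightly careful that $\bZ^r$ acts on $V_i^\square$ by a \emph{single} scalar $\zeta_i^\bfn$, so amplifying the block does not let us control the internal direction within $V_i^\square$ — that direction is whatever it is, and we read off $\kappa$ from it. Second, after rescaling one has a limit of points of the form $\zeta_\triangle^{\bfn_m}.(\bfx_m-\bfx_\infty)$; to conclude $\pi_\triangle(\bfv)\in A$ one uses that $\zeta_\triangle^{\bfn_m}.\bfx_m,\ \zeta_\triangle^{\bfn_m}.\bfx_\infty\in A$ and passes to a further subsequence so that $\zeta_\triangle^{\bfn_m}.\bfx_\infty$ converges to some $\bfw\in A$, whence $\bfw+\pi_\triangle(\bfv)\in A$; translating back (and absorbing $\bfw$ into the structure, eventually replacing it by a torsion point using that $A$ is invariant and Lemma \ref{TkappaBerend}-type density) gives the claim. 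Managing these limits and the bookkeeping of which torsion point plays the role of $\bfz$ is the technically delicate part; the rest is a direct appeal to the already-established rigidity on $T^\kappa$.
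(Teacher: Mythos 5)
Your overall skeleton (extract a nonzero single-block difference vector $\bfv\in V_i^\square$ via the ``isolate one eigendirection'' renormalization, then apply Berend rigidity on $T^\kappa$) does match the paper's strategy through Corollary~\ref{torsioneigen} and Lemma~\ref{denseorcontaindhomo}. But there is a genuine gap in your central dichotomy.

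You assert that $\bfv\in V_i^\square\cong\bF_i^2$ is ``automatically'' of the form $(v,\sigma_i(\kappa)v)$ or $(0,v)$, on the grounds that ``the lines through $\bfv$ are parametrized by $\sigma_i(K)\cup\{\infty\}$.'' This is false: lines in $\bF_i^2$ are parametrized by $\bF_i\cup\{\infty\}$, and $\sigma_i(K)$ is only a countable (though dense) subset of $\bF_i=\bR$ or $\bC$. The vector $\bfv$ is obtained as a limit of renormalized differences of nearby points of $A$, and nothing in that construction forces its slope $\kappa_*=v^{(2)}/v^{(1)}$ to lie in $\sigma_i(K)$. You cannot deduce this from Lemma~\ref{XdualK} either — that lemma concerns the \emph{characters} of $X$, not arbitrary tangent vectors. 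The paper confronts exactly this case as a genuine alternative (Lemma~\ref{denseorcontaindhomo}.(2)): when $\kappa_*\notin\sigma_i(K)\cup\{\infty\}$, one first upgrades the two points $\bfz,\bfz+\bfv\in A$ to a full real line $\{\bfy+\rho\bfw:\rho\in\bR\}\subset A$ (Lemma~\ref{line}, which requires the delicate Lemma~\ref{smallLog}), and then uses Lemma~\ref{rotatedense} — whose proof crucially exploits that $\kappa_*\notin\sigma_i(K)$ and that some $\zeta^\bfn$ generates no proper subfield — to conclude that $A=X^2$. Since $X^2$ trivially contains some $T^\kappa$, the proposition still holds, but the argument is an entirely separate (and harder) branch that your proof omits by wrongly ruling the case out.

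A secondary, smaller issue: you gloss over the step of arranging the base point to be torsion (``eventually replacing it by a torsion point using... Lemma~\ref{TkappaBerend}-type density''). In the paper this is not an afterthought but the content of Lemma~\ref{containtorsion}, which invokes Theorem~\ref{Berend'} twice (once per coordinate) and is done \emph{before} extracting $\bfv$, so that the subsequent renormalization can be carried out with a finite-index stabilizer fixing $\bfz$. If you try to produce $\bfv$ first and only then fix up the base point, the stabilizer argument in Corollary~\ref{torsioneigen} no longer goes through cleanly, and the limit point you call $\bfw$ will not be torsion in general.
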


Note that this proposition applies in both the $r\geq 3$ and $r=2$ cases.
The proof of Proposition \ref{firstdhomo} borrows ideas from \cite{B83}*{\S 4}

\subsection{Translate of a torsion point along an eigenspace}

In this part, we are going to show $A$ contains a torsion point $\bfz$ as well as another point in the $V_i$-foliation through $\bfz$ for some $i\in I$.

\begin{lemma}\label{containtorsion}If $A\subset X^2$ is an infinite $\zeta_\triangle$-closed subset then it has an accumulation point which is a torsion point in $X^2$.\end{lemma}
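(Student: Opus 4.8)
The plan is to exploit the $\bZ^r$-invariance of $A$ together with the presence of a totally irreducible (hence expanding in all directions, up to taking powers) element of the action, and to track an accumulation point of $A$ under the dynamics to show that it can be taken to be torsion. First I would fix an accumulation point $\bfx_\infty \in A$ of the infinite closed set $A$, so there is a sequence of distinct points $\bfx_h \in A$ with $\bfx_h \to \bfx_\infty$; write $\bfx_h = \bfx_\infty + \bfe_h$ with $\|\bfe_h\| \to 0$ but $\bfe_h \neq \bfzero$. Using the decomposition $(\bR^{r_1}\oplus\bC^{r_2})^2 = \bigoplus_{i\in I} V_i^\square$ from (\ref{Visquaresum}), lift each $\bfe_h$ to a small vector $\tilde{\bfe}_h = \sum_{i\in I}\tilde{\bfe}_{h,i}$ with $\tilde{\bfe}_{h,i}\in V_i^\square$, and by (\ref{Visquaremulti}) the action $\zeta_\triangle^\bfn$ scales the $V_i^\square$-component by $|\zeta_i^\bfn|$. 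The idea is to choose $\bfn_h \in \bZ^r$ so that $\zeta_\triangle^{\bfn_h}.\bfx_h$ and $\zeta_\triangle^{\bfn_h}.\bfx_\infty$ stay at bounded distance from each other while $\bfx_\infty$ is pushed toward a point with extra structure; the difference $\zeta_\triangle^{\bfn_h}.\bfe_h$ has components of size $e^{\lambda_i(\bfn_h)}\|\tilde{\bfe}_{h,i}\|$, and since $\cL(\bZ^r)$ is a full-rank lattice in $W$ (Remark \ref{dirichlet}), we may select $\bfn_h$ making all of these of order $1$ but bounded.

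Next I would extract convergent subsequences: after passing to a subsequence, $\zeta_\triangle^{\bfn_h}.\bfx_\infty \to \bfy$ and $\zeta_\triangle^{\bfn_h}.\bfx_h \to \bfy'$ with $\bfy, \bfy' \in A$ (both are limits of points of the closed invariant set $A$) and $\bfy \neq \bfy'$ because the surviving difference component is nonzero. Now the point is that $\bfy' - \bfy$ is a "small" vector lying in a proper subsum $\bigoplus_{i\in S} V_i^\square$ of the $V_i^\square$, and moreover we have shifted the problem along the orbit. Iterating this construction — or, more cleanly, applying it to the pair $(\bfy, \bfy')$ in place of $(\bfx_\infty, \bfx_h)$ and repeating — one reduces the "support" $S$ of the accumulation direction; but rather than a full induction on $|S|$, I expect it is cleaner to argue directly as in \cite{B83}*{\S 4}: by compactness of $X^2$ the orbit closure of $\bfx_\infty$ is a nonempty closed invariant set, and a minimal subset of it (which exists by Zorn's lemma) consists of points all of whose orbits are dense in it; combine this with Theorem \ref{Berend'} applied to the $X$-coordinates — the only infinite invariant closed subset of $X$ is $X$, and torsion points are the only points with non-dense orbit — to force the relevant accumulation point to be torsion in each coordinate, hence torsion in $X^2$.

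The main obstacle I anticipate is the bookkeeping in the two-step "renormalize and pass to a limit" argument: one must choose the renormalizing elements $\bfn_h$ so that \emph{simultaneously} (a) no $V_i^\square$-component of $\zeta_\triangle^{\bfn_h}.\bfe_h$ blows up, (b) at least one component does not shrink to zero, so the limit points $\bfy,\bfy'$ remain distinct, and (c) enough control is retained to conclude that the limit of $\zeta_\triangle^{\bfn_h}.\bfx_\infty$ is a torsion point and not merely some other point of $A$. Point (c) is where Berend's theorem in the form of Theorem \ref{Berend'} (and the fact that a finite-index subgroup acts with the same rigidity, Remark \ref{Berendfisubgp}) has to be invoked carefully, possibly after projecting to one $X$-factor and using that a non-torsion point there has dense orbit, so that taking accumulation points inside $A$ and intersecting with the known structure forces torsion. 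A secondary technical point is ensuring the lifts to $(\bR^{r_1}\oplus\bC^{r_2})^2$ are chosen consistently (of minimal norm) so that "small in $X^2$" genuinely means "small in the universal cover," which is routine since $X^2$ is locally Euclidean.
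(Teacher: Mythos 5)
Your proposal mixes two strategies, and neither is quite the paper's, though your remark in item (c) points at the right idea. The renormalization scheme (choosing $\bfn_h$ so that $\zeta_\triangle^{\bfn_h}.\bfe_h$ has a bounded, nonvanishing $V_i^\square$-component) is the engine of Corollary \ref{torsioneigen}, where one already has a torsion base point $\bfz$ and wants a displacement confined to a single $V_i^\square$; in the present lemma you have no control over where $\zeta_\triangle^{\bfn_h}.\bfx_\infty$ lands until the base point is first made torsion, which is precisely what the lemma has to supply. The minimal-set route has a gap as stated: a point of a minimal $\zeta_\triangle$-invariant $M\subset X^2$ has orbit dense in $M$, not in $X$, so Theorem \ref{Berend'} cannot be applied to each $X$-coordinate directly; moreover, the clean statement that minimal sets consist of finitely many torsion points is proved in the paper as a corollary \emph{of} Lemma \ref{containtorsion}, so invoking it here would be circular.

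The paper's actual argument is a short two-step push-to-torsion. Take any accumulation point $\bfy=(y^{(1)},y^{(2)})$ of $A$ and assume it is not torsion. By Theorem \ref{Berend'}, $\{\zeta^{\bfn}.y^{(1)}:\bfn\in\bZ^r\}$ is either a finite set of torsion points or dense in $X$; in either case choose $\bfn_k$ with $\zeta^{\bfn_k}.y^{(1)}\to z^{(1)}$ torsion, and pass to a subsequence so $\zeta^{\bfn_k}.y^{(2)}\to z_*^{(2)}$. Then with $H=\{\bfn:\zeta^\bfn.z^{(1)}=z^{(1)}\}$, a finite-index subgroup, apply Theorem \ref{Berend'} together with Remark \ref{Berendfisubgp} to the $H$-orbit of $z_*^{(2)}$ to find $\bfm_k\in H$ with $\zeta^{\bfm_k}.z_*^{(2)}\to z^{(2)}$ torsion. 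The point $\bfz=(z^{(1)},z^{(2)})$ lies in $\overline{\{\zeta_\triangle^\bfn.\bfy:\bfn\in\bZ^r\}}\subset A$, is torsion, and, being torsion while $\bfy$ is not, is distinct from every orbit point of $\bfy$, hence is an accumulation point of $A$. Your (c) — "projecting to one $X$-factor and using that a non-torsion point there has dense orbit" — is exactly step one; the missing ingredient in your sketch is restricting to the stabilizer $H$ of $z^{(1)}$ so that Berend can be applied to the second coordinate while the first is held fixed.
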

\begin{proof}$A$ is compact as a closed subset of the compact set $X^2$. Hence as it is infinite there must be an accumulation point $\bfy=(y^{(1)},y^{(2)})\in A$. If $\bfy$ is a torsion point then we are done, so we assume $\bfy$ is not of torsion.

Let $A'=\overline{\{\zeta_\triangle^\bfn.\bfy:\bfn\in\bZ^r\}}$. Then $A'\subset A$ and is also $\zeta_\triangle$-invariant. By Theorem \ref{Berend'} either the set $\{\zeta^\bfn.y^{(1)}:\bfn\in\bZ^r\}$ is dense in $X$ or $y^{(1)}$ is a torsion point. In both cases, there is a sequence of $\bfn_k$ such that $\zeta^{\bfn_k}.y^{(1)}$ converges to a torsion point $z^{(1)}$ (when $y^{(1)}$ is a torsion point, simply take $\bfn_k=\bfzero$ for all $k$). As $X$ is compact, by passing to a subsequence if necessary we may assume $\zeta^{\bfn_k}.y^{(2)}\rightarrow z_*^{(2)}$ for some $z_*^{(2)}\in X$. Then $\bfz_*=(z^{(1)},z_*^{(2)})=\lim_{k\rightarrow\infty}\zeta_\triangle^{\bfn_k}.\bfy\in A'$. Let $H$ be the stabilizer $\{\bfn\in \bZ^r:\zeta^\bfn.z^{(1)}=z^{(1)}\}$, which has finite index in $\bZ^r$ as $z^{(1)}$ is of torsion. It follows from Theorem \ref{Berend'} that $\overline{\{\zeta^\bfn.z_*^{(2)}:\bfn\in H\}}\subset X$ contains a torsion point, which we denote by $z^{(2)}$. Then there is a new sequence of $\bfm_k\in H$ such that $\zeta^{\bfm_k}.z_*^{(2)}$ converges to $z^{(2)}$ as $k\rightarrow\infty$. Then as $A'$ is a $\zeta_\triangle$-invariant closed set, it contains $\bfz:=(z^{(1)},z^{(2)})=\lim_{k\rightarrow\infty}\zeta_\triangle^{\bfm_k}.\bfz_*$, which is a torsion point in $X^2$.

Observe as $\bfz\in A'$ it is a limit point of the $\zeta_\triangle$-orbit of $\bfy$. But since $\bfz$ is of torsion and $\bfy$ is not, none of the points from the orbit of $\bfy$ coincide with $\bfz$. Thus $\bfz$ is an accumulation point of $A$. This completes the proof.\end{proof}

In particular, the lemma gives the following corollary.

\begin{corollary}When $r\geq 2$, any minimal $\zeta_\triangle$-invariant closed subset of $X^2$ is a finite set of torsion points.\end{corollary}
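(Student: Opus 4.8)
The plan is to deduce the statement directly from Lemma \ref{containtorsion}. Let $M \subset X^2$ be a minimal $\zeta_\triangle$-invariant closed subset; such a set exists by Zorn's lemma since $X^2$ is compact. The goal is to show $M$ is a finite set of torsion points. First I would argue by contradiction: suppose $M$ is infinite. Then Lemma \ref{containtorsion} applies to $M$ (which is infinite, closed and $\zeta_\triangle$-invariant) and produces an accumulation point $\bfz \in M$ that is a torsion point of $X^2$. Let $H \leq \bZ^r$ be the stabilizer of $\bfz$ under $\zeta_\triangle$; since $\bfz$ is of torsion, the orbit $\{\zeta_\triangle^\bfn.\bfz : \bfn \in \bZ^r\}$ is finite, so $H$ has finite index in $\bZ^r$, and in particular $\zeta_\triangle^\bfn.\bfz = \bfz$ for all $\bfn \in H$. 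Thus the finite orbit $O = \{\zeta_\triangle^\bfn.\bfz : \bfn \in \bZ^r\}$ is a closed $\zeta_\triangle$-invariant subset of $M$.

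By minimality of $M$ and the fact that $O$ is a nonempty closed invariant subset of $M$, we must have $M = O$, so $M$ is finite. This contradicts the assumption that $M$ is infinite. Hence every minimal set $M$ is finite. It remains to observe that a finite minimal set consists of torsion points: a finite $\zeta_\triangle$-invariant set is permuted by each $\zeta_\triangle^\bfn$, so each point $\bfx \in M$ has finite $\zeta_\triangle$-orbit, hence is fixed by some finite-index subgroup $H' \leq \bZ^r$; picking $\bfn \in H'$ with $\zeta^\bfn$ totally irreducible (which exists by hypothesis, after passing to a finite-index subgroup via Remark \ref{Berendfisubgp}), the equation $\zeta_\triangle^\bfn.\bfx = \bfx$ forces $(\zeta^\bfn - 1).\tilde\bfx \in \Gamma^2$ for any lift $\tilde\bfx$, and since $\zeta^\bfn$ has no eigenvalue equal to $1$ (its eigenvalues are units of $K$ of degree $d$, none of which is $1$), the operator $\zeta^\bfn - 1$ is invertible on $(\bR^{r_1}\oplus\bC^{r_2})^2$, so $\tilde\bfx$ lies in the $\bQ$-span of the lattice $\Gamma^2$, i.e. $\bfx$ is a torsion point of $X^2$.

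I do not anticipate a serious obstacle here: the corollary is essentially a repackaging of Lemma \ref{containtorsion} together with the standard fact that a nonempty closed invariant subset of a minimal set is the whole set. The only point requiring a line of care is the final verification that finite orbits consist of torsion points, which follows from the non-hyperbolicity-free observation that no power $\zeta^\bfn$ (with $\zeta^\bfn$ of degree $d$ over $\bQ$, in particular $\neq 1$) fixes any nonzero element; alternatively one can simply quote that the action $\zeta$ on $X$ has the property that points with finite $H$-orbit are torsion, which is immediate from Theorem \ref{Berend'} applied coordinatewise together with the description of finite invariant sets there. Either route is routine.
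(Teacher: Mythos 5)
Your proof is correct and follows the same route as the paper: apply Lemma \ref{containtorsion} to get a torsion point $\bfz\in M$, then conclude by minimality that $M$ equals the finite orbit of $\bfz$. Your additional verification that a finite $\zeta_\triangle$-invariant set consists only of torsion points is a detail the paper passes over (the paper applies Lemma \ref{containtorsion} directly to $M$ and then quotes minimality, tacitly assuming $M$ may be taken infinite); your argument via the invertibility of $\zeta^\bfn-1$ for a totally irreducible $\zeta^\bfn$ is valid, as is the alternative you mention of applying Theorem \ref{Berend'} coordinatewise.
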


\begin{proof}Let $M$ be a minimal invariant set. By lemma, $M$ contains a torsion point $\bfz$. By minimality $M$ is the $\zeta_\triangle$-orbit of $\bfz$, which consists of finitely many torsion points.\end{proof}

Lemma \ref{containtorsion} tells us that there is a sequence of points in $A$ converging to a torsion point. We wish to have some control on the direction along which such convergence takes place.

\begin{lemma}\label{expansion}For any finite-index subgroup $H\leq \bZ^r$, there is a positive number $C=C(H)$ such that for all $i\in I$, there exists $\bfn\in\bZ^r$ such that $0<\lambda_i(\bfn)\leq C$ and $\lambda_j(\bfn)<0$ for all $j\in I\backslash\{i\}$. \end{lemma}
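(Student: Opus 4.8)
The goal is to find, for each coordinate $i\in I$, an element $\bfn\in\bZ^r$ whose $i$-th Lyapunov exponent $\lambda_i(\bfn)$ is positive but bounded above by a constant depending only on the finite-index subgroup $H$, while every other exponent $\lambda_j(\bfn)$ is strictly negative. The natural place to work is in the Lyapunov hyperplane $W=\{w\in\bR^I:\sum_{i\in I}d_iw_i=0\}$ of \eqref{Wdef}, where by Remark \ref{dirichlet} the image $\cL(\bZ^r)$ (and indeed $\cL(H)$ for any finite-index $H$) is a full-rank lattice.

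The plan is as follows. First I would observe that the open cone $\cC_i=\{w\in W:w_i>0,\ w_j<0\ \forall j\neq i\}$ is non-empty: it is cut out inside the hyperplane $W$ by the single linear constraint and the sign conditions, and one checks directly that the vector with $w_i>0$ large and all other $w_j$ equal to a common negative value satisfying $\sum d_j w_j=0$ lies in it; in fact $\cC_i$ is a non-empty open subcone of $W$. Second, since $\cL(\bZ^r)$ is a full-rank lattice in $W$ and $\cC_i$ is a non-empty open cone, $\cL(\bZ^r)\cap\cC_i\neq\emptyset$, so there is some $\bfn$ with $\cL(\bfn)\in\cC_i$, i.e.\ $\lambda_i(\bfn)>0$ and $\lambda_j(\bfn)<0$ for $j\neq i$. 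This already gives the sign conditions; the only remaining issue is the uniform upper bound $\lambda_i(\bfn)\leq C$.

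To get the bound, fix once and for all, for each $i\in I$, one lattice point $\bfn^{(i)}\in\bZ^r$ with $\cL(\bfn^{(i)})\in\cC_i$ (possible by the previous step). Because $\cC_i$ is open and $\cL(H)$ is a full-rank lattice in $W$, we can write $\cL(\bfn^{(i)})$ as a limit of points of $\cL(H)$, but more simply: $\cL(H)$ has finite index in $\cL(\bZ^r)$, so some positive multiple $p_i\bfn^{(i)}$ lies in $H$ with $p_i\leq |\bZ^r/H|$ (using that $\cC_i$ is a cone, so $\cL(p_i\bfn^{(i)})=p_i\cL(\bfn^{(i)})\in\cC_i$ still has the right signs). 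Wait — I actually want $\bfn$ itself in $\bZ^r$ (the statement only requires $\bfn\in\bZ^r$, not $\bfn\in H$), so I do not even need this multiplication step; the dependence on $H$ enters only because the paper's surrounding arguments want the constant to work uniformly, and in fact one may simply take $C(H)=\max_{i\in I}\lambda_i(\bfn^{(i)})$ for the finitely many fixed choices $\bfn^{(i)}$, which visibly depends only on the data and not on anything varying. Re-reading the statement, $C$ is allowed to depend on $H$ but $\bfn$ need only lie in $\bZ^r$; so the cleanest route is: set $\bfn=\bfn^{(i)}$ and $C=\max_i\lambda_i(\bfn^{(i)})$, done. If instead the intended reading (made clear by later usage) is that $\bfn$ should lie in $H$, then replace $\bfn^{(i)}$ by $p_i\bfn^{(i)}\in H$ as above and set $C=C(H)=\max_i p_i\lambda_i(\bfn^{(i)})\leq |\bZ^r/H|\cdot\max_i\lambda_i(\bfn^{(i)})$.

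The only genuine content — and the step I would be most careful about — is verifying that each cone $\cC_i$ is non-empty, i.e.\ that the sign pattern $(+,-,-,\dots,-)$ in position $i$ is compatible with the trace-zero constraint $\sum_j d_j w_j=0$; this is immediate since $d_j>0$ for all $j$, so choosing $w_i=1$ and $w_j=-d_i/\big((|I|-1)\,?\big)$... more precisely $w_j$ any common negative value with $\sum_{j\neq i}d_j w_j=-d_i$ works, which has a solution as $\sum_{j\neq i}d_j>0$. Everything else is routine lattice-theoretic bookkeeping. I do not anticipate any real obstacle.
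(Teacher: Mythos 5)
Your proof is correct and follows essentially the same approach as the paper: exhibit the non-empty open cone $\cC_i=\{w\in W: w_i>0,\ w_j<0\ \forall j\neq i\}$ and use that $\cL(H)$ is a full-rank lattice in $W$ (Remark \ref{dirichlet}) to conclude it contains a lattice point, then take $C$ to be the maximum over the finitely many $i$. You also correctly flagged that the statement's ``$\bfn\in\bZ^r$'' is effectively a typo for ``$\bfn\in H$'' (as the application in Corollary \ref{torsioneigen} makes clear); the paper's proof picks $\bfn_i\in H$ directly from the full-rank lattice $\cL(H)$, which is marginally cleaner than your detour of first choosing $\bfn^{(i)}\in\bZ^r$ and then passing to $p_i\bfn^{(i)}\in H$, but both routes are valid and you even take slightly more care than the paper in checking explicitly that each cone $\cC_i$ is non-empty.
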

\begin{proof}It follows from Remark \ref{dirichlet} that any cone of non-empty interior in $W$ contains points from $\cL(H)$. In particular for any given index $i\in I$, the cone $\{(\lambda_j)_{j\in I}\in W:\lambda_i>0\text{ and }\lambda_j<0,\forall j\neq i\}$, which has open interior in $W$, contains a point $\cL(\bfn_i)$ where $\bfn_i\in H$. It suffices to take $C=\max_{i\in I}\lambda_i(\bfn_i)$.\end{proof}

\begin{corollary}\label{torsioneigen}If $A$ is an infinite $\zeta_\triangle$-invariant closed subset then there is an index $i\in I$ such that there are a torsion point $\bfz$ in $X^2$ and a non-zero vector $\bfv\in V_i^\square$ such that $A$ contains both $\bfz$ and $\bfz+\bfv$.\end{corollary}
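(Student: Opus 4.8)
The plan is to produce in $A$ a sequence that converges to a torsion point, and then renormalize that sequence by the action so that its ``direction of approach'' collapses onto a single eigenspace $V_i^\square$. First I would apply Lemma \ref{containtorsion} to obtain a torsion point $\bfz\in A$ that is an accumulation point of $A$; thus there are $\bfw_k\in A$ with $\bfw_k\to\bfz$ and $\bfw_k\neq\bfz$. Write the group difference $\bfw_k-\bfz=\pi_\triangle(\tilde\bfu_k)$ with $\tilde\bfu_k\in(\bR^{r_1}\oplus\bC^{r_2})^2$ a minimal lift, so $\tilde\bfu_k\to0$ and $\tilde\bfu_k\neq0$. Decompose $\tilde\bfu_k=\sum_{i\in I}\tilde\bfu_k^{(i)}$ along (\ref{Visquaresum}); since $\tilde\bfu_k\neq0$, at least one component is non-zero for each $k$, so the pigeonhole principle yields a single index $i_0\in I$ with $\tilde\bfu_k^{(i_0)}\neq0$ for all $k$ along a subsequence, to which I pass.

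Next comes the renormalization. Apply Lemma \ref{expansion} to get $\bfn\in\bZ^r$ with $0<\lambda_{i_0}(\bfn)\leq C$ and $\lambda_j(\bfn)<0$ for all $j\neq i_0$. For each $k$ let $m_k$ be the least non-negative integer for which $e^{m_k\lambda_{i_0}(\bfn)}\,|\tilde\bfu_k^{(i_0)}|\geq1$; because $|\tilde\bfu_k^{(i_0)}|\to0$ and $\lambda_{i_0}(\bfn)>0$ we have $m_k\to\infty$, and minimality forces $e^{m_k\lambda_{i_0}(\bfn)}|\tilde\bfu_k^{(i_0)}|\in[1,e^{C}]$. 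By (\ref{Visquaremulti}), the $V_{i_0}^\square$-component of $\zeta_\triangle^{m_k\bfn}.\tilde\bfu_k$ then has norm in $[1,e^{C}]$, while for $j\neq i_0$ its $V_j^\square$-component has norm $e^{m_k\lambda_j(\bfn)}|\tilde\bfu_k^{(j)}|\leq|\tilde\bfu_k^{(j)}|\to0$ since $m_k\geq0$ and $\lambda_j(\bfn)<0$. Hence $\{\zeta_\triangle^{m_k\bfn}.\tilde\bfu_k\}_k$ is bounded, and after passing to a further subsequence it converges to some $\bfv$ whose $V_j^\square$-components vanish for $j\neq i_0$ and whose $V_{i_0}^\square$-component has norm at least $1$; thus $\bfv\in V_{i_0}^\square$ and $\bfv\neq0$.

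Finally I transport the limit back to $A$. Since $\bfz$ is a torsion point of the compact abelian group $X^2$, the points $\zeta_\triangle^{m_k\bfn}.\bfz$ all lie in the finite $\zeta_\triangle$-orbit of $\bfz$, so after one more passage to a subsequence this is a constant torsion point $\bfz'\in A$. As $\zeta_\triangle^{m_k\bfn}$ is a group automorphism of $X^2$ compatible with $\pi_\triangle$ via (\ref{field2multi})–(\ref{X2multi}), we get $\zeta_\triangle^{m_k\bfn}.\bfw_k=\zeta_\triangle^{m_k\bfn}.\bfz+\pi_\triangle(\zeta_\triangle^{m_k\bfn}.\tilde\bfu_k)\to\bfz'+\pi_\triangle(\bfv)$, and this limit lies in $A$ because $A$ is closed and $\zeta_\triangle$-invariant. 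Taking $i=i_0$, renaming $\bfz'$ as $\bfz$ and keeping $\bfv\in V_i^\square$, we obtain a torsion point $\bfz$ and a non-zero $\bfv\in V_i^\square$ with $\bfz,\bfz+\bfv\in A$. (If one wants $\bfz+\bfv\neq\bfz$ as well, note that $\pi_\triangle$ is injective on $V_{i_0}^\square$ since $\Gamma\cap V_{i_0}=\{0\}$ — a non-zero element of $\sigma(K)\supseteq\Gamma$ has all coordinates non-zero — but this is not needed for the statement.)

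The only delicate point is the calibration of the exponents $m_k$: they must diverge, so that the unwanted $V_j^\square$-components are genuinely driven to $0$, yet keep the $V_{i_0}^\square$-component bounded away from both $0$ and $\infty$ — which is exactly the use of the two-sided estimate $0<\lambda_{i_0}(\bfn)\le C$ furnished by Lemma \ref{expansion}. A secondary bookkeeping point is that one re-centers at $\bfz'=\lim_k\zeta_\triangle^{m_k\bfn}.\bfz$ rather than at $\bfz$, since $\bfn$ need not stabilize $\bfz$; this costs nothing because torsion points have finite orbits. Everything else is a routine compactness argument.
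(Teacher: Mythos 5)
Your proof is correct and uses essentially the same renormalization strategy as the paper: decompose the small difference vectors along the $V_i^\square$'s, use Lemma~\ref{expansion} to pick $\bfn$ that expands one chosen component while contracting the rest, calibrate the power $m_k$ so the chosen component lands in a compact annulus $[1,e^C]$, and pass to a subsequential limit. The only variations are cosmetic: the paper selects $i$ so that $(\bfv_k)_i$ is the \emph{largest} component (rather than merely non-zero via pigeonhole, which as you note is enough since the other components still shrink under $e^{m_k\lambda_j(\bfn)}\le 1$), and the paper takes $\bfn$ inside the finite-index stabilizer $\Stab_{\zeta_\triangle}(\bfz)$ so that $\bfz$ is fixed outright, whereas you re-center at a subsequential limit $\bfz'$ of the finite orbit $\{\zeta_\triangle^{m_k\bfn}.\bfz\}$ --- both handle the same bookkeeping issue.
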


\begin{proof}Let $\bfz\in A$ be given by Lemma \ref{containtorsion}. Then there is a sequence of non-zero vectors $\bfv_k\in(\bR^{r_1}\oplus\bC^{r_2})^2$ converging to $0$ as $k\rightarrow\infty$, such that $\bfz+\bfv_k\in A$. There is a unique decomposition $\bfv_k=\sum_{i\in I}(\bfv_k)_i$ where $(\bfv_k)_i\in V_i^\square$. As $I$ is finite, by reducing to a subsequence, we can find $i\in I$ such that $|(\bfv_k)_i|\geq|(\bfv_k)_j|$ for all $j\neq i$ and $k\in\bN$. In particular, $|(\bfv_k)_i|>0$.

Now let $H=\Stab_{\zeta_\triangle}(\bfz)$. As $H$ has finite index in $\bZ^r$, Lemma \ref{expansion} applies and we obtain $\bfn\in H$ such that $1<|\zeta_i^\bfn|=e^{\lambda_i(\bfn)}\leq e^C$ and $|\zeta_j^\bfn|<1$ for all $j\in I\backslash\{i\}$ where $C=C(H)$. For all sufficiently large $k$, there is a positive integer $l_k$ such that $|\zeta_i^\bfn|^{l_k}\cdot|(\bfv_k)_i|\in [1,e^C]$. Hence since $\zeta^\bfn$ acts as the multiplication by $\zeta_i^\bfn$ on $V_i^\square$, which is isomorphic to $\bR^2$ or $\bC^2$, \begin{equation}\zeta^{l_k\bfn}.(\bfv_k)_i=(\zeta_i^\bfn)^{l_k}(\bfv_k)_i\in\{\bfv\in V_i^\square:|\bfv|\in[1,e^C]\}.\end{equation} Because this is a compact subset in $V_i^\square$, by passing to a subsequence of $k\in\bN$, it is alright to assume the sequence $\{\zeta^{l_k\bfn}.(\bfv_k)_i\}$ converges to a limit vector $\bfv$. Then $\bfv\in V_i^\square$ and $|\bfv|\in[1,e^C]$.

Furthermore, \begin{equation}\label{torsioneigen1}\begin{split}|\zeta^{l_k\bfn}.\bfv_k-\zeta^{l_k\bfn}.(\bfv_k)_i|
=&\big|\sum_{j\in I\backslash\{i\}}\zeta^{l_k\bfn}.(\bfv_k)_j\big|
\leq\sum_{j\in I\backslash\{i\}}|(\zeta_j^\bfn)^{l_k}(\bfv_k)_j|\\\leq&\sum_{j\in I\backslash\{i\}}|(\bfv_k)_j|.\end{split}\end{equation}  Since $\bfv_k\rightarrow 0$, $\lim_{k\rightarrow\infty}|(\bfv_k)_j|=0$ for $j\neq i$. Hence (\ref{torsioneigen1}) decays to $0$ as $k\rightarrow\infty$ and $\lim_{k\rightarrow\infty}\zeta^{l_k\bfn}.\bfv_k=\lim_{k\rightarrow\infty}\zeta^{l_k\bfn}.(\bfv_k)_i=\bfv$.

Notice since $l_k\bfn\in H$, $\zeta_\triangle^{l_k\bfn}.\bfz=\bfz$. So $\zeta_\triangle^{l_k\bfn}.(\bfz+\bfv_k)$ is just $\bfz+\zeta^{l_k\bfn}.\bfv_k$ and converges to $\bfz+\bfv$. Since $\bfz+\bfv_k$ is in the $\zeta_\triangle$-invariant closed subset $A$, this implies $\bfz+\bfv\in A$ and completes the proof.\end{proof}

\subsection{Construction of an infinitely long line} We showed $A$ contains both $\bfz$ and $\bfz+\bfv$. Using invariance under the group action, a large subset of $V_i^\square$ can be constructed from the difference vector $\bfv\in V_i^\square$.

$\bfv\in V_i^{\square}$ can be accordingly written as $(v^{(1)},v^{(2)})$ where $v^{(1)},v^{(2)}$ are from $V_i$, which is isomorphic to $\bF_i=\bR$ or $\bC$ as a vector space. As at least one of $v^{(1)}$, $v^{(2)}$ is not zero, for the moment we are going to assume $v^{(1)}\neq 0$. Then there is a number $\kappa_*\in\bF_i$ such that $v^{(2)}=\kappa_*v^{(1)}$.

\begin{lemma}\label{line} Suppose $\bfz$ is a torsion point and $\bfv=(v,\kappa_*v)$ is a non-zero vector from $V_i^\square$ for some $i\in I$ where $v\in V_i$ and $\kappa_*\in\bF_i$. If a $\zeta_\triangle$-invariant closed subset $A$ contains both $\bfz$ and $\bfz+\bfv$, then there are $\bfy\in A$ and a non-zero vector $w\in V_i$ such that $\bfy+\rho\bfw\in A$ for all $\rho\in\bR$ where $\bfw=(w,\kappa_*w)\in V_i^\square$.\end{lemma}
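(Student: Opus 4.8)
The plan is to replace $A$ by a concrete invariant closed set it is forced to contain, and then produce the line inside that set. Since $\bfz$ is torsion, $H:=\Stab_{\zeta_\triangle}(\bfz)$ has finite index in $\bZ^r$, and for $\bfn\in H$ we have $\zeta_\triangle^\bfn.(\bfz+\bfv)=\bfz+\zeta_i^\bfn\bfv$ because $\bfv\in V_i^\square$, on which $\zeta_\triangle^\bfn$ acts by the scalar $\zeta_i^\bfn$. Put $U:=\{(v',\kappa_*v'):v'\in V_i\}\subset V_i^\square$, a $\zeta_\triangle$-invariant subspace through $\bfv$, and $\Lambda:=\overline{\{\zeta_i^\bfn:\bfn\in H\}}$, a closed subgroup of $\bF_i^\times$. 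Closedness of $A$ then gives $A\supseteq\bfz+\pi_\triangle(\Lambda\bfv)$. I will use two facts about $\Lambda$: it is \emph{unbounded}, since by Lemma~\ref{expansion} there is $\bfn_0\in H$ with $\lambda_i(\bfn_0)>0$ and $\lambda_j(\bfn_0)<0$ for $j\ne i$, so $\mu:=\zeta_i^{\bfn_0}$ has $|\mu|>1$ and $\zeta_\triangle^{\bfn_0}$ strictly contracts each $V_j^\square$, $j\ne i$; and it is \emph{not discrete}, since $\bfn\mapsto\zeta_i^\bfn=\sigma_i(\zeta^\bfn)$ is injective on $H\cong\bZ^r$ (as $\sigma_i(\zeta^\bfn)=1$ forces $\zeta^\bfn=1$), whereas discrete subgroups of the Lie group $\bF_i^\times$ have torsion-free rank at most $1<r$.

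Suppose first that $\Lambda\supseteq\bR_{>0}$. Then $\Lambda\bfv\supseteq\bR_{\ge0}\bfv$, and since $\bR_{\ge0}\bfv$ is a sub-semigroup of $(\bR^{r_1}\oplus\bC^{r_2})^2$ and the closure of a sub-semigroup of the compact group $X^2$ is a subgroup, $\overline{\pi_\triangle(\bR_{\ge0}\bfv)}$ is a subtorus and so contains $\pi_\triangle(\bR\bfv)$; hence $A\supseteq\bfz+\pi_\triangle(\bR\bfv)=\{\bfz+\rho\bfv:\rho\in\bR\}$, and we may take $\bfy:=\bfz$, $w:=v$. This case always occurs when $\bF_i=\bR$: if $\lambda_i|_H$ had discrete image, its kernel would have rank $\ge r-1\ge1$ yet would embed, via $\bfn\mapsto\zeta_i^\bfn$, into $\{|z|=1\}\cap\bR=\{\pm1\}$; so $\lambda_i(H)$ is dense in $\bR$, whence the positive numbers $\zeta_i^{2\bfn}=|\zeta_i^\bfn|^2$ are dense in $\bR_{>0}$ and $\Lambda\supseteq\bR_{>0}$.

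It remains to treat $\bF_i=\bC$ with $\Lambda\cap\bR_{>0}$ discrete. Then $\Lambda\cap S^1$ is a closed subgroup of $S^1$, and since $\Lambda$ is nondiscrete and unbounded its identity component $\Lambda^0$, a nontrivial connected closed subgroup of $\bC^\times$, is $S^1$, $\bR_{>0}$, all of $\bC^\times$, or a logarithmic spiral $\{e^{(1+\imag\beta)t}:t\in\bR\}$ with $\beta\ne0$; the cases with $\bR_{>0}\subseteq\Lambda^0$ were handled above, so either $\Lambda\supseteq S^1$ and $\Lambda=\{z:|z|\in e^{c\bZ}\}$ for some $c>0$ (so $\Lambda\bfv$ is a union of concentric circles in $U\cong\bC$ of radii $\to\infty$), or $\Lambda\bfv$ contains a logarithmic spiral in $U$ escaping to infinity. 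In both situations $\Lambda\bfv$ contains, at a sequence of scales $R_k\to\infty$, curves in $U$ meeting every direction at modulus comparable to $R_k$; by equidistribution of dilating circles (respectively spirals) on a torus, $\overline{\pi_\triangle(\Lambda\bfv)}$ then contains the subtorus $T:=\overline{\pi_\triangle(U)}$. Now $\zeta_\triangle^{\bfn_0}$ restricts to an automorphism of $T$ all of whose eigenvalues ($\mu,\bar\mu$ of modulus $>1$ together with some $\zeta_j^{\bfn_0}$ of modulus $<1$) lie off the unit circle, hence a hyperbolic one whose unstable subspace includes $U$; in particular $\dim T>\dim U$ (a torus automorphism cannot have all eigenvalues of modulus $>1$) and the Lie algebra of $T$ contains $U$, so $\bfz+T$ contains the line $\{\bfz+\rho\bfv:\rho\in\bR\}$. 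Again $\bfy:=\bfz$, $w:=v$ works, now with $A\supseteq\bfz+T$.

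The only point requiring real work is \textbf{this last step}: the equidistribution assertion that the $\pi_\triangle$-images of the dilating circles or spirals in $U$ become Hausdorff-dense in $T$. It can be proved by a Fourier / van der Corput estimate showing the exponential sums along such curves tend to $0$ (for a circle the relevant integral is a Bessel function $J_0(2\pi R\|\cdot\|)\to0$), together with the density of $\pi_\triangle(U)$ in $T$; alternatively, in the same spirit, one can study directly the $\omega$-limit set $Z$ of $\pi_\triangle(\bfv)$ under the hyperbolic automorphism $\zeta_\triangle^{\bfn_0}|_T$: its forward orbit escapes to infinity along the immersed unstable leaf $\pi_\triangle(U)$, which is dense in $T$ because $\dim T>\dim U$, so $Z$ cannot be a finite periodic orbit, and one argues that $Z$ must then contain a full unstable leaf, i.e. a translate of $\pi_\triangle(\bR\bfv)$. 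Everything else is bookkeeping with the structure of closed subgroups of $\bR^\times$ and $\bC^\times$ and the semigroup-closure fact used in the second paragraph; in particular the same rank argument that makes $\Lambda$ nondiscrete rules out the degenerate possibility $\bfv\in\Gamma^2$.
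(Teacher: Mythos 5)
Your approach is genuinely different from the paper's, and the framework is sound, but it has a real gap that you yourself flag. You pass to the closed multiplicative group $\Lambda=\overline{\{\zeta_i^\bfn:\bfn\in H\}}\le\bF_i^\times$, prove by a rank count that it is unbounded and nondiscrete, and then classify its identity component $\Lambda^0$ as a connected Lie subgroup of $\bF_i^\times$. In the cases $\bR_{>0}\subseteq\Lambda^0$ (which includes all of $\bF_i=\bR$), your argument is correct and in fact yields a slightly stronger conclusion than the paper's ($\bfy=\bfz$, $w=v$): a closed subsemigroup of the compact group $X^2$ is a subgroup, so $\overline{\pi_\triangle(\bR_{\ge0}\bfv)}\supseteq\pi_\triangle(\bR\bfv)$. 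The trouble is the two remaining cases $\Lambda^0=S^1$ and $\Lambda^0$ a logarithmic spiral. For $S^1$, the Bessel-function/Fourier argument for equidistribution of expanding circles is legitimate but needs to be written out (in particular, you must check that a nontrivial character of $T=\overline{\pi_\triangle(U)}$ is nontrivial on $U$, which follows from density of $\pi_\triangle(U)$ in $T$). For the spiral, neither of your two suggestions is a routine citation: the spiral image $\pi_\triangle(\Lambda^0\bfv)$ is \emph{not} a subsemigroup of $X^2$ (because multiplication by $\Lambda^0$ is not additive), so the semigroup-closure trick is unavailable, a stationary-phase estimate on an exponentially dilating curve is nontrivial, and the $\omega$-limit-set alternative leaves precisely the key step (``one argues that $Z$ must then contain a full unstable leaf'') unproven. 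Since the non-CM hypothesis does not obviously exclude the spiral case, this is a genuine gap rather than a harmless remark.

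By contrast, the paper's proof is finitary and avoids the classification of $\Lambda$ entirely. Lemma~\ref{smallLog} produces $\bfn\in H$ with $|\Log\zeta_i^\bfn|$ as small as desired, and Lemma~\ref{finiteline} combines this with a stretching element $\zeta_i^{l\bfm}$ (from Lemma~\ref{expansion}) to exhibit, via the first-order expansion $\zeta_i^{t\bfn}=e^{t\Log\zeta_i^\bfn}\approx 1+t\Log\zeta_i^\bfn$, an $\epsilon$-approximate line segment of arbitrary length $R$ inside $A$. Lemma~\ref{line} is then a diagonal compactness argument $R\to\infty$, $\epsilon\to0$. This treats $\bR$ and $\bC$ uniformly and never needs to know the shape of $\Lambda$. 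If you want to salvage your structural route, the most economical fix is to observe that whenever $\Lambda$ is nondiscrete there exist $\bfn\in H\setminus\{0\}$ with $\Log\zeta_i^\bfn$ arbitrarily small, and then run the paper's segment construction — at which point the structural classification is no longer carrying any weight.
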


We need the following fact to prove Lemma \ref{line}.

\begin{lemma}\label{smallLog}Suppose $r\geq 2$. Then for all finite-index subgroup $H\leq\bZ^r$, $i\in I$ and $\delta>0$, there exists $\bfn\in H\backslash\{0\}$ such that $|\Log\zeta_i^\bfn|<\delta$ where $\Log\zeta_i^\bfn\in\bC$ denotes the principal value of logarithm of $\zeta_i^\bfn$.\end{lemma}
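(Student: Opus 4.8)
The plan is to derive this from the fact that, when $r\ge 2$, the single linear functional $\lambda_i$ cannot be ``expanding in every direction'' on a finite-index subgroup, and then to run a compactness/pigeonhole (recurrence) argument: once $|\lambda_i(\bfn)|$ is bounded, the values $\zeta_i^\bfn$ live in a fixed compact annulus of $\bC^\times$, so some difference $\bfn_k-\bfn_l$ produces a nonzero element with $\zeta_i^\bfn$ very close to $1$. Throughout, only the one coordinate $\zeta_i^\bfn$ is controlled; the remaining coordinates play no role.

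\emph{Step 1.} I would first show that for every $c>0$ the set $\{\bfn\in H:|\lambda_i(\bfn)|<c\}$ is infinite. The image $\lambda_i(H)$ is a finitely generated subgroup of $\bR$, hence either cyclic (and thus discrete) or dense. In the dense case $\lambda_i(H)\cap(-c,c)$ is already infinite, so its preimage in $H$ is infinite. In the cyclic case, since $H$ has rank $r\ge 2$ the kernel of $\lambda_i|_H$ has rank at least $r-1\ge 1$, hence is infinite and contained in the set in question. This is precisely where $r\ge 2$ enters (for $r=1$, $\lambda_i$ may be injective on $H$ with discrete image and the statement genuinely fails, in line with Theorem \ref{Berend} requiring $r\ge 2$). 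Equivalently, one may phrase this via Remark \ref{dirichlet}: $\cL(H)$ is a full-rank lattice in the $r$-dimensional space $W$, and a slab $\{w_i\in(-c,c)\}$ of a lattice of rank $\ge 2$ contains infinitely many lattice points.

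\emph{Step 2.} Fix $\delta>0$, and using continuity of the principal logarithm at $1$ choose $\rho>0$ with $|z-1|<\rho\Rightarrow|\Log z|<\delta$. By Step 1, pick infinitely many distinct $\bfn_1,\bfn_2,\dots\in H$ with $|\lambda_i(\bfn_k)|<\log 2$; then each $\zeta_i^{\bfn_k}$ lies in the compact annulus $\{1/2\le|z|\le 2\}$. Passing to a subsequence we may assume $\zeta_i^{\bfn_k}\to w$ with $|w|\ge 1/2$, so that for $k\ne l$ large enough $\bigl|\zeta_i^{\bfn_k-\bfn_l}-1\bigr|=\bigl|\zeta_i^{\bfn_k}/\zeta_i^{\bfn_l}-1\bigr|<\rho$. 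Setting $\bfn=\bfn_k-\bfn_l$, which is nonzero because the $\bfn_k$ are distinct, gives $\bfn\in H\setminus\{\bfzero\}$ with $|\Log\zeta_i^\bfn|<\delta$, as required.

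\emph{Expected obstacle.} There is essentially no serious obstacle here: Step 2 is routine, and the only real content is the dichotomy in Step 1. The point one must not miss is that the annulus $\{1/2\le|z|\le2\}$ is bounded \emph{away from} $0$ (and from $\infty$), which is what forces the quotient $\zeta_i^{\bfn_k}/\zeta_i^{\bfn_l}$ near $1$ rather than merely bounded; as a sanity check, for $i\le r_1$ the inequality $|\zeta_i^\bfn-1|<\rho$ with $\rho<1$ automatically yields $\zeta_i^\bfn>0$, as it must for $\Log\zeta_i^\bfn$ to be small.
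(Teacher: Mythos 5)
Your argument is correct, and it is essentially the same pigeonhole/compactness argument the paper uses: one controls $\lambda_i(\bfn)=\log|\zeta_i^\bfn|$ and $\arg\zeta_i^\bfn$ simultaneously and then takes a difference of two nearby elements of $H$. The only real difference is organizational — the paper applies pigeonhole in one shot to the bundled map $\tau(p_1,p_2)=\big(\lambda_i(p_1\bfm_1+p_2\bfm_2),\arg\zeta_i^{p_1\bfm_1+p_2\bfm_2}\big)$ on a box $\{-P,\dots,P\}^2$ inside a rank-$2$ subgroup, whereas you first use Remark \ref{dirichlet} to produce infinitely many $\bfn\in H$ with $|\lambda_i(\bfn)|$ bounded and then extract a convergent subsequence of $\zeta_i^\bfn$ in a compact annulus; both steps are sound, the conclusions agree, and your split arguably makes the use of $r\ge 2$ slightly more transparent, at the cost of Bolzano--Weierstrass in place of an explicit counting bound.
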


\begin{proof} As $\rank(H)=r$, we can fix two linearly independent non-trivial elements $\bfm_1,\bfm_2\in H$. Set $\tau(p_1,p_2)=\big(\lambda_i(p_1\bfm_1+q_2\bfm_2),\arg\zeta_i^{p_1\bfm_1+q_2\bfm_2}\big)$, then $\tau$ is a group morphism from $\bZ^2$ to $\bR\oplus(\bR/2\pi\bZ)$. For all $p_1,p_2\in\{-P,P-1,\cdots,P-1,P\}$, $\tau(p_1,p_2)\in[-MP,MP]\oplus(\bR/2\pi\bZ)$ where $M=\max(|\lambda_i(\bfm_1)|, |\lambda_i(\bfm_2))|$. Because $[-MP,MP]\oplus(\bR/2\pi\bZ)$ can be covered by $O_{M,\delta}(P)$ balls of radius $\delta$ but we are considering $(2P+1)^2$ pairs of $(p_1,p_2)$, if $P$ is sufficiently large with respect to $M$ and $\delta$ then there are two distinct pairs $(p_1,p_2),(p'_1,p'_2)\in\{-P,P-1,\cdots,P-1,P\}^2$ such that $\tau(p_1,p_2)$ and $\tau(p'_1,p'_2)$ are in the same ball. Let $\bfn=(p_1-p'_1)\bfm_1+ (p_2-p'_2)\bfm_2$, then as $\bfm_1$ and $\bfm_2$ are linearly independent elements in $H$, $\bfn$ is a non-zero element of $H$. Moreover $(\lambda_i(\bfn),\arg\zeta_i^\bfn)=\tau(p_1,p_2)-\tau(p'_1,p'_2)$ is of distance $\delta$ from the origin in $\bR\oplus(\bR/2\pi\bZ)$. In other words, if $\Arg \zeta_i^\bfn$ denotes the principal value of complex argument of $\zeta_i^\bfn$, then $|\lambda_i(\bfn)+\imag\Arg \zeta_i^\bfn|\leq\delta$. However $\lambda_i(\bfn)+\imag\Arg \zeta_i^\bfn$ is just $\Log\zeta_i^\bfn$ as $\lambda_i(\bfn)=\log|\zeta_i^\bfn)|$.\end{proof}

We show first a finitary statement, then take limit to get Lemma \ref{line}.

\begin{lemma}\label{finiteline} Let $A$, $i$, $\bfz$, $\bfv$ and $\kappa_*$ be as in Lemma \ref{line}. Then $\forall R>0$, $\forall\epsilon>0$, $\exists\bfy\in A$, $\exists w\in V_i$ such that $|w|=1$ and $A\cap B_\epsilon(\bfy+\rho\bfw)\neq\emptyset,\forall\rho\in[-R,R]$ where $\bfw=(w,\kappa_*w)\in V_i^\square$.\end{lemma}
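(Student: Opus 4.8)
The plan is first to show that, in the eigenspace direction $V_i^\square$, the set $A$ contains the whole $\zeta_i$-orbit of $\bfv$ translated by $\bfz$, and then to ``straighten'' a large enough piece of this orbit into an approximate line segment of the required length. Concretely, set $H=\Stab_{\zeta_\triangle}(\bfz)$; since $\bfz$ is a torsion point of $X^2$, $H$ has finite index in $\bZ^r$. For $\bfn\in H$ the automorphism $\zeta_\triangle^\bfn$ fixes $\bfz$ and, by \eqref{Visquaremulti}, acts on $V_i^\square$ as scalar multiplication by $\zeta_i^\bfn$; hence $\zeta_\triangle^\bfn.(\bfz+\bfv)=\bfz+\zeta_i^\bfn\bfv$, and since $A$ is closed, $\zeta_\triangle$-invariant and contains $\bfz+\bfv$, we get $\bfz+\zeta_i^\bfn\bfv\in A$ for every $\bfn\in H$. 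With $\bfv=(v,\kappa_*v)$ and $v\in V_i\setminus\{0\}$, each $\zeta_i^\bfn\bfv=(\zeta_i^\bfn v,\kappa_*\zeta_i^\bfn v)$ is a nonzero vector on the line $\bF_i\bfv\subset V_i^\square$.

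Next I would choose two elements of $H$. By Lemma \ref{smallLog}, for any $\delta\in(0,\pi)$ there is $\bfn_0\in H\setminus\{\bfzero\}$ with $\tau:=\Log\zeta_i^{\bfn_0}$ of modulus $<\delta$ (for $\bF_i=\bR$ the bound $\delta<\pi$ forces $\zeta_i^{\bfn_0}>0$, so $\Log$ is the ordinary logarithm); as $\sigma_i$ is injective on $K$ and $\zeta$ is an embedding, $\zeta_i^{\bfn_0}\neq 1$, so $0<|\tau|<\delta$ and $\zeta_i^{m\bfn_0}=e^{m\tau}$ for all $m\in\bZ$. Also $\lambda_i$ is not identically zero on $H$ (by Remark \ref{dirichlet}, $\cL(H)$ is a full-rank lattice in $W$, and the $i$-th coordinate functional does not vanish on $W$ since the $d_j$ are all positive), so $\lambda_i$ is unbounded on $H$ and one may pick $\bfn'\in H$ with $\lambda_i(\bfn')$ as large as desired; then $P:=\zeta_i^{\bfn'}\bfv\in\bF_i\bfv$ has modulus $|P|=|\bfv|e^{\lambda_i(\bfn')}$ as large as we wish.

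Put $\bfy:=\bfz+P\in A$ and $\bfx_m:=\zeta_\triangle^{m\bfn_0+\bfn'}.(\bfz+\bfv)=\bfz+e^{m\tau}P\in A$ for $m\in\bZ$, so that $\bfx_m-\bfy=(e^{m\tau}-1)P=m\tau P+E_m$ with $|E_m|\le|m\tau|^2|P|$ whenever $|m\tau|\le1$. Since $P=(\zeta_i^{\bfn'}v,\kappa_*\zeta_i^{\bfn'}v)$, the vector $\tau P$ equals $(u,\kappa_*u)$ with $u=\tau\zeta_i^{\bfn'}v\in V_i\setminus\{0\}$; set $w:=u/|u|$ (so $|w|=1$) and $\bfw:=(w,\kappa_*w)$, whence $\tau P=\mu_0\bfw$ with $\mu_0:=|u|$, a positive multiple of $|\tau||P|$. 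For $\rho\in[-R,R]$ take $m$ to be the integer closest to $\rho/\mu_0$; then $|m\mu_0-\rho|\le\mu_0/2$, $|m|\le R/\mu_0+1$, and since $\pi_\triangle$ is $1$-Lipschitz,
\[\|\bfx_m-(\bfy+\rho\bfw)\|\le|m\mu_0-\rho|\,|\bfw|+|E_m|\le\tfrac12|\tau||P|+\bigl(R/\mu_0+1\bigr)^2|\tau|^2|P|.\]
The remaining task is to make this $\le\epsilon$ by the right order of choices: first fix $\bfn'$ so that $|P|$ is large (of order $R^2/\epsilon$, up to a constant depending on $\kappa_*$), and only afterwards invoke Lemma \ref{smallLog} to take $\delta$, hence $|\tau|$, small enough relative to the now-fixed $|P|,R,\epsilon$ that both summands are $\le\epsilon/2$ and $|m\tau|\le1$ throughout $|m|\le R/\mu_0+1$. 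This produces, for every $\rho\in[-R,R]$, a point $\bfx_m\in A$ within $\epsilon$ of $\bfy+\rho\bfw$, which is the claim.

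The one genuinely delicate point is this quantitative estimate together with the order of the choices. One cannot approximate a segment of fixed length $R$ by the orbit of a near-identity element $e^\tau$ acting on $\bfv$ itself, since the curve $s\mapsto e^{s\tau}\bfv$ has curvature of order $1/|\bfv|$ and $|\bfv|$ is fixed; it is essential to first blow $\bfv$ up to modulus $\gg R^2/\epsilon$ via the expanding element $\zeta_i^{\bfn'}$, and only then to use near-identity recurrence. Everything else---the finite index of $H$, the invariance bookkeeping, and the Taylor bound for $e^{m\tau}-1$---is routine, and the argument treats $\bF_i=\bR$ and $\bF_i=\bC$ uniformly because $\{m\tau P:m\in\bZ\}$ is a real line inside $V_i^\square$ in either case.
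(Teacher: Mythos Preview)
Your argument is correct and follows essentially the same approach as the paper: blow up $\bfv$ along $V_i$ to a vector $P$ of size comparable to $R^2/\epsilon$, then use Lemma~\ref{smallLog} to pick a near-identity element $e^\tau$ and Taylor-expand $e^{m\tau}P$ around $P$ to approximate a real segment of length $2R$. The paper uses Lemma~\ref{expansion} to get an expanding element with a fixed step size and then takes a suitable power to land $|P|$ in an explicit interval, whereas you simply invoke unboundedness of $\lambda_i$ on $H$ to make $|P|$ exceed a threshold; both work, and your observation that $R|\tau|/\mu_0=R\sqrt{1+|\kappa_*|^2}/|P|$ is independent of $|\tau|$ is exactly what makes the order of choices (first $|P|$, then $|\tau|$) succeed without an upper bound on $|P|$.
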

\begin{proof} Without loss of generality, assume $R>\epsilon$.

Again let $H$ be the finite-index subgroup $\Stab_{\zeta_\triangle}(\bfz)\leq \bZ^r$. By Lemma \ref{expansion}, there exists $\bfm\in H$ such that $\lambda_i(\bfm)\in(0,C]$ where $C=C(H)$. Then as $v\in V_i$, for all integer $l$, $\zeta^{l\bfm}.v=\zeta_i^{l\bfm}.v$ and has length $e^{l\lambda_i(\bfm)}|v|$. We may choose $l\in\bZ$ such that \begin{equation}\label{finiteline1}\frac {2e^C\sqrt{1+|\kappa_*|^2}\cdot R^2}\epsilon<|\zeta_i^{l\bfm}v|\leq \frac {2e^{2C}\sqrt{1+|\kappa_*|^2}\cdot R^2}\epsilon.\end{equation}

Applying Lemma \ref{smallLog}, pick a nontrivial element  $\bfn\in H$ such that \begin{equation}\label{finiteline2}0 \neq |\Log\zeta_i^\bfn|<\frac{\epsilon^2}{4e^{2C}\sqrt{1+|\kappa_*|^2}\cdot R^2}.\end{equation}

Define \[\bfy=\zeta_\triangle^{l\bfm}.(\bfz+\bfv)=\bfz+\zeta_i^{l\bfm}\bfv, \quad w=\frac{(\Log\zeta_i^\bfn)\zeta_i^{l\bfm}v}{|(\Log\zeta_i^\bfn)\zeta_i^{l\bfm}v|}\] and denote $(w,\kappa_*w)$ by $\bfw$. 


For all $\rho\in [-R,R]$, take the integer \begin{equation}\label{finiteline3}t=\sign(\rho)\cdot\left\lfloor\frac{|\rho|}{|(\Log\zeta_i^\bfn)\zeta_i^{l\bfm}v|}\right\rfloor.\end{equation}  We claim \begin{equation}\label{finitelineclaim}\left\|\zeta_\triangle^{l\bfm+t\bfn}.(\bfz+\bfv)-(\bfy+\rho\bfw)\right\|\leq\epsilon.\end{equation}
To see this, first observe that as $\bfm,\bfn\in H$,
\begin{equation}\begin{split}\zeta_\triangle^{l\bfm+t\bfn}.(\bfz+\bfv)=&
\bfz+\zeta_i^{l\bfm+t\bfn}\bfv\\
=&\bfz+e^{t\Log\zeta_i^\bfn}\zeta_i^{l\bfm}\bfv\\
=&\bfz+\zeta_i^{l\bfm}\bfv+t(\Log\zeta_i^\bfn)\zeta_i^{l\bfm}\bfv\\
&\ \ +\sum_{k=2}^\infty\frac{(t\Log\zeta_i^\bfn)^k}{k!}\cdot\zeta_i^{l\bfm}\bfv\\
=&\bfy+\rho\bfw+\left(t-\frac{\rho}{|(\Log\zeta_i^\bfn)\zeta_i^{l\bfm}v|}\right)(\Log\zeta_i^\bfn)\zeta_i^{l\bfm}\bfv\\
&\ \ +\sum_{k=2}^\infty\frac{(t\Log\zeta_i^\bfn)^k}{k!}\cdot\zeta_i^{l\bfm}\bfv,\end{split}.\end{equation}
Therefore it suffices to show
\begin{equation}\label{finiteline5}\left|\left(t-\frac{\rho}{|(\Log\zeta_i^\bfn)\zeta_i^{l\bfm}v|}\right)\cdot(\Log\zeta_i^\bfn)\zeta_i^{l\bfm}\bfv\right|\leq\frac\epsilon 2\end{equation}
and \begin{equation}\label{finiteline6}\left|\sum_{k=2}^\infty\frac{(t\Log\zeta_i^\bfn)^k}{k!}\cdot\zeta_i^{l\bfm}\bfv\right|\leq\frac\epsilon 2.\end{equation}

By choice of $t$, $\big|t-\frac{\rho}{|(\Log\zeta_i^\bfn)\zeta_i^{l\bfm}v|}\big|\leq 1$. Moreover by (\ref{finiteline1}) and (\ref{finiteline2}),  \[|(\Log\zeta_i^\bfn)\zeta_i^{l\bfm}\bfv|\leq\frac{\epsilon^2}{4e^{2C}\sqrt{1+|\kappa_*|^2}\cdot R^2}\cdot\frac{2e^{2C}\sqrt{1+|\kappa_*|^2}\cdot R^2}\epsilon=\frac\epsilon{2},\] 
establishing (\ref{finiteline5}).

On the other hand, 
 \begin{equation}\begin{split}|t\Log\zeta_i^\bfn|\leq& \frac{|\rho|}{|(\Log\zeta_i^\bfn)\zeta_i^{l\bfm}v|}\cdot|\Log\zeta_i^\bfn|=\frac{|\rho|}{|\zeta_i^{l\bfm}v|}\\
\leq&R\left(\frac{2e^C\sqrt{1+|\kappa_*|^2}\cdot R^2}\epsilon\right)^{-1}\\
=&\frac{\epsilon}{2e^C\sqrt{1+|\kappa_*|^2}\cdot R}\end{split}\end{equation}
As we assumed $R>\epsilon$, this expression is bounded by $1$. Thus \begin{equation}\label{finiteline7}\begin{split}\left|\sum_{k=2}^\infty\frac{(t\Log\zeta_i^\bfn)^k}{k!}\right|=&\left|\sum_{k=2}^\infty\frac{(t\Log\zeta_i^\bfn)^{k-2}}{k!}\right|\cdot|t\Log\zeta_i^\bfn|^2\\
\leq&\left(\sum_{k=2}^\infty\frac1{k!}\right)\cdot|t\Log\zeta_i^\bfn|^2\\
\leq&|t\Log\zeta_i^\bfn|^2\leq\left(\frac{\epsilon}{2e^C\sqrt{1+|\kappa_*|^2}\cdot R}\right)^2\\
\leq&\frac{\epsilon^2}{4e^{2C}\sqrt{1+|\kappa_*|^2}R^2}.\end{split}\end{equation}
(\ref{finiteline6}) is obtained by taking the product of (\ref{finiteline1}) and (\ref{finiteline7}). This shows the claim (\ref{finitelineclaim}). The lemma follows as $\zeta_\triangle^{l\bfm+t\bfn}.(\bfz+\bfv)\in A$.\end{proof}

\begin{proof}[Proof of Lemma \ref{line}] By Lemma \ref{finiteline}, for all $n\in\bN$, there are $\bfy_n\in X^2$ and $w_n\in V_i$ with $|w_n|=1$ such that if we denote $\bfw_n=(w_n,\kappa_*w_n)\in V_i^\square$ then there exists a point $\bfy_{n,\rho}$ in $A\cap B_{\frac1n}(\bfy_n+\rho\bfw_n)$ for all $\rho\in[-n,n]$.

Since both $X^2$ and the unit circle in $V_i$ (which is just $\{\pm1\}$ if $V_i\cong\bR$) are compact, there is a subsequence $\{n_k\}_{k=1}^\infty$ such that as $k$ tends to $\infty$, $\bfy_{n_k}\rightarrow\bfy$ and $w_{n_k}\rightarrow w$  for some limits $\bfy\in X^2$ and $w\in V_i$ with $|w|=1$.

Set $\bfw=(w,\kappa_*w)\in V_i^\square$. Now for any $\rho\in\bR$ and $\epsilon>0$, choose $k$ such that $n_k\geq\max(\rho,\frac3\epsilon)$, $\|\bfy_{n_k}-\bfy\|\leq\frac\epsilon3$ and $|w_{n_k}-w|\leq\frac\epsilon{3|\rho|\sqrt{1+|\kappa_*|^2}}$. Then $|\bfw_{n_k}-\bfw|=|(w_{n_k}-w,\kappa_*(w_{n_k}-w))|=\sqrt{1+|\kappa_*|^2}\cdot|w_{n_k}-w|\leq\frac{\epsilon}{3|
\rho|}$. By assumption, there exists a point $\bfy_{n_k,\rho}\in A$ within distance $\frac1{n_k}$ from $\bfy_{n_k}+\rho\bfw_{n_k}$. Then the distance between $\bfy_{n_k,\rho}$ and $\bfy+\rho\bfw$ is bounded by \begin{equation}\begin{split}&\|\bfy_{n_k,\rho}-(\bfy_{n_k}+\rho\bfw_{n_k})\|+\|(\bfy_{n_k}+\rho\bfw_{n_k})-(\bfy+\rho\bfw)\|\\
\leq&\frac1{n_k}+\big(\|\bfy_{n_k}-\bfy\|+|\rho|\cdot|\bfw_{n_k}-\bfw|\big)\\
\leq&(\frac3\epsilon)^{-1}+\frac\epsilon3+|\rho|\cdot\frac\epsilon{3|\rho|}=\epsilon.\end{split}\end{equation} The proof is completed. \end{proof}

\subsection{Construction of a homogeneous invariant set}
\begin{lemma}\label{denseorcontaindhomo}Suppose a closed subset $A\subset X^2$ is $\zeta_\triangle$-invariant and contains two points $\bfz$ and $\bfz+\bfv$ where $\bfz$ is a torsion point and $\bfv$ is a non-zero vector from $V_i^\square$ for some $i\in I$. Then:
\begin{enumerate}[label=\textup{(\arabic*)},leftmargin=*]
\item  If $\bfv\in V_i^\kappa$ for some $\kappa\in K\cup\{\infty\}$ then the $d$-dimensional homogeneous $\zeta_\triangle$-invariant subset $L=\{\zeta_\triangle^\bfn.\bfz':\bfn\in\bZ^r,\bfz'\in\bfz+T^\kappa\}$, which contains both $\bfz$ and $\bfz+\bfv$, is a subset of $A$.

\item If there does not exist such a slope $\kappa$ then $A=X^2$.
\end{enumerate}
\end{lemma}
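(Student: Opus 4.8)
\medskip
\noindent\textbf{Proof plan.} Part~(1) is essentially immediate from Proposition~\ref{dhomoprop}. Since $V_i^\kappa\subset V^\kappa$ and $T^\kappa=\pi_\triangle(V^\kappa)$ by Lemma~\ref{Tkappatangent}, the point $\bfz+\bfv$ lies in $\bfz+T^\kappa\subset L$, and so does $\bfz$; moreover $\bfv\neq\bfzero$ forces $\bfz+\bfv$ to be non-torsion, because a torsion difference would have to lie in $\sigma(K)^2$ while $\sigma(K)^2\cap V_i^\square=\{\bfzero\}$ (a nonzero element of $\sigma(K)$ has all its coordinates nonzero). Hence by Proposition~\ref{dhomoprop}(3) the $\zeta_\triangle$-orbit closure of $\bfz+\bfv$ is $L$, and since $A$ is closed, $\zeta_\triangle$-invariant and contains $\bfz+\bfv$, we get $L\subset A$.

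For Part~(2), write $\bfv=(v,\kappa_*v)$; then $v\neq 0$ (otherwise $\bfv\in V_i^\infty$) and $\kappa_*\notin\sigma_i(K)$ (otherwise $\bfv\in V_i^\kappa$). By Lemma~\ref{line}, $A$ contains an entire line $\bfy+\bR\bfw$ with $\bfw=(w,\kappa_*w)\in V_i^\square$ and $w\neq 0$. Let $T_0=\overline{\pi_\triangle(\bR\bfw)}$ be the smallest subtorus of $X^2$ containing the one-parameter subgroup $\pi_\triangle(\bR\bfw)$. Since $A$ is closed, $\bfy+T_0\subset A$, and by $\zeta_\triangle$-invariance $\zeta_\triangle^\bfn.\bfy+\zeta_\triangle^\bfn.T_0\subset A$ for every $\bfn$, where $\zeta_\triangle^\bfn.T_0=\overline{\pi_\triangle(\bR\,\zeta_i^\bfn\bfw)}$.

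The key computation is that the smallest $\zeta_\triangle$-invariant subtorus containing $T_0$, namely $\sum_\bfn\zeta_\triangle^\bfn.T_0$, is all of $X^2$. Writing a character of $X^2$ as $(\sigma(\theta^{(1)}),\sigma(\theta^{(2)}))$ with $\theta^{(1)},\theta^{(2)}\in K$ via Lemma~\ref{XdualK}, and using~(\ref{fielddualformula}),\,(\ref{X2dualformula}), this character annihilates $\sum_\bfn\zeta_\triangle^\bfn.T_0$ iff $\re\big(\zeta_i^\bfn\,s\,w\big)=0$ for all $\bfn$, where $s=\sigma_i(\theta^{(1)})+\kappa_*\sigma_i(\theta^{(2)})$ (the $\re$ being dropped when $i\leq r_1$). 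If $s\neq 0$: for $i\leq r_1$ the equation $\zeta_i^\bfn sw=0$ is impossible since all three factors are nonzero; for $i>r_1$, having $\re(\zeta_i^\bfn sw)=0$ for all $\bfn$ forces every $\zeta_i^\bfn$ onto a single real line through $0$ in $\bC$, hence (as $\zeta_i^{\bfzero}=1$) into $\bR$, so $\sigma_i(K)=\bQ(\sigma_i(\zeta^{\bfn_0}))\subset\bR$ where $\zeta^{\bfn_0}$ is a totally irreducible element (so $\bQ(\zeta^{\bfn_0})=K$), contradicting that $\sigma_i$ is a non-real embedding. Hence $s=0$, and since $\kappa_*\notin\sigma_i(K)$ this gives $\theta^{(2)}=0$ and then $\theta^{(1)}=0$. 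So the annihilator is trivial and $\sum_\bfn\zeta_\triangle^\bfn.T_0=X^2$. (When $i\leq r_1$ the same computation applied to $T_0$ itself already gives $T_0=X^2$, so $A\supseteq\bfy+X^2=X^2$ and Part~(2) is finished in that case.)

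It remains to upgrade ``$A$ contains the coset $\bfy+T_0$'' to ``$A=X^2$''. The plan is a recurrence/sweeping argument in the spirit of \cite{B83}*{\S 4}: by translating the subtorus-coset $\bfy+T_0$ around under the $\bZ^r$-action and passing to closures inside $A$, one shows that $A$ must contain a coset of the smallest $\zeta_\triangle$-invariant subtorus containing $T_0$; by the previous paragraph that subtorus is $X^2$, whence $A=X^2$. I expect this last step to be the main obstacle, since $A$ is merely closed and $\zeta_\triangle$-invariant, not a subgroup, so the delicate point is to control how the $\zeta_\triangle$-orbit of the coset $\bfy+T_0$ returns near itself --- precisely the kind of controlled-recurrence phenomenon that the subsequent sections of the paper treat in detail.
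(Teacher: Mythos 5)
Part~(1) of your proof matches the paper's: identical containment argument via Lemma~\ref{Tkappatangent}, and essentially the same non-torsion argument (both reduce to the fact that a nonzero element of $\sigma(K)$ cannot lie in a single $V_i$, which uses $|I|=r_1+r_2\geq 2$), followed by an appeal to Proposition~\ref{dhomoprop}(3). No issues there.

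For Part~(2) your proof is incomplete, and you correctly identify exactly where the gap is. Your observation for real places is a nice shortcut worth recording: when $i\leq r_1$, the pairing computation already forces $s:=\sigma_i(\theta^{(1)})+\kappa_*\sigma_i(\theta^{(2)})=0$ for \emph{any} character annihilating $\pi_\triangle(\bR\bfw)$ (no need to move the line around by $\zeta_\triangle$), and then $\kappa_*\notin\sigma_i(K)$ forces $\theta^{(2)}=\theta^{(1)}=0$; hence $T_0=\overline{\pi_\triangle(\bR\bfw)}=X^2$ and $A\supseteq\bfy+T_0=X^2$ immediately. The paper's Lemma~\ref{rotatedense} Case~1 secretly reproves this (the Berend--Lemma~4.7 machinery is applied there but the final character computation only needs a single index $l_k$), so your streamlining is legitimate.

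For $i>r_1$, however, the step you flag as ``the main obstacle'' is a genuine gap, not a formality, and it is the heart of the proof. Knowing that the smallest $\zeta_\triangle$-invariant subtorus containing $T_0$ is $X^2$ does \emph{not} give $A=X^2$: the sets you actually know lie in $A$ are the cosets $\zeta_\triangle^\bfn.\bfy+\zeta_\triangle^\bfn.T_0$, and the base points $\zeta_\triangle^\bfn.\bfy$ move. Since $A$ is only closed and invariant (not a subgroup), there is no reason the union of these drifting cosets should be dense. The paper's Lemma~\ref{rotatedense} resolves this in three steps you would need to supply: (a)~fix a specific $\bfn$ with the property that $\zeta^{l\bfn}$ lies in no proper subfield of $K$ for all nonzero $l$ (this is stronger than just using ``all $\bfn$'', and is exactly what makes the character argument work with \emph{differences} $l_{k_1}-l_{k_2}$); (b)~if the union $\bigcup_l(\zeta_\triangle^{l\bfn}.\bfy+\pi_\triangle(\bR\,\zeta_i^{l\bfn}\bfw))$ avoids a ball $\mathring B_\epsilon(\bfx)$, a pigeonhole argument produces a subsequence $l_k$ along which the base points $\zeta_\triangle^{l_k\bfn}.\bfy$ cluster in some $\mathring B_{\epsilon/2}(\bfx')$, so that the \emph{subgroups} $P_{l_k}=\overline{\pi_\triangle(\bR\,\zeta_i^{l_k\bfn}\bfw)}$ all avoid the fixed ball $\mathring B_{\epsilon/2}(\bfx-\bfx')$; (c)~Berend's Lemma~4.7 then yields one nontrivial character $\bfxi$ annihilating infinitely many $P_{l_k}$ simultaneously, and the condition $\re(\zeta_i^{l_k\bfn}sw)=0$ at two indices $l_{k_1}\neq l_{k_2}$ forces $\zeta_i^{(l_{k_1}-l_{k_2})\bfn}\in\bR$, i.e.\ $\zeta^{(l_{k_1}-l_{k_2})\bfn}\in\sigma_i^{-1}(\bR)$, a proper subfield --- contradicting (a). Your version of the character computation (quantifying over all $\bfn$) proves a weaker statement about the subgroup generated by the $\zeta_\triangle^\bfn.T_0$ and does not engage with the base-point drift, so it cannot substitute for steps (b) and (c).
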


For the construction of $V_i^\kappa$, see Definition \ref{Vkappa}.

\begin{proof}\noindent(1) Suppose $\bfv\in V_i^\kappa$ where $\kappa\in K\cup\{\infty\}$. Then the set $L$ is a $d$-dimensional homogeneous $\zeta_\triangle$-invariant subset and obviously $\bfz\in L$. By Lemma \ref{Tkappatangent}, $\bfz+\bfv\in \bfz+T^\kappa\subset L$ as $\bfv\in V^\kappa$.

We want to show from the fact $\bfz+\bfv\in L$ that $L\subset A$. As $A$ is closed and $\zeta_\triangle$-invariant, using Proposition \ref{dhomoprop}.(3) it suffices to prove $\bfz+\bfv$ is not a torsion point. As $\bfz$ is of torsion, this reduces to prove that $\pi_\triangle(\bfv)=\big(\pi(v^{(1)}),\pi(v^{(2)})\big)$ is not a torsion point where $\bfv=(v^{(1)},v^{(2)})$. Because $\bfv\neq 0$, there is at least one of $v^{(1)}$, $v^{(2)}$ that doesn't vanish, so it is enough to show that $\pi(v)\in X$ is not a torsion point of $X$ for any non-zero vector $v\in V_i$. Suppose this is not true then there is a non-zero integer $q$ such that $qv\in\Gamma$. But $\Gamma\subset\sigma(K)$, hence $qv=\sigma(\gamma)$ where $\gamma\in K$. For all $j\in I\backslash\{i\}$, as the $V_j$ coordinate of $qv\in V_i$ vanishes, $\sigma_j(\gamma)=0$ (because $r_1+r_2=r+1\geq 3$, there is always another index $j\neq i$ in $I$). But this can happen only if $\gamma=0$, in consequence $qv=0$ and thus $v=0$, which provides a contradiction. This shows part (1) of the lemma.\\

\noindent(2) Suppose $\bfv\notin V_i^\kappa$ for all $\kappa\in K\cup\{\infty\}$. As before, let $\bfv=(v^{(1)},v^{(2)})$ where $v^{(1)},v^{(2)}\in V_i$, which is isomorphic to either $\bR$ or $\bC$. As $\bfv\notin V_i^\infty=\{0\}\times V_i$, $v^{(1)}\neq 0$. So there is a number $\kappa_*$ from either $\bR$ or $\bC$, whichever $V_i$ is isomorphic to, such that $v^{(2)}=\kappa_*v^{(1)}$.

By the assumption on $\bfv$, $\kappa_*\notin\sigma_i(K)$. In fact by Definition \ref{Vkappa}, if $\kappa_*=\sigma_i(\kappa)$ for some $\kappa\in K$ then $\bfv\in V_i^\kappa$.

By Lemma \ref{line}, $A$ contains an infinitely long line $\{\bfy+\rho\bfw:\rho\in\bR\}$ where $\bfy\in X^2$ and $\bfw$ can be written as $(w,\kappa_*w)$ for some non-zero vector $w\in V_i$. Furthermore by Remark \ref{totalirrrmk}, there is an $\bfn$ such that for all $l\in\bZ\backslash\{0\}$, $\zeta^{l\bfn}$ is not in any proper subfield of $K$. It follows from Lemma \ref{rotatedense}, which is stated and proved below, that the set $\bigcup_{l=1}^\infty\{\zeta_\triangle^{l\bfn}.(\bfy+\rho\bfw):\rho\in\bR\}$ is dense in $X^2$. Because this union is contained in the $\zeta_\triangle$-invariant subset $A$,  $A$ must be $X^2$.
\end{proof}

\begin{lemma}\label{rotatedense}Suppose either $1\leq i\leq r_1$ and $\kappa_*\in\bR$, or $r_1<i\leq r_1+r_2$ and $\kappa_*\in\bC$. Furthermore, assume $\kappa_*\notin\sigma_i(K)$. For $\bfy\in X^2$ and  $w\in V_i\backslash\{0\}$, denote $\bfw=(w,\kappa_*w)$. Suppose in addition $\bfn\in\bZ^r$ satisfies that $\zeta^{l\bfn}$ is not in any proper subfield of $K$ for all non-zero integer $l$, then $\bigcup_{l=1}^\infty\{\zeta_\triangle^{l\bfn}.(\bfy+\rho\bfw):\rho\in\bR\}$ is dense in $X^2$.\end{lemma}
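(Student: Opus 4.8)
The plan is to set $g=\zeta^\bfn$, let $\ell=\{\bfy+\rho\bfw:\rho\in\bR\}$ be the given line, and show that $Y:=\bigcup_{l\ge 1}\zeta_\triangle^{l\bfn}.\ell$ is dense, i.e. $\overline Y=X^2$. Since $\bfw\in V_i^\square$ and $\zeta_\triangle^{l\bfn}$ acts on $V_i^\square$ by multiplication by $\zeta_i^{l\bfn}=\sigma_i(g)^l$, the set $Y$ is a union of lines $\zeta_\triangle^{l\bfn}.\bfy+\bR\,\sigma_i(g)^l\bfw$ whose directions all lie in the slope subspace $P:=\{(v,\kappa_*v):v\in V_i\}\subset V_i^\square$, a copy of $\bF_i$. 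The first step is a character computation: $\pi_\triangle(P)$ is dense in $X^2$. By Lemma \ref{XdualK} any $\bfxi\in\widehat{X^2}$ equals $(\sigma(\theta^{(1)}),\sigma(\theta^{(2)}))$ with $\theta^{(1)},\theta^{(2)}\in K$, and since $P\subset V_i^\square$ the condition that $\bfxi$ annihilate $P$ reduces to $\sigma_i(\theta^{(1)})+\kappa_*\sigma_i(\theta^{(2)})=0$; because $\kappa_*\notin\sigma_i(K)$ this forces $\theta^{(2)}=\theta^{(1)}=0$, so no nontrivial character kills $P$.

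If $1\le i\le r_1$ then $\kappa_*\in\bR$ and $P=\bR\bfw$, so already $\overline{\zeta_\triangle^\bfn.\ell}=\zeta_\triangle^\bfn.\bfy+\overline{\pi_\triangle(P)}=X^2$ and we are done (as $\zeta_\triangle^\bfn.\ell\subset Y$). So assume $r_1<i\le r_1+r_2$. Here I first observe that $\arg\sigma_i(g)$ is an irrational multiple of $\pi$: otherwise $\sigma_i(g)^m\in\bR$ for some $m\ne 0$, so $\sigma_i$ and its conjugate embedding agree on $g^m=\zeta^{m\bfn}$; but $\zeta^{m\bfn}$ lies in no proper subfield of $K$, hence $\bQ(g^m)=K$, so $\sigma_i$ equals its conjugate on all of $K$, impossible as $\sigma_i$ is a genuine complex embedding. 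Consequently the unit vectors $\sigma_i(g)^l\bfw/|\sigma_i(g)^l\bfw|$ are dense in the unit circle of $P$. Fix any unit $\bfu\in P$; pick $l_k\to\infty$ with $\sigma_i(g)^{l_k}\bfw/|\sigma_i(g)^{l_k}\bfw|\to\bfu$, and by compactness of $X^2$ pass to a subsequence along which $\zeta_\triangle^{l_k\bfn}.\bfy$ converges to some $\bfy_0\in X^2$. Then for every $t\in\bR$ the point $\zeta_\triangle^{l_k\bfn}.\bfy+\big(t/|\sigma_i(g)^{l_k}\bfw|\big)\sigma_i(g)^{l_k}\bfw$ lies in $Y$ and tends to $\bfy_0+t\bfu$; hence $\overline Y$ contains the whole line $\bfy_0+\bR\bfu$. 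Thus $\overline Y$ contains a line in every direction of $P$, through a base point depending on the direction.

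To conclude I will use the following elementary fact: \emph{if $C$ is a closed subset of a torus $\bT$ and $P$ is a linear subspace of its Lie algebra with $\pi(P)$ dense in $\bT$, such that for each $\bfv\in P$ some $\omega\in C$ satisfies $\omega+\bR\bfv\subset C$, then $C=\bT$.} Indeed, if $\bT\setminus C$ were a nonempty open set, then for each $\bfv\in P\setminus\{0\}$ the subtorus $S_\bfv:=\overline{\pi(\bR\bfv)}$ satisfies $\omega+S_\bfv\subset C$, so $S_\bfv\ne\bT$; density of $\pi(P)$ then forces $P\not\subset\lie(S_\bfv)$ (otherwise $S_\bfv\supset\overline{\pi(P)}=\bT$), whence $\lie(S_\bfv)\cap P$ is a line through $\bfv$. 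As $\bT$ has only countably many subtori, $P\setminus\{0\}$ would be covered by countably many lines — impossible when $\dim_\bR P=2$. Applying this with $\bT=X^2$, $C=\overline Y$ and $P$ as above gives $\overline Y=X^2\supset Y$.

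The one real obstacle is the motion of the base points $\zeta_\triangle^{l\bfn}.\bfy$, whose orbit under a single automorphism is in general not dense, so the density of $Y$ cannot be established one line at a time; this is overcome by the compactness argument of the second paragraph, which produces for each direction of $P$ a limit line lying in $\overline Y$, after which the counting-of-subtori argument (using $\dim_\bR P=2$) finishes the proof.
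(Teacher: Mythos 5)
Your proof is correct, and it takes a genuinely different route from the paper. Both arguments ultimately rest on the same character computation (that $\kappa_*\notin\sigma_i(K)$ forces any $\bfxi\in\widehat{X^2}$ with $\xi_i^{(1)}+\kappa_*\xi_i^{(2)}=0$ to vanish) and both use the hypothesis on $\bfn$ in the complex case, but they package these ingredients very differently. The paper argues by contradiction: it assumes the union avoids a ball, pigeonholes the base points $\zeta_\triangle^{l\bfn}.\bfy$ into a common small ball so that the closed subgroups $P_{l}=\overline{\pi_\triangle(\bR\,\zeta_\triangle^{l\bfn}.\bfw)}$ all miss a fixed ball, and then invokes Berend's Lemma~4.7 to produce one nontrivial character annihilating infinitely many $P_{l}$; the hypothesis on $\bfn$ enters in the complex case to rule out $\zeta^{(l-l')\bfn}\in\sigma_i^{-1}(\bR)$. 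Your argument replaces the appeal to Berend's Lemma~4.7 with three more hands-on steps: first you use the character computation once to show $\pi_\triangle(P)$ is dense (which immediately settles the real case); then in the complex case you extract from the hypothesis on $\bfn$ the irrationality of $\arg\sigma_i(g)/\pi$, and use it together with compactness of $X^2$ to produce, for each direction in the $2$-plane $P$, a full line in that direction inside $\overline Y$; finally you close with a Baire-type counting argument (only countably many subtori, and each can absorb at most a line of directions in $P$). The net effect is a proof that avoids citing \cite{B83}*{Lemma~4.7} at the cost of introducing the subtorus-counting lemma, which is an elegant standalone fact; it also makes the role of the two hypotheses ($\kappa_*\notin\sigma_i(K)$ and the condition on $\bfn$) more visibly separate than in the paper's combined contradiction.
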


\begin{proof} Assume the union is not dense, then there is a ball $\mathring B_\epsilon(\bfx)\subset X^2$ which is disjoint from $\{\zeta_\triangle^{l\bfn}.(\bfy+\rho\bfw):\rho\in\bR\}$ for all $l\in\bN$. Since $X^2$ is compact, it can be covered by finitely many balls of radius $\frac\epsilon2$. In consequence there is a ball $\mathring B_{\frac\epsilon2}(\bfx')$ that contains $\zeta_\triangle^{l_k\bfn}.\bfy$ for a subsequence $l_k$. Since $\zeta_\triangle^{l_k\bfn}.(\bfy+\rho\bfw)=\zeta_\triangle^{l_k\bfn}.\bfy+\zeta_\triangle^{l_k\bfn}.\rho\bfw$, it follows that for each $l_k$, $\{\zeta_\triangle^{l_k\bfn}.\rho\bfw:\rho\in\bR\}$ is disjoint from $\mathring B_{\frac\epsilon2}(\bfx-\bfx')$. Hence the closure $P_{l_k}:=\overline{\{\zeta_\triangle^{l_k\bfn}.\rho\bfw:\rho\in\bR\}}$ avoids $\mathring B_{\frac\epsilon2}(\bfx-\bfx')$, in particular the sequence of subsets $\{P_{l_k}\}_{k=1}^\infty$ does not converge to $X^2$ in the Hausdorff metric.

On the other hand, the sets $P_{l_k}$ are closed subgroups of $X^2$. Hence by \cite{B83}*{Lemma 4.7}, there is a non-trivial character $\bfxi\in\widehat{X^2}$ which is in the annihilator of $P_{l_k}$ for all sufficiently large $k$. In particular \begin{equation}\label{rotatedense0}\langle\bfxi,\pi_\triangle(\zeta_\triangle^{l_k\bfn}.\rho\bfw)\rangle=0\text{ (mod }\bZ\text{)},\forall\rho\in\bR\end{equation} for all $k$ large enough.

Since $w\in V_i$, when regarded as a vector in $\bR^{r_1}\oplus\bC^{r_2}$, its $i$-th coordinate is a non-zero number $w_i\in\bF_i=\bR$ or $\bC$, while all other coordinates $w_j$'s are equal to $0$.

Write $\bfxi=(\xi^{(1)},\xi^{(2)})$. We claim that $\xi_i^{(1)}+\kappa_*\xi_i^{(2)}=0$. To prove this we distinguish between real and complex $V_i$'s.\newline

\noindent{\bf Case 1.} When $i\leq r_1$, $V_i\cong\bR$. Fix an arbitrary $l_k$ which is large enough, then (\ref{rotatedense0}) holds. By the duality formulas (\ref{fielddualformula}) and (\ref{X2dualformula}), \begin{equation}\label{rotatedense1}\begin{split}\langle\bfxi,\pi_\triangle(\zeta_\triangle^{l_k\bfn}.\rho\bfw)\rangle=&\langle\xi^{(1)},\pi(\zeta^{l_k\bfn}.\rho w)\rangle+\langle\xi^{(2)},\pi(\zeta^{l_k\bfn}.\rho\kappa_*w)\rangle\\
=&(\xi_i^{(1)}\zeta_i^{l_k\bfn}\rho w_i+ \xi_i^{(2)}\zeta_i^{l_k\bfn}\rho\kappa_*w_i\text{ mod }\bZ)\\
=&\left(\rho\big(\xi_i^{(1)}+\kappa_*\xi_i^{(2)}\big)\zeta_i^{l_k\bfn}w_i\text{ mod }\bZ\right).\end{split}\end{equation} Since $w_i\neq 0$, $\zeta_i^{l_k\bfn}\neq 0$ (as it is an algebraic unit) and $\rho$ is an arbitrary real number, (\ref{rotatedense0}) cannot be true unless $\xi_i^{(1)}+\kappa_*\xi_i^{(2)}=0$.\newline

\noindent{\bf Case 2.} Suppose now $r_1<i\leq r_1+r_2$, in which case $\sigma_i$ is a complex embedding and $V_i\cong\bC$. Then by (\ref{fielddualformula}) and (\ref{X2dualformula}),
\begin{equation}\label{rotatedense2}\begin{split}\langle\bfxi,\pi_\triangle(\zeta_\triangle^{l_k\bfn}.\rho\bfw)\rangle=&\big(\re(\xi_i^{(1)}\zeta_i^{l_k\bfn}\rho w_i)+\re(\xi_i^{(2)} \zeta_i^{l_k\bfn}\rho\kappa_*w_i)\text{ mod }\bZ\big)\\
=&\Big(\rho\re\big((\xi_i^{(1)}+\kappa_*\xi_i^{(2)})\zeta_i^{l_k\bfn}w_i\big)\text{ mod }\bZ\Big).\end{split}\end{equation}

Hence for any sufficiently large $k$, it follows from (\ref{rotatedense0}) that the expression $\re\big((\xi_i^{(1)}+\kappa_*\xi_i^{(2)})\zeta_i^{l_k\bfn}w_i\big)$ must vanish.

Assume first $\xi_i^{(1)}+\kappa_*\xi_i^{(2)}\neq 0$, then there is a non-zero number $u\in\bC$ such that for $z\in\bC$, $\re\big((\xi_i^{(1)}+\kappa_*\xi_i^{(2)})z\big)=0$ if and only if $\frac zu\in\bR$. Hence $\zeta_i^{l_k\bfn}w_i\in\bR u$ for large $k$. In consequence, if we fix two different large terms $l_{k_1}$ and $l_{k_2}$ from the subsequence $\{l_k\}$, then  $\zeta_i^{(l_{k_1}-l_{k_2})\bfn}=\dfrac{\zeta_i^{l_{k_1}\bfn}w_i}{\zeta_i^{l_{k_2}\bfn}w_i}\in\bR$. In other words, $\zeta^{(l_{k_1}-l_{k_2})\bfn}\in\sigma_i^{-1}(\bR)$. But since $\sigma_i$ is a complex embedding, $\sigma_i^{-1}(\bR)$ is a proper subfield of $K$. Because $l_{k_1}\neq l_{k_2}$, this contradicts the assumption on $\bfn$. Therefore $\xi_i^{(1)}+\kappa_*\xi_i^{(2)}=0$.\newline

So we proved that $\xi_i^{(1)}+\kappa_*\xi_i^{(2)}=0$ always holds. Recall $\xi^{(1)},\xi^{(2)}\in\hat X$, and thus, by Lemma \ref{XdualK}, can be respectively represented by vectors $\sigma(\theta^{(1)}),\sigma(\theta^{(2)})\in\bR^{r_1}\oplus\bC^{r_2}$ where $\theta^{(1)},\theta^{(2)}\in K$.

Notice $\theta^{(2)}\neq 0$. In fact, otherwise $\xi_i^{(2)}=\sigma_i(\theta^{(2)})$ and $\xi_i^{(1)}=-\kappa_*\sigma(\theta^{(2)})$ are both zero; since $\xi_i^{(1)}=\sigma_i(\theta^{(1)})$ we know $\theta^{(1)}=0$ as well. Thus $\bfxi=\big(\sigma(\theta^{(1)}),\sigma(\theta^{(2)})\big)$ is the trivial character on $X^2$, which contradicts the choice of $\bfxi$.

So $\kappa_*=-\frac{\xi_i^{(1)}}{\xi_i^{(2)}}=\sigma_i(-\frac{\theta^{(1)}}{\theta^{(2)}})\in\sigma_i(K)$. This produces a contradiction to the hypothesis on $\kappa_*$, hence establishes the lemma.\end{proof}

Now we show the main proposition of this section.

\begin{proof}[Proof of Proposition \ref{firstdhomo}] Given an infinite $\zeta_\triangle$-invariant closed subset $A$, by Corollary \ref{torsioneigen} there is an index $i\in I$ satisfying the condition in Lemma \ref{denseorcontaindhomo}. By applying the lemma, we see $A$ always contains a $d$-dimensional homogeneous $\zeta_\triangle$-invariant subset.\end{proof}

\section{Actions of rank $3$ or higher}\label{rank3}

We now establish the first half of Theorem \ref{Cartanjoining} as well as Theorem \ref{nondenserational}. By Lemma \ref{CartanjoiningT2X2} it suffices to prove Theorem \ref{CartanjoiningX2}.(1) and Theorem \ref{nondenserationalX2}.

Throughout this section, assume $r\geq 3$ and $A\subset X^2$ satisfies the following assumption:

\begin{condition}\label{nonisocond}$A$ is an infinite $\zeta_\triangle$-invariant subset and for any $d$-dimensional homogeneous $\zeta_\triangle$-invariant subset $L$ contained in $A$, $A\backslash L$ is dense in $A$.\end{condition}

We are going to show that:

\begin{proposition}\label{nonisofull}
When $r\geq 3$, if $A$ satisfies Condition \ref{nonisocond}, then $A=X^2$.
\end{proposition}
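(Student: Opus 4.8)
The plan is to argue by contradiction, turning Condition~\ref{nonisocond} (via the finiteness statement of Lemma~\ref{finitedhomo}) into an engine that manufactures more and more $d$-dimensional homogeneous invariant subsets inside $A$, until their union fills $X^2$. We may assume $A$ is closed (otherwise replace it by $\overline A$, which still satisfies Condition~\ref{nonisocond}). Suppose $A\neq X^2$. Then $A$ misses some open ball $\mathring B_\epsilon(\bfx)$, so \emph{every} $d$-dimensional homogeneous $\zeta_\triangle$-invariant subset $L\subseteq A$ fails to be $\epsilon$-dense; by Lemma~\ref{finitedhomo} there are only finitely many such, say $L_1,\dots,L_N$, and by Proposition~\ref{firstdhomo} we have $N\geq 1$. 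Put $\mathcal L=\bigcup_{j=1}^N L_j$, a closed $\zeta_\triangle$-invariant subset of $A$ (each $L_j$ is a finite union of translated subtori by Proposition~\ref{dhomoprop}). The goal is then to exhibit one more $d$-dimensional homogeneous invariant subset contained in $A$ and distinct from all the $L_j$, contradicting the completeness of this list.

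By Condition~\ref{nonisocond} each $A\setminus L_j$ is dense and relatively open in $A$, hence so is the finite intersection $A\setminus\mathcal L$; in particular it meets every nonempty relatively open subset of $A$. Now I would run the following construction. Using that the orbit closure of a point of $A\setminus\mathcal L$ is topologically transitive and contained in $A$, and that by Proposition~\ref{firstdhomo} it either is $X^2$ or contains some $L_j$, one obtains a sequence of points of $A\setminus\mathcal L$ whose $\zeta_\triangle$-orbit accumulates onto $\mathcal L$; by density of torsion points on each translated subtorus making up $\mathcal L$, this accumulation may be arranged onto a torsion point. One then applies the controlled recurrence of Lemma~\ref{selfreturn} to translate these points back into a small fixed ball around a torsion point while forbidding the whole orbit from ever entering a fixed neighborhood of $\mathcal L$; passing to a limit and rescaling the return displacement along its dominant component as in Corollary~\ref{torsioneigen} yields a torsion point $\bfz\in A$ and a nonzero $\bfv\in V_i^\square$ (for some $i\in I$) with $\bfz+\bfv\in A$, where $\bfv$ is transverse to $\mathcal L$ at $\bfz$ precisely because the orbit was kept away from $\mathcal L$. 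Feeding $(\bfz,\bfv)$ into Lemma~\ref{denseorcontaindhomo}: if $\bfv\notin V_i^\kappa$ for every $\kappa\in K\cup\{\infty\}$, then part~(2) already gives $A=X^2$, a contradiction; otherwise part~(1) produces a $d$-dimensional homogeneous invariant subset $L^\ast\subseteq A$ through $\bfz$ tangent to $V_i^\kappa$, and transversality of $\bfv$ forces $L^\ast$ to differ from every $L_j$ — contradicting the completeness of $L_1,\dots,L_N$, and hence forcing $A=X^2$.

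The main obstacle is the recurrence step encapsulated in Lemma~\ref{selfreturn}: one must move a point of $A\setminus\mathcal L$ by the $\bZ^r$-action so that it returns near a prescribed torsion point \emph{without} its orbit entering a fixed neighborhood of the finitely many known homogeneous subsets, while still retaining a nonzero and genuinely transverse return displacement. This is exactly where $r\geq 3$ is essential: with rank at least three there are enough commuting directions — quantitatively, Proposition~\ref{nonhyp} and Corollary~\ref{Tkappanonhyp} give density of orbits already inside a thin slab $\{\bfn:|\lambda_i(\bfn)|<\delta\}$, both on $X$ and on each $T^\kappa$ — to carry out such a recurrence while preserving the $V_i$-component of the displacement rather than crushing it to $0$; in rank $2$ no such room exists, consistently with part~(2) of Theorem~\ref{Cartanjoining}. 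Making the avoidance of $\mathcal L$ and the transversality of the return vector hold simultaneously is the delicate bookkeeping, and once it is in place the construction may be iterated, producing infinitely many distinct $d$-dimensional homogeneous invariant subsets inside the closed proper set $A$ — which directly contradicts Lemma~\ref{finitedhomo}.
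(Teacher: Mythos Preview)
Your overall contradiction strategy matches the paper's exactly: list the finitely many $d$-dimensional homogeneous invariant subsets inside $A$ via Lemma~\ref{finitedhomo} (nonempty by Proposition~\ref{firstdhomo}), then manufacture one more via Lemma~\ref{denseorcontaindhomo}. You also correctly pinpoint Proposition~\ref{nonhyp}/Corollary~\ref{Tkappanonhyp} as the place where $r\geq 3$ enters, allowing motion inside a slab $\{|\lambda_i(\bfn)|<\epsilon\}$ that neither crushes nor blows up the $V_i$-component of the transversal displacement.

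But your description of the recurrence step is geometrically backwards, and as stated it would fail. You say one moves a point of $A\setminus\mathcal L$ ``while forbidding the whole orbit from ever entering a fixed neighborhood of $\mathcal L$''. This is impossible and not what Lemma~\ref{selfreturn} does: the construction starts with points of $A$ \emph{arbitrarily close to} a fixed component $L_{0,1}\subset\mathcal L$ (using density of $A\setminus L_0$ in $A\supset L_{0,1}$), writes each as $\bfx_m+\bfw_m$ with base $\bfx_m\in L_{0,1}$ and small transversal $\bfw_m\in V^\kappa$ for some $\kappa\neq\kappa_0$, and applies the recurrence of Lemma~\ref{selfreturn} to the \emph{base points inside $L_{0,1}$}. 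What is avoided is not $\mathcal L$ but the finite crossing set $E=L_{0,1}\cap\bigcup_{h\geq 1}L_h$ of Lemma~\ref{finiteinter}: the recurrence keeps $\bfx_m$ in $L_{0,1}\setminus E_\theta$ while one Lyapunov direction expands the transversal part to definite size (this is Lemma~\ref{transdev}, not Corollary~\ref{torsioneigen}). Only \emph{afterwards} does the $r\geq 3$ slab density (Lemma~\ref{dhomononhyp}) upgrade the resulting base $\bfy\in L_{0,1}\setminus E_\theta$ to a torsion point $\bfz$ while keeping $\bfw$ bounded (Corollary~\ref{transnewline}). The conclusion $\bfz+\bfv\notin\bigcup_h L_h$ then comes from $\bfz$ being far from $E$ and $|\bfv|$ being small --- not from having kept the orbit away from $\mathcal L$. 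Once you reframe the recurrence this way, your sketch becomes the paper's proof.
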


To begin with, we claim an inductive fact:

\begin{lemma}\label{nonisoinduction}Suppose $r\geq 3$ and $A$ satisfies Condition \ref{nonisocond}. If $A$ contains a finite but non-empty collection of $d$-dimensional homogeneous $\zeta_\triangle$-invariant subsets $L_0,\cdots,L_q$, then it contains at least one more $d$-dimensional homogeneous $\zeta_\triangle$-invariant set $L_{q+1}$ which is different from any of these.\end{lemma}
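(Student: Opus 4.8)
The plan is to exhibit, inside $A$, a torsion point $\bfz$ and a nonzero ``eigendirection'' vector $\bfv \in V_i^\square$ for some $i \in I$, with $\bfz + \bfv \in A$, chosen so that the $d$-dimensional homogeneous $\zeta_\triangle$-invariant set this pair spans is none of $L_0,\dots,L_q$; once that is done, Lemma~\ref{denseorcontaindhomo} does the rest. Write $Y = \bigcup_{t=0}^{q} L_t$. By Proposition~\ref{dhomoprop} each $L_t$ is a finite disjoint union of translates of a $d$-dimensional subtorus, so $Y$ is a closed, $\zeta_\triangle$-invariant, nowhere-dense subset of $X^2$. Since each $L_t$ is closed, Condition~\ref{nonisocond} makes $A \setminus L_t$ open and dense in $A$; intersecting the finitely many of these, $A \setminus Y$ is dense in $A$, hence infinite, and it is $\zeta_\triangle$-invariant.

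The heart of the matter is to realize such a configuration \emph{off} $Y$: I want a torsion point $\bfz \in A$ with $\dist(\bfz,Y) > 0$ together with a nonzero $\bfv \in V_i^\square$ such that $\bfz + \bfv \in A$. Running the torsion-point and eigendirection extraction of Section~\ref{containhomo} (Lemma~\ref{containtorsion} and Corollary~\ref{torsioneigen}) naively only gives such a pair somewhere in $A$, possibly inside $Y$. To keep the construction away from $Y$ I would invoke the controlled recurrence of Lemma~\ref{selfreturn}: starting from a point $\bfx_0 \in A \setminus Y$ (which we may take non-torsion, so that it genuinely accumulates in its own orbit), it drives the $\zeta_\triangle$-orbit of $\bfx_0$ to within an arbitrarily small distance of a torsion point while keeping it uniformly bounded away from $Y$. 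The mechanism uses the extra degree of freedom available for $r \geq 3$: one is allowed to restrict to group elements $\bfn$ with $\lambda_i(\bfn)$ confined to a fixed bounded window and still obtain the recurrence, which is exactly the higher-rank rigidity of Proposition~\ref{nonhyp} — the ``wasted'' coordinate $\lambda_i$ is what one spends steering the orbit off $Y$. From the resulting torsion point $\bfz \in A \setminus Y$ — an accumulation point of the orbit of $\bfx_0$, hence of $A$ — the renormalization in the proof of Corollary~\ref{torsioneigen}, namely applying large powers of an element of $\Stab_{\zeta_\triangle}(\bfz)$ that expands a single $V_i^\square$ and contracts all the others, produces the desired $\bfv \in V_i^\square$ with $\bfz + \bfv \in A$; since that renormalization fixes $\bfz$, the point $\bfz$ stays off $Y$.

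With $\bfz$ and $\bfv$ in hand I would finish by the dichotomy of Lemma~\ref{denseorcontaindhomo}. If $\bfv \in V_i^\kappa$ for some $\kappa \in K \cup \{\infty\}$ (notation of Definition~\ref{Vkappa}), then part~(1) gives that $L := \{\zeta_\triangle^\bfn.\bfz' : \bfn \in \bZ^r,\ \bfz' \in \bfz + T^\kappa\}$ is contained in $A$; since $\bfz \in L$ while $\bfz \notin Y = \bigcup_t L_t$, this $L$ is a $d$-dimensional homogeneous $\zeta_\triangle$-invariant subset of $A$ different from every $L_t$, and we set $L_{q+1} = L$. If instead $\bfv$ lies in no $V_i^\kappa$, part~(2) gives $A = X^2$; but $X^2$ contains the infinitely many subtori $T^\kappa$ ($\kappa \in K \cup \{\infty\}$), and since the $L_t$ involve only finitely many ``slopes'' while distinct slopes give subtori meeting in a finite set (Corollary~\ref{finiteTkappainter}), we may pick $\kappa$ with $T^\kappa \neq L_t$ for all $t$ and take $L_{q+1} = T^\kappa$.

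The main obstacle is the middle step: bringing the orbit of a point of $A \setminus Y$ close to a torsion point without letting it be absorbed into $Y$. This is precisely where $r \geq 3$ enters essentially — through Proposition~\ref{nonhyp}, as packaged in Lemma~\ref{selfreturn} — and it is the only point at which the argument would break down for $r = 2$.
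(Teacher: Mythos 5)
There is a genuine gap at the center of your argument, and it stems from misreading what Lemma~\ref{selfreturn} does. Lemma~\ref{selfreturn} is a recurrence property for points \emph{inside} the subtorus coset $L_{0,1}\subset L_0\subset Y$; it produces an $\bfn\in H$ keeping a point of $L_{0,1}\setminus E_\theta$ inside $L_{0,1}\setminus E_\theta$. It says nothing about points of $A\setminus Y$ and cannot be used to ``drive the $\zeta_\triangle$-orbit of $\bfx_0$ to within an arbitrarily small distance of a torsion point while keeping it uniformly bounded away from $Y$.'' The object you want to produce --- a torsion point $\bfz\in A$ with $\dist(\bfz,Y)>0$ --- is also much stronger than anything the paper constructs, and there is no reason it should exist: torsion points of $A$ need not be dense in $A$, so they could all lie in $Y$. (You are also misattributing where $r\geq 3$ enters: Lemma~\ref{selfreturn} holds for $r\geq 2$; the rank-$3$ hypothesis is used in Lemma~\ref{dhomononhyp}, via Corollary~\ref{Tkappanonhyp} and Proposition~\ref{nonhyp}, to move a point \emph{within} $L_{0,1}$ to a torsion point while keeping $\lambda_i$ near $0$.)

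The paper's actual route is essentially orthogonal to yours: it does \emph{not} seek a torsion point off $Y$. It fixes a torsion point $\bfz\in L_{0,1}$ (so $\bfz$ is squarely inside $Y$) and produces a \emph{small} nonzero transverse vector $\bfv\in V_i^\kappa$ with $\kappa\neq\kappa_0$, small enough that $\bfz+\bfv$ escapes all of $L_0,\dots,L_q$ at once. Concretely: Lemma~\ref{transdev} starts from the dense set $A\setminus L_0$ approaching a point of $L_{0,1}\setminus E_{2\theta}$, splits the discrepancy along $V^{\kappa_0}\oplus V^\kappa$, and uses Lemma~\ref{selfreturn} to rescale the transverse part up to a fixed small size $\delta$ while holding the $L_{0,1}$-component clear of $E_\theta$; Corollary~\ref{transnewline} then uses Lemma~\ref{dhomononhyp} ($r\geq 3$) to slide the base point to a torsion point $\bfz\in L_{0,1}\setminus E_\theta$ while keeping the transverse vector of controlled size, and the final containment $\bfz+\bfv\notin\bigcup L_h$ is obtained by choosing $\delta$ smaller than $\dist(L_{0,1}\setminus E_\theta,\bigcup_{t\geq 2}L_{0,t}\cup\bigcup_{h\geq 1}L_h)$ and than $\rho_0$. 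Even if you could somehow exhibit a torsion $\bfz$ off $Y$, your invocation of Corollary~\ref{torsioneigen}'s renormalization would not finish the argument: that renormalization produces $\bfv$ with $|\bfv|\in[1,e^C]$, so you would still owe a size control showing $\bfz+\bfv\notin Y$, which you never supply. The conclusion drawn from Lemma~\ref{denseorcontaindhomo} at the end is fine, but the input to it is not established.
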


The proof of the lemma is the topic of subsections \ref{subsecselfreturn}--\ref{extra}. For now we prove Proposition \ref{nonisofull} assuming the lemma.

\begin{proof}[Proof of Proposition \ref{nonisofull}]By Proposition \ref{firstdhomo}, $A$ contains at least one $d$-dimensional homogeneous $\zeta_\triangle$-invariant subset. Fix such a subset $L_0$. By applying Lemma \ref{nonisoinduction} repeatedly, we see $A$ contains infinitely many $d$-dimensional homogeneous $\zeta_\triangle$-invariant subsets. By Lemma \ref{finitedhomo}, $A$ is $\epsilon$-dense in $X^2$ for any $\epsilon>0$. Since $A$ is closed, it must be $X^2$.\end{proof}

\subsection{Notation and preliminary observations}
From now on, suppose $q\geq 0$ and $L_0,L_1,L_2,\cdots,L_q$ are $d$-dimensional homogeneous $\zeta_\triangle$-invariant subsets, all different from each other and contained in $A$.  Then there are $\kappa_0,\kappa_1,\cdots,\kappa_q\in K\cup\{\infty\}$ such that for all $h=0,..,q$, $L_h=\bigsqcup_{t=1}^{s_h}L_{h,t}$ where $s_h\geq 1$ and each $L_{h,t}$ is a translate of $T^{\kappa_h}$ by a torsion element of $X^2$.

\begin{lemma}\label{finiteinter}
Set $E=\big(\bigcup_{h=1}^qL_h\big)\cap L_{0,1}$. Then $E$ is a finite (possibly empty) set of torsion points. \end{lemma}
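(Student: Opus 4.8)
The plan is to analyze the intersection $L_{0,1}\cap L_{h,t}$ for each $h\in\{1,\dots,q\}$ and each component $L_{h,t}$ of $L_h$. Since $L_{0,1}$ is a translate of the subtorus $T^{\kappa_0}$ by a torsion element and $L_{h,t}$ is a translate of $T^{\kappa_h}$ by a torsion element, I would split into two cases according to whether $\kappa_h=\kappa_0$ or $\kappa_h\neq\kappa_0$. When $\kappa_h\neq\kappa_0$, Corollary \ref{finiteTkappainter} applies directly: the intersection of a translate of $T^{\kappa_0}$ with a translate of $T^{\kappa_h}$ is a finite set. When $\kappa_h=\kappa_0$, the two translates $L_{0,1}$ and $L_{h,t}$ are cosets of the same subtorus $T^{\kappa_0}$, so their intersection is either empty or all of $L_{0,1}$; but the latter would force $L_{0,1}=L_{h,t}$, and since $L_h$ is a homogeneous invariant subset distinct from $L_0$ (all the $L_h$ are pairwise different by assumption) with both decomposing into translates of the \emph{same} $T^{\kappa_0}$, topological transitivity of $L_0$ and $L_h$ (Proposition \ref{dhomoprop}.(3)) would then give $L_0=L_h$, a contradiction. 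Hence in the case $\kappa_h=\kappa_0$ the intersection $L_{0,1}\cap L_{h,t}$ is empty.

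With that case analysis in hand, $E=\bigcup_{h=1}^q\bigcup_{t=1}^{s_h}\big(L_{0,1}\cap L_{h,t}\big)$ is a finite union (there are only finitely many pairs $(h,t)$, since $q$ is finite and each $s_h$ is finite) of finite sets, hence finite. It remains to check that every point of $E$ is a torsion point of $X^2$. For this I would argue: a point $\bfp\in L_{0,1}\cap L_{h,t}$ with $\kappa_h\neq\kappa_0$ satisfies $\bfp-\bfz_0\in T^{\kappa_0}$ and $\bfp-\bfz_h\in T^{\kappa_h}$ for the torsion translation vectors $\bfz_0,\bfz_h$ of $L_{0,1},L_{h,t}$; by Corollary \ref{Vkappatransversal}, $(\bR^{r_1}\oplus\bC^{r_2})^2=V^{\kappa_0}\oplus V^{\kappa_h}$, and I would lift $\bfp$, $\bfz_0$, $\bfz_h$ to $(\bR^{r_1}\oplus\bC^{r_2})^2$ and use that the difference $\bfz_0-\bfz_h$ has rational coordinates (being a difference of torsion points, up to $\Gamma^2$) together with the rationality of the decomposition into $V^{\kappa_0}$ and $V^{\kappa_h}$ — the slopes $\kappa_0,\kappa_h$ lie in $K$, so the projections onto $V^{\kappa_0}$ and $V^{\kappa_h}$ are $K$-rational, hence $\bQ$-rational, linear maps — to conclude that $\bfp$ itself lies in $\bQ^{2d}$ modulo $\Gamma^2$, i.e. $\bfp$ is a torsion point. (The case $\kappa_0=\infty$ or $\kappa_h=\infty$ is handled the same way using Remark \ref{switch12} to reduce to finite slopes, or directly since $V^\infty$ is clearly $\bQ$-rational.)

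The main obstacle I anticipate is the rationality bookkeeping in this last step: one must be careful that "torsion point of $X^2$" means a point whose lift lies in $\bQ\otimes\Gamma$ (equivalently in $\sigma(K)^2$ after scaling), and that the transversal projections associated to $\kappa_0,\kappa_h\in K$ genuinely preserve this $\bQ$-structure — this uses that $\Gamma\subset\sigma(K)$ and the multiplication by $\kappa\in K$ on $\bR^{r_1}\oplus\bC^{r_2}$ restricts to multiplication on $\sigma(K)$, exactly as exploited in the proof of Lemma \ref{dhomotorprop}. Once one sees that the intersection point $\bfp$ is forced into the $K$-rational locus and is simultaneously contained in the compact set $L_{0,1}$, finiteness already gives torsion, but spelling out the $\bQ$-linear algebra cleanly is where the real work lies; everything else is a direct appeal to Corollaries \ref{finiteTkappainter} and \ref{Vkappatransversal} and to the topological transitivity in Proposition \ref{dhomoprop}.
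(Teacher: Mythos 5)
Your finiteness argument is essentially identical to the paper's: decompose $E$ as $\bigcup_{h,t}(L_{0,1}\cap L_{h,t})$, invoke Corollary~\ref{finiteTkappainter} when $\kappa_h\neq\kappa_0$, and use topological transitivity (Proposition~\ref{dhomoprop}.(3)) to rule out a nontrivial intersection of parallel cosets when $\kappa_h=\kappa_0$.

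For the torsion part you take a genuinely different route. The paper simply observes that $E$ is preserved by a finite-index subgroup of $\bZ^r$ (namely $\Stab_{\zeta_\triangle}(L_{0,1})$; the paper loosely says ``$\zeta_\triangle$-invariant'', which is not literally true since $L_{0,1}$ is only a single component of $L_0$) and then appeals to the abstract fact that a finite set preserved by a finite-index subgroup of $\bZ^r$ consists of torsion points --- because for $\bfn\neq\bfzero$, $\zeta_\triangle^\bfn$ has no eigenvalue $1$, so $\zeta_\triangle^\bfn.\bfp=\bfp$ forces $\bfp$ rational. You instead argue concretely: lift to $(\bR^{r_1}\oplus\bC^{r_2})^2$, use the $K$-rationality of the transverse decomposition $V^{\kappa_0}\oplus V^{\kappa_h}$ (Corollary~\ref{Vkappatransversal}) together with $\Gamma\subset\sigma(K)$ to show the intersection point's lift lies in $\sigma(K)^2=\bQ\Gamma^2$, hence it is torsion. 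This is correct and self-contained, buying you a more explicit reason \emph{why} the intersection point is torsion, at the cost of extra rationality bookkeeping. One small slip: your closing sentence claims that being ``in the $K$-rational locus and in the compact set $L_{0,1}$, finiteness already gives torsion,'' but this conflates two independent facts. Lying in $\sigma(K)^2=\bQ\Gamma^2$ already means the point is torsion, with no appeal to finiteness or compactness needed; conversely, a finite set invariant under a finite-index subgroup consists of torsion points without any rationality computation. Either observation alone closes the argument; you don't need (and cannot meaningfully combine) both.
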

\begin{proof}For the finiteness of $E=\bigcup_{h=1}^q\bigcup_{t=1}^{s_h}(L_{h,t}\cap L_{0,1})$, it suffices to show $L_{h,t}\cap L_{0,1}$ is finite for all pairs $(h,t)$ with $h\geq 1$. Recall $L_{h,t}$ is a translate of $T^{\kappa_h}$. If $\kappa_h\neq\kappa_0$ then the finiteness is claimed by Corollary \ref{finiteTkappainter}. If $\kappa_h=\kappa_0$ then $L_{h,t}$ and $L_{0,1}$ are parallel, hence have a non-empty intersection if and only if they coincide with each other. But this would imply $L_h=L_0$ as $L_h$ and $L_0$ are respectively the full $\zeta_\triangle$-orbits of $L_{h,t}$ and $L_{0,1}$. By choice $L_h\neq L_0$, hence $L_{h,t}\cap L_{0,1}=\emptyset$ when $\kappa_h=\kappa_0$.

%
By construction, $E$ is $\zeta _ \Delta$-invariant, and have established that it is finite. The lemma follows from the observation  that every finite $\zeta _ \Delta$-invariant subset of $X ^ 2$ can contain only torsion points.
\end{proof}

Define \begin{equation}\label{stab01}H=\left\{\begin{aligned}&\bigcap_{\bfx\in E}\Stab_{\zeta_\triangle}(\bfx),&&\text{ if }E\neq\emptyset;\\&\Stab_{\zeta_\triangle}(\bfz_{0,1}),&&\text{ if } E=\emptyset.\end{aligned}\right.\end{equation} where $\bfz_{0,1}$ is any torsion point in $L_{0,1}$.
The group $H$ preserves $L_{0,1}$ under $\zeta_\triangle$ in both cases, since if $H$ stabilizes any $\bfx\in L_{0,1}$ it also stabilizes $L_{0,1}=\bfx+T^{\kappa_0}$. Moreover, $H$ is a finite index subgroup of $\bZ^r$: When $E$ is empty this is clear. Suppose $E\neq\emptyset$, then as each $\bfx$ is of torsion, all the $\Stab_{\zeta_\triangle}(\bfx)$ have finite index in $\bZ^r$, and hence so does their intersection.
Clearly $E$ is preserved by the subgroup $H$ under $\zeta_\triangle$.

Let $\rho_0>0$ be small enough such that if $0<\theta<\rho_0$ the set \begin{equation}E_\theta=\bigcup_{\bfx\in E}(\mathring B_\theta(\bfx)\cap L_{0,1})\end{equation} is equal to $\{\bfx+\bfv:\bfx\in E,\bfv\in V^{\kappa_0},|\bfv|<\theta\}$, the open $\theta$-neighborhood of $E$ in $L_{0,1}$.

\subsection{A recurrence property}\label{subsecselfreturn}

 In this section we show that when $\theta$ is sufficiently small, the complement $L_{0,1}\backslash E_\theta$ has the following recurrence property:

\begin{lemma}\label{selfreturn}There are positive constants $C$ and $\theta_0$, which may depend on $L_{0,1}$ and $E$,  such that for all $i\in I$, $\theta\in(0,\theta_0)$, and $\bfx\in L_{0,1}\backslash E_\theta$, there exists  $\bfn\in H$ such that:

(1) $C<\lambda_i(\bfn)<2C$ and $\lambda_j(\bfn)<\frac C2$ for all $j\in I\backslash\{i\}$;

(2) $\zeta_\triangle^\bfn.\bfx\in L_{0,1}\backslash E_\theta$.\end{lemma}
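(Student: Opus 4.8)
The plan is to fix, once and for all, a constant $C>0$ — depending only on $H$ (hence on $L_{0,1}$ and $E$) and on $|I|$ — large enough that for every $i\in I$ the finite set
\[S_i=\bigl\{\bfn\in H:\ C<\lambda_i(\bfn)<2C,\ \lambda_j(\bfn)<C/2\ \text{for all }j\in I\setminus\{i\}\bigr\}\]
contains, for each $k_0\in I\setminus\{i\}$, an element with $\lambda_{k_0}(\bfn)\ge\log|I|$ (in particular $S_i\neq\emptyset$). Such a $C$ exists: by Remark \ref{dirichlet} the image $\cL(H)$ is a full-rank lattice in the hyperplane $W$, while the region of $W$ cut out by the inequalities defining $S_i$ (together with $w_{k_0}\ge\log|I|$) has non-empty interior and, after the rescaling $w\mapsto w/C$, converges as $C\to\infty$ to a fixed region with non-empty interior; hence for $C$ large it contains arbitrarily large balls and therefore meets $\cL(H)$. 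Since $S_i$ is \emph{defined} by the inequalities appearing in conclusion~(1), every $\bfn\in S_i$ already satisfies~(1); the whole content of the lemma is thus conclusion~(2): that some $\bfn\in S_i$ sends $\bfx$ out of $E_\theta$.

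Two structural observations reduce this to a compactness statement. First, as $\bfn\in H$ preserves $L_{0,1}$, automatically $\zeta_\triangle^\bfn.\bfx\in L_{0,1}$. Second, $H$ fixes every point of $E$ (by the very definition of $H$ when $E\neq\emptyset$; when $E=\emptyset$ there is nothing to prove), so for $\bfn\in H$ the map $\zeta_\triangle^\bfn$ restricts to a bijection of $L_{0,1}$ fixing $E$ pointwise, whence $(\zeta_\triangle^\bfn)^{-1}(E)=E$. Assuming $E\neq\emptyset$ and fixing $i$, it then suffices to produce $\theta_0(i)>0$ such that the ``bad set''
\[B_i(\theta)=\bigl\{\bfx\in L_{0,1}:\ \zeta_\triangle^\bfn.\bfx\in E_\theta\ \text{for every }\bfn\in S_i\bigr\}\]
is contained in $E_\theta$ whenever $\theta<\theta_0(i)$; taking $\theta_0=\min_{i\in I}\theta_0(i)$ (and shrinking $\rho_0$ so that distinct points of $E$ are more than $3\rho_0$ apart) then finishes the proof, because $\bfx\in L_{0,1}\setminus E_\theta$ forces $\bfx\notin B_i(\theta)$, i.e. some $\bfn\in S_i\subseteq H$ satisfies~(1) and $\zeta_\triangle^\bfn.\bfx\in L_{0,1}\setminus E_\theta$.

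To prove the claim I argue by contradiction. If it fails, monotonicity in $\theta$ gives points $\bfx_m\in B_i(1/m)\setminus E_{1/m}$, so $\dist(\bfx_m,E)\ge 1/m$ while $\dist(\zeta_\triangle^\bfn.\bfx_m,E)<1/m$ for all $\bfn\in S_i$. By compactness of $L_{0,1}$, pass to a subsequence with $\bfx_m\to\bfx_0$; letting $m\to\infty$ gives $\zeta_\triangle^\bfn.\bfx_0\in E$ for every $\bfn\in S_i$, hence $\bfx_0\in\bigcap_{\bfn\in S_i}(\zeta_\triangle^\bfn)^{-1}(E)=E$. For $m$ large write $\bfx_m=\bfx_0+\bfu_m$ with $\bfu_m$ in the tangent space $V^{\kappa_0}=\bigoplus_{k\in I}V_k^{\kappa_0}$ of $L_{0,1}$ (Lemma \ref{Tkappatangent}), where $\|\bfu_m\|=\dist(\bfx_m,\bfx_0)\ge 1/m$ and $\bfu_m\to 0$. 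Since $\zeta_\triangle^\bfn$ fixes $\bfx_0$, is linear on $V^{\kappa_0}$, and (for $m$ large) keeps $\bfx_m$ inside the ball of radius $\rho_0$ around $\bfx_0$, the nearest point of $E$ to $\zeta_\triangle^\bfn.\bfx_m$ is $\bfx_0$, so $\|\zeta_\triangle^\bfn.\bfu_m\|=\dist(\zeta_\triangle^\bfn.\bfx_m,E)<1/m$. Choosing $k_0\in I$ (constant along a further subsequence) with $\|(\bfu_m)_{k_0}\|\ge\|\bfu_m\|/\sqrt{|I|}$ and using that $\zeta_\triangle^\bfn$ acts on the orthogonal summand $V_{k_0}^{\kappa_0}\subseteq V_{k_0}^\square$ as scaling by $e^{\lambda_{k_0}(\bfn)}$, we get $e^{\lambda_{k_0}(\bfn)}/(m\sqrt{|I|})\le\|\zeta_\triangle^\bfn.\bfu_m\|<1/m$, forcing $\lambda_{k_0}(\bfn)<\tfrac12\log|I|$ for \emph{every} $\bfn\in S_i$. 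This contradicts the choice of $C$: if $k_0=i$ then $\lambda_i(\bfn)>C>\log|I|$ for any $\bfn\in S_i$, and if $k_0\neq i$ then by construction $S_i$ contains an $\bfn$ with $\lambda_{k_0}(\bfn)\ge\log|I|>\tfrac12\log|I|$.

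The main obstacle is precisely this last dichotomy. The inequalities in conclusion~(1) confine $\bfn$ to a \emph{bounded} region of $W$, so $S_i$ is a finite set and there is no equidistribution available; the argument must instead show directly that for $\bfx$ lying just outside $E_\theta$ at least one of finitely many maps ejects it, and this is possible only because $C$ has been engineered so that $S_i$ expands every coordinate direction $V_k^{\kappa_0}$ by a definite factor via at least one of its elements — so a nonzero displacement of $\bfx$ away from $E$ cannot stay small under all of $S_i$.
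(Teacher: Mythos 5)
Your proof is correct but takes a genuinely different route from the paper's. The paper's argument is constructive and tracks constants explicitly: it builds, for each ordered pair $(i,k)$ with $k\neq i$, a specific $\bfn_{ik}\in H$ satisfying not only the upper bounds of conclusion~(1) but also $\lambda_k(\bfn_{ik})>C/4$ and a \emph{lower} bound $\lambda_j(\bfn_{ik})>-4C$ for all $j$; the lower bound is used to pull back via $\zeta_\triangle^{-\bfn_{ik}}$ and force all hypothetical targets $\bfy_k\in E$ to coincide, after which a coordinate-by-coordinate estimate yields the explicit threshold $\theta_0=\rho/(2e^{6C})$. You dispense with the lower bound entirely and replace the explicit estimate by a compactness-and-contradiction argument: you take the full finite set $S_i$ defined by the upper bounds, engineer $C$ so that for each $k_0\neq i$ some element of $S_i$ expands $V_{k_0}^{\kappa_0}$ by at least a factor $|I|\ge\sqrt{|I|}$, and then argue that a sequence of near-counterexamples would converge to a point of $E$ whose nonzero tangential displacement $\bfu_m$ stays small under all of $S_i$, contradicting the engineered expansion in the dominant coordinate $k_0$ of $\bfu_m$. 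The shared key idea is identical (one expanding element per coordinate direction), but yours gets $\theta_0$ nonconstructively and avoids both the pull-back step and the lower bound; the paper's yields an explicit $\theta_0$.

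Two small imprecisions, neither fatal. First, ``monotonicity in $\theta$ gives $\bfx_m\in B_i(1/m)\setminus E_{1/m}$'' is not quite right, as $B_i(\theta)\setminus E_\theta$ is not monotone in $\theta$: negating the claim gives, for each $m$, some $\theta_m<1/m$ and $\bfx_m\in B_i(\theta_m)\setminus E_{\theta_m}$. Running the rest of the argument with $\theta_m$ in place of $1/m$ works verbatim, since all you use is $\|\zeta_\triangle^\bfn.\bfu_m\|<\theta_m\le\|\bfu_m\|$ for large $m$ and every $\bfn\in S_i$, which still yields $e^{\lambda_{k_0}(\bfn)}<\sqrt{|I|}$. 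Second, when $E=\emptyset$ it is not literally ``nothing to prove'': you must still exhibit some $\bfn\in H$ satisfying~(1), but $S_i\neq\emptyset$ (which you establish) supplies it, since $E_\theta=\emptyset$ makes~(2) vacuous.
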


\begin{proof}The proof has two steps:\newline

\noindent{\bf Step 1.} We show that there exist $C>2\log(r_1+r_2)$ and $\bfn_{ik}\in H$ for all pairs $i,k\in I$ with $i\neq k$ such that \begin{equation}\label{selfreturn0}\left\{\begin{aligned}&C<\lambda_i(\bfn_{ik})<2C,\\ &\frac C4<\lambda_k(\bfn_{ik})<\frac C2,\\&-4C<\lambda_j(\bfn_{ik})<\frac C2,\forall j\in I\backslash\{i,k\}.\end{aligned}\right.\end{equation}

This claim follows from two facts. The first one is that by  Remark~\ref{dirichlet},\ $\cL(H)$ is a full-rank lattice in $W$ (with $\cL$ and $W$ as in \S\ref{Xnota}), and hence there exists $b>0$ such that any ball of radius $b$ in $W$ contains at least one point from~$\cL(H)$.

Second, consider the subset \begin{equation}\label{selfreturn1}\begin{split}\Omega_{ik}=&\{( w_l)_{l\in I}:\sum_{l\in I}d_l w_l=0,  w_l\in(1,2), \\
&\ \ \ \  w_k\in(\frac14,\frac12),  w_j\in(-4,\frac12)\text{ for any other }j\} \subset W.\end{split}\end{equation}
 Recall that $d_i$ is the real dimension of $V_i$, which is either $1$ or $2$ depending on whether $\bF_i=\bR$ or $\bC$. Then $\Omega_{ik}$ is open, as it is defined by open conditions, and non-empty, as it contains the point given by $ w_i=\frac43$, $ w_k=\frac13$ and $ w_j=-\frac{4d_i+d_k}{3(d-d_i-d_k)}$ for all the other $j$'s (note this is possible because $d_i,d_k\leq 2$ and $d-d_i-d_k=\sum_{j\in I\backslash\{i,k\}}d_j\geq |I|-2=(r+1)-2\geq 1$ as long as $r\geq 2$).

Pick a large constant $C$ such that $C>2\log(r_1+r_2)$ and $C\Omega_{ik}$ contains a ball of radius $b$ for all pairs $(i,k)$. Thus by the choice of $b$, there is $\bfn_{ik}$ such that $\cL(\bfn_{ik})\in C\Omega_{ik}$. By definition of $\cL$, this is equivalent to the inequalities (\ref{selfreturn0}).\newline

\noindent{\bf Step 2.} We now claim the following:

\begin{itemize}
\item [($\star$)]\emph{There is $\theta_0>0$ such that for all $\theta\in(0,\theta_0)$, $\bfx\in L_{0,1}\backslash E_\theta$, and $i\in I$, $\zeta_\triangle^{\bfn_{ik}}.\bfx\in L_{0,1}\backslash E_\theta$ for at least one $k\in I\backslash\{i\}$. }
\end{itemize}
\noindent
Clearly, together with (\ref{selfreturn0}) this would imply the lemma.
Let
 \begin{equation}\label{selfreturn2}\rho=\min\big(\min_{\substack{\bfgamma\in\Gamma^2\\\bfgamma\neq\bfzero}}|\bfgamma|,\min_{\substack{\bfy,\bfy'\in E\\\bfy\neq\bfy'}}\|\bfy-\bfy'\|,\rho_0\big)>0.\end{equation} It is easy to verify that if $\bfv\in V^{\kappa_0}$ satisfies $0<|\bfv|<\rho$ then $\forall\bfy\in E$,\ $\bfy+\bfv\notin E$.

We prove the claim ($\star$) for \begin{equation}\theta_0=\frac\rho{2e^{6C}}.\end{equation}

Suppose the claim fails for some $\bfx\in L_{0,1}\backslash E_\theta$ and $i\in I$ where $\theta<\theta_0$, i.e.\ that $\zeta_\triangle^{\bfn_{ik}}.\bfx\notin L_{0,1}\backslash E_\theta$ for all~$k\neq i$, which implies that $\zeta_\triangle^{\bfn_{ik}}.\bfx\in E_\theta$. In other words there exists $\bfy_k\in E$ such that $\|\zeta_\triangle^{\bfn_{ik}}.\bfx-\bfy_k\|<\theta$. In this case $E\neq\emptyset$, hence because $\bfn_{ik}\in H$, $\zeta_\triangle^{-\bfn_{ik}}.\bfy_k=\bfy_k$ by construction of~$H$. Thus \begin{equation}\label{x-yk}\begin{split}\|\bfx-\bfy_k\|=&\|\zeta_\triangle^{-\bfn_{ik}}.(\zeta_\triangle^{\bfn_{ik}}.\bfx-\bfy_k)\|\\
\leq&e^{\max_{j\in I}(-\lambda_j(\bfn_{ik}))}\|\zeta_\triangle^{\bfn_{ik}}.\bfx-\bfy_k\|\text{\ \  (by (\ref{X2Lip}))}\\
< &e^{4C}\theta<e^{4C}\theta_0\\
=&\frac\rho{2e^{2C}}\end{split}\end{equation}
So for distinct $k, k'\in I\backslash\{i\}$, \begin{equation}\|\bfy_k-\bfy_{k'}\|\leq\|\bfx-\bfy_k\|+\|\bfx-\bfy_{k'}\|<\rho\leq \min_{\substack{\bfy,\bfy'\in E\\\bfy\neq\bfy'}}\|\bfy-\bfy'\|.\end{equation} 
Therefore all the $\bfy_k \in E$ are the same; denote this common value by~$\bfy$. In particular, $\|\zeta_\triangle^{\bfn_{ik}}.\bfx-\bfy\|<\theta$ for all $k\in I\backslash\{i\}$.
By (\ref{x-yk}), there is a vector $\bfv \in V^{\kappa_0}$ such that $\bfx-\bfy=\pi_\triangle(\bfv)$ and $|\bfv|<\frac\rho{2e^{2C}}$.

By (\ref{X2Lip}), $|\zeta_\triangle^{\bfn_{ik}}.\bfv|\leq e^{\max_{j\in I}\lambda_j(\bfn_{ik})}\|\bfv\|$. Plug in (\ref{selfreturn0}); we get $|\zeta_\triangle^{\bfn_{ik}}.\bfv|\leq e^{2C}\cdot \frac\rho{2e^{2C}}=\frac\rho 2$. On the other hand, we know  \[\|\pi_\triangle(\zeta_\triangle^{\bfn_{ik}}.\bfv)\|=\|\zeta_\triangle^{\bfn_{ik}}.\bfx-\bfy\|<\theta,\]
i.e.\ there is some $\bfgamma\in\Gamma^2$ such that $|\zeta_\triangle^{\bfn_{ik}}.\bfv-\bfgamma|<\theta$. Then \[|\bfgamma|<|\zeta_\triangle^{\bfn_{ik}}.\bfv-\bfgamma|+|\zeta_\triangle^{\bfn_{ik}}.\bfv|<\frac\rho2+\theta<\frac\rho2+\frac\rho2=\rho\] and by the definition of $\rho$, it follows that  $\bfgamma=0$. Hence \begin{equation}\label{selfreturn3}|\zeta_\triangle^{\bfn_{ik}}.\bfv|<\theta,\qquad\forall k\in I\backslash\{i\}.\end{equation}

Write $\bfv$ as $\sum_{j\in I}\bfv_j$ where $\bfv_j\in V_j^{\kappa_0}$. Then by (\ref{Visquaremulti}), (\ref{selfreturn0}) and (\ref{selfreturn3})
\[\theta > |\zeta_i^{\bfn_{ik}}|\cdot|\bfv_k|=e^{\lambda_k(\bfn_{ik})}|\bfv_k|\geq e^{\frac C4}|\bfv_k|
,\]
and 
similarly 
\[\theta>|\zeta_\triangle^{\bfn_{ik}}.\bfv_i|=e^{\lambda_i(\bfn_{ik})}|\bfv_i|\geq e^C|\bfv_i|.
\]
 Thus we see
 \[
 |\bfv|^2=\sum_{j\in I}|\bfv_j|^2<e^{-\frac C2}(r_1+r_2)\theta^2<\theta^2
 \]
where the last inequality follows form $C>2\log(r_1+r_2)$. We conclude that $\bfx=\bfy+\pi_\triangle(\bfv)\in E_\theta$, which contradicts the assumption on the point~$\bfx$. Thus the claim ($\star$) is proved and the lemma follows.\end{proof}

\subsection{Applying the recurrence property to $A$}

In this part we use the recurrence property to show that when $A \supsetneq L_0$, we can find points in $A$ that deviate from $L_{0,1}$ in a highly controlled way.

\begin{lemma}\label{transdev}For all sufficiently small $\theta>0$ and all $\delta>0$, if $\kappa\in K\cup\{\infty\}$ is different from $\kappa_0$ then there are $i\in I$, $\bfy\in L_{0,1}\backslash E_\theta$ and a non-zero vector $\bfw\in V_i^\kappa$ with $|\bfw|\leq\delta$, such that $\bfy+\bfw\in A$.\end{lemma}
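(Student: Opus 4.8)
The plan is to exploit Condition \ref{nonisocond}: since $L_0\subset A$ is a $d$-dimensional homogeneous invariant subset, $A\setminus L_0$ is dense in $A$, so there are points of $A$ arbitrarily close to $L_{0,1}\subset L_0$ but off $L_0$. I would take the deviation of such a point from $L_{0,1}$ — which must point into the complementary subspace $V^\kappa$, since $\kappa\ne\kappa_0$ gives $(\bR^{r_1}\oplus\bC^{r_2})^2=V^{\kappa_0}\oplus V^\kappa$ by Corollary \ref{Vkappatransversal} — and renormalize it via the recurrence Lemma \ref{selfreturn} until, in the limit, it becomes a nonzero vector inside a single eigenspace $V_i^\kappa$.

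Concretely: fix $\bfy_0\in L_{0,1}$ outside $\overline{E_\theta}$ — possible for $\theta$ small, as $E$ is finite — and a sequence $\bfx_k\in A\setminus L_0$ with $\bfx_k\to\bfy_0$. Lifting to the cover $(\bR^{r_1}\oplus\bC^{r_2})^2$ and splitting the difference along $V^{\kappa_0}\oplus V^\kappa$ (using $\pi_\triangle(V^{\kappa_0})=T^{\kappa_0}$, Lemma \ref{Tkappatangent}), I obtain for large $k$ a point $\bfy_k\in L_{0,1}\setminus E_\theta$ with $\bfy_k\to\bfy_0$ and a vector $\bfv_k\in V^\kappa$, $\|\bfv_k\|\to 0$, with $\bfx_k=\bfy_k+\pi_\triangle(\bfv_k)$; moreover $\bfv_k\ne\bfzero$, since otherwise $\bfx_k=\bfy_k\in L_{0,1}\subset L_0$. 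Decomposing $\bfv_k=\sum_{j\in I}(\bfv_k)_j$ with $(\bfv_k)_j\in V_j^\kappa$ and passing to a subsequence, fix $i\in I$ realizing $\max_j|(\bfv_k)_j|>0$.

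Now iterate the recurrence. With $C,\theta_0$ from Lemma \ref{selfreturn} and $\theta<\theta_0$: starting from $(\bfy_k,\bfv_k)$, repeatedly apply Lemma \ref{selfreturn} with this $i$ to the current base point in $L_{0,1}\setminus E_\theta$, producing $\bfn^{(m)}\in H$ with $C<\lambda_i(\bfn^{(m)})<2C$ and $\lambda_j(\bfn^{(m)})<C/2$ for $j\ne i$, keeping the base point inside $L_{0,1}\setminus E_\theta$, and acting on the deviation by $\zeta_\triangle^{\bfn^{(m)}}$. Since $V^\kappa$ and each $V_j^\kappa$ are $\zeta_\triangle$-invariant with $\zeta_\triangle^\bfn$ acting on $V_j^\kappa$ by a scalar of modulus $e^{\lambda_j(\bfn)}$, each step multiplies the $V_i^\kappa$-component by a factor in $(e^C,e^{2C})$ and the remaining components by less than $e^{C/2}$; hence after $l$ steps $|(\bfv_k^{(l)})_i|>e^{lC}|(\bfv_k)_i|$ and $|(\bfv_k^{(l)})_j|<e^{-lC/2}|(\bfv_k^{(l)})_i|$ for $j\ne i$, while the renormalized point $\bfx_k^{(l)}=\bfy_k^{(l)}+\pi_\triangle(\bfv_k^{(l)})$ stays in $A$. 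For each $k$ let $l_k$ be the first step at which $|(\bfv_k^{(l)})_i|$ exceeds $\delta e^{-4C}$; as this quantity starts below $\delta e^{-4C}$ (eventually), grows to infinity, and increases by less than a factor $e^{2C}$ per step, $l_k$ is well defined with $|(\bfv_k^{(l_k)})_i|\in(\delta e^{-4C},\delta e^{-2C})$, and $l_k\to\infty$ because $|(\bfv_k)_i|\to 0$. Then $|(\bfv_k^{(l_k)})_j|<\delta e^{-l_kC/2}\to 0$ for $j\ne i$, so along a subsequence $(\bfv_k^{(l_k)})_i\to\bfw\in V_i^\kappa$ with $|\bfw|\in[\delta e^{-4C},\delta e^{-2C}]$ (so $\bfw\ne\bfzero$ and $|\bfw|\le\delta$), whence $\bfv_k^{(l_k)}\to\bfw$; passing to a further subsequence $\bfy_k^{(l_k)}\to\bfy$ in the compact set $L_{0,1}\setminus E_\theta$; therefore $\bfx_k^{(l_k)}\to\bfy+\pi_\triangle(\bfw)$, which lies in $A$ since $A$ is closed. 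This $\bfy$ and $\bfw$ satisfy the conclusion.

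I expect the only genuinely delicate point to be the joint choice of $k$ and the number of renormalization steps $l_k$: one needs $l_k\to\infty$ (to kill the components transverse to $V_i^\kappa$) while trapping the $V_i^\kappa$-component in a fixed annulus bounded away from $0$ and below $\delta$; both follow from $\|\bfv_k\|\to 0$, which forces many steps before the dominant component reaches a fixed scale. The rest is a bookkeeping wrapper around Lemma \ref{selfreturn} (which is what guarantees the base point stays inside $L_{0,1}\setminus E_\theta$ throughout) and around the $\zeta_\triangle$-invariance and transversality of $V^{\kappa_0}$ and $V^\kappa$.
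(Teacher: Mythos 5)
Your proof is essentially the paper's argument: take a sequence in $A\backslash L_0$ converging to a point of $L_{0,1}$ away from $E$, split its deviation via $V^{\kappa_0}\oplus V^\kappa$ (Corollary \ref{Vkappatransversal}), then iterate Lemma \ref{selfreturn} to expand the dominant $V_i^\kappa$-component into a fixed annulus while the others and the original deviation shrink, and pass to the limit. The bookkeeping differs slightly (the paper fixes a reference point in $L_{0,1}\backslash E_{2\theta}$ and traps the dominant component in $[e^{-2C}\delta,\delta]$; you use $L_{0,1}\backslash\overline{E_\theta}$ and $[\delta e^{-4C},\delta e^{-2C}]$), but the mechanism and the role of Lemma \ref{selfreturn} are identical.
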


\begin{proof}Let $C$ and $\theta_0$ be as in Lemma \ref{selfreturn} and choose $\theta\in(0,\theta_0)$ so that the difference set $L_{0,1}\backslash E_{2\theta}$ is not empty, in which we fix a point $\bfx$.

By Condition \ref{nonisocond},  since $\bfx\in L_0$ there is a sequence of points $\bfx'_m\in A\backslash L_0$ converging to $\bfx$. We can write each $\bfx'_m$ as $\bfx+\bfw'_m$ where $\bfw'_m\in(\bR^{r_1}\oplus\bC^{r_2})^2$ and $\lim_{m\rightarrow\infty}\bfw'_m=\bfzero$.

As $\kappa\neq\kappa_0$, by Corollary \ref{Vkappatransversal}, $(\bR^{r_1}\oplus\bC^{r_2})^2=V^{\kappa_0}\oplus V^\kappa$. Thus the vector $\bfw'_m$ can be uniquely decomposed as $(\bfw'_m-\bfw_m)+\bfw_m$ where $\bfw_m\in V^\kappa$ and $\bfw'_m-\bfw_m\in V^{\kappa_0}$, moreover both $|\bfw'_m-\bfw_m|$ and $|\bfw_m|$ converge to $\bfzero$ as $m\rightarrow\infty$.

Note $\bfw_m\neq\bfzero$ because otherwise $\bfw'_m\in V^{\kappa_0}$ and $\bfx'_m$ would belong to $L_{0,1}=\bfx+T^{\kappa_0}$. By neglecting finitely many terms at the beginning of the sequence, we may assume \begin{equation}|\bfw'_m-\bfw_m|<\theta, 0<|\bfw_m|<\delta,\ \forall m.\end{equation}

$\bfw_m$ has an unique decomposition $\bfw_m=\sum_{j\in I}(\bfw_m)_j$ where $(\bfw_m)_j\in V_j^\kappa=V^\kappa\cap V_j^\square$. Then $|\bfw_m|=\big(\sum_{i\in I}|(\bfw_m)_j|^2\big)^\frac12$ and $\lim_{m\rightarrow\infty}(\bfw_m)_j=0$ for all $j$. Passing to a subsequence if neccessary, we may suppose without loss of generality that there is an index $i\in I$ such that \begin{equation}\label{transdev0}|(\bfw_m)_i|\geq|(\bfw_m)_j|,\ \forall m\in\bN,\forall j\in I\backslash\{i\}.\end{equation} In particular $|(\bfw_m)_i|>0$.

Let $\bfx_m=\bfx+(\bfw'_m-\bfw_m)$, which is in $L_{0,1}$ because $\bfw'_m-\bfw_m\in V^\kappa$. As $|\bfw'_m-\bfw_m|<\theta$ it follows from $\bfx\notin E_{2\theta}$ that $\bfx_m\notin E_\theta$.

Starting from such a point $\bfx_m$, Lemma \ref{selfreturn} allows us to construct a sequence of points $\{\bfx_{m,l}\}_{l=1}^\infty\subset L_{0,1}\backslash E_\theta$ such that $\bfx_{m,0}=\bfx_m$ and $\bfx_{m,l+1}=\zeta_\triangle^{\bfn_{m,l+1}}.\bfx_{m,l}=\zeta_\triangle^{\sum_{k=1}^{l+1}\bfn_{m,k}}.\bfx_m$ for all  $m,l\in\bN$ where $\bfn_{m,l}\in\bZ^r$ satisfies $C<\lambda_i(\bfn_{m,l})<2C$ and $\lambda_j(\bfn_{m,l})<\frac C2$ for $j\in I\backslash\{i\}$.

Construct a corresponding sequence $\{\bfw_{m,l}\}_{l=1}^\infty$ by $\bfw_{m,0}=\bfw_m$ and $\bfw_{m,l+1}=\zeta_\triangle^{\bfn_{m,l+1}}.\bfw_{m,l}=\zeta_\triangle^{\sum_{k=1}^{l+1}\bfn_{m,k}}.\bfw_m$.

For $j\in I$ let $(\bfw_{m,l})_j\in V_j^\kappa$ denote the $j$-th coordinate of $\bfw_{m,l}$. Then by (\ref{Visquaremulti}), \begin{equation}\label{transdev1}\begin{split}|(\bfw_{m,l+1})_i|=&|\zeta_i^{\bfn_{m,l+1}}|\cdot|(\bfw_{m,l})_i|=|e^{\lambda_j(\bfn_{m,l+1})}(\bfw_{m,l})_i|\\
\in& \big(e^C|(\bfw_{m,l})_i|,e^{2C}|(\bfw_{m,l})_i|\big);\end{split}\end{equation} similarly \begin{equation}\label{transdev2}|(\bfw_{m,l+1})_j|\leq  e^{\frac C2}|(\bfw_{m,l})_j|,\forall j\in I\backslash\{i\}.\end{equation}

For any $m\in\bN$, as it was previously assumed that $|\bfw_{m,0}|=|\bfw_m|$ is bounded by $\delta$, (\ref{transdev1}) implies that there is $l_m\in\bN$ such that \begin{equation}\label{transdev3}|(\bfw_{m,l_m})_i|\in[e^{-2C}\delta,\delta].\end{equation}
Furthermore,.it follows from (\ref{transdev0}), (\ref{transdev1}), (\ref{transdev2})  and (\ref{transdev3}) that for $j\neq i$,
\begin{equation}\label{transdev4}\begin{split}|(\bfw_{m,l_m})_j|\leq&\left(\frac{|(\bfw_{m,l_m})_i|}{|(\bfw_{m})_i|}\right)^\frac12\cdot|(\bfw_{m})_j|\\
\leq& \left(\frac{|(\bfw_{m,l_m})_i|}{|(\bfw_{m})_i|}\right)^\frac12\cdot|(\bfw_{m})_i|\\
\leq&\delta^{\frac 12}|(\bfw_{m})_i|^\frac12\leq \delta^{\frac 12}|\bfw_{m}|^\frac12.\end{split}\end{equation}

Let $m\rightarrow\infty$, it follows from (\ref{transdev3}) that, by taking a subsequence if neccessary, we may assume $\{(\bfw_{m,l_m})_i\}_{m=1}^\infty$ converges to a vector $\bfw\in V_i^\kappa$ with $|\bfw|\in[ e^{-2C}\delta,\delta]$. And by (\ref{transdev4}) and the fact that $\bfw_m$ converges to $0$ as $m\rightarrow\infty$, $\lim_{m\rightarrow\infty}(\bfw_{m,l_m})_j=\bfzero$ for all $j\in I\backslash\{0\}$. Therefore \begin{equation}\lim_{m\rightarrow\infty}\bfw_{m,l_m}=\lim_{m\rightarrow\infty}(\bfw_{m,l_m})_i=\bfw.\end{equation}

On the other hand, recall since $\bfx_{m,l_m}\in L_{0,1}\backslash E_\theta$ is at least of distance $\theta$ from $E$, any accumulation point cannot be in $E$. Thus by compactness of $L_{0,1}\backslash E_\theta$, by passing to a subsequence, we may suppose that $\lim_{m\rightarrow\infty}\bfx_{m,l_m}=\bfy$ for some $\bfy\in L_{0,1}\backslash E_\theta$. Then
\begin{equation}\begin{split}\bfy+\bfw=&\lim_{m\rightarrow\infty}\bfx_{m,l_m}+\lim_{m\rightarrow\infty}\bfw_{m,l_m}\\
=&\lim_{m\rightarrow\infty}\left(\zeta_\triangle^{\sum_{l=1}^{l_m}\bfn_{m,l}}.\bfx_m\right)+\lim_{m\rightarrow\infty}\left(\zeta_\triangle^{\sum_{l=1}^{l_m}\bfn_{m,l}}.\bfw_m\right)\\
=&\lim_{m\rightarrow\infty}\left(\zeta_\triangle^{\sum_{l=1}^{l_m}\bfn_{m,l}}.(\bfx_m+\bfw_m)\right)\\
=&\lim_{m\rightarrow\infty}\left(\zeta_\triangle^{\sum_{l=1}^{l_m}\bfn_{m,l}}.\big(\bfx+(\bfw'_m-\bfw_m)+\bfw_m\big)\right)\\
=&\lim_{m\rightarrow\infty}\left(\zeta_\triangle^{\sum_{l=1}^{l_m}\bfn_{m,l}}.\bfx'_m\right).
\end{split}\end{equation}

As all the $\bfx'_m$'s are in the $\zeta_\triangle$-invariant closed set $A$, this implies $\bfy+\bfw\in A$, which concludes the proof.\end{proof}

\subsection{Use of the $ r \geq 3$ assumption, and construction of extra homogeneous invariant subsets}\label{extra}
To finish the proof of Theorem \ref{CartanjoiningX2}.(1) we need the following fact:

\begin{lemma}\label{dhomononhyp}If $ r \geq 3$, $\bfy\in L_{0,1}$, $\epsilon>0$ and $i\in I$ then
\begin{equation}\label{dhomononhypeq}\overline{\{\zeta_\triangle^\bfn.\bfy:\bfn\in H, |\lambda_i(\bfn)|<\epsilon\}}=L_{0,1}\end{equation} unless $\bfy$ can be written as $\bfy_0+\bfw_0$ where $\bfy_0\in L_{0,1}$ is a torsion point and $\bfw\in V_i^{\kappa_0}$.\end{lemma}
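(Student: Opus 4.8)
The plan is to deduce the lemma from Corollary \ref{Tkappanonhyp} by translating the coset $L_{0,1}$ back to the genuine subtorus $T^{\kappa_0}$ through a torsion point, the only care being that this translation must be made compatible with the group action.

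First I would fix a torsion point $\bfz_{0,1}\in L_{0,1}$, so that $L_{0,1}=\bfz_{0,1}+T^{\kappa_0}$, and write the given point as $\bfy=\bfz_{0,1}+\bfx$ with $\bfx\in T^{\kappa_0}$. Since $\bfz_{0,1}$ is of torsion, its stabilizer in $\bZ^r$ has finite index, so $H_0:=H\cap\Stab_{\zeta_\triangle}(\bfz_{0,1})$ is a finite-index subgroup of $\bZ^r$ contained in $H$. For $\bfn\in H_0$ one has $\zeta_\triangle^\bfn.\bfz_{0,1}=\bfz_{0,1}$, hence $\zeta_\triangle^\bfn.\bfy=\bfz_{0,1}+\zeta_\triangle^\bfn.\bfx$; thus the orbit piece $\{\zeta_\triangle^\bfn.\bfy:\bfn\in H_0,\ |\lambda_i(\bfn)|<\epsilon\}$ is exactly $\bfz_{0,1}$ plus the corresponding orbit piece of $\bfx$ inside $T^{\kappa_0}$.

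Next I would apply Corollary \ref{Tkappanonhyp} with the subgroup $H_0$, the slope $\kappa=\kappa_0$, the same index $i$, and the point $\bfx$. If $\bfx$ cannot be written as $\bfx_0+\bfv$ with $\bfx_0\in T^{\kappa_0}$ of torsion and $\bfv\in V_i^{\kappa_0}$, the corollary gives that the $H_0$-orbit piece of $\bfx$ is dense in $T^{\kappa_0}$, and adding back $\bfz_{0,1}$ shows the $H_0$-orbit piece of $\bfy$ is dense in $\bfz_{0,1}+T^{\kappa_0}=L_{0,1}$. Since $H_0\subset H$ while $H$ preserves $L_{0,1}$, the full $H$-orbit piece of $\bfy$ lies between the $H_0$-orbit piece and $L_{0,1}$, so its closure is also $L_{0,1}$, which is exactly (\ref{dhomononhypeq}). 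Finally I would settle the bookkeeping of the exceptional case: if $\bfx=\bfx_0+\bfv$ as above, then $\bfy=(\bfz_{0,1}+\bfx_0)+\bfv$, where $\bfz_{0,1}+\bfx_0$ is a torsion point lying in $L_{0,1}$ (a sum of two torsion points) and $\bfv\in V_i^{\kappa_0}$, so $\bfy$ has the excluded form $\bfy_0+\bfw_0$; contrapositively, if $\bfy$ is not of that form then neither is $\bfx$, and the density argument applies.

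I do not expect a genuine obstacle here, since the substance of the statement is entirely carried by Corollary \ref{Tkappanonhyp}. The one point demanding a little attention is that $H$ itself need not fix the chosen torsion point $\bfz_{0,1}$, which is why I pass to the finite-index subgroup $H_0$; this loses nothing, because enlarging the acting group from $H_0$ to $H$ can only enlarge the orbit, and we are only after density.
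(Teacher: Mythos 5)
Your proof is correct and takes essentially the same approach as the paper: translate $\bfy$ into $T^{\kappa_0}$ through a torsion point of $L_{0,1}$ and apply Corollary~\ref{Tkappanonhyp}, then translate back. The only difference is that you detour through the smaller subgroup $H_0=H\cap\Stab_{\zeta_\triangle}(\bfz_{0,1})$ and enlarge again to $H$, whereas the paper observes that the definition of $H$ in (\ref{stab01}) already makes $H$ stabilize some torsion point of $L_{0,1}$, so Corollary~\ref{Tkappanonhyp} can be applied with $H$ directly.
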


\begin{proof} By definition of $H$ in \ref{stab01}, it stabilizes under $\zeta_\triangle$ at least one torsion point $\bfx$ from $L_{0,1}$. If $\bfy$ cannot be written in the particular form given above then $\bfy-\bfx\in T^{\kappa_0}$ cannot be written as $\bfy'+\bfw'$ where $\bfy'$ is a torsion point in $T^{\kappa_0}$ and $\bfw'\in V_i^{\kappa_0}$.

By Corollary \ref{Tkappanonhyp}, $\overline{\{\zeta_\triangle^\bfn.(\bfy-\bfx):\bfn\in H,|\lambda_i(\bfn)|<\epsilon\}}=T^{\kappa_0}$. As $\zeta_\triangle^\bfn.\bfy=\bfx+\zeta_\triangle^\bfn.(\bfy-\bfx)$ for all $\bfn\in H$ and $L_{0,1}=\bfx+T^{\kappa_0}$, (\ref{dhomononhypeq}) follows. \end{proof}

\begin{corollary}\label{transnewline}If $r\geq 3$ then there exist an index $i\in I$, a torsion point $\bfz\in L_{0,1}$ and a non-zero vector $\bfv\in V_i^\square$ such that $\bfz+\bfv\in A\backslash(\bigcup_{h=0}^qL_h)$.\end{corollary}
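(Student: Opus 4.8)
\emph{Proof proposal.} The plan is to produce the required point directly from Lemma~\ref{transdev}, sliding it onto a torsion point of $L_{0,1}$ via the rank‑$3$ density statement Lemma~\ref{dhomononhyp} only when Lemma~\ref{transdev} happens to hand us a non‑torsion point. The one thing to be careful about is the \emph{order} of the construction: the ``escape distance'' must be fixed before invoking Lemma~\ref{transdev}, since the index $i$ it returns may depend on $\delta$.

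First I would fix once and for all a slope $\kappa\in(K\cup\{\infty\})\setminus\{\kappa_0,\dots,\kappa_q\}$ (possible since $K$ is infinite) and a small $\theta>0$ valid for Lemmas~\ref{selfreturn}--\ref{transdev} and for the definition of $E_\theta$. Crucially, \emph{before} invoking Lemma~\ref{transdev}, I would also fix a torsion point $\bfz_\ast\in L_{0,1}\setminus E_\theta$ (torsion points are dense in $L_{0,1}$, and $E_\theta$ is a small neighbourhood of the finite set $E$) and the constant
\[\eta=\min\Big(\min_{1\le h\le q}\ \inf_{\bfy\in L_{0,1}\setminus E_\theta}\dist(\bfy,L_h),\ \ \min_{t\ne 1}\dist(L_{0,1},L_{0,t})\Big),\]
which is $>0$: the cosets $L_{0,t}$ ($t\ne1$) are pairwise disjoint compacta, and each $L_h$ ($1\le h\le q$) is closed and meets the compact set $L_{0,1}\setminus E_\theta$ in $\emptyset$, because $L_h\cap L_{0,1}\subseteq E$ — by Corollary~\ref{finiteTkappainter} when $\kappa_h\ne\kappa_0$, and because $L_h\ne L_0$ when $\kappa_h=\kappa_0$. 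Only now would I apply Lemma~\ref{transdev} with this $\kappa$ and with $\delta>0$ so small that $e\delta<\eta$, obtaining an index $i\in I$, a point $\bfy\in L_{0,1}\setminus E_\theta$, and a nonzero vector $\bfw\in V_i^\kappa$ with $|\bfw|\le\delta$ and $\bfy+\bfw\in A$.

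Next I would produce, in all three cases, a torsion point $\bfz\in L_{0,1}$, a nonzero vector $\bfv\in V_i^\square$, and a ``reference point'' $\bfq\in L_{0,1}\setminus E_\theta$ with $\bfz+\pi_\triangle(\bfv)\in A$ and $\|\bfz+\pi_\triangle(\bfv)-\bfq\|\le e\delta$. If $\bfy$ is already a torsion point, set $\bfz=\bfq=\bfy$, $\bfv=\bfw$. If $\bfy$ has the exceptional shape $\bfy=\bfy_0+\pi_\triangle(\bfw_0)$ of Lemma~\ref{dhomononhyp}, with $\bfy_0\in L_{0,1}$ torsion and $\bfw_0\in V_i^{\kappa_0}$, set $\bfz=\bfy_0$, $\bfq=\bfy$, and $\bfv=\bfw_0+\bfw$; this lies in $V_i^\square$ and is nonzero because $\bfw\ne0$ and $V_i^\kappa\cap V_i^{\kappa_0}=\{0\}$, and $\bfz+\pi_\triangle(\bfv)=\bfy+\bfw$. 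Otherwise $\bfy$ is neither torsion nor exceptional, so by Lemma~\ref{dhomononhyp} the set $\{\zeta_\triangle^\bfn.\bfy:\bfn\in H,\ |\lambda_i(\bfn)|<1\}$ is dense in $L_{0,1}$; choosing $\bfn_k\in H$ with $|\lambda_i(\bfn_k)|<1$ and $\zeta_\triangle^{\bfn_k}.\bfy\to\bfz_\ast$, and passing to a subsequence with $\zeta_\triangle^{\bfn_k}.\bfw\to\bfv$, the limit $\bfv$ lies in the closed invariant set $V_i^\kappa$ and satisfies $e^{-1}|\bfw|\le|\bfv|\le e|\bfw|$, hence $\bfv\ne0$; set $\bfz=\bfq=\bfz_\ast$, so that $\bfz+\pi_\triangle(\bfv)=\lim_k\zeta_\triangle^{\bfn_k}.(\bfy+\bfw)\in A$ by closedness and invariance of $A$.

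Finally I would check that $\bfp:=\bfz+\pi_\triangle(\bfv)\notin\bigcup_{h=0}^qL_h$. For $1\le h\le q$: $\dist(\bfp,L_h)\ge\dist(\bfq,L_h)-e\delta\ge\eta-e\delta>0$. For $L_{0,t}$ with $t\ne1$: since $\bfq\in L_{0,1}$, $\dist(\bfp,L_{0,t})\ge\dist(L_{0,1},L_{0,t})-e\delta\ge\eta-e\delta>0$. For $L_{0,1}$ itself: if $\bfp\in L_{0,1}$, then since $\bfz\in L_{0,1}$ forces $L_{0,1}=\bfz+T^{\kappa_0}$, and $\pi_\triangle(\bfv)\in T^\kappa$ (as $\bfv\in V_i^\kappa\subseteq V^\kappa$), we get $\pi_\triangle(\bfv)\in T^\kappa\cap T^{\kappa_0}$, a finite set of torsion points by Corollary~\ref{finiteTkappainter} (using $\kappa\ne\kappa_0$); but $\pi_\triangle(\bfv)$ is not a torsion point for any nonzero $\bfv\in V_i^\square$ — this is exactly the computation in the proof of Lemma~\ref{denseorcontaindhomo}(1), using $r_1+r_2\ge3$ and $\Gamma\subseteq\sigma(K)$ — a contradiction. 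Hence $\bfp\in A\setminus\bigcup_{h=0}^qL_h$ with $\bfz$ a torsion point of $L_{0,1}$ and $\bfv\in V_i^\square\setminus\{0\}$, which is the corollary. The step I would expect to be the main obstacle is the exceptional case of Lemma~\ref{dhomononhyp}: it cannot be resolved by sliding (the $H$‑orbit of such a $\bfy$ need not be dense in $L_{0,1}$), and one must instead recognize that in that case the point $\bfy+\bfw$ already has the required torsion‑plus‑eigenspace shape — which is why I split off the ``reference point'' $\bfq$ and keep the distance estimates uniform over $L_{0,1}\setminus E_\theta$.
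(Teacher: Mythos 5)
Your proposal follows essentially the paper's own route: the same use of Lemma~\ref{transdev} to produce a transverse perturbation, the same split on the exceptional shape of Lemma~\ref{dhomononhyp}, the same sliding to a torsion anchor, and the same distance bookkeeping; your ``reference point'' $\bfq$ plays exactly the role of the paper's auxiliary $\bfz'$ (with the connecting vector playing the role of the paper's $\bfv'$).

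There is, however, a genuine slip in the final exclusion of $L_{0,1}$. You assert that $\pi_\triangle(\bfv)\in T^\kappa$ ``as $\bfv\in V_i^\kappa\subseteq V^\kappa$.'' But in your exceptional case you defined $\bfv=\bfw_0+\bfw$ with $\bfw_0\in V_i^{\kappa_0}$ (possibly nonzero) and $\bfw\in V_i^\kappa$, so there $\bfv$ lies only in $V_i^\square$ and generically \emph{not} in $V_i^\kappa$ --- the premise contradicts your own construction in that case, and the deduction $\pi_\triangle(\bfv)\in T^\kappa\cap T^{\kappa_0}$ does not follow. The repair is already present in your notation: anchor at $\bfq\in L_{0,1}$ rather than at $\bfz$, and use the connecting vector from $\bfq$ to $\bfp$, which is $\bfw$ in your first two cases and the limit vector in your third, all of which genuinely lie in $V_i^\kappa$. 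Then $\bfp\in L_{0,1}$ would force that connecting vector to project into $T^{\kappa_0}\cap T^\kappa$, and Corollary~\ref{finiteTkappainter} together with the non-torsion computation from Lemma~\ref{denseorcontaindhomo}(1) gives the contradiction. This is precisely why the paper carries $\bfz'$ and $\bfv'$ along (with $\bfz'=\bfy$, $\bfv'=\bfw$ in its Case~1), and it is the same reason you introduced $\bfq$ for the distance estimates; you simply forgot to use $\bfq$ and the connecting vector instead of $\bfz$ and $\bfv$ in this one spot.
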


\begin{proof}Let $\theta$ be sufficiently small then $L_{0,1}\backslash E_\theta$ is non-empty and has open interior with respect to the relative topology in $L_{0,1}$. In particular, $L_{0,1}\backslash E_\theta$ contains torsion points because torsion points are dense in $L_{0,1}$.

Notice $L_{0,1}\backslash E_\theta$ is disjoint from all the $L_{0,t}$'s for $1<t\leq s_0$ and all the $L_h$'s for $1\leq h\leq q$. As each of these sets are compact, it follows \begin{equation}\dist\Big(L_{0,1}\backslash E_{\theta},(\bigcup_{t=2}^{s_0}L_{0,t})\cup(\bigcup_{h=1}^qL_h)\Big)>0.\end{equation}
Define a positive number \begin{equation}\delta=\frac12\min\left(\dist\Big(L_{0,1}\backslash E_{\theta},(\bigcup_{t=2}^{s_0}L_{0,t})\cup(\bigcup_{h=1}^qL_h)\Big),\rho_0\right).\end{equation}

Applying Lemma \ref{transdev}, we obtain $i\in I$, $\bfy\in L_{0,1}\backslash E_\theta$ and $\bfw\in V_i^\kappa$ where $\kappa\in K\cup\{\infty\}$ is distinct from $\kappa_0$, such that $0<|\bfw|<\delta$ and $\bfy+\bfw\in A$.

We distinguish between two cases:\\

\noindent{\bf Case 1.} If $\bfy$ can be decomposed as $\bfz+\bfw'$ where $\bfz$ is a torsion point from $L_{0,1}$ and $\bfw'\in V_i^{\kappa_0}$. Then $\bfy+\bfw$ rewrites as $\bfz+\bfv$ where $\bfv=\bfw'+\bfw$ is in $V_i^\square$ since both $\bfw'$ and $\bfw$ are.\\

\noindent{\bf Case 2.} Suppose $\bfy$ cannot be decomposed as above, then by Lemma \ref{dhomononhyp}, \[\{\zeta_\triangle^\bfn.\bfy:\bfn\in H,|\lambda_i(\bfn)|<\frac12\}\] is dense in $L_{0,1}$. In particular, we fix an arbitrary torsion point $\bfz\in L_{0,1}\backslash E_\theta$, then there is a sequence $\{\bfn_m\in H\}_{m=1}^\infty$ such that $|\lambda_i(\bfn_m)|<\frac12$ for all $m$ and $\lim_{m\rightarrow\infty}\zeta_\triangle^{\bfn_m}.\bfy=\bfz$. Then for each $m$, $\zeta_\triangle^{\bfn_m}.\bfw=\zeta_i^{\bfn_m}\bfw\in V_i^\kappa$ and $|\zeta_\triangle^{\bfn_m}.\bfw|=e^{\lambda_i(\bfn_m)}|\bfw|\in [e^{-\frac12}\delta,e^\frac12\delta]$. Since $\{\bfv\in V_i^\kappa:|\bfv|\in[e^{-\frac12}\delta,e^\frac12\delta]\}$ is compact, without loss of generality we may assume $\zeta_\triangle^{\bfn_m}.\bfw$ converges to a vector $\bfv\in V_i^\kappa$ with $|\bfv|\in[e^{-\frac12}\delta,e^\frac12\delta]$ as $m$ tends to $\infty$. In particular $0<|\bfv|<2\delta$. Then $\bfz+\bfv$ is in $A$ as it is the limit of $\zeta_\triangle^{\bfn_m}.(\bfy+\bfw)$.\\

So in both cases $A$ contains $\bfz+\bfv$ where $\bfz\in L_{0,1}$ is of torsion and $\bfv\in V_i^\square$ is non-trivial. Moreover, $\bfz+\bfv$ can always be written as $\bfz'+\bfv'$ where $\bfz'\in L_{0,1}\backslash E_\theta$ (though not necessarily a torsion point) and $\bfv'$ is a non-zero vector from $V_i^\kappa$ with $|\bfv'|<2\delta$ (in Case 1, $\bfz'=\bfy$ and $\bfv'=\bfw$; while in Case 2 $\bfz'=\bfz$ and $\bfv'=\bfv$.)

It remains to show $\bfz+\bfv\notin L_h$ for all $h=0,\cdots,q$.
On the one hand, by the definition of $\delta$,\begin{equation}|\bfv'|<\dist\Big(L_{0,1}\backslash E_{\theta},(\bigcup_{t=2}^{s_0}L_{0,t})\cup(\bigcup_{h=1}^qL_h)\Big),\end{equation} so $\bfz+\bfv\notin (\bigcup_{t=2}^{s_0}L_{0,t})\cup(\bigcup_{h=1}^qL_h)$; on the other hand because $0<|\bfv'|< \rho_0$ and $\bfv'\notin V^{\kappa_0}$, $\bfz'+\bfv'$ is not in $L_{0,1}$. Therefore $\bfz+\bfv\notin\bigcup_{h=1}^qL_h$, which concludes the proof.\end{proof}

Now we are ready to prove Lemma \ref{nonisoinduction}.

\begin{proof}[Proof of Lemma \ref{nonisoinduction}] As $r\geq 3$, by Corollary \ref{transnewline}, $A$ contains $\bfz$ and $\bfz+\bfv$ where $\bfz$ is a torsion point and $\bfv$ is a non-zero vector from one of the $V_i^\square$'s such that $\bfz+\bfv\notin L_h, \forall h=0,\cdots q$. By Lemma \ref{denseorcontaindhomo} it contains a $d$-dimensional homogeneous $\zeta_\triangle$-invariant subset $L_{q+1}$ such that $\bfz+\bfv\in L_{q+1}$. It follows that $L_{q+1}$ is different from all the previous $L_h$'s.\end{proof}

Hence so far we have completed the proof of Proposition \ref{nonisofull}, which is going to be used to establish both Theorems \ref{CartanjoiningX2}.(1) and \ref{nondenserationalX2}.

\subsection{Proof of rigidity results}

\begin{proof}[Proof of Theorem \ref{CartanjoiningX2}.(1)]Let $A$ be an infinite proper closed subset of $X^2$, invariant and topologically transitive under the action $\zeta_\triangle$.

Assuming $A$ is not a $d$-dimensional homogeneous $\zeta_\triangle$-invariant subset, we try to deduce a contradiction.

We claim $A$ meets Condition \ref{nonisocond}. To see this, recall by Proposition \ref{firstdhomo}, $A$ always contains a $d$-dimensional homogeneous $\zeta_\triangle$-invariant subset $L$. Then $A \backslash L$ is non-empty and relatively open in $A$. Furthermore as both $A$ and $L$ are $\zeta_\triangle$-invariant, so is $A \backslash L$. Let $U$ be any relatively open subset in $A$. By topological transitivity, there exists an $\bfn \in\bZ^r$ such that $\big(\zeta_\triangle^\bfn.(A \backslash L)\big)\cap U$ is not empty. But $\zeta_\triangle^\bfn.(A \backslash L)$ is just $A \backslash L$. This shows $\overline{A \backslash L}=A$.

As $r \geq 3$, Proposition \ref{nonisofull} applies and $A=X^2$. But $A$ is supposed to be proper and we get a contradiction.

Moreover, suppose $A$ is a $\zeta _ \triangle$-invariant, closed, proper subset of $X ^ 2$ (not necessarily topologically transitive). Then by Lemma~\ref{finitedhomo}, there can be only finitely many $d$-dimensional homogeneous invariant subsets $L _ 0$,\dots,$L _ q \subset A $. If $A'=A \setminus \bigcup _ {i = 0} ^ q L _ i$ is finite, this shows that $A$ is a finite union of topologically transitive invariant subsets. If $A '$ is infinite, by Proposition~\ref{firstdhomo} there is a $d$-dimensional homogeneous invariant subset $L \subset \overline {A '}$. It follows that $\overline {A '}$ satisfies Condition \ref{nonisocond}, hence by Proposition~\ref{nonisofull} we have that $\overline {A '} = X ^ 2$ --- a contradiction.
\end{proof}

\begin{proof}[Proof of Theorem \ref{nondenserationalX2}] Fix $\epsilon>\epsilon'>0$ and $\bfx\in X^2$. Let $C_\bfx\subset X^2$ be the set of all torsion points whose $\zeta_\triangle$-orbits are disjoint from $\mathring B_{\epsilon'}(\bfx)$ and notice it is $\zeta_\triangle$-invariant. We want to show $C_\bfx$ is covered by a given finite union of $d$-dimensional homogeneous $\zeta_\triangle$-invariant sets.

Take all the $d$-dimensional homogeneous $\zeta_\triangle$-invariant subsets $L_1,\cdots,L_q$ which are not $\epsilon'$-dense. Write \begin{equation}A_\bfx=\overline{C_\bfx\backslash{\bigcup}_{h=1}^qL_h}.\end{equation}
Then as $C_\bfx$ and the $L_h$'s are all $\zeta_\triangle$-invariant, so are both $C_\bfx\backslash\bigcup_{h=1}^qL_h$ and $A_\bfx$. Moreover, $A_\bfx$ is disjoint from $\mathring B_{\epsilon'}(\bfx)$.

We claim $A_\bfx$ is finite. Suppose not, then $A_\bfx$ contains a $d$-dimensional homogeneous $\zeta_\triangle$-invariant subset $L$ by Proposition \ref{firstdhomo}. As $L$ avoids $\mathring B_{\epsilon'}(\bfx)$, it must be one of $L_1,\cdots,L_q$. In particular, \begin{equation}A_\bfx\backslash L\supset (C_\bfx\backslash{\bigcup}_{h=1}^qL_h)\backslash L=C_\bfx\backslash{\bigcup}_{h=1}^qL_h\end{equation} and it follows that $\overline{A_\bfx\backslash L}\supset \overline{C_\bfx\backslash{\bigcup}_{h=1}^qL_h}=A_\bfx$. So $A_\bfx$ satisfies Condition \ref{nonisocond}. Because $r\geq 3$, $A_{\epsilon',\bfx}=X^2$ by Proposition \ref{nonisofull}, which contradicts the fact that $A$ is disjoint from $\mathring B_{\epsilon'}(\bfx)$. It follows \begin{equation}\label{nondenserationalX20}\big|C_\bfx\backslash{\bigcup}_{h=1}^qL_h\big|<\infty, \forall \bfx\in X^2.\end{equation}

By compactness of $X^2$, there are $\bfx_1,\cdots,\bfx_l$ such that $\bigcup_{k=1}^l\mathring B_{\epsilon-\epsilon'}(\bfx_k)$ covers $X^2$. 

Now let $\bfy\in X^2$ be a torsion point whose orbit $\{\zeta_\triangle^\bfn.\bfy:\bfn\in\bZ^r\}$ is not $\epsilon$-dense, or equivalently, is disjoint from $\mathring B_\epsilon(\bfx)$ for some $\bfx\in X$. Take $k\in\{1,\cdots,l\}$ such that $\bfx\in\mathring B_{\epsilon-\epsilon'}(\bfx_k)$. Then the orbit of $\bfy$ is also disjoint from $\mathring B_{\epsilon'}(\bfx_k)$. So by construction the set $C_{\bfx_k}$ contains $\bfy$.

Therefore the collection of all torsion points whose orbits are not $\epsilon$-dense, which we denote by $C$, is contained in the set \begin{equation}{\bigcup}_{k=1}^lC_{\bfx_k}\subset({\bigcup}_{h=1}^q{L_h})\sqcup\big({\bigcup}_{k=1}^l(C_{\bfx_k}\backslash{\bigcup}_{h=1}^qL_h)\big).\end{equation} This proves the first part of theorem because $\big({\bigcup}_{k=1}^l(C_{\bfx_k}\backslash{\bigcup}_{h=1}^qL_h)\big)$ is finite by (\ref{nondenserationalX20}).

To show the last claim in Theorem \ref{nondenserationalX2}, i.e. $C$ is covered by finitely many $d$-dimensional homogeneous $\zeta_\triangle$-invariant subsets, it suffices to note that every torsion point in $X^2$ is contained in some $d$-dimensional homogeneous $\zeta_\triangle$-invariant subset. We assign to each point in the finite set $\big({\bigcup}_{k=1}^l(C_{\bfx_k}\backslash{\bigcup}_{h=1}^qL_h)\big)$ a $d$-dimensional homogeneous $\zeta_\triangle$-invariant subset that contains it. Then all these subsets together with $L_1,\cdots, L_q$ cover $C$.\end{proof}

So far we established Theorem \ref{CartanjoiningX2}.(1) as well as Theorem \ref{nondenserationalX2}. By Lemma \ref{CartanjoiningT2X2}, Theorem \ref{Cartanjoining}.(1) and Theorem \ref{nondenserational} follow.

\section{Counterexample in rank 2}\label{rank2}

We now construct the counterexample required by the second part of Theorem \ref{CartanjoiningX2}. From now on let $r=2$.

In this case, $r_1+r_2=r+1=3$ so $I=\{1,2,3\}$ and $\bR^{r_1}\oplus\bC^{r_2}=V_1\oplus V_2\oplus V_3$. Fix a non-trivial lattice point $\gamma\in\Gamma\backslash\{0\}$ and decompose it as $\sum_{i=1}^3\gamma_i$ where $\gamma_i\in V_i$. Recall $\gamma=\sigma(\theta)$ for some $\theta\in K$ as $\Gamma\subset\sigma(K)$. Moreover, $\theta\neq 0$ as otherwise $\gamma=0$. Hence for each $i$, $\gamma_i$ doesn't vanish as it corresponds to $\sigma_i(\theta)$ in $V_i$.

Take the point $\tilde\bfx=(\gamma_1,\gamma_2)\in(\bR^{r_1}\oplus\bC^{r_2})^2$. And let $\bfx=\pi_\triangle(\tilde\bfx)=\big(\pi(\gamma_1),\pi(\gamma_2)\big)$. We will prove in this section that (\ref{CartanjoiningX2eq}) holds with \begin{equation}\kappa_1=\infty,\ \kappa_2=0\text{ and }\kappa_3=-1.\end{equation}

From Lemma \ref{Tkappatangent} one can easily check:
\begin{equation}T^\infty=\pi_\triangle(V^\infty)=\pi_\triangle\big(\{0\}\times(\bR^{r_1}\oplus\bC^{r_2})\big)=\{0\}\times X;\end{equation}
\begin{equation}T^0=\pi_\triangle(V^0)=\pi_\triangle\big((\bR^{r_1}\oplus\bC^{r_2})\times\{0\}\big)=X\times\{0\};\end{equation}
and
\begin{equation}\begin{split}T^{-1}=&\pi_\triangle(V^{-1})=\pi_\triangle\big(\{(\tilde x,-\tilde x):\tilde x\in (\bR^{r_1}\oplus\bC^{r_2})\}\big)\\
=&\{(x,-x):x\in X\}.\end{split}\end{equation}

The point $\bfx$ is special in the following sense:

\begin{lemma}\label{homoclinic} For each $i\in\{1,2,3\}$, there is a permutation $(i,j,h)$ of $\{1,2,3\}$ such that $\bfx$ can be decomposed as $\pi_\triangle(\tilde\bfx_\parallel)+\pi_\triangle(\tilde\bfx_\top)$ where $\tilde\bfx_\parallel$ and $\tilde\bfx_\top$ are non-zero vectors, respectively from $V_j^{\kappa_i}$ and $V_i^{\kappa_j}$. In particular, $\pi_\triangle(\tilde\bfx_\parallel)\in T^{\kappa_0}$.
\end{lemma}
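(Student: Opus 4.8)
The plan is to prove the lemma by an explicit case analysis over $i\in\{1,2,3\}$: in each case I will name the permutation $(i,j,h)$ and write down $\tilde\bfx_\parallel$ and $\tilde\bfx_\top$ directly in terms of $\gamma_1,\gamma_2,\gamma_3$. Two elementary facts will be used throughout. First, each $\gamma_i\neq\bfzero$; this is already noted above, since $\gamma_i$ is the $V_i$-component of $\gamma=\sigma(\theta)$ with $\theta\neq 0$, hence corresponds to $\sigma_i(\theta)\neq 0$. Second, $V_i\cap\Gamma=\{\bfzero\}$ for every $i$: since $\Gamma\subset\sigma(K)$ by Proposition \ref{Gfield}, any $\sigma(\mu)\in V_i$ has $\sigma_l(\mu)=0$ for all $l\neq i$, forcing $\mu=0$ because the $\sigma_l$ are field embeddings. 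I will mention the second fact only to explain that the decompositions below are essentially forced; uniqueness is not part of the claim, so only the nonvanishing $\gamma_i\neq\bfzero$ and the relation $\gamma_1+\gamma_2+\gamma_3=\gamma\in\Gamma$ are actually needed.

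I will recall from (\ref{Vkappacoord}) the concrete shapes of the relevant subspaces: since $\sigma_l(0)=0$ and $\sigma_l(-1)=-1$, one has $V_l^0=\{(v,0):v\in V_l\}$, $V_l^{-1}=\{(v,-v):v\in V_l\}$ and $V_l^\infty=\{(0,v):v\in V_l\}$. For $i=1$ (so $\kappa_1=\infty$) I take $(i,j,h)=(1,2,3)$, $\tilde\bfx_\parallel=(0,\gamma_2)\in V_2^\infty=V_2^{\kappa_1}$ and $\tilde\bfx_\top=(\gamma_1,0)\in V_1^0=V_1^{\kappa_2}$, for which $\tilde\bfx_\parallel+\tilde\bfx_\top=\tilde\bfx$ on the nose. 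For $i=2$ (so $\kappa_2=0$) I take $(i,j,h)=(2,1,3)$ and swap the roles: $\tilde\bfx_\parallel=(\gamma_1,0)\in V_1^0=V_1^{\kappa_2}$ and $\tilde\bfx_\top=(0,\gamma_2)\in V_2^\infty=V_2^{\kappa_1}$, again with $\tilde\bfx_\parallel+\tilde\bfx_\top=\tilde\bfx$. For $i=3$ (so $\kappa_3=-1$) I take $(i,j,h)=(3,1,2)$, $\tilde\bfx_\parallel=(\gamma_1,-\gamma_1)\in V_1^{-1}=V_1^{\kappa_3}$ and $\tilde\bfx_\top=(0,-\gamma_3)\in V_3^\infty=V_3^{\kappa_1}$. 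In all three cases both vectors are nonzero because $\gamma_1,\gamma_2,\gamma_3\neq\bfzero$.

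For $i=1,2$ there is nothing left to check. For $i=3$, the sum $\tilde\bfx_\parallel+\tilde\bfx_\top=(\gamma_1,-\gamma_1-\gamma_3)$ differs from $\tilde\bfx=(\gamma_1,\gamma_2)$ exactly by $(0,\gamma_1+\gamma_2+\gamma_3)=(0,\gamma)\in\Gamma\times\Gamma=\Gamma^2$; hence $\pi_\triangle(\tilde\bfx_\parallel)+\pi_\triangle(\tilde\bfx_\top)=\pi_\triangle(\tilde\bfx_\parallel+\tilde\bfx_\top)=\pi_\triangle(\tilde\bfx)=\bfx$, as required. Finally, in every case $\tilde\bfx_\parallel\in V_j^{\kappa_i}\subset V^{\kappa_i}=\bigoplus_{l\in I}V_l^{\kappa_i}$, so Lemma \ref{Tkappatangent} gives $\pi_\triangle(\tilde\bfx_\parallel)\in\pi_\triangle(V^{\kappa_i})=T^{\kappa_i}$, which is the concluding ``in particular'' assertion.

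I do not anticipate a genuine obstacle here: the argument is bookkeeping against the definitions of $V_l^\kappa$, $T^\kappa$ and $\pi_\triangle$. The one point that is not completely immediate is the case $i=3$, where the decomposition is an identity only modulo $\Gamma^2$ and relies on the defining property $\gamma=\gamma_1+\gamma_2+\gamma_3$ of the chosen lattice point, together with the trivial observation that $(0,\gamma)\in\Gamma^2$. It is also worth verifying at the end that each of the three triples $(i,j,h)$ is a genuine permutation of $\{1,2,3\}$ and that $\tilde\bfx_\parallel$, $\tilde\bfx_\top$ sit in $V_j^{\kappa_i}$ and $V_i^{\kappa_j}$ respectively, in that order.
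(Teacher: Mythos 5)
Your proof is correct and coincides with the paper's own argument: identical choices of the permutation $(i,j,h)$, the same explicit vectors $\tilde\bfx_\parallel,\tilde\bfx_\top$ for each $i$, and the same observation that for $i=3$ the discrepancy $(0,\gamma)$ lies in $\Gamma^2$. (Incidentally, you correctly read the concluding assertion as $\pi_\triangle(\tilde\bfx_\parallel)\in T^{\kappa_i}$; the $\kappa_0$ in the statement is evidently a typo.)
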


\begin{proof}We choose $j$, $h$, $\tilde\bfx_\parallel$ and $\tilde\bfx_\top$ in the following way : \begin{itemize}\item if $i=1$, let $j=2$, $h=3$, $\tilde\bfx_\parallel=(0,\gamma_2)$ and $\tilde\bfx_\top=(\gamma_1,0)$; \item if $i=2$, let $j=1$, $h=3$, $\tilde\bfx_\parallel=(\gamma_1,0)$ and $\tilde\bfx_\top=(0,\gamma_2)$; \item if $i=3$, let $j=1$, $h=2$, $\tilde\bfx_\parallel=(\gamma_1,-\gamma_1)$ and $\tilde\bfx_\top=(0,-\gamma_3)$.\end{itemize}

The lemma is obviously true for $i=1,2$ as $\tilde\bfx_\parallel=\tilde\bfx_\parallel+\tilde\bfx_\top$. For $i=3$, notice $\tilde\bfx=\tilde\bfx_\parallel+\tilde\bfx_\top+(0,\gamma)$. Thus $\bfx=\pi_\triangle(\tilde\bfx)=\pi_\triangle(\tilde\bfx_\parallel)+\pi_\triangle(\tilde\bfx_\top)$ as $(0,\gamma)\in\Gamma^2$.
\end{proof}

\begin{corollary}\label{counterapproach}For each $i=1,2,3$, suppose a sequence $\{\bfn_l\}_{l=1}^\infty\subset\bZ^2$ satisfies that $\lim_{l\rightarrow\infty}\lambda_i(\bfn_l)=-\infty$ and $\lim_{l\rightarrow\infty}\zeta_\triangle^{\bfn_l}.\bfx=\bfy$, then $\bfy\in T^{\kappa_i}$.\end{corollary}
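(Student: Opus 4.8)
The plan is to exploit the explicit decomposition of $\bfx$ furnished by Lemma \ref{homoclinic}. Fix $i\in\{1,2,3\}$, and let $(i,j,h)$ be the associated permutation together with the non-zero vectors $\tilde\bfx_\parallel\in V_j^{\kappa_i}$ and $\tilde\bfx_\top\in V_i^{\kappa_j}$ provided by that lemma, so that $\bfx=\pi_\triangle(\tilde\bfx_\parallel)+\pi_\triangle(\tilde\bfx_\top)$ holds as an identity of points of $X^2$. First I would record that $\pi_\triangle(\tilde\bfx_\parallel)$ already lies in the target torus: since $V_j^{\kappa_i}\subset V^{\kappa_i}$, Lemma \ref{Tkappatangent} gives $\pi_\triangle(\tilde\bfx_\parallel)\in\pi_\triangle(V^{\kappa_i})=T^{\kappa_i}$, and $T^{\kappa_i}$ is a compact $\zeta_\triangle$-invariant subtorus.

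Next I would apply $\zeta_\triangle^{\bfn_l}$, which is a group automorphism of $X^2$, to obtain
\[\zeta_\triangle^{\bfn_l}.\bfx=\zeta_\triangle^{\bfn_l}.\pi_\triangle(\tilde\bfx_\parallel)+\pi_\triangle\big(\zeta_\triangle^{\bfn_l}.\tilde\bfx_\top\big).\]
By invariance, the first summand stays in $T^{\kappa_i}$ for every $l$. For the second summand, since $\tilde\bfx_\top\in V_i^{\kappa_j}\subset V_i^\square$, formula (\ref{Visquaremulti}) gives $\zeta_\triangle^{\bfn_l}.\tilde\bfx_\top=\zeta_i^{\bfn_l}\tilde\bfx_\top$, whose Euclidean norm equals $e^{\lambda_i(\bfn_l)}|\tilde\bfx_\top|$ and hence tends to $0$ because $\lambda_i(\bfn_l)\to-\infty$; as $\pi_\triangle$ is $1$-Lipschitz for the chosen metrics, $\pi_\triangle(\zeta_\triangle^{\bfn_l}.\tilde\bfx_\top)\to 0$ in $X^2$.

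Finally, combining these, $\zeta_\triangle^{\bfn_l}.\pi_\triangle(\tilde\bfx_\parallel)=\zeta_\triangle^{\bfn_l}.\bfx-\pi_\triangle(\zeta_\triangle^{\bfn_l}.\tilde\bfx_\top)$ converges to $\bfy-0=\bfy$; since every term of this sequence lies in the closed set $T^{\kappa_i}$, so does the limit $\bfy$, which is exactly the assertion. I do not expect a genuine obstacle here; the only points requiring care are that the splitting $\bfx=\pi_\triangle(\tilde\bfx_\parallel)+\pi_\triangle(\tilde\bfx_\top)$ is an honest identity in $X^2$ (so that it is respected by the additive action of $\zeta_\triangle^{\bfn_l}$) and that the contracting part $\tilde\bfx_\top$ lies in the single eigenspace $V_i^\square$ (so that $\lambda_i(\bfn_l)\to-\infty$ genuinely forces it to $0$) — both of these are precisely what Lemma \ref{homoclinic} guarantees.
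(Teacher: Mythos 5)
Your proof is correct and follows essentially the same route as the paper: decompose $\bfx$ via Lemma \ref{homoclinic}, observe via (\ref{Visquaremulti}) that the $V_i^\square$-component is contracted to $0$ because $\lambda_i(\bfn_l)\to-\infty$, and conclude $\bfy$ lies in the closed $\zeta_\triangle$-invariant subtorus $T^{\kappa_i}$. The only cosmetic difference is that you explicitly invoke invariance of $T^{\kappa_i}$ and its closedness, where the paper compresses this into a single remark.
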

\begin{proof} Take the decomposition given by Lemma \ref{homoclinic}, then $\zeta_\triangle^{\bfn_l}.\bfx=\zeta_\triangle^{\bfn_l}.\big(\pi_\triangle(\tilde\bfx_\parallel)+\pi_\triangle(\tilde\bfx_\top)\big)=\zeta_\triangle^{\bfn_l}.\pi_\triangle(\tilde\bfx_\parallel)+\pi_\triangle(\zeta_\triangle^{\bfn_l}.\tilde\bfx_\top)$. Furthermore, by (\ref{Visquaremulti}), $|\zeta_\triangle^{\bfn_l}.\tilde\bfx_\top|=e^{\lambda_i(\bfn_l)}|\tilde\bfx_\top|$. When $l$ go to $\infty$, $e^{\lambda_i(\bfn_l)}\rightarrow 0$ and thus $\pi_\triangle(\zeta_\triangle^{\bfn_l}.\tilde\bfx_\top)$ approaches $\bfzero$. Hence  $\lim_{l\rightarrow\infty}\zeta_\triangle^{\bfn_l}.\bfx=\lim_{l\rightarrow\infty}\zeta_\triangle^{\bfn_l}.\pi_\triangle(\tilde\bfx_\parallel)$ and as $\pi_\triangle(\tilde\bfx_\parallel)\in T^{\kappa_i}$ this limit must be in $T^{\kappa_i}$.\end{proof}

\begin{corollary}\label{countersmall}Any accumulation point of $\{\zeta_\triangle^\bfn.\bfx:\bfn\in\bZ^r\}$ is contained in at least one of $T^\infty$, $T^0$ and $T^{-1}$.\end{corollary}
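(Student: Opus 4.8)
The plan is to reduce everything to Corollary~\ref{counterapproach}, which already tells us that if $\lambda_i(\bfn_l)\to-\infty$ along a sequence $\{\bfn_l\}\subset\bZ^2$ and $\zeta_\triangle^{\bfn_l}.\bfx$ converges, then its limit lies in $T^{\kappa_i}$. Since $\{\kappa_1,\kappa_2,\kappa_3\}=\{\infty,0,-1\}$, it suffices to prove: for every accumulation point $\bfy$ of the orbit there is an index $i\in\{1,2,3\}$ and a sequence $\bfn_l\in\bZ^2$ with $\lambda_i(\bfn_l)\to-\infty$ and $\zeta_\triangle^{\bfn_l}.\bfx\to\bfy$.

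First I would fix an accumulation point $\bfy$ and choose lattice points $\bfn_l\in\bZ^2$ so that the $\zeta_\triangle^{\bfn_l}.\bfx$ are pairwise distinct and converge to $\bfy$; this is possible because an accumulation point of a subset of a metric space is a limit of a sequence of distinct points of that subset, and distinct orbit points necessarily come from distinct exponents $\bfn_l$. Next, recall from \S\ref{Xnota} and Remark~\ref{dirichlet} that $\cL\colon\bZ^2\to\bR^I$ is a group morphism whose image is a full-rank lattice in the plane $W=\{w\in\bR^I:\sum_{i\in I}d_iw_i=0\}$. Since $\dim W=|I|-1=2=\rank(\bZ^2)$, the kernel of $\cL$ has rank $0$, hence is trivial as $\bZ^2$ is torsion-free, so $\cL$ is injective. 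Therefore $\{\cL(\bfn_l)\}$ is an infinite set of distinct points of the lattice $\cL(\bZ^2)$; a lattice is discrete and closed, so every ball contains only finitely many of its points, and consequently $|\cL(\bfn_l)|\to\infty$.

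The key (and essentially only nonroutine) step is then to use the "determinant one" relation $\sum_{i\in I}d_i\lambda_i(\bfn_l)=0$ — which is precisely the statement $\cL(\bfn_l)\in W$ — to conclude $\min_{i\in I}\lambda_i(\bfn_l)\to-\infty$. Indeed, if the numbers $\lambda_i(\bfn_l)$ were bounded below by some $-M$ uniformly in $i$ and $l$, then for each $i$ we would get $d_i\lambda_i(\bfn_l)=-\sum_{j\neq i}d_j\lambda_j(\bfn_l)\leq M\sum_{j\neq i}d_j$, so the $\lambda_i(\bfn_l)$ would be bounded above as well, contradicting $|\cL(\bfn_l)|\to\infty$. (This uses only $d_i\geq 1$ and $|I|\geq 2$, so the argument is insensitive to the rank, but here $|I|=3$.) Passing to a subsequence on which the minimizing index is a fixed $i\in\{1,2,3\}$ gives $\lambda_i(\bfn_l)\to-\infty$ while still $\zeta_\triangle^{\bfn_l}.\bfx\to\bfy$, and Corollary~\ref{counterapproach} then yields $\bfy\in T^{\kappa_i}\in\{T^\infty,T^0,T^{-1}\}$, completing the proof.

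I do not expect a real obstacle: all the geometric content is already packaged in Corollary~\ref{counterapproach}, and the remaining work is the elementary observation above that an unbounded sequence of distinct exponents, constrained to the trace-zero hyperplane $W$ via the injective map $\cL$, must have some coordinate $\lambda_i$ tending to $-\infty$.
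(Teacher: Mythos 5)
Your proof is correct and essentially reproduces the paper's argument: both reduce to Corollary~\ref{counterapproach} by passing to a subsequence of distinct exponents $\bfn_l$ with $\lambda_i(\bfn_l)\to-\infty$ for a fixed index $i$, using the discreteness of the full-rank lattice $\cL(\bZ^2)\subset W$ together with the trace-zero constraint $\sum_{i\in I} d_i\lambda_i=0$. The paper organizes the extraction of $i$ geometrically via the cones $\Phi_i$ covering $W$ (and the compactness of $\Phi_{i,R}$), which is the same ``minimizing coordinate must escape to $-\infty$'' argument you give; your version incidentally sidesteps the small imprecision that $\bigcup_i\Phi_i$, as written with strict inequalities, misses the lower-dimensional locus where the minimal coordinate is attained at two indices.
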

\begin{proof} For each $i=1,2,3$, we define a subset $\Phi_i$ of $W$ by:
\begin{equation}\Phi_i=\big\{w\in\bR^3:\sum_{k=1}^3d_kw_k=0,w_i<0,w_k>w_i\mathrm{\ for\ }k\neq i\big\}.\end{equation}
Then $\cup_{i=1}^3\Phi_i=W$. Therefore by Remark \ref{dirichlet}, for any accumulation point $\bfy$ of $\{\zeta_\triangle^\bfn.\bfx:\bfn\in\bZ^r\}$, there is $i\in\{1,2,3\}$ and a sequence $\{\bfn_l\}_{l=1}^\infty$ where all the $\bfn_l$'s are different from each other, such that $\zeta_\triangle^{\bfn_l}.\bfx$ converges to $\bfy$ as $l\rightarrow\infty$ and $\cL(\bfn_l)\in\Phi_i$ for all $l$.

For any $R>0$, it is easy to check that $\Phi_{i,R}=\{w\in \Phi_i:w_i\geq -R\}$ is a compact subset. Because $\cL$ embedds $\bZ^r$ as a full-rank lattice into $W$, there are only finitely many $\bfn\in\bZ^r$ such that $\cL(\bfn)\in\Phi_{i,R}$ . Therefore for sufficiently large $l$, $\cL(\bfn_l)\in\Phi_i\backslash\Phi_{i,R}=\{w\in \Phi_i:w_i<-R\}$. Equivalently, the $i$-th coordinate of $\cL(\bfn_l)$, which is just $\lambda_i(\bfn_l)$, is strictly less than $-R$.  Since $R$ is arbitrary,  we see \begin{equation}\label{counterapproach0}\lim_{l\rightarrow\infty}\lambda_i(\bfn_l)=-\infty.\end{equation} Hence by Corollary \ref{counterapproach}, $\bfy\in T^{\kappa_i}$.\end{proof}

\begin{lemma}\label{counterlarge}For each $i=1,2,3$, $T^{\kappa_i}\subset\overline{\{\zeta_\triangle^\bfn.\bfx:\bfn\in\bZ^r\}}$.\end{lemma}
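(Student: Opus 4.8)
Fix $i\in\{1,2,3\}$. The plan is to run everything through the decomposition of $\bfx$ supplied by Lemma~\ref{homoclinic}: write $\bfx=\pi_\triangle(\tilde\bfx_\parallel)+\pi_\triangle(\tilde\bfx_\top)$ with $\tilde\bfx_\parallel\in V_j^{\kappa_i}\subset V_j^\square$ and $\tilde\bfx_\top\in V_i^{\kappa_j}\subset V_i^\square$ both non-zero, where $j\neq i$. Set $\bfq=\pi_\triangle(\tilde\bfx_\parallel)$. Then $\bfq\in T^{\kappa_i}$ by Lemma~\ref{Tkappatangent}, and $\bfq$ is not a torsion point of $X^2$: a non-zero vector of $V_j$ projects to a non-torsion point of $X$ because $|I|=r+1\geq3$, exactly as in the proof of Lemma~\ref{denseorcontaindhomo}(1). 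Using~(\ref{Visquaremulti}) and $\|\pi_\triangle(\cdot)\|\leq|\cdot|$, for every $\bfn\in\bZ^2$ we have $\zeta_\triangle^\bfn.\bfx=\zeta_\triangle^\bfn.\bfq+\pi_\triangle(\zeta_i^\bfn\tilde\bfx_\top)$ with $\|\pi_\triangle(\zeta_i^\bfn\tilde\bfx_\top)\|\leq e^{\lambda_i(\bfn)}|\tilde\bfx_\top|$. Hence it suffices to establish the recurrence statement: \emph{for every $\bfp\in T^{\kappa_i}$ and every $R>0$, the point $\bfp$ lies in the closure of $\{\zeta_\triangle^\bfn.\bfq:\bfn\in\bZ^2,\ \lambda_i(\bfn)<-R\}$.} Indeed, given $\bfp$ and $\epsilon>0$, choose $R$ with $e^{-R}|\tilde\bfx_\top|<\epsilon/2$, then $\bfn$ with $\lambda_i(\bfn)<-R$ and $\|\zeta_\triangle^\bfn.\bfq-\bfp\|<\epsilon/2$; the triangle inequality on $X^2$ gives $\|\zeta_\triangle^\bfn.\bfx-\bfp\|<\epsilon$, so $T^{\kappa_i}\subset\overline{\{\zeta_\triangle^\bfn.\bfx:\bfn\in\bZ^2\}}$.

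To prove the recurrence statement I combine two ingredients. First, since $\bfq$ is non-torsion, Lemma~\ref{TkappaBerend} yields that $\{\zeta_\triangle^\bfn.\bfq:\bfn\in\bZ^2\}$ is dense in $T^{\kappa_i}$, so there is $\bfn_0$ with $\|\zeta_\triangle^{\bfn_0}.\bfq-\bfp\|<\epsilon/2$. Second, I claim that \emph{for every $\delta>0$ and every $R>0$ there is $\bfs\in\bZ^2$ with $|\zeta_j^\bfs-1|<\delta$ (as an element of $\bF_j$) and $\lambda_i(\bfs)<-R$.} Granting the claim, put $\bfn=\bfn_0+\bfs$. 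Because $\tilde\bfx_\parallel\in V_j^\square$, (\ref{Visquaremulti}) gives $\zeta_\triangle^\bfn.\bfq=\pi_\triangle(\zeta_j^\bfs\,\zeta_j^{\bfn_0}\tilde\bfx_\parallel)$, hence $\|\zeta_\triangle^\bfn.\bfq-\zeta_\triangle^{\bfn_0}.\bfq\|\leq|\zeta_j^\bfs-1|\cdot e^{\lambda_j(\bfn_0)}|\tilde\bfx_\parallel|$, which is $<\epsilon/2$ once $\delta$ is small (the factor $e^{\lambda_j(\bfn_0)}|\tilde\bfx_\parallel|$ is fixed once $\bfn_0$ is fixed); and $\lambda_i(\bfn)=\lambda_i(\bfn_0)+\lambda_i(\bfs)<-R$ once $\lambda_i(\bfs)$ is chosen negative enough. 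This proves the recurrence statement, and the lemma.

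It remains to prove the claim, which is where the rank $2$ hypothesis is used: since $|I|=r+1=3$, the space $W$ of~(\ref{Wdef}) is $2$-dimensional, and by Remark~\ref{dirichlet} the morphism $\cL$ embeds $\bZ^2$ onto a full-rank lattice in $W$. The map $\bfs\mapsto\zeta_j^\bfs$ has trivial kernel (as $\zeta$ is injective and $\sigma_j$ an embedding), so by Lemma~\ref{smallLog} applied with index $j$ there exist, for $\delta_n\to0$ with $\delta_n<\pi$, non-zero $\bfs_n\in\bZ^2$ with $|\Log\zeta_j^{\bfs_n}|<\delta_n$ (so in particular $\zeta_j^{\bfs_n}\to1$, and $\zeta_j^{\bfs_n}>0$ when $\bF_j=\bR$). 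The $\bfs_n$ cannot remain bounded, so $|\cL(\bfs_n)|\to\infty$ while its $j$-th coordinate $\lambda_j(\bfs_n)$ tends to $0$; thus $\cL(\bfs_n)$ goes to infinity along a neighbourhood of the line $\{w\in W:w_j=0\}$, on which the $i$-th coordinate is a non-zero, hence unbounded, linear functional (this line is genuinely $1$-dimensional because $w_j=0$ and $\sum_k d_kw_k=0$ are independent in $\bR^3$). Therefore $|\lambda_i(\bfs_n)|\to\infty$; replacing $\bfs_n$ by $-\bfs_n$ along a subsequence if necessary (which leaves $|\Log\zeta_j^{\,\cdot}|$ unchanged, as $\zeta_j^{\bfs_n}\to1$) we may assume $\lambda_i(\bfs_n)\to-\infty$, and for $n$ large this $\bfs_n$ satisfies both requirements, $|\zeta_j^{\bfs_n}-1|<2\delta_n<\delta$ and $\lambda_i(\bfs_n)<-R$. \textbf{The main obstacle is exactly this claim:} in rank $2$ a half-space $\{\lambda_i<-R\}$ is not large enough in the semigroup sense to apply Berend-type density arguments directly (the Weyl-chamber-type results such as Corollary~\ref{Tkappanonhyp} are unavailable for $r=2$), and the point of the argument is that the abundance of near-identity elements guaranteed by Lemma~\ref{smallLog} allows one to slide an approximating orbit point of $\bfq$ arbitrarily deep into that half-space without moving it appreciably. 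Everything else is routine estimation in the Euclidean coordinates of $X^2$.
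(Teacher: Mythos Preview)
Your proof is correct and takes a genuinely different route from the paper's.

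The paper argues by contradiction: it first produces one accumulation point of the orbit in $T^{\kappa_i}$, sets $E=T^{\kappa_i}\cap\overline{\{\zeta_\triangle^\bfn.\bfx\}}$, and invokes Lemma~\ref{TkappaBerend} so that it suffices to show $E$ is infinite. Assuming $E$ finite (hence consisting of torsion points), it builds a compact ``shell'' $D\subset T^{\kappa_i}$ around a torsion limit $\bfy$, disjoint from $E$ and hence from the orbit closure, and then pushes a nearby orbit point back into $D$ by iterating a fixed element $p\bfn\in\Stab_{\zeta_\triangle}(\bfy)$, obtaining a contradiction. The argument is indirect and uses the finiteness hypothesis in an essential way (to stabilize $\bfy$ and to make the shell $D$ avoid the orbit closure).

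Your approach is direct and constructive. You exploit a feature of the decomposition in Lemma~\ref{homoclinic} that the paper does not use: $\tilde\bfx_\parallel$ lies in a single eigenspace $V_j^\square$, so $\zeta_\triangle^\bfn.\bfq$ depends only on the scalar $\zeta_j^\bfn$. This lets you decouple the two requirements on $\bfn$: first approximate $\bfp$ by some $\zeta_\triangle^{\bfn_0}.\bfq$ via Lemma~\ref{TkappaBerend}, then slide into the half-space $\{\lambda_i<-R\}$ using an $\bfs$ with $\zeta_j^\bfs\approx 1$, which barely perturbs $\zeta_\triangle^{\bfn_0}.\bfq$. The existence of such $\bfs$ is exactly where $r=2$ enters, via Lemma~\ref{smallLog} and the observation that in the $2$-dimensional plane $W$ the conditions $\lambda_j\to 0$ and $|\cL|\to\infty$ force $|\lambda_i|\to\infty$. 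This is shorter and more transparent than the paper's shell argument; the paper's method, by contrast, does not rely on $\tilde\bfx_\parallel$ being confined to a single $V_j^\square$ and would adapt more readily if that special feature were absent.
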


\begin{proof} Let $(i,j,h)$, $\bfx_\parallel$ and $\bfx_\bot$ be given by Lemma \ref{homoclinic}. Because $\cL(\bZ^2)$ is a full-rank lattice in $W$, there is $\bfn\in\bZ^2$ such that $\lambda_i(\bfn)<0$ and $\lambda_j(\bfn),\lambda_h(\bfn)>0$. 

As $X^2$ is compact, for some subsequence of integers $\{l_k\}_{k=1}^\infty$ of $\bN$, $\zeta_\triangle^{l_k\bfn}.\bfx$ converges to a limit $\bfy$. Because $\lambda_i(l_k\bfn)=l_k\lambda_i(\bfn)$ coverges to $-\infty$ as $k\rightarrow\infty$, it follows from Lemma \ref{Tkappatangent} that $\bfy\in T^{\kappa_i}$. In particular, there is at least one accumulation point of $\{\zeta_\triangle^\bfn.\bfx:\bfn\in\bZ^r\}$ in $T^{\kappa_i}$.

So we can consider the non-empty intersection $E=T^{\kappa_i}\cap\overline{\{\zeta_\triangle^\bfn.\bfx:\bfn\in\bZ^r\}}$. As both $T^\kappa$ and $\overline{\{\zeta_\triangle^\bfn.\bfx:\bfn\in\bZ^r\}}$ are $\zeta_\triangle$-invariant, $E$ is a $\zeta_\triangle$-invariant closed subset in $T^{\kappa_i}$.

To establish the lemma it suffices to show $E=T^{\kappa_i}$. Thanks to Lemma \ref{TkappaBerend}, this is equivalent to showing that $E$ is infinite.

In order to deduce a contradiction, assume $E$ is finite, then it must consisits of torsion points. In particular, the point $\bfy$ constructed above is of torsion and therefore the subgroup $H=\Stab_{\zeta_\triangle}(\bfy)$ has finite index in $\bZ^2$. So there is $p\in\bN$ such that $p\bfn\in H$. Denote \begin{equation}\label{counterlarge2}C=\max\big(p\lambda_j(\bfn),p\lambda_h(\bfn)\big).\end{equation}

Since $E$ is a finite set, similar to (\ref{selfreturn2}), there is a constant $\rho$ such that for all non-zero vector $\bfv\in V^{\kappa_i}$ with $|\bfv|<\rho$ we have $\bfy+\bfv\in T^{\kappa_i}\backslash E$.

Define a shell-shaped set \begin{equation}\label{counterlarge3}D=\big\{\bfy+\bfv:\bfv\in V^{\kappa_i}, |\bfv|\in\big[\frac\rho{2e^C},\frac\rho2\big]\big\},\end{equation} which is a compact subset of $T^{\kappa_i}$ and is disjoint from $E$. Then by construction of $E$, the set $D$ is disjoint from $\overline{\{\zeta_\triangle^\bfn.\bfx:\bfn\in\bZ^r\}}$ as well.
Set \begin{equation}\delta=\min\big(\dist(D, \overline{\{\zeta_\triangle^\bfn.\bfx:\bfn\in\bZ^r\}}),\frac12\rho\big).\end{equation} 

For any $\bfz\in D$, because $\bfz\in\mathring B_{\frac{\delta_{\bfz_m}}2}(\bfz_m)$ for some $m$, $B_\delta(\bfz)$ is contained in $B_{\delta+\frac{\delta_{\bfz_m}}2}(\bfz_m)\subset B_{\delta_{\bfz_m}}(\bfz_m)$. Thus \begin{equation}\label{counterlarge4}B_\delta(\bfz) \cap\{\zeta_\triangle^\bfn.\bfx:\bfn\in\bZ^r\}=\emptyset, \forall \bfz\in D\end{equation}

Following earlier discussions, we know for any sufficiently large integer $k$, \begin{equation}\label{paralleltopclose}\|\pi_\triangle(\zeta_\triangle^{l_k\bfn}.\bfx_\parallel)-\bfy\|<\delta,\  |\zeta_\triangle^{l_k\bfn}.\bfx_\top|<\delta.\end{equation}

Fix such a large $k$, then there is a vector $\bfw\in V^{\kappa_i}$ such that $|\bfw|<\delta$ and $\pi_\triangle(\zeta_\triangle^{l_k\bfn}.\bfx_\parallel)=\bfy+\bfw$. Write $\bfw$ as $\bfw_i+\bfw_j+\bfw_h$ where $\bfw_i\in V_i^{\kappa_i}$, $\bfw_j\in V_j^{\kappa_i}$ and $\bfw_h\in V_h^{\kappa_i}$. 

Since $\bfy=\pi_\triangle(\zeta_\triangle^{l_k\bfn}.\bfx_\parallel-\bfw)$ is a torsion point, there is a non-zero integer $q$ such that $q\bfy=\pi_\triangle\big(q(\zeta_\triangle^{l_k\bfn}.\bfx_\parallel-\bfw)\big)=\bfzero$ or equivalently $q(\zeta_\triangle^{l_k\bfn}.\bfx_\parallel-\bfw)\in\Gamma^2$. Because $\bfx_\parallel$ is a non-zero element from $V_j^{\kappa_i}$ and $\lambda_j(\bfn)>0$, $|\zeta_\triangle^{l_k\bfn}.\bfx_\parallel|>\delta$ if $k$ is chosen to be sufficiently large . But $|\bfw|<\delta$, so $q(\zeta_\triangle^{l_k\bfn}.\bfx_\parallel-\bfw)\in\Gamma^2\backslash\{\bfzero\}$. Recall $\Gamma\subset\sigma(K)$ in the setting of this paper. Thus $q(\zeta_\triangle^{l_k\bfn}.\bfx_\parallel-\bfw)=\big(\sigma(\theta^{(1)}),\sigma(\theta^{(2)})\big)$ where $\theta^{(1)}, \theta^{(2)}$ are from $K$ and are not simultaneously zero. In particular, the $V_h^{(1)}$ and $V_h^{(2)}$ coordinates of $q(\zeta_\triangle^{l_k\bfn}.\bfx_\parallel-\bfw)$ are respectively $\sigma_h(\theta^{(1)})$ and $\sigma_h(\theta^{(2)})$, and are not simultaneously zero. In other words, in the decomposition \begin{equation}q(\zeta_\triangle^{l_k\bfn}.\bfx_\parallel-\bfw)=q(\zeta_\triangle^{l_k\bfn}.\bfx_\parallel-\bfw_j)+q\bfw_i+q\bfw_h,\end{equation} the $V_h^\square$-component $q\bfw_h$ doesn't vanish. Thus $\bfw_h\neq \bfzero$.

Now consider the sequence of vectors $\bfv_s=\zeta_\triangle^{sp\bfn}.\bfw\in V^{\kappa_i}$ where $s=1,2,\cdots$. Note $\bfv_s$ decomposes into $(\bfv_s)_i+(\bfv_s)_j+(\bfv_s)_h$ where  $(\bfv_s)_f=\zeta_\triangle^{sp\bfn}.\bfw_f\in V_f^{\kappa_i}$ for $f=i,j,h$. By (\ref{Visquaremulti}), $|(\bfv_s)_f|=e^{sp\lambda_i(\bfn)}|\bfw_f|$, in particular $|(\bfv_s)_f|\leq e^{p\lambda_f(\bfn)}|(\bfv_{s-1})_f|\leq e^C|(\bfv_{s-1})_f|$ for all $s\geq 1$ and $ f=i,j,h$ because of (\ref{counterlarge2}) and the fact that $\lambda_i(\bfn)<0$. Because $|\bfv_s|^2=|(\bfv_s)_i|^2+|(\bfv_s)_i|^2+|(\bfv_s)_i|^2$, we have \begin{equation}|\bfv_s|\leq e^C|\bfv_{s-1}|, \forall s\geq 1.\end{equation}
But on the other hand because $\bfw_h\neq\bfzero$ and $\lambda_h(\bfn)>0$,   as $s$ increases $|(\bfv_s)_h|=e^{sp\lambda_i(\bfn)}|\bfw_h|$ grows to $\infty$, in consequence $|\bfv_s|\rightarrow\infty$ as well. Since $|\bfv_0|=|\bfw|\leq\delta\leq\frac\rho 2$, the above facts imply that there is an integer $s_0\geq 0$ such that \begin{equation}\label{counterlarge5}|\bfv_{s_0}|\in\big[\frac\rho{2e^C},\frac\rho2\big].\end{equation}

We investigate the point \begin{equation}\label{farpoint}\zeta_\triangle^{(l_k+s_0p)\bfn}.\bfx= \zeta_\triangle^{s_0p\bfn}.\zeta_\triangle^{l_k\bfn}.\bfx
=\zeta_\triangle^{s_0p\bfn}.\pi_\triangle(\zeta_\triangle^{l_k\bfn}.\tilde\bfx_\parallel)+\zeta_\triangle^{s_0p\bfn}.(\zeta_\triangle^{l_k\bfn}.\tilde\bfx_\top).\end{equation}

Observe $\zeta_\triangle^{s_0p\bfn}.\pi_\triangle(\zeta_\triangle^{l_k\bfn}.\tilde\bfx_\parallel)=\zeta_\triangle^{s_0p\bfn}.\bfy+\zeta_\triangle^{s_0p\bfn}.\bfw$.

First, $\zeta_\triangle^{s_0p\bfn}.\bfy=\bfy$ because $s_0p\bfn$ belongs to $H$ and thus stabilizes $\bfy$ under $\zeta_\triangle$. Furthermore, $\zeta_\triangle^{s_0p\bfn}.\bfw=\bfv_{s_0}\in V^{\kappa_i}$. So it follows from (\ref{counterlarge3}) and (\ref{counterlarge5}) that \begin{equation}\label{counterlarge6}\zeta_\triangle^{s_0p\bfn}.\pi_\triangle(\zeta_\triangle^{l\bfn}.\tilde\bfx_\parallel)\in D.\end{equation}

On the other hand, since $\lambda_i(\bfn)<0$ and $\zeta_\triangle^{l\bfn}.\tilde\bfx_\top\in V_i^{\kappa_j}$, by (\ref{paralleltopclose}) \begin{equation}|\zeta_\triangle^{s_0p\bfn}.(\zeta_\triangle^{l\bfn}.\tilde\bfx_\top)|\leq e^{s_0p\lambda_i(\bfn)}|\zeta_\triangle^{l\bfn}.\tilde\bfx_\top|\leq |\zeta_\triangle^{l\bfn}.\tilde\bfx_\top|\leq\delta.\end{equation} From (\ref{counterlarge5}) and (\ref{farpoint}) we deduce that $\zeta_\triangle^{(l_k+s_0p)\bfn}.\bfx\in B_\delta\big(\zeta_\triangle^{s_0p\bfn}.\pi_\triangle(\zeta_\triangle^{l\bfn}.\tilde\bfx_\parallel)\big)$. But, together with (\ref{counterlarge6}), this contradicts the disjointness relation (\ref{counterlarge4}).

So $E$ cannot be a proper subset of $T^{\kappa_i}$ and the lemma follows.\end{proof}

\begin{proof}[Proof of Theorem \ref{CartanjoiningX2}.(2)]The second half of the theorem directly follows from Corollary \ref{countersmall} and Lemma \ref{counterlarge}.\end{proof}

So finally we established Theorem \ref{CartanjoiningX2} and, by Lemma \ref{CartanjoiningT2X2}, the main result Theorem \ref{Cartanjoining} as well.\newline

{\noindent\bf Acknowledgments:} The authors are grateful to Jean Bourgain and Bryna Kra for helpful discussions. We thank the referees for valuable comments and suggestions. Z.W. wishes to thank the Hebrew University of Jerusalem for its hospitality.

\begin{bibdiv}
\begin{biblist}

\bib{B83}{article}{
   author={Berend, Daniel},
   title={Multi-invariant sets on tori},
   journal={Trans. Amer. Math. Soc.},
   volume={280},
   date={1983},
   number={2},
   pages={509--532},
   issn={0002-9947},
   review={\MR{716835 (85b:11064)}},
}

\bib{EL03}{article}{
   author={Einsiedler, Manfred},
   author={Lindenstrauss, Elon},
   title={Rigidity properties of $\bZ^d$-actions on tori and
   solenoids},
   journal={Electron. Res. Announc. Amer. Math. Soc.},
   volume={9},
   date={2003},
   pages={99--110 (electronic)},
   issn={1079-6762},
   review={\MR{2029471 (2005d:37007)}},
}

\bib{EL04}{article}{
   author={Einsiedler, Manfred},
   author={Lind, Douglas},
   title={Algebraic $\bZ^d$-actions on entropy rank one},
   journal={Trans. Amer. Math. Soc.},
   volume={356},
   date={2004},
   number={5},
   pages={1799--1831 (electronic)},
   issn={0002-9947},
   review={\MR{2031042 (2005a:37009)}},
}

\bib{F67}{article}{
   author={Furstenberg, Harry},
   title={Disjointness in ergodic theory, minimal sets, and a problem in
   Diophantine approximation},
   journal={Math. Systems Theory},
   volume={1},
   date={1967},
   pages={1--49},
   issn={0025-5661},
   review={\MR{0213508 (35 \#4369)}},
}

\bib{G03}{book}{
   author={Glasner, Eli},
   title={Ergodic theory via joinings},
   series={Mathematical Surveys and Monographs},
   volume={101},
   publisher={American Mathematical Society},
   place={Providence, RI},
   date={2003},
   pages={xii+384},
   isbn={0-8218-3372-3},
   review={\MR{1958753 (2004c:37011)}},
}

\bib{GH55}{book}{
   author={Gottschalk, Walter Helbig},
   author={Hedlund, Gustav Arnold},
   title={Topological dynamics},
   series={American Mathematical Society Colloquium Publications, Vol. 36},
   publisher={American Mathematical Society},
   place={Providence, R. I.},
   date={1955},
   pages={vii+151},
   review={\MR{0074810 (17,650e)}},
}

\bib{LS11}{article}{
   author={Lindenstrauss, Elon},
   author={Shapira, Uri},
   title = {Homogeneous orbit closures and applications},
   journal = {Ergodic Theory and Dynamical Systems},
   volume={},
   date={2011},
   pages={to appear},
   issn={},
   review={},
}

\bib{KK02}{article}{
   author={Kalinin, Boris},
   author={Katok, Anatole},
   title={Measurable rigidity and disjointness for $\mathbb Z^k$ actions by
   toral automorphisms},
   journal={Ergodic Theory Dynam. Systems},
   volume={22},
   date={2002},
   number={2},
   pages={507--523},
   issn={0143-3857},
   review={\MR{1898802 (2003b:37046)}},
}

\bib{M00}{article}{
   author={Margulis, Gregory},
   title={Problems and conjectures in rigidity theory},
   conference={
      title={Mathematics: frontiers and perspectives},
   },
   book={
      publisher={Amer. Math. Soc.},
      place={Providence, RI},
   },
   date={2000},
   pages={161--174},
   review={\MR{1754775 (2001d:22008)}},
}

\bib{M10}{article}{
   author={Maucourant, Fran{\c{c}}ois},
   title={A nonhomogeneous orbit closure of a diagonal subgroup},
   journal={Ann. of Math. (2)},
   volume={171},
   date={2010},
   number={1},
   pages={557--570},
   issn={0003-486X},
   review={\MR{2630049}},
}

\bib{P75}{article}{
   author={Parry, Charles J.},
   title={Units of algebraic number fields},
   journal={J. Number Theory},
   volume={7},
   date={1975},
   number={4},
   pages={385--388},
   issn={0022-314X},
   review={\MR{0384752 (52 \#5625)}},
}

\bib{S95}{book}{
   author={Schmidt, Klaus},
   title={Dynamical systems of algebraic origin},
   series={Progress in Mathematics},
   volume={128},
   publisher={Birkh\"auser Verlag},
   place={Basel},
   date={1995},
   pages={xviii+310},
   isbn={3-7643-5174-8},
   review={\MR{1345152 (97c:28041)}},
}

\bib{S11}{article}{
   author={Shapira, Uri},
   title={A solution to a problem of Cassels and Diophantine properties of
   cubic numbers},
   journal={Ann. of Math. (2)},
   volume={173},
   date={2011},
   number={1},
   pages={543--557},
   issn={0003-486X},
   review={\MR{2753608}},
   doi={10.4007/annals.2011.173.1.11},
}

\bib{T10}{article}{
   author={Tomanov, George},
   title={Locally Divergent Orbits on Hilbert Modular Spaces and Margulis Conjectures},
   journal={},
   volume={},
   date={2010},
   pages={preprint},
   issn={},
   doi={},
}

\bib{W10effective}{article}{
   author={Wang, Zhiren},
   title={Quantitatitve density under higher rank abelian algebraic toral actions},
   journal={Int. Math. Res. Not.},
   volume={},
   date={2010},
   pages={},
   issn={},
   doi={10.1093/imrn/rnq222},
}

\bib{W10subaction}{article}{
   author={Wang, Zhiren},
   title={Rigidity of commutative non-hyperbolic actions by toral automorphisms},
   journal = {Ergodic Theory and Dynamical Systems},
   volume={},
   date={2011},
   pages={to appear},
   issn={},
   review={},
}

\end{biblist}
\end{bibdiv}
\end{document}